\newcommand{\Z}{\mathbf Z}
\newcommand{\Qp}{\mathbf Q_p}
\newcommand{\Ph}{\varphi}
\newcommand{\z}{\bold{z}}
\newcommand{\zn}{\zeta_{p^n}}
\newcommand{\Dc}{\mathbf{D}_{\mathrm{cris}}}
\newcommand{\La}{\Lambda}
\newcommand{\res}{\mathrm{res}}
\newcommand{\R}{\mathbf{R}}
\newcommand{\bD}{\mathbf{D}}
\newcommand{\cl}{\mathrm{cl}}
\newcommand{\Iw}{\mathrm{Iw}}
\newcommand{\Bd}{\mathbf{B}_{\mathrm{dR}}}
\newcommand{\Bc}{\mathbf{B}_{\mathrm{cris}}}
\newcommand{\Exp}{\mathrm{Exp}}
\newcommand{\Hom}{\mathrm{Hom}}
\newcommand{\F}{\mathrm{Fil}}
\newcommand{\gam}{\gamma}
\newcommand{\Ddagrig}{\mathbf{D}^{\dagger}_{\mathrm{rig}}}
\newcommand{\Gal}{\mathrm{Gal}}
\newcommand{\CR}{\mathcal{R}}
\newcommand{\CDcris}{\mathscr{D}_{\mathrm{cris}}}
\newcommand{\ep}{\varepsilon}
\newcommand{\Zp}{\mathbf Z_p}
\newcommand{\Q}{\mathbf Q}
\newcommand{\CH}{\mathscr H}
\newcommand{\gr}{\text{\rm gr}}
\newcommand{\CE}{\mathcal E}
\newcommand{\CO}{\mathcal O}
\newcommand{\DdagrigA}{\bD^{\dagger}_{\mathrm{rig},A}}
\newcommand{\DdagrigAf}{\bD^{\dagger}_{\mathrm{rig},A_{\f}}} 
\newcommand{\DdagrigAg}{\bD^{\dagger}_{\mathrm{rig},A_{\g}}}
\newcommand\Fr{\textrm{Fr}}
\newcommand\BQ{\mathbf Q}
\newcommand\BC{\mathbf C}
\newcommand{\bM}{\mathbf M}
\newcommand\im{\mathrm{Im}}
\newcommand\pr{\mathrm{pr}}
\newcommand\Pro{\mathrm{Pr}}
\newcommand{\DdagrigE}{\bD^{\dagger}_{\mathrm{rig}
%,E
}}
\newcommand{\id}{\mathrm{id}}
\newcommand{\rk}{\mathrm{rk}}
\newcommand{\bdelta}{\boldsymbol{\delta}}
\newcommand{\f}{\mathbf f}
\newcommand{\g}{\mathbf{g}}
\newcommand{\bchi}{\boldsymbol{\chi}}
\newcommand{\boeta}{\boldsymbol{\eta}}
\newcommand{\Spm}{\mathrm{Spm}}
\newcommand{\CF}{\mathcal{F}}
\newcommand{\ba}{\mathbf{a}}
\newcommand{\bb}{\mathbf{b}}
\newcommand{\bxi}{\boldsymbol{\xi}}
\newcommand{\BF}{\mathbf{BF}}
\newcommand{\BFrm}{\mathrm{BF}}
\newcommand{\Zrm}{\mathrm{Z}}
\newcommand{\BFfrak}{\mathfrak{BF}}
\newcommand{\Tw}{\mathrm{Tw}}
\newcommand{\tr}{\mathrm{tr}}
\newcommand{\Zfrak}{\mathfrak{Z}}
\newcommand{\mm}{\mathfrak m}
\newcommand{\bN}{\mathbf{N}}
\newcommand{\Sym}{\mathrm{S}}
\newcommand{\TSym}{\mathrm{TS}}
\newcommand{\cF}{\mathscr{F}}
\newcommand{\Eis}{\mathrm{Eis}}
\newcommand{\EI}{\mathbf{Eis}}
\newcommand{\RI}{\mathbf{RI}}
\newcommand{\mom}{\mathrm{m}}
\newcommand{\et}{\mathrm{\acute et}}
\newcommand{\spec}{\mathrm{sp}}
\newcommand{\p}{\mathrm p}
\newcommand{\CGm}{\mathrm{CG}}
\newcommand{\Tor}{\mathrm{Tor}}
\newcommand{\Cp}{\mathbf{C}_p}
\newtheorem{mytheorem}[subsubsection]{Theorem}
\newtheorem{myproposition}[subsubsection]{Proposition}
\newtheorem{mydefinition}[subsubsection]{Definition}
\newtheorem{mylemma}[subsubsection]{Lemma}
\newtheorem{myremark}[subsubsection]{Remark}
\newtheorem*{mynonumberremark}{Remark}
\newtheorem{mycorollary}[subsubsection]{Corollary}
\newtheorem*{Extra-zero conjecture}{Extra-zero conjecture}
\newtheorem*{MTheorem}{Theorem I}
\newtheorem*{MT2Theorem}{Theorem II}
\begin{document}
\title{On extra zeros of $p$-adic  Rankin--Selberg $L$-functions}
% Use \titlerunning{Short Title} for an abbreviated version of
% your contribution title if the original one is too long
\author{Denis Benois}
\address{Institut de Math\'ematiques,
Universit\'e de  Bordeaux, 351, cours de la Lib\'eration  33405
Talence, France} 
\email{denis.benois@math.u-bordeaux.fr}
\author{St\'ephane Horte}
\address{28 rue des Platanes, 92500 Rueil-Malmaison, France}
\email{stephane.horte@polytechnique.edu }
%\date{July 20th 2020}
\begin{abstract}
We prove a version of the Extra-zero conjecture, formulated by the first named author in \cite{Ben14},    for $p$-adic $L$-functions associated to Rankin--Selberg convolutions of modular forms of the same weight. The novelty of this result is to provide strong evidence in support of this conjecture in the {\it  non-critical} case,  which remained essentially unstudied.   
\end{abstract}
\subjclass{11R23, 11F80, 11F85 11S25, 11G40, 14F30}
\keywords{$p$-adic $L$-functions, modular forms, $p$-adic representations}
%
% Use the package "url.sty" to avoid
% problems with special characters
% used in your e-mail or web address
%
\maketitle
\tableofcontents 

\setcounter{section}{-1}

\section{Introduction}

\subsection{The extra-zero conjecture}
Let $ M/\Q$ be a pure motive of weight $\mathrm{wt} (M)\leqslant -2.$ 
Fix an odd prime number $p$ and denote by $V$ the $p$-adic realization of $M .$
So $V$ is a finite dimensional vector space over a finite extension $E$ of $\Qp,$ 
equipped with a continuous Galois action. 
We will always assume that $M$ has a good reduction at $p.$ Let $\Dc (V)$
denote the Dieudonn\'e module associated to the restriction of $V$ on the decomposition group at $p$  and $t_V(\Qp) =\Dc (V)/\F^0\Dc (V)$ be the corresponding tangent space. The Bloch--Kato logarithm is an isomorphism
\footnote{Since $\mathrm{wt} (M)\leqslant -2,$ one has $\Dc (V)^{\Ph=1}=0.$
%and the Bloch--Kato exponential map $t_V(\Qp)\rightarrow H^1_f(\Qp,V)$ is an isomorphism
}
\begin{equation}
\nonumber
\log_{V}\,:\,H^1_f(\Q_p, V)\rightarrow t_V(\Qp ) .
\end{equation}
Let $H^1_f(\Q, V)$ denote the Bloch--Kato Selmer group of $V.$ We have 
a commutative diagram 
\begin{equation}
\nonumber
\xymatrix
{
H^1_f(\Q, V) \ar[r]^{\res_p} \ar[dr]_{r_{V}} &H^1_f(\Q_p, V) \ar[d]^{\log_V}\\
 &t_V(\Qp ) ,
 }
\end{equation}
 where $\res_p$ is the restriction map, and $r_{V}$ denotes the resulting
 map. Note that $r_V$  is closely related to the  syntomic regulator. Assume that $\res_p$ is injective. One expects that this 
 always holds under our assumptions \cite{Ja89}.
A $\Ph$-submodule  $D\subset \Dc (V)$ is called {\it  regular}, if 
$D\cap \F^0\Dc (V)=\{0\}$ and 
\[
t_V(\Qp)=D\oplus r_{V} \left (H^1_f(\Q, V) \right ),
\]
where we identify $D$ with its image in $t_V(\Qp).$ If $D$ is regular, 
the composition of $r_V$ with the  projection $t_V(\Qp)
\rightarrow \Dc (V)/\left (\F^0\Dc (V)+D\right )$ is an isomorphism
\begin{equation}
\label{the map r_{VD}}
r_{V,D}\,:\,H^1_f(\Q, V) 
\rightarrow \Dc (V)/\left (\F^0\Dc (V)+D\right ).
\end{equation} 
We call $p$-adic regulator and denote by $R_p(V,D)$ the determinant of this
map. Of course, it depends on the choice of bases, but we omit them from notation
in this general discussion.

In \cite{PR95}, Perrin-Riou conjectured that to each regular $D$ one can associate a $p$-adic $L$-function $L_p(M,D,s)$
satisfying some precise interpolation property. At $s=0,$ the conjectural interpolation formula reads
\begin{equation}
\label{perrin-riou conjecture}
L_p( M,D,0)=\CE (V,D) R_p(V,D)\frac{L( M,0)}{R_\infty ( M)},
\end{equation}
where $L(M,s)$ is the complex $L$-function associated to $M,$ $R_\infty (M)$ and $R_p(V,D)$ are the archimedean and  $p$-adic regulators respectively,
computed in the compatible bases\footnote{See, for example \cite[Section~4.2.1]{Ben14}.} 
and $\CE (V,D)$ is the Euler-like factor given by
\begin{equation}
\label{definition of Euler-like factor}
\CE (V,D)=\det (1-p^{-1}\Ph^{-1} \vert D)\,
\det (1-\Ph \vert \Dc (V)/D).
\end{equation}
We say that $L_p(M,D,s)$ has an extra-zero at $s=0$ if $\CE (V,D)=0.$
By the weight argument, this can occur only if $\mathrm{wt}(M)=-2,$ and in this 
case we define 
\begin{equation}
\nonumber
\CE^+ (V,D)=\det \left (1-p^{-1}\Ph^{-1} \vert D/D^{\Ph=p^{-1}}\right )\cdot 
\det \bigl (1-\Ph \vert \Dc (V)/D \bigr
 ).
\end{equation}
If, in addition, we assume that the action of $\Ph$ on $\Dc (V)$ is semisimple
at $p^{-1},$ then $\CE^+ (V,D)\neq 0.$ In \cite{Ben14}, the first named author 
proposed the following conjecture. 

\begin{Extra-zero conjecture} The $p$-adic $L$-function $L_p(M,D,s)$
has a zero of order $e=\dim_{E}D^{\Ph=p^{-1}}$ at $s=0$ and 
\begin{equation}
\nonumber 
L_p(M,D,0)=\mathcal L(V,D)\, \CE^+(V,D)\, R_p(V,D)\frac{L(M,0)}{R_\infty (M)},
\end{equation}
where $\mathcal L(V,D)$ is the $\mathcal L$-invariant constructed 
in \cite{Ben14}. 
\end{Extra-zero conjecture}

\subsection{Examples}

1) Let $\Q (\eta)$ be the motive associated to an odd Dirichlet character 
$\eta \,:\,(\Z/N\Z)^* \rightarrow \overline\Q^{\,*}$ such that $(p,N)=1$
and let $\Q (\eta \chi)$ be its  twist by the cyclotomic character $\chi .$ In this case, the extra-zero conjecture follows from the explicit formula for the derivative of
Kubota--Leopoldt $p$-adic $L$-functions 
proved by Ferrero and Greenberg \cite{FG} and Gross--Koblitz \cite{GrosK} (see also \cite{Ben14a}). 

2) More generally, assume that  $F$ is either a totally real or a CM-field and $\Q (\rho)$ is the Artin motive over $F$ associated to an Artin representation $\rho$
of $G_F=\Gal (\overline F/F).$ In this case, the extra-zero  conjecture
for $M= \Q (\rho \chi)$ generalizes 
the Gross--Stark conjecture for abelian characters of totally real fields  proved by Dasgupta, Kakde and Ventullo \cite{DKV}.
However, our methods are also applicable beyond the totally real and CM cases,
and should provide some insight on a computation of Betina and Dimitrov \cite{BD19}. On the other hand, it seems interesting to 
compare the formalism of \cite{Ben14} with the approach of B\"uy\"ukboduk and Sakamoto \cite{BS19}.

3) Let  $M_f$ be the motive  associated to  a modular form $f$ of odd weight $k\geqslant 3$  and level $N_f.$ We assume  that $(p,N_f)=1.$ Then the Tate twist 
$M_f\left (\frac{k+1}{2}\right )$ of $M_f$ is a motive of weight $-2$ which has a good reduction at $p.$ In this case, the  extra-zero conjecture  was proved in \cite{Ben14a}. 

4) The $\mathcal L$-invariant of the adjoint weight one modular form introduced 
and  studied  in \cite{RRV20} is covered by the  general formalism of \cite{Ben14}.

We remark that in the cases 1) and 3) the motive $M$ is {\it critical} and 
$H^1_f(\Q, V)=0.$

\subsection{Rankin--Selberg $L$-functions}
In this paper, we prove some results toward the extra-zero conjecture for Rankin--Selberg convolutions of modular forms. This provides some evidences
for the Extra-zero conjecture in a {\it non-critical} setting.

 Let $f=\underset{n=1}{\overset{\infty}\sum } a_nq^n$ and 
$g=\underset{n=1}{\overset{\infty}\sum }b_n q^n$ be two newforms of weights $k_0$ and $l_0,$
levels $N_f$ and $N_g$ and nebentypus $\ep_f$ and $\ep_g$ respectively. 
Let $S$ denote the set of primes dividing $N_fN_g .$
The Rankin--Selberg $L$-function $L(f,g,s)$ is defined by
\begin{equation}
\nonumber
L(f,g,s)=L_{(N_fN_g)}(\ep_f\ep_g, 2s-k_0-l_0+2)\,\underset{n=1}{\overset{\infty}\sum}
\frac{a_nb_n}{n^s},
\end{equation}
where $L_{(N_fN_g)}(\ep_f\ep_g, 2s-k_0-l_0+2)$ is the Dirichlet $L$-function 
with removed Euler factors at the primes $q \in S .$ Note that, 
up to Euler factors at the bad primes, $ L(f,g,s)$ coincides with the 
$L$-function of the motive $M_{f,g}= M_f \otimes  M_g.$

Fix an odd  prime number $p$ such that $(p,N_fN_g)=1$ and denote by  $\alpha_p(f)$ and $\beta_p(f)$
(respectively by $\alpha_p(g)$ and $\beta_p(g)$) the roots of the Hecke polynomial
of $f$ (respectively $g$) at $p.$ We will always assume that 

\begin{itemize}
\item[]{\bf M1)}  $\alpha (f)\neq \beta (f)$
and $\alpha (g)\neq \beta (g).$

\item[]{\bf M2)}  $v_p(\alpha (f))<k_0-1$ and $v_p(\alpha (g)) <l_0-1,$
where $v_p$ denotes the $p$-adic valuation normalized by $v_p(p)=1.$

\end{itemize}

Denote by $f_{\alpha}$ and $g_{\alpha}$  the $p$-stabilizations of the forms $f$ and
$g$ with respect to $\alpha (f)$ and $\alpha (g).$ Let  $\f$ and $\g$ be 
Coleman families passing through $f_\alpha$ and $g_\alpha$ respectively.
We denote by $\f_x$ and  $\g_y$ the specializations of 
$\f$ and $\g$ at $x$ and $y$ respectively and by  
 $\f^0_x$ (respectively $\g^0_y$)
the primitive modular form of weight  $x$ (respectively $y$)
whose $p$-stabilization is $\f_x$ (respectively $\g_y$).

\begin{mytheorem} 
\label{theorem 3-variable p-adic L-function}
For each $0\leqslant a\leqslant p-2,$ there exists
a three variable $p$-adic analytic function $L_p(\f, \g, \omega^a ) (x,y,s)$ defined on $U_{f,g}\times \Zp,$ where $U_{f,g}$ is a sufficiently small neighborhood of $(k_0, l_0)$ in the weight space,   such that for each triple of integers $(x,y,j)\in U_{f,g}\times \Zp$ satisfying
\[
\begin{aligned}
&x\equiv k_0\mod{(p-1)},  &&y\equiv l_0\mod{(p-1)},\\ 
&j\equiv a\mod{(p-1)}, &&2\leqslant y\leqslant j< x, &&&&&
\end{aligned}
\]
one has 
\begin{equation}
\nonumber
L_p(\f, \g, \omega^a ) (x,y,j)=\frac{\CE (\f^0_x,\g^0_y,j)}{C(\f^0_x)}
\cdot 
\frac{\Gamma (j)\Gamma (j-y+1)}{(-i)^{x-y}2^{x-1}(2\pi)^{2j-y+1}
\left <\f^0_x,\g^0_y\right >}
\cdot 
L(\f^0_x,\g^0_y,j).
\end{equation}
In this formula, $\left <\f^0_x,\g^0_y\right >$ is the Petersson inner product,
$C(\f^0_x)$ is  defined in (\ref{definition of C(f)}),
and the Euler-like factor $\CE (\f^0_x,\g^0_y,j)$ is given by
\begin{align}
\nonumber
\CE (\f^0_x,\g^0_y,j)=
\left (1-\frac{p^{j-1}}{\alpha (\f_x^0)\alpha (\g_y^0)}\right )
\left (1-\frac{p^{j-1}}{\alpha (\f_x^0)\beta (\g_y^0)}\right )
\left (1-\frac{\beta (\f_x^0)\alpha (\g_l^0)}{p^j}\right )
\left (1-\frac{\beta (\f_x^0)\beta (\g_l^0)}{p^j}\right ).
\end{align}
\end{mytheorem}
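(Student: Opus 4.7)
The plan is to follow the Rankin--Shimura integral method, $p$-adified in the setting of Coleman families. Classically, for a triple of integers $(x,y,j)$ in the interpolation range $2\leqslant y\leqslant j<x$ of the correct parity, one has an integral representation of the form
\begin{equation*}
\frac{\Gamma(j)\Gamma(j-y+1)}{(-i)^{x-y}2^{x-1}(2\pi)^{2j-y+1}\langle \f^0_x,\g^0_y\rangle}\, L(\f^0_x,\g^0_y,j) \;=\; \bigl\langle \f^0_x,\; \mathrm{Hol}\bigl(\g^0_y\cdot \delta^{j-y}E_a\bigr)\bigr\rangle,
\end{equation*}
where $\delta$ is the Maass--Shimura raising operator, $E_a$ is a holomorphic Eisenstein series of the appropriate weight with nebentype involving $\omega^a$, and $\mathrm{Hol}$ denotes holomorphic projection onto cusp forms. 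The goal is to realize the right-hand side as the specialization of a single three-variable $p$-adic analytic function on $U_{f,g}\times\Zp$.

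The construction assembles three $p$-adically interpolated ingredients. First, the Coleman families $\f$ and $\g$ themselves, which are $p$-adic analytic families of overconvergent finite-slope eigenforms specializing to $\f_x$ and $\g_y$ on a small neighborhood $U_{f,g}$ of $(k_0,l_0)$; assumption M2) ensures non-critical slope, so the specialization map is surjective at classical points. Second, a three-variable Eisenstein family $\CE_a(x,y,j)$ obtained from the Eisenstein measure of Katz, whose classical specializations yield the $p$-stabilizations of the $E_a$ above, the discrepancy with the classical series being encoded by the Euler-like factor $\CE(\f^0_x,\g^0_y,j)$. Third, a $p$-adic Petersson product, constructed as the functional $\lambda_\f$ on overconvergent cusp forms of weight $x$ dual to $\f_x$; assumption M1) guarantees that $\alpha(f)\neq\beta(f)$, so that $\f_x$ is distinguished from its companion and $\lambda_\f$ varies analytically on $U_{f,g}$.

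One then forms the product $\g\cdot\CE_a$, a three-variable family of nearly-overconvergent modular forms of weight $y$ in the sense of Urban, and applies the nearly-overconvergent projector onto the finite-slope part to obtain an overconvergent family. Evaluating $\lambda_\f$ on it defines $L_p(\f,\g,\omega^a)(x,y,s)$. At integer points in the interpolation range, the identification with the classical special value is obtained by comparing the $p$-adic projector with complex holomorphic projection: the discrepancy produces exactly the Euler-like factor $\CE(\f^0_x,\g^0_y,j)$, while the constant $C(\f^0_x)$ records the normalization of $\lambda_\f$ relative to the Petersson pairing. The main technical obstacle is the construction of the nearly-overconvergent projector in three variables; in the non-ordinary setting this rests on the machinery of Urban and Andreatta--Iovita--Pilloni, and the slope condition M2) is precisely what guarantees that finite-slope decompositions persist throughout $U_{f,g}\times\Zp$.
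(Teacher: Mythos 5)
Your construction follows the Rankin--Shimura / overconvergent-projector route of Hida (ordinary case) and Urban (finite slope), whereas the paper does not prove the theorem itself and instead invokes the Euler-system proof of Loeffler and Zerbes: there one realizes $L_p(\f,\g,\omega^a)$ as the image of a three-variable Beilinson--Flach class under a Perrin-Riou-type large logarithm applied to the $(\varphi,\Gamma)$-module $\bM_{\f,\g}$ (this is the content of Theorem~\ref{theorem relation L with Beilinson-Flach element} in the paper, which is the form in which the three-variable $L$-function is actually used throughout). The two routes are genuinely different: the Loeffler--Zerbes method avoids holomorphic projection and the nearly-overconvergent projector entirely, working instead with the interpolation of $\{-,-\}$-pairings on Iwasawa cohomology; the automorphic approach you outline gives a more direct link with the complex Rankin integral, but at the cost of the projector issue.

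The one substantive caveat concerns exactly the step you single out as "the main technical obstacle": the nearly-overconvergent projector in the non-ordinary, three-variable setting. The paper explicitly records that Urban's original argument contained a gap filled only later in \cite{Ur19}, and that Loeffler--Zerbes obtained a complete proof using only the unconditional parts of Urban's theory. So while your outline is coherent and does recover the correct shape of the interpolation formula (the $p$-stabilization of the Eisenstein datum producing $\CE(\f_x^0,\g_y^0,j)$, the normalization of the dual functional producing $C(\f_x^0)$), as a self-contained argument it leans on a construction whose difficulty is precisely what led the paper's authors to route the result through the Euler-system framework. If you pursue this route, you would need to address the overconvergent-projector construction at the level of rigor in \cite{Ur19} rather than \cite{Ur14}, or else restrict the cyclotomic variable to a neighborhood where the finite-slope decomposition is unproblematic.
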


This theorem was first proved in the ordinary case  by Hida \cite{Hi88}.
In \cite{Ur14}, Urban introduced the overconvergent projector and 
sketched a proof in the general non-ordinary case, but his arguments 
contained a gap which was filled  recently in \cite{Ur19}.
Meanwhile, Loeffler and Zerbes \cite{LZ} gave a complete proof of Theorem~\ref{theorem 3-variable p-adic L-function} based on the theory of Euler systems
and unconditional properties of the overconvergent projector proved in \cite{Ur14}.

\subsection{$L$-values at near central points}
Assume now that $f$ and $g$ are modular forms of the same weight $k_0=l_0\geqslant 2.$
Let  $M_{f,g}=M_f\otimes M_g$ be the tensor product of motives associated to 
$f$ and $g.$ Its $p$-adic realization is $W_{f,g}=W_{f}\otimes_E W_{g},$
where  $W_{f}$ and $W_{g}$ are the $p$-adic representations associated
to $f$ and $g$ by Deligne \cite{De71}, and $E$ denotes an
appropriate  finite extension of $\Qp.$ Since $(p, N_fN_g)=1,$ the representations
$W_f,$ $W_g$ and $W_{f,g}$ are crystalline at $p,$ and we have
\[
\Dc (W_{f,g})= \Dc (W_f)\otimes_E \Dc (W_g).
\] 
The motive $M_{f,g}(k_0)$  is non critical, of motivic  weight $-2,$ and
its $p$-adic realization is $V_{f,g}=W_{f,g}(k_0).$ 
Let $E\eta_f^\alpha$  be the one dimensional eigenspace
\footnote{Here $\eta_f^\alpha$ denotes the canonical eigenvector associated 
to $\alpha (f).$  See Section~\ref{subsection p-adic representations} below.}
 of $\Dc (W_f)$ 
associated with the eigenvalue $\alpha (f).$ Set
\begin{equation}
\nonumber
D =\eta_f^{\alpha}\otimes_E \Dc (W_g(k_0)).
\end{equation}
Then $D$ is  a $\Ph$-submodule of $\Dc (W_{f,g}),$ and an easy computation
shows that
\begin{equation}
\label{comparision of euler factors}
\CE (f,g,k_0)= \CE (V_{f,g}, D),
\end{equation} 
where the right hand side term is defined by (\ref{definition of Euler-like factor}). 

We define the $p$-adic $L$-function $L_{p,\alpha}(f,g,s)$  as the restriction
of the  three variable $p$-adic $L$ function from Theorem~\ref{theorem 3-variable p-adic L-function}:
\begin{equation}
\nonumber
L_{p,\alpha}(f,g,s)=L_p(\f,\g,\omega^{k_0}) (k_0,k_0,s).
\end{equation} 
A density argument shows that this function does not depend on the choice of the 
$p$-stabilization of $g$ (see Section~\ref{subsection Extra-zeros of $p$-adic Rankin--Selberg $L$-functions}).  

Assume that  $\ep_f \ep_g\neq \id.$ Let $\BFrm_{f^*,g^*}^{[k_0-2]}\in H^1(\Q, V_{f,g})$ denote the Beilinson--Flach element
\footnote{In the weight $2$ case, the motivic version of this element was first constructed  by Beilinson\cite{Bei84}. In \cite{Fl90}, Flach exploited  the $p$-adic 
realization of Beilinson's elements in the study of the Selmer group of the symmetric square of an elliptic curve.} constructed in \cite{KLZb}. 
From the results of Besser \cite{Bes00}, it  follows that  the restriction 
$\res_p\left (\BFrm_{f^*,g^*}^{[k_0-2]}\right )$ of this element on the decomposition group at $p$ lies in $H^1_f(\Qp, V_{f,g})$ (see \cite[Proposition~5.4.1]{KLZb} for  detailed arguments). The modular forms 
$f$ and $g^*$ define canonical bases $\omega_f$ of $\F^0\Dc (W_f)$ and
$\omega_{g^*}$ of  $\F^0\Dc (W_{g^*}).$  Consider the canonical pairing
\[
\left [\,\,,\,\,\right ]\,:\,\Dc (W_g)\times \Dc (W_{g^*}) \rightarrow \Dc (E(1-k_0))
\]
Let $\eta_g$ be any element of $\Dc (W_g)$ such that
\[
\left [\eta_g, \omega_{g^*}\right ]= e_{1}^{\otimes (1-k_0)},
\]
where $e_{1}$ is the canonical basis
\footnote{More explicitly, $e_1=\ep \otimes t^{-1}$, where $\ep$ is a compatible system of $p^n$th roots of unity, and $t=\log [\ep]$ the associated 
element of $\Bc$.
}
of $\Dc (E(1)).$ Set $b=\omega_f \otimes \eta_g\otimes e_1^{\otimes k_0}\in \Dc (V_{f,g}).$ Then the element
\[
\overline b_{\alpha} =b \mod{\left (\F^0\Dc (V_{f,g})+D \right )}
\]
is a basis of the one dimensional space $\Dc (V_{f,g})/\left (\F^0\Dc (V_{f,g})+D\right ).$ 
Therefore the image of $\res_p\left (\BFrm_{f^*,g^*}^{[k_0-2]}\right )$ under the composition
\[
H^1_f(\Qp, V_{f,g}) \xrightarrow{\log_{V_{f,g}}} 
t_{V_{f,g}}(\Qp) \rightarrow
\Dc (V_{f,g})/\left (\F^0\Dc (V_{f,g})+D \right 
)
\]
can be written in a unique way as $\widetilde R_p(V_{f,g},D) \cdot \overline b_\alpha $ with $\widetilde R_p(V_{f,g},D) \in E.$ We remark that 
$\widetilde R_p(V_{f,g},D)$ concides with the regulator $R_p(V_{f,g},D)$ if $H^1_f(\Q, V_{f,g})$ is the one dimensional vector space generated 
by $\BFrm_{f^*,g^*}^{[k_0-2]}.$ One expect that this always holds
\footnote{Beilinson conjectures in the formulation of Bloch and Kato 
predict that $H^1_f(\Q, V_{f,g})$ has dimension $1$.} . 
We have the following result toward Perrin-Riou's conjecture 
(\ref{perrin-riou conjecture}). 

\begin{mytheorem} Assume that $\ep_f \ep_g\neq \id$ (in particular, this condition
implies that $f\neq g^*$).  Then  
the following formula holds:
\[
L_{p,\alpha}(f,g,k_0)=\frac{\ep (f,g,k_0)\cdot
\CE(V_{f,g},D)}{  C(f) \cdot   G(\ep_f) \cdot   G(\ep_g)
\cdot  (k_0-2)!}  \cdot \widetilde{R}_p (V_{f,g},D).
\]
Here $G(\ep_f)$ and $G(\ep_g)$ are Gauss sums associated to $\ep_f$ and $\ep_g,$
$\ep (f,g,k_0)$ the epsilon constant of the functional equation of 
the complex $L$-function, and 
\[
C(f)=\left (1-\frac{\beta (f)}{p\alpha (f)}\right )\cdot 
\left (1-\frac{\beta (f)}{\alpha (f)}\right ).
\]
\end{mytheorem}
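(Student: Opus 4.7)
The plan is to derive the formula from the explicit reciprocity law satisfied by the Beilinson--Flach Euler system, combined with Perrin-Riou's formalism of big logarithm maps. The key observation is that $s=k_0$ lies \emph{outside} the range of interpolation of Theorem~\ref{theorem 3-variable p-adic L-function} (one has $j=k_0 = x$, violating the strict inequality $j<x$), so $L_{p,\alpha}(f,g,k_0)$ is controlled not by a complex $L$-value but by the image of the Beilinson--Flach class under the Bloch--Kato logarithm.

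More precisely, I would start from the construction of $L_p(\f,\g,\omega^a)$ in \cite{LZ}: the Beilinson--Flach Iwasawa class $\BFfrak$ attached to the Coleman families $\f$ and $\g$ is pushed through a Perrin-Riou-style big logarithm into a module of rigid analytic functions on $U_{f,g}\times \Zp$ valued in $\Dc$, and then paired against $\eta_{\f_x}^\alpha\otimes\omega_{\g_y^*}$. At classical triples $(x,y,j)$ with $2\leq y\leq j<x$, the big logarithm specialises to the dual exponential and produces, via explicit zeta integrals, the complex $L$-value appearing in Theorem~\ref{theorem 3-variable p-adic L-function}.

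Specialising at $(k_0,k_0,k_0)$ --- the boundary point where $j=x$, beyond the interpolation range --- Perrin-Riou's big logarithm instead degenerates to the Bloch--Kato logarithm $\log_{V_{f,g}}$, with the standard correction factor $\CE(V_{f,g},D)$ and a $\Gamma$-factor $(k_0-2)!$ in accordance with Perrin-Riou's general recipe. The cyclotomic specialisation of $\BFfrak$ at this triple becomes $\res_p\bigl(\BFrm_{f^*,g^*}^{[k_0-2]}\bigr)$. By the very definition of $\widetilde R_p(V_{f,g},D)$, the projection of its image under $\log_{V_{f,g}}$ to $\Dc(V_{f,g})/(\F^0\Dc(V_{f,g})+D)$ equals $\widetilde R_p(V_{f,g},D)\cdot \overline b_\alpha$; since $\eta_f^\alpha\otimes\omega_{g^*}$ is dual to $b=\omega_f\otimes\eta_g\otimes e_1^{\otimes k_0}$ modulo $\F^0\Dc(V_{f,g})+D$ under the canonical pairing, the pairing extracts exactly $\widetilde R_p(V_{f,g},D)$.

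Finally, I would track the remaining scalar factors. The Gauss sums $G(\ep_f)$ and $G(\ep_g)$ arise from comparing the $q$-expansion normalisation of $\omega_f$ and $\omega_{g^*}$ with the de Rham comparison isomorphism, consistently with the period computations of \cite{KLZb}; the constant $C(f)$ encodes the passage between $f_\alpha$ and the primitive form $f$, already present in Theorem~\ref{theorem 3-variable p-adic L-function}; and the epsilon factor $\ep(f,g,k_0)$ comes from the functional equation relating $L(f,g,s)$ to $L(f^*,g^*,2k_0-1-s)$, needed because the Beilinson--Flach class is naturally built from $(f^*,g^*)$. The hypothesis $\ep_f\ep_g\neq\id$ guarantees non-vanishing of the relevant Gauss sums and of the local epsilon factors. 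The main technical obstacle will be the degeneration of the Perrin-Riou big logarithm at the boundary point: one must verify that the correction factor is precisely $\CE(V_{f,g},D)$ rather than the modified $\CE^+(V_{f,g},D)$, that the sign and the power $(k_0-2)!$ come out correctly, and that the normalisations implicit in the definition of $\widetilde R_p(V_{f,g},D)$ match those produced by the Perrin-Riou machinery applied to $\BFfrak$.
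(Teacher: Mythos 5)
The theorem you are asked to prove is not actually proved in the paper: the authors' entire proof is a citation to Bertolini--Darmon--Rotger and to Kings--Loeffler--Zerbes, so your sketch is attempting to reconstruct an argument the paper deliberately omits. With that caveat, your outline captures the broad strategy used in those references: start from the explicit reciprocity law expressing the three-variable $p$-adic $L$-function as a Perrin-Riou pairing with the Beilinson--Flach Iwasawa class, observe that $(x,y,j)=(k_0,k_0,k_0)$ lies just outside the interpolation range, and track the $\Gamma$-factor, Euler-like factor, Gauss sums, $C(f)$, and $\ep(f,g,k_0)$. Those are indeed the right ingredients.

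There is, however, a genuine imprecision at the heart of your sketch. You write that at $(k_0,k_0,k_0)$ the big logarithm "degenerates to $\log_{V_{f,g}}$" and that "the cyclotomic specialisation of $\BFfrak$ at this triple becomes $\res_p(\BFrm_{f^*,g^*}^{[k_0-2]})$." Neither claim is quite right. The boundary specialisation of the Iwasawa class $\,_b\Z^{\Iw}_{\f,\g}$ produces the class $\widetilde\Zrm_{f,g}^{[k_0-1]}\in H^1\bigl(\gr_1\Ddagrig(V_{f,g}^*(1))\bigr)$, and the explicit reciprocity law at this point lands on the \emph{exponential} side: it gives $\bigl<\widetilde\Zrm_{f,g}^{[k_0-1]},\exp(d_{\alpha\beta})\bigr>$, not the Bloch--Kato logarithm of $\Zrm_{f^*,g^*}^{[k_0-2]}$. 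The regulator $\widetilde R_p(V_{f,g},D)$ is defined through $\log\bigl(\Zrm_{f^*,g^*}^{[k_0-2]}\bigr)$, where $\Zrm_{f^*,g^*}^{[k_0-2]}$ is the projection of the \emph{genuine} Beilinson--Flach class $\BFrm_{f^*,g^*}^{[k_0-2]}\in H^1(\Q,V_{f,g})$ — a different class, in a different cohomology group. Passing from the first quantity to the second is not a normalisation bookkeeping step; it is exactly the content of the functional equation for zeta elements (Theorem II of the paper, proved via the two improved $L$-functions $L_p^{\mathrm{wc}}$ and $L_p^{\mathrm{wt}}$). So the $\ep$-factor does not merely "come from" the complex functional equation as a scalar correction — the entire $(f,g)\leftrightarrow(f^*,g^*)$ swap is what produces it, and your sketch silently identifies the two classes whose comparison is the real work. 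This is the gap: your proposal would need to either prove the functional equation for zeta elements or cite it before the formula follows.
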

\begin{proof} This theorem was first proved by Bertolini--Darmon--Rotger \cite{BDR15a} for modular forms of weight $2.$  Kings, Loeffler and Zerbes  extended the proof to the higher weight case \cite[Theorem~7.2.6]{KLZb}, \cite[Theorem~7.1.5]{LZ}.  Note that the results  proved in \cite{KLZb} and \cite{LZ} are in fact more general and include also  the case of modular forms of different weights.
\end{proof}
We remark that, combining this formula with the computaiton of the special value
of the complex $L$-function in terms of the Beilinson regulator, one can
write this theorem in the form (\ref{perrin-riou conjecture})
(see \cite[Theorem~7.2.6]{KLZb}). Also, this result suggests that 
$L_{p,\alpha}(f,g,s+k_0)$ satisfies the conjectural interpolation properties 
of
$
L_p(M_{f,g}(k_0), D,s)
$
 up to "bad" Euler factors at primes dividing $N_fN_g.$

\subsection{The main result}
We keep previous notation and conventions. 
In this paper, we prove a result toward the extra-zero conjecture 
for the $p$-adic $L$-finction $L_{p,\alpha}(f,g,s)$ as $s=k_0.$
In addition to {\bf M1-2)} we assume that the following conditions hold:

\begin{itemize}
\item[]{\bf M3)}  $\ep_f,$ $\ep_g$ and $\ep_f\ep_g$ are primitives 
modulo $N_f,$ $N_g$ and $\text{lcm} (N_f,N_g)$ respectively. 

\item[]{\bf M4)} $\ep_f(p)\ep_g (p)\neq 1.$
\end{itemize}
Note that the condition {\bf M3)} can be relaxed. We introduce it mainly because 
in this case the functional equation for the Rankin--Selberg $L$-function has a
simpler form. However, the condition  $\ep_f\ep_g \neq \id$ can not be relaxed.
In particular, the case   $f=g^*$  should be excluded. 

Assume that the interpolation factor $\CE (f,g,k_0)$ vanishes. Without 
loss of generality, we can assume that $\alpha (f) \beta (g)=p^{k_0-1}.$
Then the $\Ph$-module $D$ has a four-step filtration 
\begin{equation}
\label{filtration introduction}
\{0\}\subset D_{-1}\subset D_0 \subset D_1\subset \Dc (V_{f,g})
\end{equation}
such that $D_0=D,$  $D_{-1}$ is the eigenline of $\Ph$ associated to 
the eigenvalue $\alpha (f) \alpha (g)p^{-k_0},$ and $D_1$ is the unique subspace
such that $\Ph$ acts on $D_1/D_0$ as the multiplication by 
$\beta (f)\alpha (g)p^{-k_0}.$
Note that $\Ph$ acts
on $D_0/D_{-1}$ as the multiplication by $p^{-1}.$ Taking duals, we have 
a  filtation on $\Dc (V_{f,g}^*(1))$
\[
\{0\} \subset D_{-1}^\perp \subset D_{0}^\perp\subset D_1^\perp \subset \Dc (V_{f,g}^*(1)),
\]
such  that $\Ph$ acts trivially on $D_1^\perp/D_0^\perp.$ This filtration 
induces a filtration of the associated $(\Ph,\Gamma)$-modules
\[
\{0\}\subset F_0\Ddagrig (V_{f,g}^*(1))\subset F_1\Ddagrig (V_{f,g}^*(1))\subset 
\Ddagrig (V_{f,g}^*(1)).
\]
By \cite[Proposition~1.5.9]{Ben11}, the cohomology of the quotient 
\[
\gr_1\Ddagrig (V_{f,g}^*(1))= F_1\Ddagrig (V_{f,g}^*(1))/F_0\Ddagrig (V_{f,g}^*(1))
\]
has a canonical  decomposition 
\begin{equation}
\label{decomposition of cohomology introduction}
H^1\left (\gr_1\Ddagrig (V_{f,g}^*(1))\right )=
H^1_f\left (\gr_1\Ddagrig (V_{f,g}^*(1))\right ) \oplus
H^1_c\left (\gr_1\Ddagrig (V_{f,g}^*(1))\right )
\end{equation}
into two subspaces of dimension $1,$ which are both canonically isomorphic to
$D_1^\perp/D_0^\perp.$ The   interpolation of Beilinson--Flach elements (see \cite{KLZ}) provides us with an 
%\footnote{Up to the choice of $b$. See Section~\ref{subsection Beilinson--Flach elements}. } 
element 
$\widetilde Z_{f,g}^{[k_0-1]}\in H^1\left (\gr_1\Ddagrig (V_{f,g}^*(1))\right ).$
Since Beilinson's conjecture and the injectivity of the restriction map
$H^1_f(\Q,V_{f,g}) \rightarrow H^1_f(\Qp,V_{f,g})$ are not known in our case, 
we can not work with the general definition of the $\mathcal L$-invariant 
proposed in \cite{Ben14}. To remeday this problem, we introduce 
the   ad hoc invariant $\widetilde{\mathcal L}(V_{f,g},D)$
as the slope of the line generated by $\widetilde Z_{f,g}^{[k_0-1]}$
under the decomposition (\ref{decomposition of cohomology introduction}).
We show that $\widetilde{\mathcal L}(V_{f,g},D)$ coincides 
with the invariant $\mathcal L (V_{f,g},D)$ defined in \cite{Ben14}
if the above mentioned conjectures hold and the regulator $\widetilde{R}_p (V_{f,g},D)$ does not vanish. The main result of this paper is the following theorem (see Theorem~\ref{main theorem}).

 \begin{MTheorem} 
\label{main theorem}
Assume that $\alpha (f) \beta (g)=p^{k_0-1}.$ Then

1) $L_{p,\alpha}(f,g, k_0)=0.$

2) The following conditions are equivalent:
\begin{itemize}
\item[i)]{} $\mathrm{ord}_{s=k_0}L_{p,\alpha}(f,g,s)=1$.
\item[ii)]{} $\,_b\widetilde\Zrm_{f,g}^{[k_0-1]}\notin 
 H^1_c\left (\gr_1 \Ddagrig (V_{f,g}^*(1))
\right ).$
\end{itemize}

3) In addition to the assumption that $\alpha (f) \beta (g)=p^{k_0-1},$ suppose that  
\[
\,_b\widetilde\Zrm_{f,g}^{[k_0-1]}\notin 
 H^1_f\left (\gr_1 \Ddagrig (V_{f,g}^*(1))
\right ).
\]
 Then 
\[
L_{p,\alpha}'(f,g,k_0)=\frac{\ep (f,g,k_0)\cdot \widetilde{\mathcal L}(V_{f,g},D) \cdot
\CE^+(V_{f,g},D)}{  C(f) \cdot   G(\ep_f) \cdot   G(\ep_g)
\cdot  (k_0-2)!}  \cdot \widetilde{R}_p (V_{f,g},D), 
\]
where 
\begin{equation}
\nonumber
\CE^+(V_{f,g},D)=\left (1-\frac {p^{k_0-1}}{\alpha (f) \alpha (g)} \right ) 
\left (1-\frac {\beta (f) \alpha (g)}{p^{k_0}} \right )
\left (1-\frac {\beta (f) \beta (g)}{p^{k_0}} \right ).
\end{equation}
\end{MTheorem}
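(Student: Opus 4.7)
The plan is to prove the three parts by combining the explicit reciprocity law of Kings--Loeffler--Zerbes for Rankin--Selberg $p$-adic $L$-functions with the derivative formula for Perrin-Riou's large logarithm at a trivial zero, developed in \cite{Ben14}.

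First, I would re-express $L_{p,\alpha}(f,g,s)$ via the reciprocity law as the image of the Iwasawa-theoretic Beilinson--Flach class under a Perrin-Riou big logarithm map on the decomposition group at $p$. Under the hypothesis $\alpha(f)\beta(g)=p^{k_0-1}$, the Dieudonn\'e filtration (\ref{filtration introduction}) translates, via Berger's equivalence, into the filtration $F_0 \subset F_1 \subset \Ddagrig(V_{f,g}^*(1))$ on the associated $(\varphi,\Gamma)$-module. The relevant component of the big logarithm factors through the graded piece $\gr_1$, on which the Frobenius $\Ph$ acts as the identity on the Dieudonn\'e module. Since Perrin-Riou's map carries a built-in local Euler factor of the form $\det(1-\Ph)$ on this graded piece, it vanishes at the Hodge--Tate character corresponding to $s=k_0$. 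This immediately yields part 1, and in fact shows that the zero is of order at least one, coming entirely from the $\gr_1$ contribution.

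To establish parts 2 and 3, I would apply the derivative formula for Perrin-Riou's big logarithm at its trivial zero. The key input is that, for a rank one $(\varphi,\Gamma)$-module $\gr_1$ with trivial $\Ph$-action on the Dieudonn\'e module, the derivative of the exponential-type map splits through the canonical decomposition (\ref{decomposition of cohomology introduction}): the $H^1_f$ component is converted into a non-zero contribution (producing the factor $\CE^+$), while the $H^1_c$ component is annihilated except for producing the slope factor $\widetilde{\mathcal L}$. Concretely, the Iwasawa Beilinson--Flach class specializes at $s=k_0$ to $\,_b\widetilde\Zrm_{f,g}^{[k_0-1]}$ in $H^1(\gr_1)$, and writing this element as $a\cdot e_f + b \cdot e_c$ in the canonical bases of $H^1_f \oplus H^1_c$, the derivative $L'_{p,\alpha}(f,g,k_0)$ is proportional to $a$. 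This gives the equivalence in part 2 directly, and under the additional hypothesis of part 3 (both $a$ and $b$ non-zero), $\widetilde{\mathcal L}(V_{f,g},D) = b/a$ appears naturally in the derivative formula.

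Finally, one must match the normalizations: comparing the derivative arising from Perrin-Riou's map with the regulator $\widetilde R_p(V_{f,g},D)$ defined via the Bloch--Kato logarithm. Since $\widetilde R_p$ is essentially the image of $\res_p(\BFrm_{f^*,g^*}^{[k_0-2]})$ under $\log_{V_{f,g}}$ projected modulo $F^0 + D$, and since the reciprocity law at the non-extra-zero specializations already matches the factors $C(f), G(\ep_f), G(\ep_g), (k_0-2)!$ and $\ep(f,g,k_0)$ as in the weight $(k_0,k_0)$ theorem, these factors are inherited by continuity in the derivative formula. The constant $\CE$ is simply replaced by $\widetilde{\mathcal L}\cdot \CE^+$.

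The main obstacle is the explicit computation of the derivative of Perrin-Riou's map on the graded piece $\gr_1$, in families over the weight space, and the identification of the two ``components'' of the Beilinson--Flach class with the classes appearing in the decomposition (\ref{decomposition of cohomology introduction}). This requires extending the formalism of \cite{Ben14} to the Rankin--Selberg setting, in particular controlling the interplay between the differentiation in the cyclotomic variable $s$ and the filtration induced from $D_0 \subset D_1$, and matching both sides of the explicit reciprocity law with the Bloch--Kato logarithm used to define $\widetilde R_p$.
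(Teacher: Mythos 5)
Your outline correctly identifies the opening moves of the paper's proof: express $L_{p,\alpha}(f,g,s)$ via the reciprocity law of Kings--Loeffler--Zerbes (Theorem~\ref{theorem relation L with Beilinson-Flach element} combined with Lemma~\ref{lemma first improved L} and the identity (\ref{comparision generators of Mfg})) as the value of $\mathfrak{Log}_{\bM_{f,g},d_{\alpha\beta}}$ applied to $\Tw_{1-k_0}\bigl(\,_b\Zrm^{\Iw}_{f,\g}\bigr)$, observe that the local Euler factor in Perrin-Riou's interpolation formula vanishes when $\alpha(f)\beta(g)=p^{k_0-1}$ (giving part~1), and then apply the derivative-of-the-big-logarithm formula from \cite{Ben14a} to express $L'_{p,\alpha}(f,g,k_0)$ as a pairing with $i_{\bM_{f,g},c}(d_{\alpha\beta})$ (giving part~2 and setting up part~3). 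That matches the paper.

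The genuine gap is in your final paragraph. After the derivative formula and the $\mathcal{L}$-invariant rewriting, one only obtains
$L'_{p,\alpha}(f,g,k_0)$ expressed in terms of
$\widetilde{\mathcal{L}}(V_{f,g},D)\cdot\bigl<\,_b\widetilde\Zrm_{f,g}^{[k_0-1]},\,\exp(d_{\alpha\beta})\bigr>_{\bM_{f,g}}$.
The element $\widetilde\Zrm_{f,g}^{[k_0-1]}$ lives in $H^1\bigl(\gr_1\Ddagrig(V_{f,g}^*(1))\bigr)$ and is obtained by Iwasawa-theoretic specialization of the Beilinson--Flach family, whereas the regulator $\widetilde R_p(V_{f,g},D)$ is defined from the genuine Beilinson--Flach element $\BFrm_{f^*,g^*}^{[k_0-2]}$ living on the dual side, in $H^1\bigl(\gr_1\Ddagrig(V_{f,g})\bigr)$. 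Passing from one to the other is \emph{not} a continuity or normalization-matching argument: it is exactly the content of Theorem~\ref{theorem functional equation for zeta elements} (Theorem~II), which converts $\bigl<\widetilde\Zrm_{f,g}^{[k_0-1]},\exp(d_{\alpha\beta})\bigr>$ into $\CE(V_{f,g},D_{-1})\cdot\bigl[\log(\Zrm_{f^*,g^*}^{[k_0-2]}),n_{\alpha\beta}\bigr]$ up to the Gauss sums, $\Gamma$-factor, and epsilon constant. Theorem~II itself is proved via the two ``improved'' one-variable $p$-adic $L$-functions $L_p^{\mathrm{wc}}$ and $L_p^{\mathrm{wt}}$ of Sections~\ref{subsection The first improved $p$-adic $L$-function}--\ref{subsection The second improved $p$-adic $L$-function}, whose relation comes from the \emph{complex} Rankin--Selberg functional equation. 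Your plan omits this entire mechanism; without it the factor $\CE^+$ and the translation into the regulator $\widetilde R_p$ cannot be obtained. Two smaller slips: the definition gives $\widetilde{\mathcal{L}}(V_{f,g},D)=A/B$ where $A$ is the $H^1_f$-component and $B$ the $H^1_c$-component (so $a/b$ in your notation, not $b/a$); and the hypothesis of part~3 only forces the $H^1_c$-component to be nonzero (so that $\widetilde{\mathcal{L}}$ is defined) --- it does not force the $H^1_f$-component to be nonzero, contrary to your reading ``both $a$ and $b$ nonzero''.
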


\begin{mynonumberremark}
{\rm We expect that $\,_b\widetilde\Zrm_{f,g}^{[k_0-1]}$ is in general position 
with respect to the subspaces $H^1_c\left (\gr_1 \Ddagrig (V_{f,g}^*(1))\right )$
and $H^1_f\left (\gr_1 \Ddagrig (V_{f,g}^*(1))\right ).$  In this case, 
$\mathrm{ord}_{s=k_0}L_{p,\alpha}(f,g,s)=1$ and both  the $p$-adic regulator   $\widetilde{R}_p (V_{f,g},D)$ and the $\mathcal L$-invariant 
$\widetilde{\mathcal L}(V_{f,g},D)$ does not vanish.}
\end{mynonumberremark}

It would be interesting to understand the relationship between our approach and the 
methods of Rivero and Rotger \cite{RR18}, where the case $g=f^*$ is studied. 

\subsection{Outline of the proof}
The proof of Theorem~I relies heavily on the theory of Beilinsion--Flach elements
%\footnote{These elements were introduced first in the motivic setting by Beilinson \cite{Bei84}.} 
 initiated by  Bertolini, Darmon and Rotger \cite{BDR15a, BDR15b} and  extensively developed by Lei, Kings,  Loeffler and  Zerbes \cite{LLZ14, KLZ, KLZb, LZ}.  Note  
that in the non ordinary case, the overconvergent Shimura isomorphism  of Andreatta, Iovita and Stevens \cite{AIS} plays a crucial role in the theory.

Let $\f=\underset{n=1}{\overset{\infty}\sum } \ba_nq^n$ and 
$\g=\underset{n=1}{\overset{\infty}\sum} \bb_nq^n$ denote Coleman families passing through the stabilizations $f_\alpha$ and $g_\alpha$ of two forms of weight $k_0.$
Kings, Loeffler and Zerbes \cite{KLZb} (in the ordinary case)  and
Loeffler and Zerbes \cite{LZ} (in the general case)  expressed the three variable $p$-adic $L$-function as the image of the stabilized three variable
Beilinson--Flach element under the large exponential map. 
Using the semistabilized versions of Beilinson--Flach elements we define, 
in a neighborhood of $k_0,$ two anaytic $p$-adic $L$-functions 
$L_p^{\textrm{wc}}(\f,\g,s)$ and $L_p^{\textrm{wt}}(\f,\g,s)$ 
which can be viewed as "improved" versions of the three variable $p$-adic $L$-function. Namely
\begin{equation}
\nonumber
\begin{aligned}
&
 L_p(\f, \g, \omega^{k_0} ) (k_0,s,s)=
(-1)^{k_0}\left (1-\frac{\bb_{p}(s)}{\ep_{g}(p)\ba_{p}(k_0)}\right )
 \left (1-\frac{\ep_g(p)\ba_{p}(k_0)}{p\bb_{p}(s)}\right )^{-1}
 L_p^{\mathrm{wc}}(\f,\g,s),\\
&L_p(\f, \g, \omega^{k_0-1} ) (k_0,s,k_0-1)
=-\left (1-\frac{p^{k_0-2}}{\ba_p(k_0)\bb_p(s)}\right )
\left (1-\frac{\ep_f(p)\bb_p(s)}{\ba_p(k_0)}\right )
L_p^{\mathrm{wt}}(\f,\g,s)
\end{aligned}
\end{equation}
(see Propositions~\ref{proposition first improved L-function} and \ref{proposition second improved L-function}).

The crystalline $(\Ph,\Gamma)$-modules $\gr_1   \Ddagrig (V_{f,g})$ and 
 $\gr_0   \Ddagrig (V_{f,g}^*(1))$ are Tate dual to each other, and we denote by
\[
\left [\,\,,\,\,\right ]\,:\,
\CDcris \left (\gr_1   \Ddagrig (V_{f,g})\right ) \times
\CDcris \left (\gr_0   \Ddagrig (V_{f,g}^*(1))\right )
\rightarrow E
\]
the resulting duality of Dieudonné modules. Analogously, the  $(\Ph,\Gamma)$-modules $\gr_1   \Ddagrig (V_{f,g}^*(1))$ and 
 $\gr_0   \Ddagrig (V_{f,g})$ are Tate dual to each other and 
we  denote by 
\[
\left < \,\,,\,\,\right >\,:\, H^1\left ( \gr_1   \Ddagrig (V_{f,g}^*(1))\right )
\times H^1\left ( \gr_0  \Ddagrig (V_{f,g})\right ) \rightarrow E
\]
the induced local duality on cohomology. Let 
\[
\exp \,:\, \CDcris \left ( \gr_0  \Ddagrig (V_{f,g})\right )\rightarrow H^1 \left ( \gr_0  \Ddagrig (V_{f,g})\right )
\]
and 
\[
\log \,:\,H^1\left (\gr_1   \Ddagrig (V_{f,g})\right )\rightarrow
\CDcris \left (\gr_1   \Ddagrig (V_{f,g})\right )
\]
denote the Bloch--Kato exponential and logarithm maps for the corresponding 
$(\Ph,\Gamma)$-modules (see \cite[Section~2.1.4]{Ben14}, \cite{Nak14}).
The filtered Dieudonné modules $\CDcris \left ( \gr_0  \Ddagrig (V_{f,g})\right )$
and $\CDcris \left ( \gr_0  \Ddagrig (V_{f,g}^*(1))\right )$ have canonical bases
which we denote by $d_{\alpha\beta}$ and $n_{\alpha\beta}$ respectively
(see Section~\ref{subsection zeta elements}). 
 The Beilinson--Flach element 
$\BFrm^{[k_0-2]}_{f^*g^*}$ can be "projected" on the subquotient 
$H^1 \left (\gr_1 \Ddagrig (V_{f,g})\right )$ of $H^1(\Qp, V_{f,g})$
and we denote by $\Zrm_{f^*,g^*}^{[k_0-2]}$ its image in $H^1 \left (\gr_1 \Ddagrig (V_{f,g})\right )$ (see Definition~\ref{definition of Zrm}
and Corollary~\ref{corollary about Zrm}).
The functional equation for the improved $L$-functions has the following interpretation
 in terms of Beilinson--Flach elements (see Theorem~\ref{theorem functional equation for zeta elements}).

\begin{MT2Theorem} Assume that $\beta (f)\alpha (g)\neq p^{k_0-1}.$ Then 
the elements $\Zrm_{f^*,g^*}^{[k_0-2]}$ and
$\widetilde\Zrm_{f,g}^{[k_0-1]}$ are related by the equation
\begin{equation}
\label{functional equation for zeta introduction}
\frac{\left <\widetilde\Zrm_{f,g}^{[k_0-1]}, \exp (d_{\alpha\beta}) \right >}
{
%b^2  
G(\ep_f^{-1}) G(\ep_g^{-1})
%(1-\ep_f^{-1}(b)\ep_g^{-1}(b))
}
=(-1)^{k_0-1}\ep (f,g,k_0)\cdot 
\CE (V_{f,g}, D_{-1}) \cdot
\frac{\left [ \log \left (\Zrm_{f^*,g^*}^{[k_0-2]}\right ), n_{\alpha\beta}
\right ]}{(k_0-2)! G(\ep_f) G(\ep_g)},
\end{equation}
where 
\[
\CE (V_{f,g}, D_{-1})= \det \left (1-p^{-1}\Ph^{-1} \mid D_{-1} \right )
\det \left (1-\Ph \mid \Dc (V_{f,g})/D_{-1}\right ).
\]
\end{MT2Theorem}

We deduce Theorem I from this theorem. Namely, the machinery developed in 
\cite{Ben14} gives a formula for the derivative of $L_{p,\alpha}(f,g,s)$ at $s=k_0$
in terms of the $\mathcal L$-invariant $\widetilde{\mathcal L}(V_{f,g},D)$ 
and   the left hand side of equation (\ref{functional equation for zeta introduction}). Using Theorem~II, we express it in terms of the 
right hand side of (\ref{functional equation for zeta introduction}), which 
is essentially the regulator $\widetilde R_p(V_{f,g},D).$

We hope  that our approach could be useful to study some other cases 
of extra-zeros of   non-critical motives.

\subsection{The plan of the paper} The organization of the paper is as follows. 
In Section 1, we review basic results about the cohomology of $(\Ph,\Gamma)$-modules
and the large exponential map. In Sections 2.1-2.2, we review the definition of the 
$\mathcal L$-invariant in the non critical case. Note that in \cite{Ben14},  
the first named author considered only the representations arising from motives
of weight $-2$ because the dual case  can be treated
using the functional equation. However, to compare this general  definition with our 
ad hoc invariant $\widetilde {\mathcal L}(V_{f,g},D),$  it is important to have an intrinsic definition of the $\mathcal L$-invariant  in the weight $0$ case. This is the subject of Section 2.3. In Section 3, for the convenience of the reader, we review the overconvergent \'etale cohomology of modular curves and its application to Coleman families following \cite{AIS} and \cite{LZ}. 
In Section 4, we review the construction of Beilinsion--Flach  elements following 
\cite{KLZb, LLZ14, KLZ} and introduce  semistabilized Beilinsion--Flach  elements,
which play a key role in this paper. Local properties of these elements are studied in Section 5. In Section 6, using semistabilized  Beilinsion--Flach  elements,
we define the improved $p$-adic $L$-functions $L_p^{\textrm{wc}}(\f,\g,s)$ and
$L_p^{\textrm{wt}}(\f,\g,s)$ and prove Theorem~II. In Section~7,
we prove Theorem~I. 

\subsection{Acknowledgements} We would like to thank Mladen Dimitrov, David Loeffler
and Sarah Zerbes for their remarks on the first draft of this paper. 
This work was partially supported by the Agence National de Recherche
(grant  ANR-18-CE40-0029) in the framework 
of the ANR-FNR project
"Galois representations, automorphic forms and their $L$-functions".

\newpage

\section{The exponential map}

\subsection[]{Notation and conventions.}

\subsubsection{} Le $p$ be an odd prime. In this subsection, $\overline{\Q}_p$ denotes  a fixed algebraic closure of $\Qp$ and $\Cp$ the $p$-adic completion
of $\overline{\Q}_p.$ For any extension $L/\Qp,$ we set $G_L=\Gal (\overline{\Q}_p/L).$
Fix  a  system $\ep=(\zn)_{n\geq 0}$ of primitive 
$p^n$th roots of unity such that $\zeta_{p^{n+1}}^p=\zn$ for all $n\geq 0.$
Set $K_n=\Qp (\zn),$ $K_\infty=\underset{n\geq0}{\cup}K_n$ and 
$\Gamma=\Gal (K_\infty/\Qp).$
There is  a canonical decomposition 
\begin{equation}
\nonumber
\Gamma \simeq \Delta \times \Gamma_1, \qquad \Gamma_1=\Gal (K_\infty/K_1).
\end{equation}
 We denote by $\chi \,:\,\Gamma \rightarrow \Zp^*$
the cyclotomic character and by $\omega$ its restriction on $\Delta=\Gal (K_1/\Qp).$
We also denote by $\left <\chi \right >$ the composition of $\chi$ with 
the projection $\Z_p^*\rightarrow (1+p\Zp)^*$ induced by the canonical decomposition
$\Z_p^*\simeq (\Z/p\Z)^*\times  (1+p\Zp)^*.$

\subsubsection{} Let $(w_1,\ldots ,w_d)$ be a finite
set of variables. 
%For any multi-index $m=(m_1, \ldots ,m_d)$ set $w^m=w_1^{m_1} w_2^{m_2}\ldots w_d^{m_d}.$
If $E/\Qp$ is a finite extension,   we denote by 
\begin{equation}
\label{definition of Tate algebra}
A=
%E \left < w/p^r\right >
E\left < {w_1}/{p^r}, \ldots , {w_d}/{p^r} \right > 
\end{equation}
the Tate algebra of formal power series 
\[
F(w_1, \ldots ,w_d)=
%\underset{m\in \mathbf{N}}\sum c_m(w/p^r)^m=
\underset{(m_1, \ldots ,m_d)\in \mathbf{N}^d}
\sum c_{m_1, \ldots ,m_d}(w_1/p^r)^{m_1} \cdots (w_d/p^r)^{m_d}
\]
such that $c_{m_1, \ldots ,m_d}\to 0$ when $m_1+\cdots +m_d\to +\infty .$

%converging on the closed disk 
%\begin{equation}
%\nonumber
%B(0,r)=\{(w_1, \ldots, w_d) \mid \Vert w_i\Vert_p \leqslant 1/p^r,
%\quad 1\leqslant i \leqslant d\}.
%\end{equation}
\subsubsection{}
\label{subsection A^{wt}}
Fix $k=(k_1, \ldots, k_d)\in \Zp^d$ and consider 
the closed disk with center $k$ and radius $1/p^{r-1}$ in $\Zp^d$:
\begin{equation}
\nonumber
D(k,1/p^{r-1})=k+p^{r-1}\Zp^d.
\end{equation}
%Let $U=\overline B(k,p^{-r})$ denote the closed disk of radius $p^{-r}$ about
%$k$ in the weight space. The ring of analytic functions on $U$ is isomorphic to
%the Tate algebra $A=E\left \{  \left \{w/p^r\right \} \right \}.$
For each $F\in A,$ we define the   $p$-adic  analytic function $\mathcal A^{\mathrm{wt}}(F)$
on $D(k,1/p^{r-1})$ with values in $E$ by
\[
\mathcal A^{\mathrm{wt}}(F)(\kappa_1,\ldots ,\kappa_d)=
F \left ((1+p)^{\kappa_1-k_1}-1, \ldots ,
(1+p)^{\kappa_d-k_d}-1 \right ).
\]
If $M$ is an $A$-module, and $x\in \Spm (A),$ we denote by $\mathfrak m_x$ the corresponding maximal ideal of $A$ and set $k(x)=A/\mathfrak m_x$ and $M_x=M\otimes_{A}k(x).$
Let
\begin{equation}
\label{definition of general specialization map}
\spec_x\,:\,M\rightarrow M_x
\end{equation}
denote the specialization map. If 
\[\mathfrak m_x=\left ((1+w_1)-(1+p)^{\kappa_1-k_1}, \ldots ,
(1+w_d)-(1+p)^{\kappa_d-k_d}\right )
\]
with $\kappa=(\kappa_1,\ldots ,\kappa_d)\in D(k,1/p^{r-1}),$ we will
often  write $M_\kappa$ and $\spec_\kappa$ instead $M_x$ and $\spec_x$
respectively.

\subsubsection{}
Let $\CH_E$ denote the ring of power series $f(T)\in E[[T]]$ which converge
on the open unit disk. If $\gamma_1\in \Gamma_1$ is a fixed generator 
of the $p$-procyclic group $\Gamma_1,$ then the map $\gamma_1 \mapsto T-1$
identifies $\CH_E$ with the large Iwasawa algebra $\CH_E (\Gamma_1).$
We set $\CH_E(\Gamma) =E [\Delta ]\otimes_E \CH (\Gamma_1).$
Each $h\in  \CH_E(\Gamma)$ can be written in the form
\begin{equation}
\nonumber
h=\underset{i=1}{\overset{p-1}\sum}\delta_i h_i(\gamma_1-1), 
\qquad \text{\rm where $\delta_i=\frac{1}{\vert \Delta \vert}\underset{g\in\Delta}\sum \omega (g)^{-i}g.$}
\end{equation}
Define
\begin{equation}
\nonumber
\mathcal A^{\mathrm c}_{\omega^i}(h)(s)=h_i(\chi (\gamma_1)^s-1),
\qquad 1\leqslant i\leqslant p-1.
\end{equation}
Note that the series $\mathcal A^{\mathrm c}_{\omega^i}(h)(s)$ converge
on the open unit disk.

For each $i\in \Z,$ we have a $\Gamma$-equivariant map
\begin{equation}
\label{definition of cyclotomic spec on power series}
\begin{aligned}
&\spec^c_{m}\,:\, \CH_E(\Gamma) \rightarrow E(\chi^m),\\
&\spec^c_m(f)=\mathcal A_{\omega^m}^c(f)(m)\otimes \chi^m.
\end{aligned}
\end{equation}
%If $M$ is a $\CH_E(\Gamma)$-module, we denote again by $\spec_{c,m}\,:\, M\rightarrow 
%M\otimes_{\CH_E(\Gamma)}E(m)$ the map induced by (\ref{definition of cyclotomic spec on power series}).

\subsubsection{}
If $A$ is a Tate algebra of the form (\ref{definition of Tate algebra}), we 
set $\CH_A(\Gamma)=A\widehat\otimes_E\CH_E(\Gamma).$ For each $F\in \CH_A(\Gamma)$
define 
\begin{equation}
\label{transform A}
\mathcal A_{\omega^i}(F)(\kappa_1, \ldots ,\kappa_d,s)=
\left (\mathcal A^{\mathrm{wt}}\otimes \mathcal A^{\mathrm c}_{\omega^i}\right ) (F),
\qquad 1\leqslant i\leqslant p-1.
\end{equation}
Let $\eta \,:\,\Gamma \rightarrow A^*$ be a continuous character. 
When $\left. \eta  \right \vert_{\Delta}=\omega^m$ for some $0\leqslant m\leqslant p-2.$ The algebra $\CH_A(\Gamma)$ is equipped with the twist operator
\begin{equation}
\label{definition of Tw_eta}
\Tw_{\eta} \,:\, \CH_A(\Gamma)\rightarrow \CH_A(\Gamma), \qquad 
\Tw_{\eta} \left (F(\gamma_1-1)\delta_i \right )= 
F(\chi (\gamma_1)^m\gamma_1-1)\delta_{i-m}.
\end{equation}
If $\eta=\chi^m$ with $m\in \Z,$ we write $\Tw_{m}$ instead $\Tw_{\chi^m}.$
We have
\begin{equation}
\label{definition of Tw_m}
%\Tw_m \,:\, \CH_A(\Gamma)\rightarrow \CH_A(\Gamma), \qquad 
\Tw_m \left (F(\gamma_1-1)\delta_i \right )= 
F(\chi (\gamma_1)^m\gamma_1-1)\delta_{i-m}.
\end{equation}
The map $\spec^c_m$ can be extended by linearity to a map $\CH_A(\Gamma)\rightarrow A (m).$ Directly from definitions, one has 
\begin{equation}
\nonumber
\spec_m^c= \spec_{0}^c \circ \Tw_m.
\end{equation}

%Then $\mathcal A^{\mathrm c}_{\omega^i}(h)(s)$ is a $p$-adic analytic function on 

\subsubsection{} Let $A=E\left <  {w}/{p^r} \right >$
be the one variable Tate algebra over $E.$
%and fix $k\in  \Z.$ 
We denote by  $\bchi \,:\,\Gamma \rightarrow A^*$  the character defined by
\begin{equation}
\label{character chi bold}
\bchi (\gam )=
%\chi (\gam )^k 
\exp \left (\log_p(1+w)\frac{\log (\left <  (\chi (\gam)) \right >}
{\log (1+p)} \right )
\end{equation}
Note that  
$\mathcal A^{\mathrm{wt}}(\bchi (\gam ))(\kappa)=
%\chi (\gam )^k 
\left <  (\chi (\gam)) \right >^{\kappa-k}. $ The map $\Tw_{\bchi} \,:\, \CH_A (\Gamma )\rightarrow \CH_A (\Gamma )$ is explicitly give by  
\begin{equation}
\begin{aligned}
\label{definition of the bold Tw}
%&\Tw_{\bchi} \,:\, \CH_A (\Gamma )\rightarrow \CH_A (\Gamma ),\\
&\Tw_{\bchi}\left (F(\gamma_1-1)\delta_i \right )= 
F(\bchi (\gamma_1) \gamma_1-1)\delta_{i}.
\end{aligned}
\end{equation}
%and, more generally 
%\begin{equation}
%\nonumber
%\begin{aligned}
%&\Tw_{\bchi \chi^k}=\Tw_{\bchi}\circ \Tw_k \,:\, \CH_A (\Gamma )\rightarrow \CH_A %(\Gamma ),
%\\
%&\Tw_{\bchi}\left (F(\gamma_1-1)\delta_i \right )= 
%F(\bchi (\gamma_1) \gamma_1-1)\delta_{i}
%\end{aligned}
%\end{equation}
For any $h\in \CH_A(\Gamma)$ one has 
\begin{equation}
\nonumber
\left.\left (\mathcal A^{\mathrm{wt}} \circ \Tw_{\bchi} h\right )\right \vert_{\kappa=m} =\Tw_{m-k} \circ ((\mathcal A^{\mathrm{wt}} h )\vert_{\kappa=m}) ,
\qquad m\equiv k\pmod{(p-1)p^{r-1}}
\end{equation}
and 
\begin{equation}
\label{property of Tw_bchi}
\mathcal A_{\omega^i} ( \Tw_{\bchi} h) (\kappa,s) =\mathcal A_{\omega^i} (h)(\kappa, s+\kappa-k).
\end{equation}

\subsubsection{}  For each $r\in [0,1),$ we denote by $\CR_E^{(r)}$
the ring of power series 
\[
f(X)=\underset{n\in \Z}{\sum}a_nX^n, \qquad a_n\in E
\]
converging  on  the open annulus $\mathrm{ann} (r,1)=\{X\in \Cp \vert r\leqslant \vert X\vert_p <1\}.$ These rings are equipped with a canonical Fr\'echet topology \cite{Ber02}. 
For each affinoid algebra $A$  over $E$ we define 
\begin{equation}
\nonumber
\CR^{(r)}_A=A\widehat\otimes_E \CR_E^{(r)}.
\end{equation}
We define the Robba ring over $A$ the ring $\CR_A=\underset{0\leqslant r<1}\cup \CR^{(r)}_A.$ Equip  $\CR_A$
 with a continuous action of $\Gamma$ and a Frobenius operator
$\Ph$ given by
\begin{equation}
\begin{aligned}
\nonumber
& \gamma (f(X))=f((1+X)^{\chi (\gamma )}-1),&& \gamma \in \Gamma,\\
& \Ph (f(X))=f((1+X)^{p}-1). &&
\end{aligned}
\end{equation}
In particular, 
\[
t=\underset{n=1}{\overset{\infty}\sum} (-1)^{n-1}\frac{X^n}{n} \in \CR_A,
\]
and we have $\Ph (t)=pt$ and $\gamma (t)=\chi (\gamma)t,$ $\gamma\in \Gamma.$

\subsubsection{} The operator $\Ph \,:\,\CR_A\rightarrow \CR_A$ has a left inverse  $\psi$ given by
\begin{equation}
\nonumber
\psi (f)=
\frac{1}{p}\Ph^{-1} \left (\underset{\zeta^p=1}\sum f(\zeta(1+X)-1)\right ),
\qquad f\in \CR_A.
\end{equation}
Set $\mathcal E_A=\CR_A\cap A[[X]].$ Then $\mathcal E_A^{\psi=0}$ is the free
$\mathcal H_A(\Gamma)$-submodule of $\mathcal E_A$ generated by 
$X+1.$ 
%We denote by $\mathcal E_A^{\psi=0, \Delta}$ the subset of $\mathcal E_A$
%fixed by $\Delta .$ Note that $\mathcal E_A^{\psi=0, \Delta}$  is a free
%$\CH_A (\Gamma_1)$-module generated by $\displaystyle 1+X_0=1+ \frac{1}{\vert %\Delta \vert}
%\underset{g\in \Delta}\sum g(X).$
\newpage

\subsection{Cohomology of $(\Ph,\Gamma)$-modules}

\subsubsection{} In this section,  we use freely the theory of $(\Ph,\Gamma)$-modules over relative Robba rings  $\CR_A$ \cite{KPX}. 
If $\bD$ is a $(\Ph,\Gamma)$-module over $\CR_A,$ we set
$\CDcris (\bD)=\left (\bD [1/t] \right )^{\Gamma}.$ Then 
$\CDcris (\bD)$ is an $A$-module equipped with the induced action 
of $\Ph$  and a decreasing filtration $(\F^i\CDcris (\bD))_{i\in\Z}.$
For each $p$-adic representation $V$ of $G_{\Qp}$  with coefficients in $A$
we denote by $\DdagrigA (V)$ the associated $(\Ph, \Gamma)$-module 
 For any $(\Ph,\Gamma)$-module $\bD ,$ we denote by $H^i(\bD)$ the cohomology of the Fontaine--Herr
complex 
\begin{equation}
\nonumber 
\xymatrix{
C_{\Ph,\gamma_1}(\bD)\,\,:\,\,\bD^{\Delta} \ar[r]^(.6){d_0}&\bD^{\Delta}  \oplus \bD^{\Delta}  \ar[r]^(.6){d_1} &\bD^{\Delta},
}
\end{equation}
where $\gamma_1$ is a fixed generator of $\Gamma_1,$  $d_0(x)=((\Ph-1)x,  (\gamma_1-1)x)$ and $d_1(y,z)=(\gamma_1-1)y-  (\Ph-1)z.$ 
Let $\bD^*(\chi)=\Hom_{\CR_A}(\bD,\CR_A(\chi)) $ be the Tate dual of $\bD.$
We have a canonical pairing
\begin{equation}
\nonumber
\left <\,\,,\,\,\right >_{\bD}\,\,:\,\, H^1(\bD^*(\chi))\times H^1(\bD )\rightarrow 
H^2(\CR_A(\chi))\simeq A,
\end{equation}
which generalizes the classical local  duality. 

 The Iwasawa cohomology $H^*_{\Iw}(\bD )$ of $\bD$
is defined as the cohomology of the complex 
\begin{equation}
\nonumber
\bD \xrightarrow{\psi-1} \bD
\end{equation}
concentrated in degrees $1$ and $2.$ Let $\bD \widehat{\otimes}_A \CH_A (\Gamma)^{\iota}$ denote the tensor product $\bD \widehat{\otimes}_A \CH_A (\Gamma)$
which we consider  as a $(\Ph,\Gamma)$-module with the diagonal action of $\Gamma$
and the additional sructure of $\CH (\Gamma)$-module given by
\[
\gamma (d\otimes h)=d\otimes h\gamma^{-1}, \qquad d\in \bD, \,\, 
h\in \CH (\Gamma), \,\,\gamma\in\Gamma.
\]
There exists a canonical isomorphism of $\CH (\Gamma)$-modules
\begin{equation}
\label{isomorphism for iwasawa cohomology}
H^1_{\Iw}(\bD) \simeq 
H^1 \left (\bD \widehat{\otimes}_A \CH_A (\Gamma)^{\iota} \right )
\end{equation}
%where  $\iota$ means that the group $\Gamma$ acts  via the involution
%$\gamma \mapsto \gamma^{-1}$ 
(see \cite[Theorem~4.4.8]{KPX}).
We remark 
that if $\bD=\DdagrigA (V)$ for a $p$-adic representation $V,$ then $H^i(\Qp, V)\simeq H^i(\DdagrigA (V))$ and $H^i_{\Iw}(\DdagrigA (V))\simeq H^1_{\Iw}(\Qp,V),$
where $H^1_{\Iw}(\Qp,V)$ denotes the usual Iwasawa cohomology of $V$
(see \cite{CC99} and \cite[Corollary~4.4.11]{KPX}).

For each $m\in \Z,$ the map $\spec^c_{-m}$ induces  a morphism
of $\Ph,\Gamma)$-modules 
\begin{equation}
\label{definition of cyclotomic specialization for modules}
\spec^{c}_{\bD, m}\,:\,\bD\,\widehat{\otimes}_A\CH_A(\Gamma)^{\iota}
\rightarrow 
\left (\bD\,\widehat{\otimes}_A\CH_A(\Gamma)^{\iota}\right )
\otimes_{\CH_A(\Gamma), \spec^c_{-m}} A \simeq \bD (\chi^{m}).
\end{equation}
%and the specialization map reads
%$\spec_{c,m}\,:\,\bD\,\widehat{\otimes}_A\CH_A(\Gamma)^{\iota}\rightarrow \bD (\chi^{-m}). $ 
Together with the isomorphism (\ref{isomorphism for iwasawa cohomology}), it induces homomorphisms on cohomology
\begin{equation}
\label{specialization of Iwasawa cohomology}
\spec^c_{\bD, m}\,:\, H^i_{\Iw}(\bD) \rightarrow H^i(\bD (\chi^{m})).
\end{equation}
In the remainder of this paper, we will often omit $\bD$ in notation.

\subsubsection{} 
We have a canonical 
$\CH_A(\Gamma)$-linear pairing
\begin{equation}
\label{definition of Iwasawa pairing}
\left \{\,\,,\,\,\right \}_{\bD}\,\,:\,\,H^1_{\Iw}(\bD^*(\chi))\times H^1_{\Iw}(\bD )^{\iota}
\rightarrow \CH_A(\Gamma)
\end{equation}
(see \cite[Definition~4.2.8]{KPX}).  It generalizes
the pairing in Iwasawa cohomology of $p$-adic representations \cite[Section~3.6]{PR94}. 

\begin{mylemma} i) The pairings $\left <\,\,,\,\,\right >_{\bD}$ and 
$\left \{\,\,,\,\,\right \}_{\bD}$ commute with the base change.

%ii) One has
%\begin{equation}
%\nonumber
%\left <\Tw_{\bD^*(\chi),m}(x),\Tw_{\bD ,-m}(y^{\iota}) \right >_{\Iw}=
%\Tw_{-m} \left <x,y^{\iota}\right >_{\Iw}.
%\end{equation}

ii) The following diagram commutes
\begin{equation}
\nonumber
\xymatrix{
&H^1_{\Iw}(\bD^*(\chi))\times H^1_{\Iw}(\bD )^{\iota}
\ar[rr]^(.6){\left \{\,\,\,,\,\,\,\right \}_{\bD}} 
\ar[d]^{(\spec^c_{0}\,,\,\spec^c_{0})}
&&
\CH_A(\Gamma)
\ar[d]^{\spec^c_{0}}
\\
&H^1 (\bD^*(\chi))\times H^1 (\bD )
\ar[rr]^(.6){\left <\,\,\,,\,\,\,\right >_{\bD}}
&&
A.
}
\end{equation}
\end{mylemma}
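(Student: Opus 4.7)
The plan is to prove both statements by reducing them to compatibilities of cup products at the level of the Fontaine--Herr complex, since both pairings are constructed by composing a cup product with a trace isomorphism on $H^2$. The statement is entirely formal once one has explicit cochain-level models for both pairings, so the strategy is essentially bookkeeping, with one genuine computation at the heart of it.

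For part (i), I would observe that the Fontaine--Herr complex $C_{\Ph,\gamma_1}(\bD)$ is constructed out of $\bD$ by operations (taking $\Delta$-invariants and applying $\Ph-1$, $\gamma_1-1$) that commute with any base change $A \to A'$ on the coefficient ring, and similarly for the complex computing $H^*_{\Iw}(\bD)$. The local duality pairing $\langle\,,\,\rangle_{\bD}$ is defined by an explicit cup-product formula on cocycles composed with the trace isomorphism $H^2(\CR_A(\chi)) \simeq A$ (and similarly for $\{\,,\,\}_{\bD}$ using $H^2(\CR_A(\chi) \widehat\otimes \CH_A(\Gamma)^\iota)\simeq \CH_A(\Gamma)$). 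Base-change compatibility of the trace isomorphism is part of its characterization in \cite{KPX}; combined with the obvious base-change compatibility of the cup product on cocycles, part (i) follows.

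For part (ii), the key observation is that the specialization $\spec^c_0$ can be viewed as a morphism of $(\Ph,\Gamma)$-modules $\CH_A(\Gamma)^\iota \to A$ (the trivial representation), which induces on $\bD \widehat\otimes_A \CH_A(\Gamma)^\iota$ a map onto $\bD$. I would factor the Iwasawa pairing through the cup product
\[
H^1(\bD^*(\chi)) \otimes H^1(\bD \widehat\otimes_A \CH_A(\Gamma)^\iota) \longrightarrow H^2\bigl(\bD^*(\chi) \otimes_{\CR_A} (\bD \widehat\otimes_A \CH_A(\Gamma)^\iota)\bigr) \longrightarrow H^2(\CR_A(\chi) \widehat\otimes_A \CH_A(\Gamma)^\iota),
\]
followed by the trace isomorphism to $\CH_A(\Gamma)$. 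Naturality of cup products under the $(\Ph,\Gamma)$-module morphism $\CH_A(\Gamma)^\iota \to A$ gives a commutative square at the level of $H^2$. So the only remaining step is to verify that the trace isomorphism intertwines $\spec^c_0$ with the classical one, i.e.\ that the diagram
\[
\xymatrix{
H^2(\CR_A(\chi) \widehat\otimes_A \CH_A(\Gamma)^\iota) \ar[r]^(.65){\sim} \ar[d] & \CH_A(\Gamma) \ar[d]^{\spec^c_0} \\
H^2(\CR_A(\chi)) \ar[r]^(.6){\sim} & A
}
\]
commutes, where the left vertical arrow is induced by $\spec^c_0$ on the coefficient $(\Ph,\Gamma)$-module.

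The main obstacle is precisely this last compatibility of trace maps, which I expect to be the only non-formal step. I would handle it by appealing to the explicit description of the trace isomorphism in \cite[\S 4.2]{KPX} (via residues at $X=0$ on cocycle representatives in the Fontaine--Herr complex) and checking directly that the prescribed formula, viewed as a $\CH_A(\Gamma)$-valued residue, specializes under $\spec^c_0$ to the $A$-valued residue defining the classical trace. Once this is established in the universal case $\bD = \CR_A$, part (ii) for general $\bD$ follows from the naturality argument above, and part (i) of the lemma (base change) applied to the inclusion $A \hookrightarrow A$ twisted by $\spec^c_0$ could alternatively give a shortcut.
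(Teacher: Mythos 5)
Your treatment of (i) matches the paper's: it follows formally from the cocycle-level definitions of both pairings, and the paper's proof says exactly that. For (ii), however, the paper simply cites \cite[Proposition~4.2.9]{KPX}, which is precisely this compatibility (in a slightly more general form covering all $\spec^c_m$); you instead propose to reprove it from scratch. Your outline --- reduce via naturality of the cup product to a compatibility of the $\CH_A(\Gamma)$-valued and $A$-valued trace isomorphisms, then verify that compatibility by the explicit residue description of the trace --- is the right underlying idea and is essentially how one would prove the KPX statement. Two points would need repair before this stands as a proof, though. First, the factorization you display has $H^1(\bD^*(\chi))$ in the first slot, but the Iwasawa pairing's first argument lives in $H^1_{\Iw}(\bD^*(\chi))\simeq H^1\bigl(\bD^*(\chi)\widehat\otimes_A\CH_A(\Gamma)^\iota\bigr)$; as written, the displayed map is not $\{\,\,,\,\,\}_{\bD}$ but a hybrid pairing, so the naturality square has to be set up with the $\CH_A(\Gamma)^\iota$-coefficients appearing symmetrically in both factors, with $\spec^c_0$ inducing the coefficient morphism $\CH_A(\Gamma)^\iota\to A$ on each; one then ends up comparing two squares rather than one. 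Second, the residue computation you identify as the crux is announced rather than carried out, and it is genuinely the nontrivial content. Neither issue is conceptually fatal, but you are in effect reconstructing the proof of a result that the paper cites; the one-line appeal to \cite[Proposition~4.2.9]{KPX} is the economical route and what the situation calls for.
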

\begin{proof} i)  follows immediately  from the definition of the pairings and 
%ii) This property is well known in the case of Iwasawa cohomology of $p$-adic representations [see \cite{PR94}). For $(\Ph,\Gamma)$-modules it follows easily
%from the characterization of the pairing $\left <\,\,,\,\,\right >_{\Iw}$ given in
%\cite[Lemma~4.2.5]{KPX}. 
ii)  is a particular case of \cite[Proposition~4.2.9]{KPX}.
\end{proof}

\subsubsection{} %We need the following version of the part ii) of this lemma which can be proved by the same argument. Let $A=E\left <w/p^r \right >$ and let $\bchi\,:\,\Gamma \rightarrow A$ denote the character (\ref{character chi bold}). T
Let $\eta \,:\,\Gamma \rightarrow A^*$ be a continuous character.  We denote by 
\begin{equation}
\label{definition of Tw for Iwasawa cohomology}
\Tw_{\bD,\eta} \,:\, H^i_{\Iw}(\bD) \rightarrow H^i_{\Iw}(\bD (\eta))
\end{equation}
the isomorpism given by $d\mapsto d\otimes \eta.$ Note that it is 
not $\Gamma$-equivariant. If $\eta=\chi^m,$ where $\chi$ is the cyclotomic character, we write $\Tw_{\bD,m}$ instead $\Tw_{\bD,\chi^m}.$ Note that
\begin{equation}
\label{formula for spec}
\spec^c_{0}\circ \Tw_m=\spec^c_{m}.
\end{equation}

\begin{mylemma} 
\label{Lemma twisted Iwasawa pairing}
Let $\eta \,:\,\Gamma \rightarrow A^*$ be a continuous character.
Then 
\begin{equation}
\nonumber
\left \{\Tw_{\bD^*(\chi),\eta }(x),\Tw_{\bD ,\eta^{-1}}(y^{\iota}) \right \}_{\bD (\eta)}=
\Tw_{\eta^{-1}} \left \{ x,y^{\iota}\right \}_{\bD},
\end{equation}
where the twisting map in the right hand side is defined by  (\ref{definition of Tw_eta}). 
\end{mylemma}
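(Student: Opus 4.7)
The plan is to unwind the Iwasawa pairing via the isomorphism (\ref{isomorphism for iwasawa cohomology}) and to track how the twist operators on each factor propagate to the output. First, I would recall from \cite[Definition~4.2.8]{KPX} that, via (\ref{isomorphism for iwasawa cohomology}), the pairing $\{\,,\,\}_{\bD}$ of (\ref{definition of Iwasawa pairing}) is induced by the natural cup product of Fontaine--Herr complexes
\[
H^1 \!\left ( \bD^*(\chi) \widehat{\otimes}_A \CH_A(\Gamma)^\iota \right ) \otimes H^1 \!\left ( \bD \widehat{\otimes}_A \CH_A(\Gamma)^\iota \right ) \longrightarrow H^2 \!\left ( \CR_A(\chi) \widehat{\otimes}_A \CH_A(\Gamma)^\iota \right ),
\]
followed by the canonical $\CH_A(\Gamma)$-linear identification of the target with $\CH_A(\Gamma)$.

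Next, I would observe that for any continuous character $\eta : \Gamma \to A^*$ the twist operator $\Tw_{\bD,\eta}$ corresponds, under (\ref{isomorphism for iwasawa cohomology}), to the $\CR_A$-linear isomorphism $\bD \widehat{\otimes}_A \CH_A(\Gamma)^\iota \iso \bD(\eta) \widehat{\otimes}_A \CH_A(\Gamma)^\iota$ obtained by tensoring $\bD$ with the canonical basis of $A(\eta)$ and then absorbing the resulting $\eta$-shift of the $\Gamma$-action into the $\CH_A(\Gamma)^\iota$-coordinate via multiplication by $\eta \in \CH_A(\Gamma)^\times$. In words: the twist leaves the $(\Ph,\Gamma)$-cocycles untouched and acts by $\eta$ on the $\CH_A(\Gamma)^\iota$-factor.

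Combining the two steps, the cup product of $\Tw_{\bD^*(\chi),\eta}(x)$ and $\Tw_{\bD,\eta^{-1}}(y^\iota)$ differs from that of $x$ and $y^\iota$ only on the $\CH_A(\Gamma)^\iota$-coordinate, where the two factors contribute $\eta$ and $\iota(\eta^{-1})$ respectively. A direct inspection of the identification $H^2(\CR_A(\chi) \widehat{\otimes}_A \CH_A(\Gamma)^\iota) \simeq \CH_A(\Gamma)$ then shows that this combined multiplication corresponds precisely to the operator $\Tw_{\eta^{-1}}$ of (\ref{definition of Tw_eta}) on the output, yielding the claimed identity.

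The main obstacle will be the bookkeeping with the involution $\iota$ in the last step: one must verify that $\iota$ on the right factor, together with the canonical identification of $H^2$ with $\CH_A(\Gamma)$, conspire to produce $\Tw_{\eta^{-1}}$ rather than $\Tw_\eta$. If direct verification proves unwieldy, I would fall back on a Zariski-density argument: both sides depend continuously on $\eta$, and for $\eta = \chi^m$ the identity follows from (\ref{formula for spec}), the compatibility of the Iwasawa pairing with cyclotomic specialization (part ii) of the preceding Lemma), and the $\CH_A(\Gamma)$-linearity of the pairing.
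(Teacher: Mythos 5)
Your approach is genuinely different from the paper's, and as stated it has two gaps.

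The paper's proof does not invoke cup products at all. It quotes the characterization of the pairing from \cite[Definition~4.2.8 and Lemma~4.2.5]{KPX}: one writes $\left \{ x,y^{\iota}\right \}_{\bD}=\left \{(\Ph-1)x,(\Ph-1)y^{\iota}\right \}^0_{\bD}$, where $\{\,,\,\}^0_{\bD}$ is the unique $\CR_A(\Gamma)$-linear pairing on $\psi=0$ elements satisfying the residue identity (\ref{caracterization of auxiliary Iwasawa pairing}). One then checks by a short direct computation that the twisted pairing $\{x,y\}'=\Tw_{\eta^{-1}}\{\Tw_{\eta^{-1}}x,\Tw_{\eta}y^\iota\}^0_{\bD}$ is $\CR_A(\Gamma)$-linear and satisfies the same residue identity for $\bD(\eta^{-1})$, hence coincides with $\{\,,\,\}^0_{\bD(\eta^{-1})}$ by uniqueness. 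The whole point of the residue characterization is that it makes this verification a finite, concrete calculation.

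Your first route asserts that, under (\ref{isomorphism for iwasawa cohomology}), the Iwasawa pairing $\{\,,\,\}_{\bD}$ is \emph{induced by} the cup product of Fontaine--Herr complexes for $\bD\widehat\otimes\CH_A(\Gamma)^\iota$. This is not how KPX define it; it is defined via the residue formula, and its compatibility with cup products is a theorem (essentially the content of \cite[Proposition~4.2.9]{KPX}) rather than a definition. Using it as a definition buries a nontrivial comparison that you would need to justify. You also acknowledge that tracking $\iota$ through the identification $H^2(\CR_A(\chi)\widehat\otimes\CH_A(\Gamma)^\iota)\simeq\CH_A(\Gamma)$ is the delicate step — and that is precisely the step where the sign/twist ($\Tw_{\eta^{-1}}$ versus $\Tw_{\eta}$) is decided, so leaving it as a remark is leaving the substance of the lemma unproved.

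Your fall-back has two problems. First, the density claim: an arbitrary continuous $\eta:\Gamma\to A^*$ over an affinoid $A$ need not lie in the closure of the set $\{\chi^m:m\in\Z\}$; that closure is a single copy of the weight space over $\Q_p$ inside the much larger space of $A^*$-valued characters, so verifying the identity only for $\eta=\chi^m$ does not force it for general $\eta$ without an additional argument (specializing $A$, etc.). Second, the verification for $\eta=\chi^m$ as you sketch it is in danger of circularity: Lemma 1.2.2 ii) gives the compatibility of $\{\,,\,\}_{\bD}$ with $\langle\,,\,\rangle_{\bD}$ only at $\spec^c_0$; to get it at $\spec^c_m$ you would precompose with $\Tw_m$, and the interaction of $\Tw_m$ with the Iwasawa pairing is exactly the statement of the lemma you are trying to prove (for $\eta=\chi^m$). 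To make this work you would need an independent proof of the $\spec^c_m$-compatibility for all $m$ — which is essentially \cite[Proposition~4.2.9]{KPX} again, and that proposition is itself proved from the residue characterization. So the honest shortest path is the one the paper takes: verify the residue identity for the twisted pairing directly.
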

\begin{proof} By \cite[Definition~4.2.8 and Lemma~4.2.5]{KPX}
\[
 \left \{ x,y^{\iota}\right \}_{\bD}=\left \{(\Ph-1)(x),(\Ph-1)(y^{\iota})
 \right \}_{\bD}^0,
 \]
where $\{\,\,,\,\,\}_{\bD}^0\,\,:\,\,\bD^*(\chi)^{\psi=0}\times \bD^{\psi=0, \iota}\rightarrow  \CR_A (\Gamma)$ is the unique $\CR (\Gamma)$-linear pairing satisfying the following condition: 
%a) For each $y\in \bD^{\psi=0},$ the map $\{\,\cdot \,,y\}_{\Iw}^0$
%is $\CR_A(\Gamma)$-linear. 
for all $x\in \bD^*(\chi)^{\psi=0}$ and $y\in \bD^{\psi=0}$ one has
\begin{equation}
\label{caracterization of auxiliary Iwasawa pairing}
\res \left (\{x,y^{\iota}\}^0_{\bD} \cdot \frac{d \gamma_1}{\gamma_1} \right )=
\log (\chi (\gamma_1))\cdot \res \left ( [x,y]_{\bD} \cdot \frac{dX}{1+X} \right ).
\end{equation}
Here $[\,\,,\,\,]_{\bD}\,:\,\bD^* (\chi)\times \bD \rightarrow \CR_A$
denotes the canonical pairing, and we refer the reader to \cite[Section~2.1]{KPX} for any unexplained notation. Let 
\[
\{ \,\,,\,\,\}'\,:\,\bD^*(\chi\eta)^{\psi=0}\times \bD (\eta^{-1})^{\psi=0, \iota}\rightarrow  \CR_A (\Gamma)
\]
denote the map $\{x,y\}'=\Tw_{\eta^{-1}}\left \{\Tw_{\bD^*(\chi\eta),\eta^{-1}}(x),\Tw_{\bD (\eta^{-1}),\eta}(y^{\iota}) \right \}^0_{\bD}.$ An easy computation shows that 
it is a $\CR_A(\Gamma)$-linear pairing which satisfies (\ref{caracterization of auxiliary Iwasawa pairing}) for the $(\Ph,\Gamma)$-module $\bD (\eta^{-1}).$
Therefore it coincides with $\{\,\,,\,\,\}_{\bD (\eta^{-1})}^0.$
This imples the lemma. 
\end{proof}

\subsection{The large exponential map}
\subsubsection{} Assume that $A=E.$ Let $\bD$ be a crystalline $(\Ph,\Gamma)$-module
over $\CR_E,$ {\it i.e.} $\dim_E \CDcris (\bD)=\rk_{\CR_E} \bD.$
We denote by $H^1_f(\bD)$ the subgroup of $H^1(\bD)$ that classifies 
crystalline extensions of the form $0\rightarrow \bD\rightarrow \mathbf{X}
\rightarrow \CR_E \rightarrow 0$ \cite[Section~1.4]{Ben11}.
The equivalence between the category of crystalline $(\Ph,\Gamma)$-modules
and that of filtered Dieudonn\'e modules \cite{Ber08} induces a canonical 
homomorphism
\begin{equation}
\nonumber
\exp_{\bD}\,\,:\,\,\CDcris (\bD)/\F^0 \CDcris (\bD) \rightarrow H^1_f(\bD),
\end{equation}
which is a direct generalization of the Bloch--Kato exponential map 
\cite[Proposition~1.4.4]{Ben11}, \cite{Nak14}. Note that $\exp_{\bD}$ is an isomorphism 
if $\CDcris (\bD)^{\Ph=1}=0.$

If $V$ is a crystalline representation of $G_{\Qp},$ then 
$\CDcris (\bD^{\dagger}_{\mathrm{rig},E}(V))\simeq \Dc (V),$ where $\Dc$ is  classical Fontaine's functor \cite{Fo94a, Fo94b}. We have a commutative diagram
\begin{equation}
\nonumber
\xymatrix{
\Dc (V)/\F^0\Dc (V) \ar[r]^(.6){\exp_V} \ar[d]^{\simeq} & H^1_f(\Qp,V) \ar[d]^{\simeq}
\\
\CDcris (\bD)/\F^0 \CDcris (\bD) \ar[r]^(.6){\exp_{\bD}} &H^1_f(\bD),
}
\end{equation}
where $\exp_V$ is the Bloch--Kato exponential map \cite{BK90}. 

\subsubsection{} Let $\bD$ be a $(\Ph,\Gamma)$-module over a Tate algebra $A.$
Assume that for all $x\in \mathrm{Spm}(A)$ the specialization $\bD_x=\bD\otimes_A A/\mathfrak{m}_x$ of $\bD$ at $x$ is a crystalline module. Then $\CDcris (\bD)$ is a projective $A$-module of rank $\rk_{\CR_A}(\bD)$ and $\CDcris (\bD_x)\simeq \CDcris (\bD)\otimes_A A/\mathfrak{m}_x$ \cite[Th\'eor\`eme C]{BCz}. Moreover, 
 Nakamura \cite[Section~2]{Nak17} constructed the relative version  $\exp_{\bD}\,:\,
\CDcris (\bD)/\F^0\CDcris (\bD) \rightarrow H^1(\bD)$ of the exponential map. 
For any $x\in \mathrm{Spm}(A)$ we have a commtative diagram  
\begin{equation}
\nonumber
\xymatrix{
\CDcris (\bD)/\F^0\CDcris (\bD) \ar[r]^(.6){\exp_{\bD}} \ar[d] & H^1(\bD) \ar[d]
\\
\CDcris (\bD_x)/\F^0 \CDcris (\bD_x) \ar[r]^(.6){\exp_{\bD_x}} &H^1(\bD_x).
}
\end{equation}

%We have canonical pairings 
%\begin{equation}
%\begin{aligned}
%\nonumber
%&(\,\,,\,\,)\,\,:\,\, H^1(\Qp,\bD)\times H^1(\Qp,\bD^*(\chi))\rightarrow 
%H^2(\Qp, \CR_A(\chi))\simeq A,\\
%&[\,\,,\,\,]\,\,:\,\,\CDcris (\bD)\times  \CDcris (\bD^*(\chi))
%\rightarrow A.
%\end{aligned}
%\end{equation}

\subsubsection{}
\label{subsection modules rank 1}
%We review the construction of the large exponential map for $(\Ph, \Gamma)$-modules of rank one. We refer the reader to \cite{Ben11} and \cite{Nak14} for 
%general constructions and more detail. Let $A$ be a Tate algebra. 
Let $\bdelta \,:\,\Qp^*\rightarrow A^*$ be a continuous character with values 
in a Tate algebra $A.$ We denote by $\CR_A(\bdelta )$ the  $(\Ph,\Gamma)$-module $\CR_A\cdot e_{\boldsymbol\delta}$  of rank $1$ over $\CR_A$ defined  by  
\[
\varphi (e_{\bdelta})=\delta (p)\cdot e_{\bdelta},\qquad  
\gamma (e_{\bdelta})=\bdelta (\chi (\gamma))\cdot e_{\bdelta},\quad \gamma \in \Gamma.
\]
In Sections \ref{subsection modules rank 1}-\ref{them:propertiestwovarPRlog}, we assume that 
\[
\bdelta \vert_{\mathbb Z_p^*} (u)=u^m \quad \text{ for some integer $ m\geqslant 1.$}
\]
%b) The function 
%\[
%(\kappa_1,\ldots ,\kappa_d) \mapsto v_p
%\left (\mathcal A^{\mathrm{wt}}(\bdelta (p))
%(\kappa_1,\ldots ,\kappa_d)\right )
%\]
%is constant on $D(k, 1/p^{r-1}).$
Then the crystalline module $\CDcris (\CR_A(\bdelta ))$
associated to $\CR_A(\boldsymbol\delta )$ is the free $A$-module of rank $1$ generated by $d_\delta=t^{-m}e_{\bdelta}.$ The action of $\Ph$ 
on $d_{\bdelta}$ is given by 
\[
\varphi (d_{\boldsymbol\delta})=p^{-m}\boldsymbol\delta (p)d_{\boldsymbol\delta}.
\]
Moreover, $\F^0\CDcris (\CR_A(\bdelta ))=0,$ and the exponential map takes the form 
\begin{equation}
\nonumber
\exp_{\CR_A(\bdelta )}\,:\,\CDcris (\CR_A(\bdelta )) \rightarrow 
H^1(\CR_A(\bdelta )).
\end{equation}
%The pairing 
%\[
%[\,\,,\,\,]_{\CR_A(\delta^{-1}\chi)}\,:\, \CDcris (\CR_A(\bdelta ))\times 
%\CDcris (\CR_A(\bdelta^{-1}\chi )) \rightarrow A
%\]
%identifies $\CDcris (\CR_A(\bdelta^{-1}\chi ))$ with the $A$-dual of 
%$\CDcris (\CR_A(\bdelta )).$
%Therefore we can define the dual exponential map
%\[
%\exp^*_{\CR_A(\delta^{-1}\chi)}\,:\, H^1(\CR_A(\bdelta^{-1}\chi ))\rightarrow 
%\CDcris (\CR_A(\bdelta^{-1}\chi ))
%\]
%by the usual formula 
%\[
%\left [x,\exp^*_{\CR_A(\delta^{-1}\chi)}(y)\right ]_{\CR_A(\delta^{-1}\chi)}= 
%\left <\exp_{\CR_A(\delta)} (x), y\right >_{\CR_A(\delta^{-1}\chi)}.
%\]

%For any generator $\eta$ of $\CDcris (\bdelta)$ define 
%We denote by $\CR_A(\bdelta^{-1}\chi)$ the Tate dual of $\CR_A(\bdelta )$
%and by 
%\begin{equation}
%\nonumber
%(\,\,,\,\,)\,:\,H^1(\CR_A(\bdelta ))\times H^1(\CR_A(\bdelta^{-1}\chi ))
%\rightarrow H^2(\CR_A(\chi))\simeq A
%\end{equation}
%the canonical pairing. The dual exponential map
%\begin{equation}
%\nonumber
%\begin{aligned}
%&\exp^*_{\CR_A(\bdelta^{-1}\chi),\eta}\,\,:\,\,H^1(\CR_A(\bdelta^{-1}\chi))
%\rightarrow A,\\
%&\exp^*_{\CR_A(\bdelta^{-1}\chi),\eta}(x)=\left <x, \exp_{\CR_A(\bdelta )} (\eta)\right >_{\CR_A(\bdelta)}.
%\end{aligned}
%end{equation}

\subsubsection{} 
\label{subsubsection large logarithm}
We review the construction of the large exponential map for $(\Ph, \Gamma)$-modules of rank one.
% following \cite{BB}. 
We refer the reader to  \cite{Nak17} for general constructions and more detail.
Equip the ring $\mathcal E_A=\CR_A\cap A[[X]]$ with the operator $\partial =(1+X)\,\displaystyle\frac{d}{dX}.$
Let $z\in \CDcris (\CR_A(\boldsymbol\delta ))\otimes_A \mathcal E_A^{,\psi=0}.$
It may be shown that the equation
\[
(\varphi-1) F=z-\frac{\partial^mz(0)}{m!}t^m
\]
has a solution in $\CDcris (\CR_A(\bdelta ))\otimes_A \CR_A$ and we define
\begin{equation}
\nonumber
\textup{Exp}_{\CR_A (\boldsymbol\delta)}(z)=
(-1)^m\frac{\log \chi (\gamma_1)}{p}\,t^m\partial^m (F).
\end{equation}
Exactly as in the classical case $A=E$ (see \cite{Ber03}), it is not hard to check that  $\textup{Exp}_{\CR_A (\boldsymbol\delta)}(z) \in \CR_A (\boldsymbol\delta )^{\psi=1}\simeq H^1_{\Iw}(\CR_A(\delta))$ and we denote by 
\begin{equation*}
\textup{Exp}_{\CR (\bdelta)}\,:\,\CDcris (\CR_A(\bdelta ))\otimes_A\mathcal E_A^{\psi=0} \rightarrow H^1_{\Iw}(\CR_A(\bdelta ))
\end{equation*}
the resulting map \cite{Ber03, Nak14, Nak17}.
Let $c\in \Gamma$ denote the unique element such that $\chi (c)=-1.$ 
Set $\textup{Exp}^c_{\CR (\bdelta)}=c \circ \textup{Exp}_{\CR (\bdelta)}.$
For any generator $d \in  \CDcris (\CR_A(\bdelta ))$  define 
\begin{equation}
\nonumber
\begin{aligned}
&\frak{Log}_{\CR_A(\bdelta^{-1}\chi ),d}\,:\, H^1_{\Iw}(\CR_A(\bdelta^{-1}\chi ))\rightarrow \CH_A(\Gamma),\\
&\frak{Log}_{\CR_A(\bdelta^{-1}\chi ),d}(x)=\left \{x, \Exp^c_{\CR_A (\boldsymbol\delta)}(d\otimes (1+X)^{\iota})\right \}_{\CR_A(\bdelta)}.
\end{aligned}
\end{equation}

\begin{myproposition}
\label{them:propertiestwovarPRlog}
\textup{1)} The maps $\textup{Exp}_{\CR_A(\bdelta ) }$ and $\frak{Log}_{\CR_A(\bdelta^{-1}\chi ),d}$
commute with the base change.

\textup{2)} Let  $A=E$ and let $V$ be a crystalline representation of $G_{\Qp}.$
The choice of a compatible system $\ep=(\zeta_{p^n})_{n\geqslant 0}$ of $p^n$th roots of unity fixes 
an isomorphism between $\CR_{\Qp}$ and the ring $\mathbf B^{\dagger}_{\mathrm{rig}, \Qp}$ from \cite{Ber02}.
Assume that $\CR_E(\delta)$ is a submodule of $\bD^{\dagger}_{\mathrm{rig},E}(V).$ Then $\textup{Exp}_{\CR_E(\delta ) }$ 
coincides with the restriction of Perrin-Riou's large exponential map \cite{PR94}
\[
\textup{Exp}_{V,m}^{\varepsilon}\,:\,\Dc (V)\otimes_E\mathcal E_E^{\psi=0} \rightarrow H^1_{\Iw}(\Qp,V)
\]
on $\CDcris (\CR_E(\delta) )\otimes_E \mathcal E_E^{\psi=0}.$
%Therefore, $\frak{Log}^{\varepsilon}_{V_x^*(1),\eta_x}$ coincides with 
%Perrin-Riou's large logarithm map as defined in \cite{Ben14a}.

\textup{3)} Let  $k\in \Z$ be an integer   such that $k+m\geqslant 1$
and $p^{-k-m}\bdelta (p) -1$ does not vanish on $A.$ Then 
\[
\spec^c_{k}\circ  \frak{Log}_{\CR_A(\bdelta^{-1}\chi )  ,d}(x)=
(m+k-1)! \cdot \frac{1-p^{m+k-1}\bdelta(p)^{-1}}{1-p^{-m-k}\bdelta (p)}\cdot 
\left <\spec^c_{-k}(x), \exp_{\CR_A(\bdelta \chi^k )} (d [k])\right >_{\CR_A(\bdelta \chi^k)}, 
\]  
where we denote by  $d[k]$ the image of $d$ under the canonical shift
$\CDcris (\CR_A(\bdelta)) \rightarrow  \CDcris (\CR_A(\bdelta \chi^k)).$

\textup{4)} Let $A=E.$ Then for any $k\in \Z$ such that $k+m\leqslant 0$ 
and $p^{-k-m}\bdelta (p) \neq 1$     one has 
\[
\spec^c_{k}\circ  \frak{Log}_{\CR_E(\bdelta^{-1}\chi )  ,d}(x)=
\frac{(-1)^{m+k}}{(-m-k)!} \cdot \frac{1-p^{m+k-1}\bdelta(p)^{-1}}{1-p^{-m-k}\bdelta (p)}\cdot 
\left [ \log_{\CR_E(\bdelta^{-1}\chi^{1-k})}\left (\spec^c_{-k}(x)\right ), d [k]\right ]_{\CR_E(\bdelta \chi^k)}, 
\]  
where 
\[
[\,\,,\,\,]_{\CR_E(\bdelta\chi^k)}\,:\, \CDcris \left (\CR_E(\bdelta^{-1}\chi^{1-k} )\right )\times 
\CDcris \left (\CR_E (\bdelta\chi^k)\right ) \rightarrow E
\]
denotes the canonical pairing.

%where $\mathrm{Log}_{V^*(1),\eta}(\mathrm{pr}_0(z))=\left\langle\mathrm{pr}_0(z), \exp_{\mathbb D}(\eta)\right\rangle\in A$ 
%and $\exp_{\mathbb D}$ is the Bloch-Kato exponential map $($for its general definition, see \cite{Nak13}$)$.  
\end{myproposition}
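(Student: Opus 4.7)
The plan is to handle the four assertions in sequence, reducing parts (3) and (4) to the explicit reciprocity law via parts (1) and (2).

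For part (1), I would argue that base-change compatibility of $\Exp_{\CR_A(\bdelta)}$ is forced by its defining equation $(\varphi-1)F=z-\frac{\partial^m z(0)}{m!}t^m$, which is visibly $A$-linear. Although $F$ is non-unique, it is determined modulo $\bigl(\CDcris(\CR_A(\bdelta))\otimes_A \CR_A\bigr)^{\varphi=1}$, and the final operator $(-1)^m\frac{\log\chi(\gamma_1)}{p}\,t^m\partial^m$ factors through this quotient (since elements of $\CR_A^{\varphi=1}$ are annihilated by $\partial^m$ up to constants that land in the Iwasawa cohomology indeterminacy). Hence the construction is functorial in $A$. Base-change compatibility of $\frak{Log}_{\CR_A(\bdelta^{-1}\chi),d}$ then follows from that of $\Exp$ combined with the corresponding property of the Iwasawa pairing $\{\,,\,\}_{\CR_A(\bdelta)}$, which was recorded in the preceding lemma.

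For part (2), I would directly compare the formula defining $\Exp_{\CR_E(\delta)}$ with Berger's reformulation of Perrin-Riou's large exponential in the language of $(\varphi,\Gamma)$-modules \cite{Ber03}. Both maps are characterized by solving the same Fontaine equation and applying the same normalizing operator $(-1)^m\frac{\log\chi(\gamma_1)}{p}t^m\partial^m$, so they must coincide on $\CDcris(\CR_E(\delta))\otimes_E \mathcal E_E^{\psi=0}$, giving the required identification with the restriction of $\Exp_{V,m}^{\varepsilon}$.

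For parts (3) and (4), the plan is to unfold the definition of $\frak{Log}$. Starting from
\[
\spec^c_{k}\circ\frak{Log}_{\CR_A(\bdelta^{-1}\chi),d}(x)=
\spec^c_{k}\Bigl\{x,\Exp^c_{\CR_A(\bdelta)}(d\otimes(1+X)^{\iota})\Bigr\}_{\CR_A(\bdelta)},
\]
I would apply the twist $\Tw_{-k}$ together with Lemma \ref{Lemma twisted Iwasawa pairing} and the commutative square of part (ii) of the previous lemma to rewrite this expression as
\[
\bigl\langle \spec^c_{-k}(x),\,\spec^c_{k}\circ\Exp^c_{\CR_A(\bdelta)}(d\otimes(1+X)^{\iota})\bigr\rangle_{\CR_A(\bdelta\chi^k)},
\]
up to the appropriate power of $\chi(\gamma_1)$ and sign from the twist. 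The inner specialization is then computed by the classical Perrin-Riou reciprocity law, in its $(\varphi,\Gamma)$-module formulation (Berger--Colmez, generalized by Nakamura): for $k+m\geqslant 1$ it equals $(m+k-1)!\cdot\frac{1-p^{m+k-1}\bdelta(p)^{-1}}{1-p^{-m-k}\bdelta(p)}\,\exp_{\CR_A(\bdelta\chi^k)}(d[k])$, yielding (3); for $k+m\leqslant 0$ the target of $\exp$ vanishes, and the classical reciprocity instead gives a formula through the inverse Bloch--Kato logarithm with prefactor $\frac{(-1)^{m+k}}{(-m-k)!}\cdot\frac{1-p^{m+k-1}\bdelta(p)^{-1}}{1-p^{-m-k}\bdelta(p)}$, yielding (4).

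The main obstacle will be tracking the precise normalizations—signs, factorials, and the twist by the involution $c\in\Gamma$ arising from $\Exp^c=c\circ\Exp$—across the differing conventions in \cite{PR94}, \cite{Ber03} and \cite{Nak14, Nak17}, and verifying that the Euler-like factors on both sides of (3) and (4) truly match. Once the formulas are established in the absolute case $A=E$ by reduction to Perrin-Riou's theorem, the relative version of (3) follows from part (1) and the density of arithmetic points in $\mathrm{Spm}(A)$, since both sides are $A$-analytic functions whose classical specializations coincide.
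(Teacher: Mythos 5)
Your proposal follows essentially the same route as the paper: part (1) is noted to be "clear" in the paper and you supply the (standard) functoriality argument; part (2) is reduced to Berger's reformulation of Perrin-Riou's map; parts (3) and (4) are reduced, via the twisting formalism of Lemma~\ref{Lemma twisted Iwasawa pairing} and specialization, to the interpolation property and explicit reciprocity law for crystalline $(\varphi,\Gamma)$-modules of rank one, which is exactly the chain of citations (\cite{PR94}, \cite{BB08}, \cite{Ber03}, \cite{Nak14}) the paper gives. The only substantive addition you make is the closing density argument for the relative version of (3), which is consistent with and implicit in the paper's appeal to part (1).
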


\begin{proof} Part  1) is clear. Part 2)  follows from Berger's construction of the large exponential map \cite{Ber03}. Part  3) is essentially the interpolation property  of the large exponential map (see \cite{PR94} and \cite[Corollaire 4.10]{BB08}. Part 4) is equivalent to Perrin-Riou's explicit reciprocity law. See  \cite{Ben00, Ber03,  Cz98} for the proofs  in the cassical case of absolutely crystalline representations.  The case  of $(\Ph,\Gamma)$-modules of rank one over an unramified field is particularly  simple and can be treated by the method of Berger \cite{Ber03} without  additional difficulties.   It also  can be deduced
from the  results of Nakamura \cite{Nak14}, where the approach of Berger 
 was  extendend to  general de Rham $(\Ph,\Gamma)$-modules. 
\end{proof}

\begin{mycorollary}
\label{corollary large exponential}
 We record the particular cases that will be used in this paper:
\[
\nonumber
\begin{aligned}
\nonumber
&\spec^c_{0}\circ  \frak{Log}_{\CR_A(\bdelta^{-1}\chi )  ,d}(x)=
(m-1)! \cdot \frac{1-p^{m-1}\bdelta(p)^{-1}}{1-p^{-m}\bdelta (p)}\cdot 
\left <\spec^c_{0}(x), \exp_{\CR_A(\bdelta)} (d)\right >_{\CR_A(\bdelta )}, \\
&
\spec^c_{-m}\circ  \frak{Log}_{\CR_E(\bdelta^{-1}\chi )  ,d}(x)=
 \frac{1-p^{-1}\bdelta(p)^{-1}}{1-\bdelta (p)}\cdot 
\left [ \log_{\CR_E(\bdelta^{-1}\chi^{m+1})}\left (\spec^c_{m}(x)\right ), d [-m]\right ]_{\CR_E(\bdelta \chi^{-m})}. 
\end{aligned}
\]  
\end{mycorollary}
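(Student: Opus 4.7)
The corollary is a direct specialization of Proposition~\ref{them:propertiestwovarPRlog}, so the proof reduces to substituting particular values of the integer parameter $k$ in parts (3) and (4) and simplifying the resulting constants.

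For the first identity, the plan is to apply part (3) of Proposition~\ref{them:propertiestwovarPRlog} with $k=0$. The hypothesis $k+m\geqslant 1$ becomes $m\geqslant 1$, which is precisely the standing assumption on $\bdelta$; the hypothesis that $p^{-k-m}\bdelta(p)-1$ does not vanish on $A$ becomes $p^{-m}\bdelta(p)\neq 1$, which is already the genericity condition built into the first formula. The factor $(m+k-1)!$ collapses to $(m-1)!$, the Euler factor $\frac{1-p^{m+k-1}\bdelta(p)^{-1}}{1-p^{-m-k}\bdelta(p)}$ becomes $\frac{1-p^{m-1}\bdelta(p)^{-1}}{1-p^{-m}\bdelta(p)}$, and the shift $d[0]$ is the identity, so $\CR_A(\bdelta\chi^0)=\CR_A(\bdelta)$ and $d[0]=d$. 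This yields the first displayed formula.

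For the second identity, I specialize part (4) of the proposition with $A=E$ and $k=-m$. The hypothesis $k+m\leqslant 0$ is satisfied as an equality, so $(-m-k)!=0!=1$ and $(-1)^{m+k}=1$. The Euler factor $\frac{1-p^{m+k-1}\bdelta(p)^{-1}}{1-p^{-m-k}\bdelta(p)}$ reduces to $\frac{1-p^{-1}\bdelta(p)^{-1}}{1-\bdelta(p)}$, the genericity condition $p^{-k-m}\bdelta(p)\neq 1$ becomes $\bdelta(p)\neq 1$, consistent with the denominator on the right-hand side. The argument of the logarithm is $\spec^c_{-k}(x)=\spec^c_m(x)$, and the target $(\Ph,\Gamma)$-module for the logarithm is $\CR_E(\bdelta\chi^k)=\CR_E(\bdelta\chi^{-m})$, with its Tate dual being $\CR_E(\bdelta^{-1}\chi^{1-k})=\CR_E(\bdelta^{-1}\chi^{m+1})$; this matches the expression given. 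The shifted basis $d[k]=d[-m]$ already appears as such on the right-hand side.

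There is essentially no obstacle here beyond careful bookkeeping of the Tate twists and of which $(\Ph,\Gamma)$-module carries which duality pairing; the only point that deserves attention is to check that the genericity hypotheses of Proposition~\ref{them:propertiestwovarPRlog}, (3) and (4) are equivalent to the non-vanishing of the denominators that explicitly appear in the statement, so that the formulas are unambiguously defined. Since both specializations respect these conditions, no additional argument is needed.
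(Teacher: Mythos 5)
Your proof is correct and takes the only reasonable route: both formulas are obtained by substituting $k=0$ into part (3) and $k=-m$ into part (4) of Proposition~\ref{them:propertiestwovarPRlog}, and your bookkeeping of the factorials, Euler factors, Tate twists, and duality pairings is exact. The paper states the corollary without a displayed proof precisely because it is this immediate specialization.
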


We also need the following technical result. 

\begin{myproposition}
\label{proposition cohomology of rank 1 modules in families}
Let $m\in \Z$ and let $\bdelta \,:\,\Qp^*\rightarrow A$ be a continuous character 
such that 
\[
\bdelta (u)=u^m, \qquad u\in \Zp^*.
\]
Then the following statements hold: 

1) If $m\geqslant 1$ or $\bdelta (p)p^{-m}\neq 1,$ then $H^0(\CR_A(\bdelta))=0.$

2a) If $A$ is a principal ideal domain and $m\leqslant 0,$ then 
$H^2(\CR_A(\bdelta))=0.$

2b) If, in addition, $p^{-m}\bdelta (p)-1$ is invertible in $A$ then $H^1(\CR_A(\bdelta))$
is a free $A$-module of rank one.
\end{myproposition}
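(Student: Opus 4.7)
The plan is to derive parts (2a) and (2b) from part (1) using local Tate duality and the Euler--Poincar\'e formula for $(\varphi,\Gamma)$-modules over $\CR_A$, both established in \cite{KPX}.

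For part (1), I would compute $H^0(\CR_A(\bdelta))=\CR_A(\bdelta)^{\Gamma,\,\varphi=1}$ directly. Since $\bdelta\vert_{\Zp^*}(u)=u^{m}$ implies $\bdelta(\chi(\gamma))=\chi(\gamma)^{m}$ for $\gamma\in\Gamma$, an element $fe_{\bdelta}$ is $\Gamma$-fixed if and only if $\gamma(f)=\chi(\gamma)^{-m}f$. Using $\gamma(t)=\chi(\gamma)t$ together with the standard identification of the $\Gamma$-eigenspaces of $\CR_A[1/t]$, the $\chi^{-m}$-eigenspace is exactly $A\cdot t^{-m}$. When $m\geqslant 1$, $t^{-m}\notin\CR_A$, so $\CR_A(\bdelta)^{\Gamma}=0$ and a fortiori $H^0=0$. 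When $m\leqslant 0$, one has $\CR_A(\bdelta)^{\Gamma}=A\cdot d_{\bdelta}$ with $d_{\bdelta}=t^{-m}e_{\bdelta}$, and $\varphi(d_{\bdelta})=p^{-m}\bdelta(p)d_{\bdelta}$; the equation $\varphi=1$ then becomes $c(p^{-m}\bdelta(p)-1)=0$, which forces $c=0$ since $p^{-m}\bdelta(p)-1\neq 0$ in the integral domain $A$, giving $H^0=0$.

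For part (2a), I would apply local Tate duality for $(\varphi,\Gamma)$-modules over $\CR_A$ (\cite[Theorem~4.4.5]{KPX}), which canonically identifies $H^2(\CR_A(\bdelta))$ with the $A$-dual of $H^0(\CR_A(\bdelta^{-1}\chi))$. The character $\bdelta^{-1}\chi$ restricts on $\Zp^*$ to $u\mapsto u^{1-m}$, and $1-m\geqslant 1$ under the hypothesis $m\leqslant 0$; applying part (1) to this character yields $H^0(\CR_A(\bdelta^{-1}\chi))=0$, and hence $H^2(\CR_A(\bdelta))=0$.

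For part (2b), I would combine the vanishing of $H^0$ and $H^2$ with the Euler--Poincar\'e formula of \cite[Theorem~4.4.2, Theorem~4.4.3]{KPX}, which represents the Fontaine--Herr complex of $\CR_A(\bdelta)$ by a perfect complex of $A$-modules of total Euler characteristic $-\rk_{\CR_A}\CR_A(\bdelta)=-1$. Under the strengthened hypothesis $p^{-m}\bdelta(p)-1\in A^{\times}$, the proofs of (1) and (2a) remain valid after every specialization $A\to A/\mathfrak m_{x}$, so $H^0$ and $H^2$ vanish fibrewise; the standard cohomology-and-base-change argument then forces $H^1(\CR_A(\bdelta))$ to be a finite projective $A$-module of constant fibre dimension $1$. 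Since $A$ is a principal ideal domain, finite projectivity coincides with freeness, and $H^1(\CR_A(\bdelta))$ is free of rank one. The main obstacle I anticipate is precisely this freeness step: the vanishing of $H^0$ and $H^2$ must survive every specialization, which is exactly what the unit hypothesis on $p^{-m}\bdelta(p)-1$ (as opposed to mere non-vanishing) guarantees, since units remain units under any ring homomorphism.
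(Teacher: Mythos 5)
Your proof of part (1) is correct and follows the paper's argument. Your treatment of part (2b) — reduction to fibres, the Euler characteristic formula over each residue field $k(x)$, and the structure of finitely generated modules over a PID — is also the paper's strategy. The gap is in part (2a).

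Local Tate duality for $(\varphi,\Gamma)$-modules over an affinoid base is a statement in the derived category of $A$-modules; it does \emph{not} identify $H^2(\CR_A(\bdelta))$ with the $A$-linear dual of $H^0(\CR_A(\bdelta^{-1}\chi))$. Over a ring $A$ of global dimension one, passing from the derived duality to cohomology yields instead a short exact sequence
\[
0 \to \Ext^1_A(H^1(\CR_A(\bdelta^{-1}\chi)), A) \to H^2(\CR_A(\bdelta)) \to \Hom_A(H^0(\CR_A(\bdelta^{-1}\chi)), A) \to 0.
\]
Part (1) only kills the right-hand term. To conclude $H^2(\CR_A(\bdelta)) = 0$ you would also need the $\Ext^1$-term to vanish, i.e., $H^1(\CR_A(\bdelta^{-1}\chi))$ to be torsion-free over $A$, which you do not verify. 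This is not automatic: when $p^{-m}\bdelta(p)-1$ is a nonzero non-unit of $A$, the fibre cohomology of these rank-one modules genuinely jumps at the zero locus, and the $H^1$ over $A$ can acquire torsion. So the proposed identification fails, and the argument for (2a) as written has a hole.

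The paper establishes (2a) by a different route that avoids this entirely. It uses the base-change spectral sequence $\Tor^A_{-i}(H^j(\CR_A(\bdelta)), k(x)) \Rightarrow H^{i+j}(\CR_{k(x)}(\bdelta_x))$, which in top degree degenerates to an isomorphism $H^2(\CR_A(\bdelta)) \otimes_A k(x) \simeq H^2(\CR_{k(x)}(\bdelta_x))$; then Tate duality over the residue field $k(x)$ — where it really is a perfect pairing of finite-dimensional vector spaces — combined with part (1) applied to $\bdelta_x^{-1}\chi$ gives $H^2(\CR_{k(x)}(\bdelta_x)) = 0$ for every $x\in\Spm(A)$, and the structure theorem over the PID $A$ forces $H^2(\CR_A(\bdelta)) = 0$. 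This is precisely the fibrewise reasoning you already deploy in (2b); applying it to (2a) as well closes the gap.
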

\begin{proof} 1) If $m\geqslant 1,$ then $\CR_A(\bdelta)^{\Gamma}=0$
and therefore  $H^0(\CR_A(\bdelta))=0.$ Assume that $m\leqslant 0.$
Then  $\CDcris (\CR_A(\bdelta))=\CR_A(\bdelta)^{\Gamma}$ is generated by $d_{\bdelta}=t^{-m}e_{\bdelta},$ where 
$
\Ph (d_{\bdelta})=p^{-m}\bdelta (p) d_{\bdelta}.
$
Therefore
\[
H^0(\CR_A(\bdelta))= \CDcris (\CR_A(\bdelta))^{\Ph=1}=0.
\]

2a) For any bounded complex $C^\bullet$ of $A$-modules and any 
$A$-module $M$ one has a spectral sequence
\[
E_2^{i,j}:=\Tor^A_{-i}(H^j(C^\bullet),M) \Rightarrow H^{i+j}(C^\bullet \otimes_A M).
\]
In paticular, for any maximal ideal $\mathfrak m_x$ of $A$ one has
\begin{equation}
\label{spectral sequence (phi-Gamma)-modules in families}
E_2^{i,j}:=\Tor^A_{-i}\left (H^j(\CR_A(\bdelta), k(x)\right ) \Rightarrow H^{i+j}\left (\CR_{k(x)}(\bdelta_x)\right ),
\end{equation}
where $k(x)=A/{\mathfrak m_x}.$ 
In degree 2, this gives  isomorphisms
\[
H^2(\CR_A(\bdelta))\otimes_A k(x)\simeq 
H^{2}\left (\CR_{k(x)}(\bdelta_x)\right ), \qquad \forall x\in \Spm (A).
\]
By local duality and part 1) of the proposition, 
\[
H^{2}\left (\CR_{k(x)}(\bdelta_x)\right )=
H^0\left (\CR_{k(x)}(\bdelta^{-1}_x\chi )\right )=0,
\]
and therefore $H^2(\CR_A(\bdelta))\otimes_A k(x)=0$ for all $x\in \Spm (A).$
Since $A$ is a principal ring, this implies that $H^2(\CR_A(\bdelta))=0.$

2b) The spectral sequence (\ref{spectral sequence (phi-Gamma)-modules in families})
together with 2a) gives 
\[
H^1(\CR_A(\bdelta))\otimes_A k(x)\simeq 
H^{1}\left (\CR_{k(x)}(\bdelta_x)\right ), \qquad \forall x\in \Spm (A).
\]
From our assumptions, it follows that $H^{0}\left (\CR_{k(x)}(\bdelta_x)\right )=
H^{2}\left (\CR_{k(x)}(\bdelta_x)\right )=0$ and by the Euler characteristic formula
$\dim_{k(x)}H^{1}\left (\CR_{k(x)}(\bdelta_x)\right )=1$ for all $x\in \Spm (A).$
Thus, $H^1(\CR_A(\bdelta))\otimes_A k(x)$ is a $k(x)$-vector space of dimension
$1$ for all $x\in \Spm (A).$ Since  $H^1(\CR_A(\bdelta))$ is a finitely generated 
module over the principal ideal domain $A,$ this implies that 
$H^1(\CR_A(\bdelta))$ is free of rank one over $A.$ The proposition is proved. 
\end{proof}

\section{Complements on  the $\mathcal L$-invariant}

\subsection{Regular submodules}
\label{subsection regilar submodules}
\subsubsection{} 
In this section,  we first review the definition of the $\mathcal L$-invariant 
of $p$-adic representations of motivic weight $-2$ proposed in \cite{Ben14}.
For the purposes of this paper, it is more convenient to use another 
construction, which we describe in Section~\ref{subsection second definition of L-invariant}. We next show that the two definitions are equivalent.  

Fix a prime number $p$ and  a finite set $S$ of primes of $\Q$ 
containing $p.$ We denote by $G_{\Q,S}$ the Galois group
of the maximal algebraic extension of $\Q$ unramified outside $S\cup \{\infty\}.$
Let $V$ be a $p$-adic representation of 
$G_{\Q,S}$ with coefficients in a finite extension $E$ of $\Qp.$
We write $H^*_S(\Q,V)$ for the continuous cohomology of $G_{\Q,S}$ with coefficients in $V.$  Recall that the Bloch--Kato Selmer group $H^1_f(\Q,V)$ is defined as
\begin{equation}
\label{definition of Bloch-Kato Selmer}
H^1_f(\BQ,V)=\ker \left (H^1_S(\BQ,V)\rightarrow \underset{l\in S}{\bigoplus}
\frac{H^1(\BQ_l,V)}{H^1_f(\BQ_l,V)}\right ),
\end{equation}
where the "local conditions" $H^1_f(\BQ_l,V)$ are given by 
\begin{equation}
\label{Bloch-Kato local conditions}
H^1_f(\BQ_l,V)=\begin{cases} 
\ker (H^1(\BQ_l,V)\rightarrow H^1(I_l,V))  &\text{\rm if $l\neq  p$},
\\
\ker (H^1(\BQ_p,V) \rightarrow H^1(\BQ_p,V\otimes\Bc))  &\text{\rm if $l= p$} 
\end{cases}
\end{equation}
(see \cite{BK90}).
Here  $\Bc$ is  Fontaine's ring of crystalline periods  \cite{Fo94a}.
We also consider the relaxed Selmer group 
\begin{equation*}
\label{definition of relaxed Bloch-Kato Selmer}
H^1_{f, \{p\}}(\BQ,V)=\ker \left (H^1_S(\BQ,V)\rightarrow \underset{l\in S\setminus\{p\}}{\bigoplus}
\frac{H^1(\BQ_l,V)}{H^1_f(\BQ_l,V)}\right ).
\end{equation*}
The Poitou--Tate exact sequence induces an exact sequence
\begin{equation}
\label{modified Poitou-Tate sequence}
0\rightarrow H^1_f(\Q,V)\rightarrow H^1_{f, \{p\}}(\Q,V)
\rightarrow \frac{H^1(\Qp,V)}{H^1_f(\Qp,V)}
\rightarrow H^1_f(\Q, V^*(1))^*
\end{equation}
(see \cite[Proposition~2.2.1]{FPR94} and \cite[Lemme~3.3.6]{PR95}).

\subsubsection{}In the rest of this section,  we assume that $V$ 
satisfies the following conditions:
\begin{itemize}
\item[]{\bf C1)} $H^0_S(\Q,V)=H^0_S(\Q, V^*(1))=0.$ 

\item[]{\bf C2)} $V$ is crystalline at $p$ and  $\Dc (V)^{\Ph=1}=0.$

\item[]{\bf C3)} $\Ph \,:\,\Dc (V) \rightarrow \Dc (V)$
is semisimple at $p^{-1}$.

\item[]{\bf C4)}   $H^1_f(\BQ, V^*(1))=0.$

\item[]{\bf C5)}  The localization map  $\res_p\,:\,H^1_f(\Q, V)\rightarrow H^1_f(\Qp,V)$
is injective. 
\end{itemize}
We refer the reader to \cite{Ben15} for a discussion of these conditions. 
Here we only remark that if $V$ is the $p$-adic realization of a pure motive
of weight $\leqslant -2$ having  good reduction at $p,$ then 
{\bf C3--5)} are deep conjectures which are known only in some special cases.

From {\bf C2)} it follows that the exponential map 
$\Dc (V)/\F^0\Dc (V)\rightarrow H^1_f(\Qp,V)$
is an isomorphism, and we denote by $\log_V$ its inverse. Compositing $\log_V$ with
the localization map $H^1_f(\Q,V)\rightarrow H^1_f(\Qp,V),$  we obtain a map
\[
r_V\,:\,H^1_f(\Q,V) \rightarrow \Dc (V)/\F^0\Dc (V)
\]
which is closely related to the syntomic regulator.

\subsubsection{}We introduce the notion of  regular submodule, which first appeared 
in  Perrin-Riou's book  \cite{PR95} in the context of crystalline representations 
(see also \cite{Ben11}). 

\begin{mydefinition}[\sc Perrin-Riou] Assume that $V$ is a $p$-adic representation
which satisfies the conditions {\bf C1--5)}. 

i) A $\Ph$-submodule $D$ of $\Dc (V)$ is regular if $D\cap \F^0\Dc (V)=0$
and the map
\[
r_{V,D}\,:\, H^1_f(\Q,V)\rightarrow \Dc (V)/(\F^0\Dc (V)+D),
\]
induced by $r_V,$ is an isomorphism.

ii) A $\Ph$-submodule $D$ of $\Dc (V^*(1))$ is regular if 

\[D+\F^0\Dc (V^*(1))=\Dc (V^*(1))
\]
 and the map
\[
D\cap \F^0\Dc (V^*(1))\rightarrow H^1_f(\Q,V)^*,
\]
induced by the dual map  $r_V^*\,:\,\F^0\Dc (V^*(1))\rightarrow H^1_f(V)^*,$ is an 
isomorphism. 
\end{mydefinition}
\noindent
\begin{myremark}   1) Assume that $H^1_f(\Q,V)=H^1_f(\Q,V^*(1))=0.$ Then $D$ is regular 
if the canonical projection $D\rightarrow t_V(\Qp)$ is an isomorphism of vector
spaces, and our definition agrees with the definition given in \cite{Ben11}.

2)  A $\Ph$-submodule $D$ of $\Dc (V)$ is regular if and only if 
\begin{equation}
\label{decomposition of H^1_f using H^1_f(D)}
H^1_f(\Qp,V)=\res_p \left (H^1_f(\Q,V)\right )\oplus H^1_f(\bD),
\end{equation}
where $\bD$ is the $(\Ph,\Gamma)$-submodule of $\DdagrigE (V)$
associated to $D$ by the theory of Berger \cite{Ber08} (see \cite[Section~3.1.3]{Ben14}). 
\end{myremark}

\subsection[]{The $\mathcal L$-invariant}
\label{The L-invariant}
\subsubsection{}
Let $D\subset \Dc (V)$ be a regular submodule. From the semisimplicity 
of $\Ph$ it follows that, as a $\Ph$-module,  $D$ decomposes into the direct sum
\[
D=D_{-1}\oplus D^{\Ph=p^{-1}},\qquad D_{-1}^{\Ph=p^{-1}}=0,
\]
which gives a four step filtration
\begin{equation}
\label{definition of L-inv filtration}
\{0\}\subset D_{-1}\subset D \subset \Dc (V).
\end{equation}
Let $F_0\Ddagrig (V)$ and $F_{-1}\Ddagrig (V)$  denote  the $(\Ph,\Gamma)$-submodules 
of $\DdagrigE (V)$ associated  
to $D$ and $D_{-1}$ by Berger \cite{Ber08}, thus 
\[
D=\CDcris \left ( F_0\Ddagrig (V)\right ),\qquad 
D_{-1}=\CDcris \left (F_{-1}\Ddagrig (V)\right ).
\]
Set $\bM=\mathrm{gr}_0\Ddagrig (V)$ and $W=D/D_{-1}\simeq \CDcris (\bM).$  
The $(\Ph,\Gamma)$-module $\bM$ satisfies 
\[
\F^0\CDcris (\bM)=0, \qquad \CDcris (\bM)^{\Ph=p^{-1}}=\CDcris (\bM).
\]
The cohomology of such  modules was studied
in detail in \cite[Proposition 1.5.9 and Section 1.5.10]{Ben11}.
In particular, there exists a canonical decomposition of $H^1(\bM)$ into the direct sum of $H^1_f(\bM)$ and some canonical space $H^1_c(\bM)$
\begin{equation}
\label{decomposition of H^1(W)}
H^1(\bM)=H^1_f(\bM)\oplus H^1_c(\bM).
\end{equation}
Moreover,  there are canonical isomorphisms
\begin{equation}
\label{isomorphisms for H^1_f(W) and H^1c(W)}
i_{\bM,f}\,\,:\,\,\CDcris (\bM)\simeq H^1_f(\bM), \qquad i_{\bM,c}\,\,:\,\,\CDcris (\bM)\simeq H^1_c(\bM)
\end{equation}
(see \cite[Proposition 1.5.9]{Ben11}).
 
\subsubsection{} We have a tautological exact sequence

\begin{equation*}
0\rightarrow  F_{-1}\Ddagrig (V)\rightarrow  F_{0}\Ddagrig (V)\rightarrow \bM
\rightarrow 0
\end{equation*}
which induces exact sequences \cite[Section~3.1.5]{Ben14}
\begin{align}
\nonumber
&0\rightarrow H^1(F_{-1}\Ddagrig (V))\rightarrow H^1(F_{0}\Ddagrig (V))\rightarrow H^1(\bM)
\rightarrow 0,\\
\nonumber
&0\rightarrow H^1_f(F_{-1}\Ddagrig (V))\rightarrow H^1_f(F_{0}\Ddagrig (V))\rightarrow H^1_f(\bM)\rightarrow 0.
\end{align}
Moreover, $H^1_f(F_{-1}\Ddagrig (V))=H^1(F_{-1}\Ddagrig (V)),$ and we have 
\begin{equation}
\label{isomorphism for H^1/H^1_f}
\frac{H^1(F_{0}\Ddagrig (V))}{H^1_f(F_{0}\Ddagrig (V)} \simeq
\frac{H^1(\bM)}{H^1_f(\bM)}.
\end{equation}

\subsubsection{} From  {\bf C5)} it follows that the localisation map
$
H^1_{f,\{p\}}(\Q, V) \rightarrow H^1(\Qp,V)
$
is  injective. Let 
\begin{equation*}
\kappa_D\,\,:\,\,H^1_{f,\{p\}}(\Q, V) \rightarrow \frac{H^1(\Qp,V)}{H^1_f(F_{0}\Ddagrig (V))}
\end{equation*}
denote the composition of this map with the canonical projection.
Then $\kappa_D$ is an isomorphism \cite[Lemma~3.1.4]{Ben14}

Let $ H^1(D,V)$ denote the inverse image of
$H^1(F_{0}\Ddagrig (V))/H^1_f(F_{0}\Ddagrig (V))$ under $ \kappa_D.$ Then
$\kappa_D$ induces an isomorphism
\[
H^1(D,V) \simeq \frac{H^1(F_{0}\Ddagrig (V))}{H^1_f(F_{0}\Ddagrig (V))}.
\]
Consider the composition map
$
H^1(D,V)\rightarrow H^1(F_{0}\Ddagrig (V)) \rightarrow H^1(\bM).
$
From (\ref{isomorphism for H^1/H^1_f}) it follows that 
\begin{equation}
\label{isomorphism for H^1(D,V)}
H^1(D,V) \simeq \frac{H^1(\bM)}{H^1_f(\bM)}
\end{equation}
is an isomorphism. Taking into account (\ref{decomposition of H^1(W)})
and (\ref{isomorphisms for H^1_f(W) and H^1c(W)}) we obtain that 
$\dim_E H^1(D,V)=\dim \CDcris (\bM).$ 
Hence we have a diagram
\[
\xymatrix{
\CDcris (\bM) \ar[r]^{\overset{i_{\bM,f}}\sim} &H^1_f(\bM)\\
H^1(D,V) \ar[u]^{\rho_{D,f}} \ar[r] \ar[d]_{\rho_{D,c}} 
&H^1(\bM) \ar[u]_{p_{\bM,f}}
\ar[d]^{p_{\bM,c}}
\\
\CDcris (\bM) \ar[r]^{\overset{i_{\bM,c}}\sim} &H^1_c(\bM),}
\]
where $\rho_{D,f}$ and $\rho_{D,c}$ are defined as the unique maps making
this diagram commute.
From (\ref{isomorphism for H^1(D,V)})  it follows that  $\rho_{D,c}$ is an isomorphism.

\begin{mydefinition} The determinant
\[
\mathscr L (V,D)= \det \left ( \rho_{D,f} \circ \rho^{-1}_{D,c}\,\mid \,\CDcris (\bM) \right )
\]
will be called  the $\mathscr L$-invariant associated to $V$ and $D$.
\end{mydefinition}

\subsection{The dual construction}
\label{subsection second definition of L-invariant}

\subsubsection{} Let $D^{\perp}=\Hom_E \left (\Dc (V)/D, \Dc (E(1))\right ).$
Then $D^\perp$ is a regular submodule of $\Dc (V^*(1)).$
In this section, we define an $\mathcal L$-invariant $\mathcal L(V^*(1),D^{\perp})$
associated to the data $(V^*(1),D^\perp).$

Let $D^{\perp}_1$ denote the biggest $\Ph$-submodule of $\Dc (V^*(1))$
such that $(D^{\perp}_1/D^{\perp})^{\Ph=1}=D^{\perp}_1/D^{\perp}.$
This gives a four step filtration 
\begin{equation}
\label{definition L-inv dual filtration}
\{0\}\subset D^\perp \subset D_1^\perp \subset \Dc (V^*(1)).
\end{equation}
It follows from definition that $D^{\perp}_1= \left (D_{-1}\right )^{\perp}=
\Hom_E (\Dc (V)/D_{-1}, \Dc (E(1))),$ and therefore  
(\ref{definition L-inv dual filtration}) is the  dual filtration of (\ref{definition of L-inv filtration}).

We denote by $F_0\Ddagrig (V^*(1))$ and $F_1\Ddagrig (V^*(1))$ the $(\Ph, \Gamma)$-submodules
of $\DdagrigE (V^*(1))$ associated to $D^{\perp}$
and $D^{\perp}_1,$ thus 
\begin{equation*}
\CDcris \left (F_0\Ddagrig (V^*(1))\right ) \simeq D^{\perp}, \qquad 
\CDcris \left (F_1\Ddagrig (V^*(1))\right ) \simeq D^{\perp}_1.
\end{equation*}
Then 
\begin{align*}
&F_0\Ddagrig (V^*(1))=\Hom_{\CR_E} \left (\DdagrigE (V)/F_0\DdagrigE (V), \CR_E(\chi)\right ),\\
&F_1\Ddagrig (V^*(1))=\Hom_{\CR_E} \left (\DdagrigE (V)/F_{-1}\DdagrigE (V) , \CR_E(\chi)\right ),
\end{align*}
and we have an exact sequence 
\begin{equation}
\label{second definition of L-inv tautological sequence}
0\rightarrow F_0\Ddagrig (V^*(1))\rightarrow F_1\Ddagrig (V^*(1)) \rightarrow \bM^*(\chi)
\rightarrow 0,
\end{equation}
where $\bM$ is the $(\Ph,\Gamma)$-module defined in Section~\ref{The L-invariant}.
Note that 
\begin{equation}
\label{conditions for exceptional (phi,Gamma)-module}
\F^0\CDcris (\bM^*(\chi))=\CDcris (\bM^*(\chi)), \qquad
\CDcris (\bM^*(\chi))^{\Ph=1}=\CDcris (\bM^*(\chi)).
\end{equation}
We refer the reader to \cite[Proposition~1.5.9 and Section~1.5.10]{Ben11}
for the proofs of the following facts. 

\begin{itemize}
\begin{item}[a)]
The map
\begin{equation}
\label{construction of i_(W^*(chi))}
\begin{aligned}
&i_{\bM^*(\chi)}\,\,:\,\,\CDcris (\bM^*(\chi))\oplus \CDcris (\bM^*(\chi))
\rightarrow H^1(\bM^*(\chi)),\\
&i_{\bM^*(\chi)}(x,y)=\cl (-x,\, \frac{\log \chi (\gamma_1)}{p}\, y)
\end{aligned}
\end{equation}
is an isomorphism.
\end{item}

\begin{item}[b)]
Denote by $i_{\bM^*(\chi),f}$ (respectively $i_{\bM^*(\chi),c}$)
the  restriction of $i_{\bM^*(\chi)}$ on the first (respectively second) summand. 
Then $\im \left (i_{\bM^*(\chi),f}\right )=H^1_f(\bM^*(\chi))$ and 
\begin{equation}
\label{decomposition of W^*(chi)}
H^1(\bM^*(\chi)) \simeq H^1_f(\bM^*(\chi))\oplus H^1_c(\bM^*(\chi)),
\end{equation}
where $H^1_c(\bM^*(\chi))=\im \left (i_{\bM^*(\chi),c}\right ).$
\end{item}

\begin{item}[c)]
The local duality
\begin{equation}
\nonumber
\left < \,\,,\,\,\right >_{\bM}\,:\, H^1(\bM^*(\chi))\times H^1(\bM )
\rightarrow E
\end{equation}
has the following explicit description:
\begin{equation}
\label{local duality for exceptional modules}
\left < i_{\bM^*(\chi)}(\alpha,\beta), i_{\bM}(x,y) \right >_{\bM}=
\left [\beta,x\right ]_{\bM}-\left [\alpha,y\right ]_{\bM}, \qquad
\end{equation}
for all $\alpha,\beta \in \CDcris (\bM^*(\chi))$ and 
$x,y \in \CDcris (\bM).$
\end{item}
\end{itemize}

\begin{mylemma} i) The following sequences are exact 
\begin{equation*}
\begin{aligned}
\nonumber
& 0\rightarrow H^0\left (F_0\Ddagrig (V^*(1))\right ) \rightarrow 
H^0\left (F_1\Ddagrig (V^*(1))\right )
\rightarrow H^0\left (\bM^*(\chi)\right ) \rightarrow 0,\\
\nonumber
& 0\rightarrow H^0\left (F_1\Ddagrig (V^*(1))\right ) \rightarrow 
H^0\left (\DdagrigE (V^*(1))\right ) 
\rightarrow 
H^0\left (\mathrm{gr}_2\DdagrigE (V^*(1))\right ) \rightarrow 0.
\end{aligned}
\end{equation*}

ii) We have a commutative diagram with exact rows
\begin{equation}
\label{dual L-invariant diagram from lemma}
\xymatrix
{0\ar[r]  &H^1_f\left (F_0\Ddagrig (V^*(1))\right )\ar[r] \ar@{^{(}->}[d] &H^1_f
\left (F_1\Ddagrig (V^*(1))\right )
\ar[r] \ar@{^{(}->}[d] &H^1_f\left (\bM^*(\chi)\right ) \ar[r] \ar@{^{(}->}[d]&0,\\
0\ar[r] &H^1 \left (F_0\Ddagrig (V^*(1))\right ) \ar[r] 
&H^1 \left (F_1\Ddagrig (V^*(1))\right )
\ar[r] &H^1 \left (\bM^*(\chi)\right ) \ar[r] &0.
}
\end{equation}

iii) The natural map $H^1\left (F_1\Ddagrig (V^*(1))\right )\rightarrow 
H^1\left (\DdagrigE (V^*(1))\right )$ is injective and 
\[
H^1_f\left (F_1\Ddagrig (V^*(1))\right )= H^1_f\left (\DdagrigE (V^*(1))\right ).
\]

iv) There is a canonical isomorphism
\begin{equation}
\label{dual L-invariant first isomorphism}
\frac{H^1 \left (F_1\Ddagrig (V^*(1))\right )}{H^1_f \left (\DdagrigE (V^*(1) \right )+H^1 \left (F_0\Ddagrig (V^*(1)) \right )}
\simeq \frac{H^1 (\bM^*(\chi))}{H^1_f (\bM^*(\chi))}.
\end{equation}
\end{mylemma}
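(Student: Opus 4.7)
The four assertions are obtained by unwinding the long exact cohomology sequences attached to (\ref{second definition of L-inv tautological sequence}) and to the companion
\begin{equation*}
0 \to F_1\Ddagrig(V^*(1)) \to \DdagrigE(V^*(1)) \to \gr_2\DdagrigE(V^*(1)) \to 0,
\end{equation*}
combined with the standard description $H^0(\bD) = \F^0\CDcris(\bD)^{\Ph=1}$ and the Bloch--Kato-style formula for $H^1_f$ of crystalline $(\Ph,\Gamma)$-modules. The structural ingredients are the semisimplicity hypothesis \textbf{C3}, the exceptional identities (\ref{conditions for exceptional (phi,Gamma)-module}) for $\bM^*(\chi)$, and the regularity of $D^\perp$; the latter, combined with the definition of $D_{-1}$, yields $\F^0\CDcris(\gr_2\DdagrigE(V^*(1))) = \CDcris(\gr_2\DdagrigE(V^*(1)))$ together with the absence of the Frobenius eigenvalue $1$ on $\gr_2\DdagrigE(V^*(1))$, whence $H^0(\gr_2\DdagrigE(V^*(1))) = 0$ and $H^1_f(\gr_2\DdagrigE(V^*(1))) = 0$.

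\emph{Part (i).} Since $H^0(\gr_2\DdagrigE(V^*(1))) = 0$, the second sequence reduces to the tautological equality $H^0(F_1\Ddagrig(V^*(1))) = H^0(\DdagrigE(V^*(1)))$. For the first, given $\bar x \in \CDcris(\bM^*(\chi)) = H^0(\bM^*(\chi))$, the surjection $D_1^\perp \twoheadrightarrow \CDcris(\bM^*(\chi))$ admits lifts into $\F^0$ (because $\F^0\CDcris(\bM^*(\chi)) = \CDcris(\bM^*(\chi))$ forces $\F^0 D_1^\perp + D^\perp = D_1^\perp$), and a correction by an element of $D^\perp$, made available by the semisimplicity of $\Ph$ together with the exceptional identities for $\bM^*(\chi)$, yields a lift in $\F^0(D_1^\perp)^{\Ph=1}$.

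\emph{Part (ii).} The bottom row is the long exact sequence of (\ref{second definition of L-inv tautological sequence}) truncated at $H^1$ thanks to (i), and the vertical inclusions are by definition. The top row is obtained by applying the snake lemma to the commutative diagram built by splicing the tautological sequence of filtered Dieudonn\'e modules $0 \to D^\perp \to D_1^\perp \to \CDcris(\bM^*(\chi)) \to 0$ into the Bloch--Kato four-term exact sequence
\begin{equation*}
0 \to H^0(\bD) \to \CDcris(\bD)^{\Ph = 1} \oplus \F^0\CDcris(\bD) \to \CDcris(\bD) \to H^1_f(\bD) \to 0
\end{equation*}
for $\bD = F_0\Ddagrig(V^*(1)),\, F_1\Ddagrig(V^*(1)),\,\bM^*(\chi)$. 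The surjection $H^1_f(F_1\Ddagrig(V^*(1))) \twoheadrightarrow H^1_f(\bM^*(\chi))$ follows from the surjectivity of $D_1^\perp \to \CDcris(\bM^*(\chi))$ once the snake lemma diagram is in place.

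\emph{Parts (iii) and (iv).} The injectivity in (iii) is immediate from $H^0(\gr_2\DdagrigE(V^*(1))) = 0$, and the equality $H^1_f(F_1\Ddagrig(V^*(1))) = H^1_f(\DdagrigE(V^*(1)))$ follows from $H^1_f(\gr_2\DdagrigE(V^*(1))) = 0$. Part (iv) is then a diagram chase: by (iii), the left-hand quotient rewrites as $H^1(F_1\Ddagrig(V^*(1)))/(H^1(F_0\Ddagrig(V^*(1))) + H^1_f(F_1\Ddagrig(V^*(1))))$, and the right column of the diagram of (ii) identifies this with $H^1(\bM^*(\chi))/H^1_f(\bM^*(\chi))$. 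The most delicate point throughout is the simultaneous control of $\Ph$ and $\F^0$ in Part (i) and the exactness of the $H^1_f$ row in Part (ii); both rely essentially on the exceptional identities (\ref{conditions for exceptional (phi,Gamma)-module}) and on hypothesis \textbf{C3}, without which the maps in question would in general fail to be surjective.
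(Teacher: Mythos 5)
Your overall strategy runs parallel to the paper's: use the equivalence between crystalline $(\Ph,\Gamma)$-modules and filtered Dieudonn\'e modules together with the semisimplicity of $\Ph$ to control the $H^0$ and $H^1_f$ sequences, then deduce (iv) by isomorphism theorems. The direct computation that $\F^0\CDcris(\mathrm{gr}_2\Ddagrig(V^*(1)))=\CDcris(\mathrm{gr}_2\Ddagrig(V^*(1)))$ and that $\Ph$ has no eigenvalue $1$ there (hence $H^0=H^1_f=0$ on $\gr_2$) is a valid shortcut that replaces the duality-based citation the paper uses for (iii), and your snake-lemma strategy for the top row of (ii) replaces the paper's appeal to \cite[Corollary~1.4.6]{Ben11}.

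There is, however, a genuine gap in the bottom row of (ii). The long exact cohomology sequence attached to (\ref{second definition of L-inv tautological sequence}) continues
\[
\cdots \rightarrow H^1\bigl(F_1\Ddagrig(V^*(1))\bigr) \rightarrow H^1\bigl(\bM^*(\chi)\bigr) \rightarrow H^2\bigl(F_0\Ddagrig(V^*(1))\bigr) \rightarrow \cdots
\]
Part (i) only kills the connecting map out of $H^0(\bM^*(\chi))$ and gives injectivity at the left end; it says nothing about surjectivity of $H^1\bigl(F_1\Ddagrig(V^*(1))\bigr) \rightarrow H^1(\bM^*(\chi))$. For that you need $H^2\bigl(F_0\Ddagrig(V^*(1))\bigr)=0$, which the paper establishes via Poincar\'e duality: this group is dual to $H^0\bigl(\Ddagrig(V)/F_0\Ddagrig(V)\bigr)$, and the latter vanishes because $\Dc(V)^{\Ph=1}=0$ by \textbf{C2}. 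You cannot hope to deduce the missing surjectivity from the $H^1_f$ surjectivity you obtain via the snake lemma, since $H^1(\bM^*(\chi))=H^1_f(\bM^*(\chi))\oplus H^1_c(\bM^*(\chi))$ is strictly larger. Without this step, the bottom row of (ii), and hence (iv), is unproved.

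A secondary point: the ``Bloch--Kato four-term exact sequence'' you invoke, $0\to H^0(\bD)\to \CDcris(\bD)^{\Ph=1}\oplus\F^0\CDcris(\bD)\to\CDcris(\bD)\to H^1_f(\bD)\to 0$, is not correct for $\bD=\bM^*(\chi)$: the cokernel of $(a,b)\mapsto a-b$ is $\CDcris/(\CDcris^{\Ph=1}+\F^0\CDcris)$, which here is $0$, while $H^1_f(\bM^*(\chi))\simeq\CDcris(\bM^*(\chi))$ is not. That sequence computes $H^1_e$, not $H^1_f$. The fundamental exact sequence you want is $0\to H^0(\bD)\to\CDcris(\bD)\xrightarrow{(1-\Ph,\,\pi)}\CDcris(\bD)\oplus\CDcris(\bD)/\F^0\CDcris(\bD)\to H^1_f(\bD)\to 0$; with that substitution your snake-lemma argument for the top row does go through, since both the source and target columns remain short exact by the exactness of $\CDcris$ and of $\CDcris/\F^0\CDcris$ on the tautological sequence.
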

\begin{proof} i) Since the category of crystalline $(\Ph,\Gamma )$-modules is equivalent to the category of filtered Dieudonn\'e modules \cite{Ber08}, the exact sequence (\ref{second definition of L-inv tautological sequence})
induces an exact  sequence 
\[
0\rightarrow \F^0\CDcris \left (F_0\Ddagrig (V^*(1))\right )\rightarrow \F^0\CDcris \left (F_1\Ddagrig (V^*(1))\right )\rightarrow
\F^0\CDcris \left ( \bM^*(\chi)\right )
\rightarrow 0 .\]
The semisimplicity of $\Ph$ implies that the sequence 
\[
0\rightarrow \F^0\CDcris \left (F_0\Ddagrig (V^*(1))\right )^{\Ph=1}\rightarrow \F^0\CDcris \left (F_1\Ddagrig (V^*(1))\right )^{\Ph=1}\rightarrow
\F^0\CDcris \left ( \bM^*(\chi)\right )^{\Ph=1}
\rightarrow 0 \]
is exact. Since $H^0(\mathbf X)=\F^0\CDcris (\mathbf X)^{\Ph=1}$ for any crystalline $(\Ph,\Gamma)$-module $\mathbf X$ \cite[Proposition~1.4.4]{Ben11}, 
the first sequence is exact. The exactness of the second sequence 
can be proved by the same argument.

ii) We only need to prove that the rows of the diagram 
(\ref{dual L-invariant diagram from lemma})  are exact. 
From i) and the long exact cohomology sequence 
associated to (\ref{second definition of L-inv tautological sequence}) we 
obtain an exact sequence 
\[
0\rightarrow  H^1 \left (F_0\Ddagrig (V^*(1))\right ) \rightarrow 
H^1 \left (F_1\Ddagrig (V^*(1))\right )
\rightarrow H^1 \left (\bM^*(\chi)\right ) \rightarrow H^2\left(F_0\Ddagrig (V^*(1))\right ).
\]
Condition {\bf C2)} implies that 
\[
H^0 \left (\DdagrigE (V)/F_0\Ddagrig (V) \right )=
\F^0\CDcris  \left (\DdagrigE (V)/F_0\Ddagrig (V) \right )^{\Ph=1}=0,
\]
and by Poincar\'e duality for $(\Ph,\Gamma)$-modules, we have
\begin{equation*}
\label{H^2(Qp, bD^perp)=0}
H^2\left (F_0\Ddagrig (V^*(1))\right )=H^0 \left (\DdagrigE (V)/ F_0\Ddagrig (V)\right )^*=0.
\end{equation*}
The exactness of the bottom row is proved. 
The exactness of the upper row follows from i) and \cite[Corollary~1.4.6]{Ben11}.

iii) The long exact cohomology sequence associated to 
\[
0\rightarrow F_1\Ddagrig (V^*(1)) \rightarrow \DdagrigE (V^*(1)) 
\rightarrow \mathrm{gr}_2\DdagrigE (V^*(1)) \rightarrow 0
\]
together with i) show that the sequence 
\[
0\rightarrow H^1\left (F_1\Ddagrig (V^*(1))\right )\rightarrow 
H^1\left (\DdagrigE (V^*(1))\right ) \rightarrow 
H^1\left (\mathrm{gr}_2\DdagrigE (V^*(1))\right ) 
\]
is exact. In particular, the map 
$H^1\left (F_1\Ddagrig (V^*(1))\right )\rightarrow 
H^1 \left (\DdagrigE (V^*(1))\right )$ is injective. Moreover, since 
$\mathrm{gr}_2\DdagrigE (V^*(1))$ is the Tate dual of
$F_{-1}\Ddagrig (V),$ from \cite[Corollary~1.4.10]{Ben11} it follows that
\[
\dim_E  H^1_f\left (\mathrm{gr}_2\DdagrigE (V^*(1)\right )=
\dim_E H^1\left (F_{-1}\Ddagrig (V)\right )-
\dim_E H^1_f\left (F_{-1}\Ddagrig (V)\right )=0.
\]
Thus, $ H^1_f\left (\mathrm{gr}_2\DdagrigE (V^*(1)\right )=0,$
and from \cite[Corollary~1.4.6]{Ben11} it follows that
$H^1_f \left (F_1\Ddagrig (V^*(1))\right )= H^1_f\left (\DdagrigE (V^*(1))\right ).$

iv) The last statement follows from ii), iii) and isomorphism theorems. 

\end{proof}

\subsubsection{} Consider the exact sequence (\ref{modified Poitou-Tate sequence})
for the representation $V^*(1).$ Since $H^1_f(\Q,V^*(1))=0,$ it reads 
\begin{equation}
\label{dual Poitou-Tate sequence}
0\rightarrow H^1_{f,\{p\}}(\Q,V^*(1))\rightarrow
\frac{H^1(\Qp, V^*(1))}{H^1_f(\Qp, V^*(1))}
\rightarrow H^1_f(\Q, V)^*\rightarrow 0
\end{equation}
(We remark that the surjectivity of the last map follows from {\bf C5)} and the 
isomorphism 
$
\displaystyle \frac{H^1(\Qp, V^*(1))}{H^1_f(\Qp, V^*(1))}\simeq H^1_f(\Qp, V)^*.)
$
We denote by
\begin{equation}
\nonumber
\kappa_{D^\perp}\,\,:\,\,H^1_{f,\{p\}}(\Q,V^*(1))\rightarrow
\frac{H^1(\Qp, V^*(1))}{H^1_f(\Qp, V^*(1))+H^1\left (F_0\Ddagrig (V^*(1))\right )}
\end{equation}
the composition of the second arrow of this exact sequence with the canonical 
projection.

\begin{mylemma}
The map $\kappa_{D^\perp}$ is an isomorphism.
\end{mylemma}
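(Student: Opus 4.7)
The plan is to identify both the image of $H^1_{f,\{p\}}(\Q,V^*(1))$ and that of $H^1\bigl(F_0\Ddagrig(V^*(1))\bigr)$ inside $H^1(\Qp,V^*(1))/H^1_f(\Qp,V^*(1))$ with explicit subspaces of $H^1_f(\Qp,V)^*$ via local Tate duality, and then to read off that $\kappa_{D^\perp}$ is an isomorphism directly from the direct sum decomposition which characterises regularity of $D$.

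First I would use local Tate duality, which identifies $H^1(\Qp,V^*(1))/H^1_f(\Qp,V^*(1))$ with $H^1_f(\Qp,V)^*$, since $H^1_f(\Qp,V)$ and $H^1_f(\Qp,V^*(1))$ are exact annihilators of one another under the local pairing. Combined with the exact sequence (\ref{dual Poitou-Tate sequence}), this shows that the image of $H^1_{f,\{p\}}(\Q,V^*(1))$ in $H^1_f(\Qp,V)^*$ is the annihilator of $\res_p\bigl(H^1_f(\Q,V)\bigr)$, because the third arrow in (\ref{dual Poitou-Tate sequence}) is precisely pre-composition with $\res_p$.

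Next, because $F_0\Ddagrig(V^*(1))=\Hom_{\CR_E}(\DdagrigE(V)/\bD,\CR_E(\chi))$ is Tate dual to the quotient $\DdagrigE(V)/\bD$, one has a short exact sequence
\begin{equation*}
0\to F_0\Ddagrig(V^*(1))\to \DdagrigE(V^*(1))\to \bD^*(\chi)\to 0.
\end{equation*}
The image of $H^1\bigl(F_0\Ddagrig(V^*(1))\bigr)$ in $H^1(\Qp,V^*(1))$ is therefore the kernel of the map $H^1(\Qp,V^*(1))\to H^1(\bD^*(\chi))$, which under local Tate duality is the annihilator of the image of $H^1(\bD)$ in $H^1(\Qp,V)$. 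Intersecting with $H^1_f(\Qp,V)$ and using that $H^1(\bD)\cap H^1_f(\Qp,V)=H^1_f(\bD)$ (compatibility of the two definitions of crystalline cohomology classes, cf.\ \cite[Corollary~1.4.10]{Ben11}), one concludes that the image of $H^1\bigl(F_0\Ddagrig(V^*(1))\bigr)$ in $H^1_f(\Qp,V)^*$ is the annihilator of $H^1_f(\bD)$.

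Finally, by the remark following the definition of regular submodule, regularity of $D$ is equivalent to the direct sum decomposition (\ref{decomposition of H^1_f using H^1_f(D)}),
\begin{equation*}
H^1_f(\Qp,V)=\res_p\bigl(H^1_f(\Q,V)\bigr)\oplus H^1_f(\bD).
\end{equation*}
Dualising, $H^1_f(\Qp,V)^*$ splits as the direct sum of the annihilators of the two summands, which by the two identifications above are exactly the images of $H^1_{f,\{p\}}(\Q,V^*(1))$ and of $H^1\bigl(F_0\Ddagrig(V^*(1))\bigr)$. This is tautologically the statement that $\kappa_{D^\perp}$ is an isomorphism. The main technical point I expect is verifying that the local Tate pairing on $V^*(1)\times V$ is compatible with the Tate pairing between the $(\varphi,\Gamma)$-modules $F_0\Ddagrig(V^*(1))$ and $\DdagrigE(V)/\bD$ — so that the required orthogonality of submodule cohomologies holds — but this should follow routinely from the naturality of the constructions in \cite{Ber08} and \cite{KPX}.
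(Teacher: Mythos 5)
Your proof is correct, but it follows a genuinely different route from the paper's. The paper proves injectivity of $\kappa_{D^\perp}$ directly by an orthogonality/diagram chase (if $\kappa_{D^\perp}(x)=0$, then $\res_p(x)$ annihilates both $H^1_f(F_0\Ddagrig(V))$ and $\res_p(H^1_f(\Q,V))$, hence all of $H^1_f(\Qp,V)$, so $x=0$), and then establishes bijectivity by an explicit dimension count that compares $\dim_E H^1_{f,\{p\}}(\Q,V^*(1))$ with the dimension of the target, both of which are shown to equal $\dim_E H^1_f(F_0\Ddagrig(V))$. You instead transport everything through local Tate duality into $H^1_f(\Qp,V)^*$: the image of $H^1_{f,\{p\}}(\Q,V^*(1))$ becomes $\operatorname{Ann}\bigl(\res_p(H^1_f(\Q,V))\bigr)$, and the image of $H^1\bigl(F_0\Ddagrig(V^*(1))\bigr)$ becomes $\operatorname{Ann}\bigl(H^1_f(\bD)\bigr)$ (your reduction of the latter to the intersection $H^1(\bD)\cap H^1_f(\Qp,V)=H^1_f(\bD)$ is the right way to pass from annihilators in $H^1(\Qp,V)^*$ to annihilators in $H^1_f(\Qp,V)^*$). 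The lemma then collapses to the elementary fact that a direct sum decomposition $W=U_1\oplus U_2$ induces $W^*=\operatorname{Ann}(U_1)\oplus\operatorname{Ann}(U_2)$, applied to the regularity decomposition (\ref{decomposition of H^1_f using H^1_f(D)}).

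Your approach buys conceptual clarity: it makes manifest that the lemma is literally the dual of the regularity hypothesis, and the dimension count in the paper's argument disappears, since it is absorbed into the trivial linear-algebra observation about duals of direct sums. The cost is that it places more weight on the compatibility between Galois-cohomological local duality and the Tate duality of $(\varphi,\Gamma)$-modules, and on the exactness of annihilators across the triangulation — but these are standard and the paper implicitly uses closely related facts anyway (e.g.\ $H^1\bigl(F_0\Ddagrig(V^*(1))\bigr)\cap H^1_f(\Qp,V^*(1))=H^1_f\bigl(F_0\Ddagrig(V^*(1))\bigr)$ in its dimension count). Both proofs are valid; yours is the shorter and more structural one.
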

\begin{proof}
a) First, we prove the injectivity of $\kappa_{D^\perp}.$ 
Assume that $\kappa_{D^\perp}(x)=0.$ Then 
$\res_p(x)\in H^1_f(\Qp, V^*(1))+H^1\left (F_0\Ddagrig (V^*(1))\right ).$ This implies that $\res_p(x)$ belongs to the orthogonal complement
of $H^1_f\left (F_0\Ddagrig (V)\right )$ in $H^1(\Qp, V^*(1)).$ On the other hand, 
from (\ref{dual Poitou-Tate sequence}) it follows that $\res_p(x)$ belongs
to the orthogonal complement of $\res_p \left (H^1_f(\Q,V)\right ).$
Then, by (\ref{decomposition of H^1_f using H^1_f(D)}), we have $\res_p (x)\in \res_p \left (H^1_f(\Q,V)\right )\cap H^1_f\left (F_0\Ddagrig (V)\right )=\{0\},$ and therefore $x=0.$

b) From (\ref{dual Poitou-Tate sequence}) and (\ref{decomposition of H^1_f using H^1_f(D)}) it follows that
\begin{multline}
\label{Lemma about dual kappa: first dimension} 
\dim_EH^1_{f,\{p\}}(\Q,V^*(1))=\\
=\dim_E H^1_f(\Qp,V)-\left (\dim_EH^1_f(\Qp,V)-
\dim_EH^1_f\left (F_0\Ddagrig (V)\right )\right )= 
\dim_E H^1_f\left (F_0\Ddagrig (V)\right ).
\end{multline}
On the other hand, $F_0\Ddagrig (V^*(1))$ is the Tate dual of 
$\mathrm{gr}_1\DdagrigE (V).$
From the tautological exact sequence 
\[
0\rightarrow F_0\Ddagrig (V)\rightarrow \DdagrigE (V)
\rightarrow \mathrm{gr}_1\DdagrigE (V)\rightarrow 0
\]
and the semisimplicity of $\Ph$ we obtain an exact sequence
\[
0\rightarrow H^1_f\left (F_0\Ddagrig (V)\right )\rightarrow H^1_f(\Qp,V)\rightarrow 
H^1_f\left ( \mathrm{gr}_1\DdagrigE V)\right )\rightarrow 0.
\]
Therefore
\begin{multline}
\nonumber
\dim_E H^1\left (F_0\Ddagrig (V^*(1))\right )-\dim_E H^1_f\left (F_0\Ddagrig (V^*(1))\right )
=\dim_E H^1_f\left ( \mathrm{gr}_1\Ddagrig (V)\right )=\\
=\dim_E H^1_f(\Qp,V)-\dim_E H^1_f \left (F_0\Ddagrig (V)\right ).
\end{multline}
Since $H^1 \left (F_0\Ddagrig (V^*(1))\right )\cap H^1_f(\Qp,V^*(1))=
H^1_f\left (F_0\Ddagrig (V^*(1))\right ) ,$ we obtain 
\begin{multline}
\nonumber
\dim_E \left (\frac{H^1(\Qp, V^*(1))}{H^1_f(\Qp, V^*(1))+H^1(\bD^\perp)}\right )=\\
\dim_EH^1(\Qp, V^*(1))-\dim_EH^1_f(\Qp, V^*(1))- H^1\left (F_0\Ddagrig (V^*(1))\right )+
H^1_f\left (F_0\Ddagrig (V^*(1))\right )=\\
=\dim_E H^1_f(\Qp, V)- H^1\left (F_0\Ddagrig (V^*(1))\right )+
H^1_f\left (F_0\Ddagrig (V^*(1))\right )=\dim_E H^1_f\left (F_0\Ddagrig (V)\right ).
\end{multline}
Comparing with (\ref{Lemma about dual kappa: first dimension}), we see that   the source and the target of $\kappa_{D^\perp}$ are of the same dimension.
Since $\kappa_{D^\perp}$ is injective, this proves the lemma.  
\end{proof}

%\begin{enonce*}[remark]{Remark} 
\begin{myremark} It is not difficult to prove that there is 
a canonical isomorphism
\[
\frac{H^1(\Qp, V^*(1))}{H^1_f(\Qp, V^*(1))+H^1\left (F_0\Ddagrig (V^*(1))\right )}
\simeq \frac{H^1(\bD^* (\chi))}{H^1_f(\bD^* (\chi))}.
\]
\end{myremark}
%\end{enonce*}

\subsubsection{}
Let $ H^1(D^\perp,V^*(1))$ denote the inverse image of
\[
H^1\left (F_1\Ddagrig (V^*(1))\right )/\left (H^1_f(\Qp, V^*(1))+H^1_f\left (F_0\Ddagrig (V^*(1))\right )\right )
\] 
under $ \kappa_{D^\perp}.$ 
Taking into account (\ref{dual L-invariant first isomorphism}),   we 
see that  $\kappa_{D^\perp}$ induces an isomorphism
\begin{equation}
\label{definition of dual L-inv:main isomorphism}
H^1(D^\perp,V^*(1)) \simeq \frac{H^1(\bM^*(\chi))}{H^1_f(\bM^*(\chi))}.
\end{equation}
Consider  the composition map
\[
H^1(D^\perp,V^*(1))\rightarrow H^1 \left (F_1\Ddagrig (V^*(1))\right )\rightarrow H^1(\bM^*(\chi)).
\]
Taking into account (\ref{construction of i_(W^*(chi))})
and (\ref{decomposition of W^*(chi)}),
we obtain that 
$\dim_E H^1(D^\perp,V^*(1))=\dim \CDcris (\bM).$ 
Hence we have a diagram
\[
\xymatrix{
\CDcris (\bM^*(\chi )) \ar[r]^{\overset{i_{\bM^*(\chi),f}}\sim} &H^1_f(\bM^*(\chi))\\
H^1(D^\perp,V^*(1)) \ar[u]^{\rho_{D^\perp,f}} \ar[r] \ar[d]_{\rho_{D^\perp,c}}
&H^1(\bM^*(\chi)) \ar[u]_{p_{\bM^*(\chi),f}}
\ar[d]^{p_{\bM^*(\chi),c}}
\\
\CDcris (\bM^*(\chi)) \ar[r]^{\overset{i_{\bM^*(\chi),c}}\sim} &H^1_c(\bM^*(\chi)),}
\]
where $\rho_{D^\perp,f}$ and $\rho_{D^\perp,c}$ are defined as the unique maps making this diagram commute.
From (\ref{definition of dual L-inv:main isomorphism})  it follows that  $\rho_{D^\perp,c}$ is an isomorphism.

\begin{mydefinition} The determinant
\[
\mathscr L (V^*(1),D^\perp)= (-1)^e\det \left ( \rho_{D^\perp,f} \circ \rho^{-1}_{D^\perp,c}\,\mid \,\CDcris (\bM^*(\chi)) \right ),
\]
where $e=\dim_E \CDcris(\bM (\chi)),$
will be called  the $\mathscr L$-invariant associated to $V^*(1)$ and $D^\perp$.
\end{mydefinition}

\begin{enonce*}[remark]{Remark}
The sign $(-1)^e$ corresponds to the sign in the conjectural functional equation
for $p$-adic $L$-functions. We refer the reader to \cite[Sections~2.2.6 and 2.3.5]{Ben11} for more detail.
\end{enonce*}

The following proposition is a direct generalization of 
\cite[Proposition~2.2.7]{Ben11}.

\begin{myproposition} 
\label{proposition comparision l-inv}
Assume that $D$ is a regular submodule of $\Dc (V).$
Then
\[
\mathscr L(D^\perp, V^*(1))=(-1)^e\mathscr L(D, V).
\]
\end{myproposition}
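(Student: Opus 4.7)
This proposition extends \cite[Proposition~2.2.7]{Ben11} to the situation where $H^1_f(\Q,V)$ may be nonzero. The strategy is to combine global Poitou--Tate reciprocity with the antidiagonal form of local duality recorded in (\ref{local duality for exceptional modules}) to show that the canonical images of $H^1(D,V)$ and $H^1(D^\perp,V^*(1))$ in $H^1(\bM)$ and $H^1(\bM^*(\chi))$ are mutually orthogonal Lagrangian subspaces; the equality of determinants then follows at once.

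For $x \in H^1(D,V)$ and $y \in H^1(D^\perp,V^*(1))$, the construction of $H^1(D,V)$ gives $\mathrm{loc}_p(x) \in H^1(F_0\Ddagrig(V))$ immediately. The containment $\mathrm{loc}_p(y) \in H^1(F_1\Ddagrig(V^*(1)))$ then follows from the identification $H^1_f(\Qp,V^*(1)) = H^1_f(F_1\Ddagrig(V^*(1)))$ of part iii) of the preceding lemma. Since $F_1\Ddagrig(V^*(1))$ is the Tate dual of $\Ddagrig(V)/F_{-1}\Ddagrig(V)$, the subspaces $H^1(F_{-1}\Ddagrig(V))$ and $H^1(F_1\Ddagrig(V^*(1)))$ annihilate each other under local duality, so the local Tate pairing factors through the projections to the exceptional quotients as $\langle \mathrm{loc}_p(y),\mathrm{loc}_p(x)\rangle_{\Qp} = \langle \bar y,\bar x\rangle_\bM$. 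At each prime $l \neq p$, both $\mathrm{loc}_l(x)$ and $\mathrm{loc}_l(y)$ lie in the self-annihilating subgroup $H^1_f(\Q_l,\cdot)$, so Poitou--Tate reciprocity forces $\langle \bar y,\bar x\rangle_\bM = 0$.

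The images $\Psi(H^1(D,V)) \subset H^1(\bM)$ and $\Psi'(H^1(D^\perp,V^*(1))) \subset H^1(\bM^*(\chi))$ both have dimension $e = \dim_E \CDcris(\bM)$, and $\dim H^1(\bM) = \dim H^1(\bM^*(\chi)) = 2e$, so the orthogonality above forces these images to be exact orthogonal complements of each other. Writing $\bar x = i_\bM(T\epsilon,\epsilon)$ with $\epsilon = \rho_{D,c}(x)$, $T = \rho_{D,f}\circ\rho_{D,c}^{-1}$, and $\bar y = i_{\bM^*(\chi)}(S\delta,\delta)$ with $\delta = \rho_{D^\perp,c}(y)$, $S = \rho_{D^\perp,f}\circ\rho_{D^\perp,c}^{-1}$, the formula (\ref{local duality for exceptional modules}) converts the orthogonality into the identity
\[
[\delta,T\epsilon]_\bM \,=\, [S\delta,\epsilon]_\bM \qquad \text{for all } \delta \in \CDcris(\bM^*(\chi)),\ \epsilon \in \CDcris(\bM).
\]
Thus $S$ is the adjoint of $T$ under the perfect pairing $[\cdot,\cdot]_\bM$, whence $\det S = \det T$. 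Combined with the definitions, this yields $\mathscr L(V^*(1),D^\perp) = (-1)^e\det S = (-1)^e\det T = (-1)^e\mathscr L(V,D)$.

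The main technical ingredient is part iii) of the preceding lemma: without the containment $\mathrm{loc}_p(y) \in H^1(F_1\Ddagrig(V^*(1)))$, the reduction of the global pairing to the clean pairing on the exceptional quotients would acquire an additional contribution from $H^1_f(\Qp,V^*(1))$ paired against classes in $H^1(F_0\Ddagrig(V))$ that need not lie in $H^1_f(\Qp,V)$, and this correction term would obstruct the direct identification of $S$ with the adjoint of $T$.
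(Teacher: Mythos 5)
Your proof is correct and supplies the details that the paper delegates to \cite[Proposition~2.2.7]{Ben11}, which rests on precisely this combination of Poitou--Tate reciprocity with the explicit local duality formula (\ref{local duality for exceptional modules}) on the exceptional quotients. The central mechanism — using part iii) of the preceding lemma to guarantee $\res_p(y)\in H^1\left(F_1\Ddagrig(V^*(1))\right)$, descending the $p$-local pairing to $H^1(\bM^*(\chi))\times H^1(\bM)$, and then reading off that $S=\rho_{D^\perp,f}\circ\rho_{D^\perp,c}^{-1}$ is the adjoint of $T=\rho_{D,f}\circ\rho_{D,c}^{-1}$ under $[\,\cdot\,,\,\cdot\,]_\bM$ — is exactly the argument the paper has in mind.
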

\begin{proof} The proof repeats {\it verbatim}
the proof of \cite[Proposition~2.2.7]{Ben11}. We leave the details
to the reader.
\end{proof}

\section{Modular curves and $p$-adic representations }
\label{Section modular curves}

\subsection[]{Notation and conventions}
\subsubsection{} Let $M$ and $N$ be two positive integers such that $M+N\geqslant 5.$ We denote by $Y_1(M,N)$ the scheme over $\Z[1/MN]$ representing the functor
\begin{equation}
\nonumber 
S \mapsto \text{\rm isomorphism classes of $(E,e_1,e_2)$},
\end{equation} 
where $E/S$ is an elliptic curve, $e_1, e_2\in E(S)$ are such that
$M e_1=N e_2=0$  and the map
\begin{equation}
\nonumber
\begin{cases}
\Z/M\Z \times \Z/N\Z \rightarrow E,\\
(m,n)\mapsto me_1+ne_2
\end{cases}
\end{equation}
is injective. As usual, we set $Y_1(N)=Y(1,N)$ for $N\geqslant 4$
and write $(E,e)$ instead $(E,0,e_2).$
Recall that 
\[
Y(M,N)(\BC)= \Gamma (M,N)\backslash \bf H,
\]
where $\bf H$ is the Poincar\'e half-plane and 
\begin{equation}
\nonumber
\Gamma (M,N)
=\left \{ 
\left (
\begin{matrix}
a & b\\
c & d
\end{matrix}
\right )\in \mathrm{SL}_2(\Z) \left \vert \right.
a\equiv 1(M),
b\equiv 0(M),
c\equiv 0(N),
d\equiv 1(N)
\right \}.
\end{equation}
In particular, $Y_1(N)(\BC)=\Gamma_1(N)\backslash \mathbf H,$ where
\begin{equation}
\nonumber 
\Gamma_1 (N)
=\left \{ 
\left (
\begin{matrix}
a & b\\
c & d
\end{matrix}
\right )\in \mathrm{SL}_2(\Z) \left \vert \right.
a\equiv d\equiv 1(N),
c\equiv 0(N)
\right \}.
\end{equation}

\subsubsection{} Let  $a$ be a positive integer. 
We denote by ${\Pr}_i\,:\, Y_1(Na)\rightarrow Y_1(N)$ ($i=1,2$) the morphisms 
defined by 
\begin{align}
\label{the Pr maps} 
& {\Pr}_1\,:\, (E,e)\mapsto (E, a e),\\
\label{the Pr2 map}
& {\Pr}_2\,:\, (E,e)\mapsto (E/\left <N e\right > , e \mod{\left <N e\right >}),
\end{align}
where $\left <a e\right >$ denotes the cyclic group of order $N$ generated
by $ae  .$

\subsubsection{}
\label{subsubsection Y(N,p)}
Fix a prime number $p\geqslant 3$ such that $(p,N)=1.$
We denote by $Y (N,p)$
the scheme over $\Z[1/Np]$ which represents the functor
\begin{equation}
\nonumber 
S \mapsto \text{\rm isomorphism classes of $(E,e,C)$},
\end{equation} 
where $E/S$ is an elliptic curve,  $e\in E(S)$ is a point
of order $N$ and $C\subset E$ is a subgroup of order $Np$
such that $e\in C.$
We have $Y (N,p)(\BC)=\Gamma_p (N) \backslash \mathbf H,$ where
$\Gamma_p (N)=\Gamma_1(N)\cap \Gamma_0(p)$ and 
\begin{equation}
\nonumber 
\Gamma_0 (p)
=\left \{ 
\left (
\begin{matrix}
a & b\\
c & d
\end{matrix}
\right )\in \mathrm{SL}_2(\Z) \left \vert \right.
c\equiv 0(p)
\right \}.
\end{equation}
We have a canonical projection 
\begin{equation}
\label{definition pr'}
\begin{aligned}
& {\pr}'\,:\,Y_1(Np) \rightarrow Y(N,p),\\
&{\pr}' \,:\, (E,e) \mapsto (E, p e, \left <e \right >).
\end{aligned}
\end{equation} 
We also define  projections 
\begin{equation*}
{\pr}_i \,:\, Y(N,p) \rightarrow Y_1(N), \qquad i=1,2
\end{equation*}
by 
\begin{equation}
\label{definition of pr_i}
\begin{aligned}
& {\pr}_1 \,:\, (E,e,C) \mapsto (E,e),\\
& {\pr}_2 \,:\, (E,e,C) \mapsto (E/NC,e'),
\end{aligned}
\end{equation}
where $e'\in C$ is the unique element of $C$ such that 
$pe'=e.$

Note that we have commutative diagrams  

\begin{equation}
\label{commitative diagram pr}
\xymatrix{ Y_1(Np)\ar[r]^{{\pr}'} \ar[dr]^{{\Pro}_i} & Y(N,p) \ar[d]^{{\pr}_i}\\
 & Y_1(N).
}, \qquad i=1,2.
\end{equation}

\subsubsection{}
\label{subsubsection adic sheaves}
If $Y$ is an unspecified modular curve, we denote by $\lambda \,:\,\CE\rightarrow Y$  the universal elliptic curve over $Y$ and 
set $\cF_n=\R^1\lambda_*\Z/p^n\Z (1),$ 
$\cF=\R^1\lambda_*\Z_p (1)$ and $\cF_{\Qp}=\cF \otimes_{\Z_p}\Q_p.$
Let $\iota_D\,:\,D \hookrightarrow \CE$ be a subscheme. We assume that $D$ is  \'etale over $Y.$ Consider the diagram
\begin{equation}
\label{diagram definition of adic sheaves}
\xymatrix{
\CE[p^r]\left <D\right > \ar[d]^{\mathrm{p}_{D,r}} \ar[r] &\CE \ar[d]^{[p^r]}\\
D \ar[r]^{\iota_D} \ar[d]^{\lambda_D} & \CE \ar[dl]^{\lambda}\\
Y &
}
\end{equation}
where $\CE[p^r]\left <D\right > $ is the fiber product of $D$ and $\CE$
over $\CE .$ Define
\begin{equation}
\label{definition of adic sheaves}
\La_r (\cF_r\left <D\right >)=\lambda_{D,*}\circ \mathrm{p}_{D,r,*} (\Z/p^r\Z),
\qquad \La  (\cF \left <D\right >)= \left (\La_r (\cF_r\left <D\right >)\right )_{r\geqslant 1}.
\end{equation}
We consider $\La (\cF\left <D\right >)$ as an \'etale pro-sheaf.
If $D=Y$ and $\iota_D$ is a section  $s\,:\,Y\rightarrow \CE,$
we use the notation $\La (\cF\left <s\right >)$ to indicate the dependence
on $s.$ If $s$ is the zero section, we write $\La (\cF)$
instead  $\La (\cF\left <0\right >).$ Note that 
in this case $\CE[p^r]\left <D\right > = \CE [p^r],$ and 
\[
\La_r (\cF_r)= \Z/p^r\Z \left[\,\CE [p^r] \,\right].
\]
Therefore, $\La (\cF)$ can be viewed as the sheaf of Iwasawa algebras 
associated to the relative Tate module of $\CE .$

These sheaves were studied in detail in \cite{Ki15} and we refer the reader
to {\it op. cit.} for further information.

\subsubsection{}
\label{subsubsection sheaves F<s_N>}
Let  $M \mid N$ and let $\CE \rightarrow Y(M,N)$ be the 
universal elliptic curve.
 Let $s_N\,:\,Y(M,N) \rightarrow \CE$
denote the canonical section which sends  the class $(E,e_1,e_2)$ 
to  $e_2\in \CE [N].$ For each $r\geqslant 1,$ consider the cartesian square
\begin{equation}
\nonumber
\xymatrix{
\CE [p^r]\left <s_N \right > \ar[r] \ar[d] &\CE \ar[d]^{[p^r]}\\
Y(M,N) \ar[r]^{s_N} &\CE.
}
\end{equation}
We denote by
\begin{equation}
\label{definition of the Iwasawa sheaf}
\Lambda (\cF\left <N\right >):=\Lambda (\cF\left <s_N\right >)
\end{equation}
the associated sheaf. 
From the interpretation
of $Y(M,N)$ as moduli space it follows that $Y(M,Np^r) \simeq \CE [p^r]\left <s_N \right >,$ and therefore 
\begin{equation}
\nonumber
H^1_{\et}(Y(M,Np^r), \Z/p^r (1))\simeq H^1_{\et}(\CE [p^r]\left <s_N \right > , \Z/p^r (1))
\simeq H^1_{\et}(Y(M,N),\Lambda_r(\cF_r\left <N\right >) 
\end{equation}
Passing to projective limits, we obtain a canonical isomorphism
\begin{equation}
\label{isomorphism for cohomology of Iwasawa sheaf}
\underset{r}\varprojlim H^1_{\et}(Y(M,Np^r), \Z/p^r (1))
\simeq  H^1_{\et}(Y(M,N),\Lambda (\cF \left <N\right >)(1)).
\end{equation}
(see \cite[Section 4.5]{KLZ}).

\subsubsection{}For a module $M$ over a commutative ring $A$ we denote 
by $\Sym^k(M)$ (resp. $\TSym^k(M)$) the quotient   of 
 $S_k$-coinvariants  (resp. the 
submodule  of $S_k$-invariants) of $M^{\otimes k}.$

\subsection{$p$-adic representations}
\label{subsection p-adic representations}
\subsubsection{}
For each $k\geqslant 2$ we denote by 
$\Sym^{k}(\cF_{\Qp})$ the $k$th symmetric tensor power of the sheaf
$\cF_{\Qp}.$ The \'etale cohomology 
\[
H^1_{\mathrm{\acute et},c}\left (Y_1(N)_{\overline{\Q}}, \Sym^{k} (\cF_{\Qp}^{\vee} )\right )
\]
is equipped with a natural action of the Galois group $G_{\Q,S}$
and the Hecke and Atkin--Lehner   operators $T_l$ (for primes  $(l,N)=1$) 
and $U_l$ (for primes $l\vert N$),  which commute to each over.
Let $f=\underset{n=1}{\overset{\infty}\sum}a_nq^n,$ $q=e^{2\pi i z}$ be a normalized cuspidal eigenform of  level $N_f$ and weight $k_0=k+2\geqslant 2.$ We do not assume that $f$ is a newform. 
Fix a finite extension $E/\Qp$
containing  $a_n$ for all $n\geqslant 1.$
Deligne's $p$-adic
representation associated to $f$ is the maximal subspace 
\begin{equation}
\nonumber
W_f=H^1_{\mathrm{\acute et},c}\left (Y_1(N_f)_{\overline{\Q}}, \Sym^{k}(\cF_{\Qp}^{\vee})\right )_{(f)}
\end{equation}
of
\[
H^1_{\mathrm{\acute et},c}\left (Y_1(N_f)_{\overline{\Q}}, \Sym^{k}(\cF_{\Qp}^{\vee})\right )\otimes_{\Qp}E  
\]
on which the operators  $T_l$ (for $(l,N_f)=1$) and $U_l$ (for $l\vert N_f$) act as multiplication by $a_l$ for all primes $l .$ Then $W_f$ is a two dimensional 
$E$-vector space equipped with a continuous action of  $G_{\Q,S},$ 
%where$\Q^{(S)}$ is the maximal algebraic extension of $\Q$
%unramified outside $S=\{\ell,  \ell \mid Np\}\cup\{\infty\}.$
which does not depend on the choice of the level in the following sense.  If $N_f\vert N$ then 
the canonical projection ${\Pr}_1 \,:\,Y_1(N)\rightarrow Y_1(N_f)$ induces a morphism 
\begin{equation}
\nonumber
{\Pr}_{1,*}\,:\,H^1_{\mathrm{\acute et},c}\left (Y_1(N)_{\overline{\Q}}, \Sym^{k}(\cF_{\Qp}^{\vee})\right )
\rightarrow
H^1_{\mathrm{\acute et},c}\left (Y_1(N_f)_{\overline{\Q}}, \Sym^{k}(\cF_{\Qp}^{\vee})\right ),
\end{equation}
which is  isomorphism on the $f$-components.
Note, that here in our notation we do not distinguish
between the sheaves  $\cF_{\Qp}^\vee$ on $Y_1(N)$ and $Y_1(N_f).$

From the Poincar\'e duality it follows that the dual representation $W_f^*$ can be realized as the  
quotient 
\begin{equation}
\label{dual Deligne's representaton}
W_f^*=H^1_{\mathrm{\acute et}}\left (Y_1(N_f)_{\overline{\Q}}, \TSym^{k}(\cF_{\Qp})(1)\right )_{[f]}
\end{equation}
of 
\[
H^1_{\mathrm{\acute et}}\left (Y_1(N_f)_{\overline{\Q}}, \TSym^{k}(\cF_{\Qp})(1)
\right )\otimes_{\Qp}E 
\]
by the submodule 
generated by the images of $T'_l-a_l$ (for   $(l,N_f)=1$)
and $U'_l-a_l$ (for $l\vert N_f$),  where $T_l'$ (resp. $U'_l$) denote the dual Hecke (resp. Atkin--Lehner) operators.

\subsubsection{} Let
\begin{equation}
\nonumber
\rho_f\,:\,G_{\Q,S}\rightarrow \mathrm{GL}(W_f)
\end{equation}
denote the representation of $G_{\Q,S}$ on $W_f.$ The following proposition summarizes some properties of the representation
$W_f.$

\begin{myproposition} Assume that $f\in S_{k_0}^{\mathrm{new}}(N_f,\ep_f )$ is a newform of level $N_f,$ weight $k_0=k+2\geqslant 2$ and nebentypus $\ep_f .$ Let $p$ be a prime
such that $(p,N_f)=1.$ Then

i) $\det \rho_{f}$ is isomorphic to $\ep_f \chi^{1-k_0}$ where $\chi$ is the cyclotomic character.

ii) $\rho_f$ is unramified outside the primes $l \mid N_fp.$

iii) If $l\neq p,$ then 
$$
\det (1-\mathrm{Fr}_lX \mid W_{f}^{I_l})= 1-a_lX+\ep_f(l)\,l^{k_0-1}X^2
$$
(Deligne--Langlands--Carayol theorem \cite{De71, La73, Ca86}).

iii) The restriction of $\rho_{f}$ on the decomposition group at $p$ is crystalline 
with Hodge--Tate weights $(0,k_0-1).$ Moreover 
\[
\det (1-\Ph X \mid \Dc(W_{f}))= 1-a_pX+\ep_f(p)\,p^{k_0-1}X^2
\]
(Saito's theorem \cite{Sa97}).

\end{myproposition}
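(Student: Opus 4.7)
The proposition is a compilation of classical facts, and the plan is to invoke the cited results in the right order, reducing (i) to (iii)--(iv).

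First I would address (ii). By construction, $W_f$ is cut out by Hecke operators inside $H^1_{\mathrm{\acute et},c}(Y_1(N_f)_{\overline\Q},\Sym^k(\cF_{\Qp}^\vee))$. The modular curve $Y_1(N_f)$ is smooth over $\Z[1/N_f]$, and the sheaf $\Sym^k(\cF_{\Qp}^\vee)$ extends to a lisse $\Qp$-sheaf on $Y_1(N_f)_{\Z[1/N_fp]}$ (the reduction mod~$p$ is available because we constructed $\cF$ from the relative Tate module, which only requires inverting $p$ in addition to $N_f$). By the smooth and proper base-change formalism applied to the compactification and its boundary, the cohomology is unramified at every prime $l\nmid N_fp$, hence so is the $f$-isotypic component $W_f$.

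For (iii), the input is the Deligne--Langlands--Carayol local-global compatibility theorem \cite{De71, La73, Ca86}: at every prime $l\neq p$, the Weil--Deligne representation attached to $W_f\vert_{G_{\Q_l}}$ corresponds under the local Langlands correspondence to the local component $\pi_{f,l}$ of the automorphic representation generated by $f$. Combining this with the classical computation of the Hecke eigenvalues $a_l$ in terms of the Satake/Langlands parameters of $\pi_{f,l}$ yields precisely the formula $\det(1-\mathrm{Fr}_l X\mid W_f^{I_l}) = 1-a_l X + \ep_f(l)l^{k_0-1}X^2$. The argument at $l\mid N_f$ just gives the expected Euler factor for $U_l$-eigenvalues. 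The main subtlety here is that one must invoke local-global compatibility in its refined form that also handles the $l\mid N_f$ case, but for the statement as given (which only concerns $l\neq p$) no additional work is needed.

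For (iv), the crystallinity of $W_f\vert_{G_{\Q_p}}$ under the assumption $(p,N_f)=1$ is the content of the Fontaine--Messing / Faltings comparison theorem applied to the smooth proper model of $Y_1(N_f)$ at $p$, combined with the identification of the Hodge filtration on $\Sym^k(\cF^\vee)$ coming from the Gauss--Manin connection. The Hodge--Tate weights $(0,k_0-1)$ come from the two graded pieces of the Hodge filtration on de Rham cohomology of the Kuga--Sato variety, or equivalently from the Eichler--Shimura isomorphism. The description of the crystalline Frobenius characteristic polynomial is Saito's theorem \cite{Sa97}, which asserts that the crystalline Weil--Deligne representation at $p$ matches the local Langlands parameter of $\pi_{f,p}$, yielding $\det(1-\Ph X\mid\Dc(W_f))=1-a_pX+\ep_f(p)p^{k_0-1}X^2$. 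The hard part of this step is Saito's theorem itself, which I would simply cite.

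Finally, (i) follows formally. Taking determinants in (iii) at any prime $l\nmid N_fp$ gives $\det\rho_f(\mathrm{Fr}_l)=\ep_f(l)l^{k_0-1}=(\ep_f\chi^{1-k_0})(\mathrm{Fr}_l)$, and by Chebotarev density together with (ii) the two characters $\det\rho_f$ and $\ep_f\chi^{1-k_0}$ coincide on $G_{\Q,S}$. Alternatively, one can read off the determinant directly from (iv): the crystalline Frobenius has determinant $\ep_f(p)p^{k_0-1}$, and the Hodge--Tate weights $(0,k_0-1)$ force the determinant character to be $\ep_f\chi^{1-k_0}$. The main obstacle in the whole proposition is really (iv), which rests on Saito's deep result; the remaining parts are either formal consequences or classical local-global compatibility.
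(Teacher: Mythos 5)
The paper itself offers no proof of this proposition --- it is stated as a summary of classical results with the references \cite{De71, La73, Ca86, Sa97} doing all the work. Your proposal is therefore the only argument on the table, and it is a correct fleshing out of those citations: smooth proper base change for (ii), local--global compatibility away from $p$ for the Euler factors, the crystalline comparison theorem plus Saito for (iv), and either Chebotarev from the prime-to-$N_f p$ Euler factors or the crystalline determinant at $p$ for (i). Two small points worth being aware of but not affecting correctness: first, your deduction of (i) implicitly uses the \emph{geometric} Frobenius normalization (so that $\chi(\mathrm{Fr}_l)=l^{-1}$ and $(\ep_f\chi^{1-k_0})(\mathrm{Fr}_l)=\ep_f(l)l^{k_0-1}$), which matches the paper's conventions but is worth stating explicitly; second, for the Chebotarev step you only need primes $l\nmid N_f p$, where $I_l$ acts trivially so $W_f^{I_l}=W_f$ and the Euler factor in (iii) really is the full degree-two characteristic polynomial of $\mathrm{Fr}_l$.
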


\subsubsection{} 
\label{subsubsection conditions C}
We retain previous notation. Let $f(q)=\underset{n=1}{\overset{\infty}\sum} a_n q^n$ be a newform of weight $k_0\geqslant 2,$ level $N_f$ and nebentypus $\ep_f .$
Fix a prime number $p$ such that $(p,N_f)=1$ and denote by $\alpha (f)$ and 
$\beta (f)$
the roots of the Hecke polynomial $X^2-a_pX+\ep_f (p)p^{k_0-1}.$
Till the end of this chapter we assume that the following
conditions hold:

\begin{itemize}
\item[]{\bf 1)} $\alpha (f) \neq \beta (f).$ 
\item[]{\bf 2)} If $v_p(\alpha)=k_0-1$ then $\left. \rho_f \right \vert_{G_{\Qp}}$ 
does  not split.
\end{itemize}
We remark that {\bf 1)} conjecturally always holds but is known to be true only 
in the weight $2$ case \cite{CE98}. One expects (see for example \cite{GV04}) that {\bf 2)} does not hold 
({\it i.e.} $\rho_f$ locally splits) only if $f$  is a  CM form  with $p$ split. 

\subsubsection{}
The $p$-stabilization $f_{\alpha}(q)=f(q)-\beta (f)\cdot f(q^p)$ is an oldform
with respect to the subgroup $\Gamma_p(N_f).$ The map $\pr',$
defined in (\ref{definition pr'}),
induces  an isomorphism 
\begin{equation}
\nonumber
\label{isomorphism under pr'}
W_{f_{\alpha}}^*=
H^1_{\mathrm{\acute et}}\left (Y_1(N_fp)_{\overline{\Q}}, \TSym^{k}(\cF_{\Qp})(1)\right )_{[f_{\alpha}]} \xrightarrow{\overset{\pr'_*}\sim} 
H^1_{\mathrm{\acute et}}\left (Y(N_f,p)_{\overline{\Q}}, \TSym^{k}(\cF_{\Qp})(1)\right )_{[f_{\alpha } ]}.
\end{equation}
Moreover, the map
\begin{equation}
\nonumber
{\Pr}^\alpha_*\,\,:\,\,
H^1_{\mathrm{\acute et}}\left (Y_1(N_fp)_{\overline{\Q}}, \TSym^{k}(\cF_{\Qp})(1)
\right )
\rightarrow H^1_{\mathrm{\acute et}}\left (Y_1(N_f)_{\overline{\Q}}, \TSym^{k}(\cF_{\Qp})(1)\right )
 \end{equation}
 defined by 
 \begin{equation}
 \label{definition of pr^alpha}
{\Pr}^\alpha_*={\Pr}_{1,*}-\frac{\beta (f)}{p^{k_0-1}}\cdot {\Pr}_{2,*}
\end{equation}
factorizes through appropriate quotients and  
induces an isomorphism 
\begin{equation}
\label{isomorphism of representations  stabilized and
non stabilized }
{\Pr}^\alpha_*\,\,:\,\, W_{f_{\alpha }}^* \simeq W_f^*
\end{equation}
(see \cite[Proposition ~2.4.5]{KLZ}). Taking into account the diagram (\ref{commitative diagram pr}), we can be summarize this information 
in the following commutative diagram 
\begin{equation}
\label{diagram with pr' and Pr for eigenspaces}
\xymatrix{
W_{f_{\alpha }}^* \ar[rr]^(.3){\overset{\pr'_*}\sim} \ar[drr]_{{\Pr}^\alpha_*} & & H^1_{\mathrm{\acute et}}\left (Y(N_f,p)_{\overline{\Q}}, \TSym^{k}(\cF_{\Qp})(1)\right )_{[f_{\alpha } ]} \ar[d]^{\pr^{\alpha}_*}\\
& &W_f^*.
}
\end{equation}
Here  
\begin{equation}
\nonumber
{\pr}^\alpha_*={\pr}_{1,*}-\frac{\beta (f)}{p^{k_0-1}}\cdot {\pr}_{2,*}.
\end{equation}
We denote by 
\begin{equation}
\label{dual isomorphism of representations  stabilized and
non stabilized }
{\Pr}_{\alpha}^* \,\,:\,\, W_{f} \simeq W_{f_{\alpha }}
\end{equation}
the dual isomorphism.

\subsubsection{}
\label{subsubsection basis of Dieudonne module}
The newform $f$ defines a canonical 
generator $\omega_f$ of the one-dimensional $E$-vector space $\F^{k_0-1}\Dc (W_f)$
(see \cite[Section~11.3]{Ka04}). Note that ${\Pr}^{\alpha, *}(\omega_f )=
\omega_{f_\alpha},$ where ${\Pr}^{\alpha, *}\,:\, \Dc (W_f)
\rightarrow \Dc (W_{f_{\alpha}})$ denotes the isomorphism induced by 
(\ref{dual isomorphism of representations  stabilized and
non stabilized }).

Let $f^*=\underset{n=1}{\overset{\infty}\sum} \overline{a}_n q^n$ denote the 
complex conjugate of $f.$ The Atkin--Lehner operator $w_{N_f}$ acts on $f$ by
\begin{equation*}
w_{N_f}(f)=\lambda_{N_f}(f)f^*,
\end{equation*}
where $\lambda_{N_f}(f)$ is called the pseudo-eigenvalue of $f.$
The canonical pairing $W_f\times W_{f^*}\rightarrow E(1-k_0)$
induces a pairing
\begin{equation*}
\left [\,\,,\,\,\right ]\,\,:\,\, \Dc (W_f)\times \Dc (W_{f^*})\rightarrow 
\Dc (E(1-k_0)).
\end{equation*}
The filtered module $\Dc (E(1-k_0))$ has the canonical generator 
$e_{1-k_0}=\left (\ep\otimes t \right )^{\otimes (1-k_0)},$
where $\ep= (\zn)_{n\geqslant 0}$ and $t=\log [\ep]\in \Bd$ is 
the associated uniformizer of the field of de Rham periods (note that 
$e_{1-k_0}$ does not depend on the choice of $\ep$). Since $\alpha (f) \neq 
\beta (f),$
we have $\Dc (W_f)=\Dc (W_f)^{\Ph=\alpha (f)}\oplus \Dc (W_f)^{\Ph=\beta (f)}.$
From {\bf 2)}, it follows that $\omega_{f^*}$ is not an eigenvector 
of $\Ph,$ and we denote by $\eta_f^\alpha$ the unique element of $\Dc (W_f)^{\Ph=\alpha (f)}$
such that
\begin{equation*}
\left [\eta_f^{\alpha}, \omega_{f^*}\right ]=e_{1-k_0}.
\end{equation*}
We also denote by $\omega_f^\beta$ the unique element of $\Dc (W_f)^{\Ph=\beta (f)}$
such that 
\begin{equation*}
\omega_f^\beta \equiv \omega_f \mod{\Dc (W_f)^{\Ph=\alpha (f)}}.
\end{equation*}
Note that $\{\eta_f^{\alpha}, \omega_f^\beta \}$ is a basis of $\Dc (W_f).$

\subsubsection{}
\label{subsubsection f*} Set 
\begin{equation}
\nonumber
\alpha (f^*)=\frac{p^{k_0-1}}{\beta (f)}, \qquad 
\beta (f^*)=\frac{p^{k_0-1}}{\alpha (f)}.
\end{equation}
Then $\alpha (f^*)$ and  $\beta (f^*)$ are the roots of the Hecke polynomial
%$X^2-\overline{a}_pX+\ep_f^{-1}(p)p^{k_0-1}$ 
of $f^*$ at $p.$ We denote by $\{\eta_{f^*}^\alpha, \omega_{f^*}^\beta \}$
the corresponding basis of $\Dc (W_{f^*}).$ It is easy to check that 
it is dual to the basis $\{\eta_f^{\alpha}, \omega_f^\beta \}.$
\begin{equation}
\nonumber
\end{equation}

\subsection{Overconvergent \'etale cohomology} 

\subsubsection{} In this section, we review the construction of 
$p$-adic representations associated to Coleman families \cite{Han15, LZ}. It relies heavily on the  overconvergent Eichler--Shimura isomorphism  of Andreatta, Iovita and  Stevens \cite{AIS}. Let $\mathcal W=\Hom_{\text{\rm cont}} (\Zp^*, \mathbf{G}_m)$ denote the weight space. As usual, we consider 
$\mathcal W$ as a rigid analytic space over some fixed finite 
extension $E$ of $\Qp.$ Namely, since  $\Zp^*\simeq \mu_{p-1}\times 
(1+p\Zp)^*,$ each continuous character $\eta \,:\,\Zp^*\rightarrow L^*$
is completely determined by its restriction on $\mu_{p-1}$ and 
its value at $1+p.$ This identifies $\mathcal W$ with the union of 
$p-1$ rigid open balls of radius $1.$ Let $\mathcal W^*$ denote
the subspace of weights $\kappa $ such that $v_p(\kappa (z)^{p-1}-1)\geqslant \frac{1}{p-1}.$ Note that $\mathcal W^*$ contains all characters
$\kappa $ of the form $\kappa (z)=z^m,$ $m\in \Z.$  If $U$ is an open disk, we 
denote by $\CO_U^0$ the ring of rigid analytic functions on $U$ bounded by $1$
and set $\CO_U=\CO_U^0[1/p].$ We remark that $\CO_U^0= O_E[[u]]$ for some $u$
and  denote by $\frak m_U$ the maximal ideal of $\CO^0_U.$
The inclusion $U\subset \mathcal W$ fixes a character 
\[
\kappa_U \in \mathcal W (U)=\Hom_{\text{\rm cont}}(\Zp^*, \CO_U^*).
\]
If $x\in U (L),$ we can consider $x$ as a homomorphism  $x\,:\, \CO_U\rightarrow L.$ Let $\kappa_x :\,\Zp^*\rightarrow L^*$ denote the character parametrized by $x.$
Then we have
\[
\kappa_x=x\circ \kappa_U.
\]

\subsubsection{}  Consider the 
following compact subsets of $\Zp^2:$
\[
T_0=\Zp^*\times \Zp, \qquad T_0'=p\Zp\times \Zp^*.
\]
For any weight $\kappa \in \mathcal W^*(L)$ we denote by $A_\kappa^0(T_0)$
(respectively $A_\kappa^0(T_0')$) the module of functions $f\,:\,T_0
\rightarrow O_L$ (respectively $f\,:\,T_0'
\rightarrow O_L$) such that:

\begin{itemize}
\item[]{$\bullet$} $f$ is  homogeneous of weight $\kappa$ {\it i.e.}
\[
f(ax,ay)=\kappa (a) f(x,y), \qquad \forall a\in\Zp.
\]
\item[]{$\bullet$} $f(1,z)= \underset{n=0}{\overset{\infty}\sum} c_nz^n,$
$c_n\in O_L,$ where $(c_n)_{n\geqslant 0}$ converges to $0.$
\end{itemize}
Analogously, for any open disk  $U\subset \mathcal W^*$ we denote by $A_U^0(T_0)$
(respectively $A_U^0(T_0')$) the module of functions $f\,:\,T_0
\rightarrow \CO^0_U$ (respectively $f\,:\,T_0'
\rightarrow \CO^0_U$) such that:

\begin{itemize}
\item[]{$\bullet$} $f$ is  homogeneous of weight $\kappa_U,$ {\it i.e.}
\[
f(ax,ay)=\kappa_U(a) f(x,y), \qquad \forall a\in\Zp.
\]
\item[]{$\bullet$} $f(1,z)= \underset{n=0}{\overset{\infty}\sum} c_nz^n,$
$c_n\in \CO^0_U,$ where $(c_n)_{n\geqslant 0}$ converges to $0$
in the $\frak m_U$-adic topology.
\end{itemize}
 Define 
\begin{equation}
\nonumber
D_\kappa^0=\Hom_{\mathrm{cont}}(A_\kappa^0(T_0), O_L),\qquad
D_U^0=\Hom_{\mathrm{cont}}(A_U^0(T_0), \CO^0_U)
\end{equation}
and 
\begin{equation}
\nonumber
D_\kappa=D_\kappa^0[1/p], \qquad
D_U=D_U^0[1/p].
\end{equation}
We have the specialization map 
\begin{equation}
\label{specialization of distributions}
\begin{aligned}
&\spec_\kappa \,:\,D_U \rightarrow D_\kappa,\\ 
&\spec_\kappa (\mu_U) (f)=  \kappa(\mu_U(f_U)) , \qquad 
\textrm{where $f_U(x,y)=f(1,y/x) \kappa_U(x)$}
\end{aligned}
\end{equation}
(see \cite[Section~3.1]{AIS}). 

For each positive integer $k,$ we denote by $P_k^0$ the $O_E$-module of 
homogeneous polynomials of degree $k$ with coefficients in $O_E.$
We remark that there exists a canonical isomorphism
\begin{equation}
\label{homogeneous polynomials vs O_E^2}
\Hom_{O_E}(P_k^0,O_E) \simeq \TSym^k (O_E^2).
\end{equation}
The restriction of distributions on $P_k^0$ provides us with a map 
$D^0_k\rightarrow \TSym^k (O_E^2).$ Composing this map with 
(\ref{specialization of distributions}), 
we obtain a map 
\begin{equation}
\nonumber
\theta_k\,:\,D^0_U\rightarrow \TSym^k(O_E^2).
\end{equation}

\subsubsection{} Let $\La (T_0),$ $\La (T_0')$ and $\La (\Zp^2)$ 
denote the modules of $p$-adic measures with values in $O_E$ on
$T_0,$ $T_0'$ and $\Zp^2$ respectively. We remark that $\La (\Zp^2)$
is canonically isomorphic to the Iwasawa algebra of $\Zp^2$ and for 
each integer $k\geqslant 0$ we have the moment map 
\begin{equation}
\label{definition of moments}
\begin{aligned}
&\mom^k \,:\, \La (\Zp^2) \rightarrow \TSym^k(O_E^2),\\
&\mom^k (h)=h^{\otimes k}, \qquad h\in \Zp^2.
\end{aligned}
\end{equation}
(see \cite[Section~2]{Ki15}). For $T=T_0, T_0',$ we have a commutative diagram
\begin{equation}
\nonumber
\xymatrix{
\La (T) \ar[r] \ar[d] & D_U^0(T) \ar[d]^{\theta_k}\\
\La (\Zp^2) \ar[r]^(.4){\mom^k} & \TSym^k (O_E^2).
}, 
\end{equation}
where $k\in U\cap \Z$ (see \cite[Proposition~4.2.10]{LZ}).

\subsubsection{} Let $N\geqslant 4$ and let $p$ be an odd prime such that
$(p,N)=1.$ The fundamental group
of $Y(N,p)(\mathbf C)$ is $\Gamma_p(N)=\Gamma_1(N)\cap \Gamma_0(p).$ 
Its $p$-adic completion $\widehat{\Gamma_p(N)}$  is isomorphic to the 
$p$-Iwahori subgroup $U_0(p)$ and  it acts on the pro-$p$-covering 
$Y_1(p^\infty, Np^\infty)$
of $Y(N,p).$ This defines a morphism $\pi_1^{\,\mathrm{\acute et}} (Y(N,p))\rightarrow U_0(p).$
Thus the natural action of $U_0(p)$ on $D_U^0(T_0)$ and $D_U^0(T'_0)$ provides these objects  with an action of $\pi_1^{\,\mathrm{\acute et}} (Y(N,p)).$  Therefore, $D_U^0(T_0)$ and $D_U^0(T'_0)$
define  pro-\'etale sheaves on $Y(N,p),$ which we will denote by 
$\mathfrak D_U^0(\cF)$ and $\mathfrak D_U^0(\cF ')$ 
respectively.

\subsubsection{} 
\label{subsubsection subgroups D and D'}
Let $\mathcal E \rightarrow Y(N,p)$ denote the universal
elliptic curve over $Y(N,p).$ Let $C\subset \mathcal E[p]$ denote the canonical
subgroup of order $p$ of $\CE [p].$ Set $D=\CE [p]-C$ and $D'=C-\{0\}$
and consider the pro-sheaves $\La (\cF\left <D\right >)$ and $\La (\cF\left <D'\right >)$ defined by (\ref{diagram definition of adic sheaves}-
\ref{definition of adic sheaves}). 

\begin{myproposition} 
\label{proposition about overconvergent sheaves}
i) The sheaves  $\La (\cF\left <D\right >)$,
$\La (\cF\left <D'\right >)$ and $\La (\cF)$ 
are induces by the modules $\La (T_0),$  $\La (T'_0)$  and
$\La (\Zp^2)$ equipped with the natural action of $\pi_1^{\mathrm{\,\acute et}} (Y(N,p)).$

ii) The natural inclusions $\La (T_0) \rightarrow D_U(T_0)$ and 
$\La (T'_0) \rightarrow D_U(T'_0)$ induce morphisms of sheaves
$\La (\cF\left <D\right >) \rightarrow \mathfrak D_U^0(\cF)$ and
$\La (\cF\left <D'\right >) \rightarrow \mathfrak D_U^0(\cF ').$

iii) For any $k\in U\cap \Z,$ we  have commutative diagrams
\begin{equation}
\nonumber
\xymatrix{
\La (\cF\left <D\right >) \ar[r] \ar[d]^{[p]_*} &\mathfrak D_U(\cF)\ar[d]^{\theta_k}\\
\La (\cF) \ar[r]^(.5){\mom^k} &\TSym^k(\cF_{\Qp}),
}
\qquad 
\xymatrix{
\La (\cF\left <D'\right >) \ar[r] \ar[d]^{[p]_*} &\mathfrak D_U(\cF')\ar[d]^{\theta_k}\\
\La (\cF) \ar[r]^(.5){\mom^k} &\TSym^k(\cF_{\Qp}),
}
\end{equation}
where $\mom^k$ is the moment map on sheaves induced by (\ref{definition of moments}).
\end{myproposition}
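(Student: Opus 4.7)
My plan is to work on the pro-étale cover $Y_1(p^\infty, Np^\infty) \to Y(N,p)$, on which the Tate module $T_p(\CE)$ of the universal elliptic curve becomes trivialized. The Galois group of this cover, equivalently the pro-$p$ quotient of $\pi_1^{\,\et}(Y(N,p))$ acting on the Tate module, is the $p$-Iwahori subgroup $U_0(p) \subset \GL_2(\Zp)$. Continuous pro-étale sheaves on $Y(N,p)$ whose pullback to this cover is constant correspond to continuous $U_0(p)$-modules, and the proposition will be proved by identifying $\La(\cF\left<D\right>)$, $\La(\cF\left<D'\right>)$ and $\La(\cF)$ with the $U_0(p)$-modules $\La(T_0)$, $\La(T_0')$ and $\La(\Zp^2)$ respectively.

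For part (i), I choose the trivialization $T_p(\CE) \simeq \Zp^2$ compatibly with the Iwahori structure, so that the canonical subgroup $C \subset \CE[p]$ corresponds to the line $\{0\} \times (\Z/p)$ inside $(\Z/p)^2$. Unwinding the fiber-product diagram (\ref{diagram definition of adic sheaves}), the geometric fiber of $\CE[p^r]\left<D\right> \to Y(N,p)$ is canonically
\[
\{ x \in \CE[p^{r+1}] : p^r x \in D \},
\]
which under the trivialization is the set of $(a,b) \in (\Z/p^{r+1})^2$ with $a \not\equiv 0 \pmod p$, that is, the image of $T_0 = \Zp^* \times \Zp$ modulo the appropriate truncation. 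This bijection is manifestly $U_0(p)$-equivariant (since $U_0(p)$ preserves $T_0$), and the free $\Z/p^r$-module on this set is exactly the relevant truncation of $\La(T_0)$. Passing to the limit identifies the pro-sheaf $\La(\cF\left<D\right>)$ with the sheaf attached to $\La(T_0)$. The arguments for $\La(\cF\left<D'\right>)$ (using $T_0' = p\Zp \times \Zp^*$, the preimage in $\Zp^2$ of $C \setminus \{0\}$) and for $\La(\cF)$ (using all of $\Zp^2$, with $D$ replaced by the zero section) are entirely analogous.

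For part (ii), once (i) is granted the inclusion $\La(T_0) \hookrightarrow D_U^0(T_0)$ sending a measure $\mu$ to the functional $f \mapsto \int_{T_0} f\,d\mu$ is manifestly $U_0(p)$-equivariant, and similarly for $T_0'$; passing to associated sheaves yields the asserted morphisms. For part (iii) I must verify that the two compositions
\[
\La(T_0) \xrightarrow{[p]_*} \La(\Zp^2) \xrightarrow{\mom^k} \TSym^k(O_E^2)
\quad\text{and}\quad
\La(T_0) \hookrightarrow D_U^0(T_0) \xrightarrow{\theta_k} \TSym^k(O_E^2)
\]
agree as $U_0(p)$-maps, and likewise for $T_0'$. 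By the duality (\ref{homogeneous polynomials vs O_E^2}), it suffices to pair both outputs with an arbitrary homogeneous polynomial $P \in P_k^0$; in both cases one obtains $\int_{T_0} P(x,y)\,d\mu$, the upper route because $\mom^k(\mu)$ pairs with $P$ by integrating $P$ against $\mu$, and the lower route because $\theta_k$ is by construction the restriction of distributions to $P_k^0$ (composed with the weight-$k$ specialization, which is tautological on measures already of weight $k$).

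The main technical obstacle is the bookkeeping in part (i): one must track the shift in truncation induced by the fiber product $\CE[p^r]\left<D\right> \subset \CE[p^{r+1}]$ and check that the $U_0(p)$-action inherited from $\Gal(Y_1(p^\infty,Np^\infty)/Y(N,p))$ on $\Zp^2$ coincides with the natural action preserving $T_0$ and $T_0'$; once these identifications are pinned down canonically, parts (ii) and (iii) are formal consequences. The argument follows the framework set up in \cite{Ki15} for the Iwasawa-theoretic sheaves and in \cite{LZ} for the comparison with overconvergent distributions.
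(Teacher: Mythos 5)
The paper gives no proof here, simply citing \cite[Propositions~4.4.2 and 4.4.5]{LZ}, and your argument correctly spells out the standard reasoning behind that reference. The identification of the pro-\'etale sheaves with the $U_0(p)$-modules $\La(T_0)$, $\La(T_0')$ and $\La(\Zp^2)$ via the trivialization of the Tate module on the pro-$p$ cover, together with the observation that both compositions in (iii) send a measure $\mu$ to the element of $\TSym^k(O_E^2)$ whose pairing with $P\in P_k^0$ is $\int P\,d\mu$, is exactly what is carried out there.
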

\begin{proof} See \cite[Propositions~4.4.2 and 4.4.5]{LZ}.
\end{proof}

\subsubsection{} In \cite{AIS}, Andreatta, Iovita and Stevens defined 
the \'etale cohomology with coefficients in the sheaves $\mathfrak D_U(\cF)$
and $\mathfrak D_U(\cF').$ Set
\begin{equation}
\nonumber 
W (U)^0=H^1_{\mathrm{\acute et}}(Y(N,p)_{\overline\Q}, \mathfrak D_U(\cF))(-\kappa_U), \qquad 
W' (U)^{0}=H^1_{\mathrm{\acute et}}(Y(N,p)_{\overline\Q}, \mathfrak D_U(\cF')(1))
\end{equation}
and 
\begin{equation}
\nonumber
W (U)=W (U)^0\otimes_{O_E}E , \qquad W' (U)=W'(U)^{0}\otimes_{O_E}E .
\end{equation}
We remark that $W(U)$ and $W'(U)$ are $\CO_U$-modules  equipped with 
a continuous linear action of the Galois group $G_{\Q,S}$ and  an action of Hecke operators which commute to each other. 

\subsubsection{}  Assume that  $k\in U.$ 
The map $x\mapsto x+k$ defines a canonical bijection between  $U-k$ and $U$ 
and, therefore,  an isomorphism $t_k\,:\,\CO^0_{U-k} \simeq \CO^0_U.$
If $F\in A_{U-k}^0(T_0')$ and $G\in P_k^0$ is a homogeneous polynomial 
of degree $k,$ then $t_k\circ (FG)\in A_U^0(T_0'),$ and we have a well defined 
map $A_{U-k}^0(T_0')\otimes P_k^0 \rightarrow A_{U}^0(T_0').$
Passing to the duals and using the isomorphism (\ref{homogeneous polynomials vs O_E^2}) we obtain a map 
\begin{equation}
\nonumber
\beta_k^* \,:\, D_U^0(T_0') \rightarrow D_{U-k}^0(T_0') \otimes \TSym^k(O_E^2).
\end{equation}
We use the same notation for the induced map of sheaves
\begin{equation}
\label{definition of beta}
\beta_k^* \,:\, \mathfrak D_U^0(\cF') \rightarrow \mathfrak D_{U-k}^0(\cF') \otimes \TSym^k(\cF ).
\end{equation}
Let 
\begin{equation}
\nonumber
\delta_k\,:\, A_{U}^0(T_0')\rightarrow A_{U-k}(T_0') \otimes P_k^0
\end{equation}
be the map defined by
\begin{equation}
\nonumber
\delta_k (F)=\frac{1}{k!}\underset{i+j=k}\sum
\frac{\partial^k F(x,y)}{\partial x^i\partial y^j}\otimes x^iy^j.
\end{equation}
Passing to the duals we obtain a map
\begin{equation}
\nonumber
\delta_k^* \,:\,D_U^0(T_0') \rightarrow D_{U-k}(T_0') \otimes \TSym^k(O_E^2)   .
\end{equation}
We use the same notation for the induced map of sheaves
\begin{equation}
\label{definition of delta}
\delta_k^* \,:\, \mathfrak D_{U-k}^0(\cF') \otimes \TSym^k(\cF ) \rightarrow \mathfrak D_U(\cF') .
\end{equation}
Set $\displaystyle \ell =\frac{\log_p\kappa_U(1+p)}{\log_p(1+p)}.$ Then $\ell \in \CO_U,$ and $\ell (x)=x$ for each $x\in U\cap \Z.$ Let
\begin{equation}
\nonumber
\binom{\ell}{k}=\frac{\ell (\ell-1)\cdots (\ell-k+1)}{k!}.
\end{equation}
Then 
\begin{equation}
\nonumber
\delta_k^*\circ \beta_k^*=\binom{\ell}{k}
\end{equation}
(see \cite[Proposition~5.2.1]{LZ}). 

\newpage

\subsection{Coleman families}
\label{section Coleman families}
\subsubsection{}
 Let  $f(q)=\underset{n=1}{\overset{\infty}\sum } a_n q^n$  be a newform of   weight $k_0 \geqslant 2,$ level $N_f$ and nebentypus $\ep_f.$  We assume that the conditions {\bf C1-2)} of Section~\ref{subsection p-adic representations} hold for some fixed odd prime $p \not\vert \, N_f.$ 
Define 
\begin{equation}
\nonumber
I_f=\{x\in \Z \mid x\geqslant 2, \quad  x\equiv k_0\mod{(p-1)}\}.
\end{equation}
We identify $I_f$ with a subset of $\mathcal W^*.$
Let $U\subset \mathcal W^*$ be an open disk centered in $k_0.$
For any $F\in \CO_U$ and $x\in I_f,$ we denote by $F_x$
the value of $F$ at $x.$
For any sufficiently large $r\geqslant 1,$ we consider  $E \left <w/p^r \right >$
as the ring of analytic functions on the closed disk $D(k_0,p^{-r})\subset U.$ 
Recall that for each $F(w)\in E \left <w/p^r \right >$ we set
$\mathcal A^{\mathrm{wt}}(F)(x)=F((1+p)^{x-k_0}-1)$
(see Section~\ref{subsection A^{wt}}).   Then $F_x=\mathcal A^{\mathrm{wt}}(F)(x).$
The following proposition summarizes the main properties of 
Coleman families we need in this paper.   

\begin{myproposition} 
\label{proposition coleman families}
Assume that $v_p(\alpha (f))<k_0-1.$ 
%Let  $r\geqslant 0$ be a sufficiently large integer.
Then  for a sufficiently small open disk $U$ centered in $k_0$ the
following conditions hold:
\medskip

1) There exists 
%an open disk $U$ centered in $k_0$ and 
a unique formal power series  
\begin{equation}
\nonumber
\f=\underset{n=1}{\overset{\infty}\sum}
\ba_nq^n  \in \CO_U[[q]]
\end{equation}
with coefficients $\CO_U$ such that

1a) For each  $x \in I_f \cap U$ such that 
$v_p(\alpha (f) ) \neq x/2-1,$
the specialization $\f_{x}$ at $x$ is a $p$-stabilization of a newform $f^0_x$ of 
weight $x$ and level $N_f .$

1b) $\f_{k_0}=f_\alpha .$ 
\medskip

2) Fix $D(k_0,p^{-r})\subset U $ and denote by $A_{\f}$ its $E$-affinoid algebra. 
Let
\begin{equation}
\nonumber
W_{\f}=W(U)_{(\f)}\otimes_{\CO_U}A_{\f},
\end{equation}
where  $W(U)_{(\f)}$ is the maximal submodule of the $\CO_U$-module $W(U)$ on which the operators $T_l$ (for $(l,N_f)=1$) and $U_l$ (for $l\vert N_f$)
act as multiplication by $\ba_l$ for all primes $l.$ Then 

2a) $W_{\f}$ is a free $A_{\f}$-module of rank $2$ equipped with a continuous 
linear action  of $G_{\Q,S}.$ 

2b) The specialization of $W_{\f}$ at each integer $x\geqslant 2$ is isomorphic to  Deligne's representation associated to $\f_x .$

2c) The  $(\Ph,\Gamma)$-module $\bD_{\f}=\DdagrigAf (W_{\f})$   
has a triangulation of the form

\begin{equation}
\nonumber
0\rightarrow F^+\bD_{\f}\rightarrow \bD_{\f} \rightarrow F^- \bD_{\f} \rightarrow 0,
\end{equation}
where
\begin{equation}
\begin{aligned}
\nonumber
&F^+\bD_{\f}=\CR_{A_{\f}} (\bdelta^+_{\f}), &&
\bdelta_{\f}^+(p)=\ba_p, &&&{\bdelta_{\f}^+\vert}_{\Zp^*}=1;
\\
&F^-\bD_{\f}=
\CR_{A_{\f}} (\bdelta_{\f}^-),
&&
\bdelta_{\f}^-(p)=\ep (p) \ba_p^{-1},
&&&
{\delta_{\f}^-\vert}_{\Zp^*}= \bchi_{\f}^{-1},
\end{aligned}
\end{equation}
and 
\begin{equation}
\nonumber
\bchi_{\f}(\gamma)=\chi (\gamma)^{k_0-1}
\exp \left (\log_p(1+w)\frac{\log (\left <  (\chi (\gam)) \right >}
{\log (1+p)} \right )
\end{equation}
denotes the character $\chi^{k_0-1}\bchi$ for the algebra $A_{\f}.$
\medskip

3) Let $W'_{\f }=W'(U)_{[\,\f \,]}\otimes_{\CO_U}A_{\f},$
where  $W'(U)_{[\,\f \,]}$ denotes the maximal quotient of the $\CO_U$-module $W'(U)$ by the submodule generated by the images of $T_l'-\ba_l$ (for $(l,N_f)=1$) and $U_l'-\ba_l$ (for $l\vert N_f$). There exists
a  pairing
\[
W'_{\f}\times W_{\f}\rightarrow A_{\f},
\]
which induces a canonical isomorphism 
\[
W'_{\f}\simeq W_{\f}^*:=\Hom_{A_{\f}}(W_{\f}, A_{\f}).
\]
\end{myproposition}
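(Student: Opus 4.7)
The proof naturally splits into the three parts of the statement, and each rests on a substantial body of previously developed machinery rather than on new calculations.

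For part (1), the existence and uniqueness of the Coleman family is classical. The hypothesis $v_p(\alpha(f))<k_0-1$ forces $f_\alpha$ to have non-critical slope, so by Coleman's classicality theorem $f_\alpha$ lies in the smooth non-critical locus of the eigencurve of tame level $N_f$, and the weight map is \'etale at this point. Shrinking $U$ around $k_0$, we obtain a unique analytic section of the eigencurve through $f_\alpha$, which gives the system of Hecke eigenvalues $(\ba_l)_{l\geqslant 1}\in \CO_U$, hence the formal $q$-expansion $\f$. The interpolation property 1a) is the content of Coleman's classicality result applied fibrewise, combined with the fact that the slope of $U_p$ at $\f_x$ equals $v_p(\ba_p(x))=v_p(\alpha(f))$ which stays $<x-1$ by continuity after possibly shrinking $U$.

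For part (2a,b), I would define $W_\f$ as the $A_\f$-module attached to the generalized Hecke eigensystem of $\f$ in the overconvergent cohomology $W(U)$ constructed in the previous subsection, then verify the two properties. The key is to combine the maps $\theta_k$ from Proposition \ref{proposition about overconvergent sheaves} iii) with the comparison between $\Lambda(\cF\langle D\rangle)$-cohomology and classical \'etale cohomology to obtain, at each integer $x\in U\cap I_f$, a natural specialisation $W_\f\otimes_{A_\f}k(x)\to W_{\f_x}$ which is Hecke-equivariant. Using the overconvergent Eichler--Shimura comparison of Andreatta--Iovita--Stevens \cite{AIS}, one checks that this map is an isomorphism at each non-critical classical point, and a density/flatness argument combined with the two-dimensionality at every classical fibre forces $W_\f$ to be locally free of rank $2$, after possibly shrinking $U$. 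The continuous $G_{\Q,S}$-action is inherited directly from $W(U)$.

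Part (2c), the construction of the triangulation, is the hardest step and the main obstacle. The strategy is to apply the theory of triangulations in families (Kedlaya--Pottharst--Xiao, building on Kisin and Colmez): on the one hand, the finite slope condition on $U_p$ produces an $A_\f$-line stable under $\Ph$ inside a suitable period module, and by the Kedlaya--Pottharst--Xiao globalisation this lifts to a rank one $(\Ph,\Gamma)$-submodule $F^+\bD_\f\subset \bD_\f$ that is a direct summand after inverting $t$; the character $\bdelta^+_\f$ is pinned down by checking on a Zariski-dense set of crystalline classical specialisations that $\Ph$ acts by $\ba_p$ and that $\Gamma$ acts trivially on $\Zp^*$ (since $\F^0\Dc=0$ for the corresponding eigenline at $\f_x$). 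The character $\bdelta^-_\f$ on the quotient is then forced by the relation $\bdelta^+_\f\cdot \bdelta^-_\f=\det \bD_\f=\bep\bchi_\f^{-1}\chi$, which in turn follows from the determinant formula for Deligne's representation applied to the dense set of classical specialisations combined with Zariski density.

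For part (3), one uses Poincar\'e duality on $Y(N,p)$: there is a natural $\pi_1^{\,\mathrm{\acute et}}$-equivariant pairing $D^0_U(T_0)\times D^0_U(T'_0)\to \CO_U^0$ coming from the Amice transform, which induces a cup-product pairing between $W(U)$ and $W'(U)(1)$ with values in $\CO_U$. Taking generalized eigenspaces (resp.\ coinvariants) for the same Hecke system $\f$ yields the desired pairing $W'_\f\times W_\f\to A_\f$. To see that it is perfect, it suffices to specialise at any classical weight $x\in I_f\cap U$ with $\f_x$ non-critical, where the pairing recovers the usual Poincar\'e duality pairing on $W_{\f_x}$ by the compatibility of $\theta_k$ with cup-products; perfectness in the family then follows by a Nakayama-type argument after possibly shrinking $U$.
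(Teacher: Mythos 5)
Your proposal follows essentially the same route as the paper, whose proof consists simply of pointing to Coleman's theory (plus a lemma of Bella\"iche) for part 1, to the Andreatta--Iovita--Stevens and Loeffler--Zerbes machinery (and Hansen's preprint) for parts 2a, 2b and 3, and to Liu's theorem on triangulations of refined families for part 2c; your sketch unpacks precisely these ingredients. One small attribution correction worth noting: for 2c the hard step of lifting the $\Ph$-stable line to a rank one $(\Ph,\Gamma)$-submodule over the whole affinoid (including potentially critical or non-classical points) is Liu's theorem, building on Kisin's interpolation of crystalline periods, rather than a ``Kedlaya--Pottharst--Xiao globalisation''; KPX supply the relevant cohomology and sheaf theory of $(\Ph,\Gamma)$-modules in families but not this specific triangulation result, which is why the paper cites Liu at that point.
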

\begin{proof} 1) This is the central result of Coleman's theory 
\cite{Col} together with \cite[Lemma~2.7]{Bel12}.

The statements 2a) and 2b) and 3) follow from the theory of Andreatta, Iovita and Stevens. See \cite[Theorem~4.6.6]{LZ} and the unpublished preprint \cite{Han15} for comments and  more detail. 

The statement 2c) is a theorem of Liu  \cite{Liu15}.
\end{proof}

\subsubsection{}
We say that $x\in I_f$ is {\it classical} if $v_p(\alpha ) \neq x/2-1$
and denote by $f_x^0$ the newform of level $N_f$ whose $p$-stabilization is 
$\f_x .$   For each classical weight  $x$ we have isomorphisms
\begin{equation*}
\bD_{\f}\otimes_{A_{\f}}\left (A_{\f}/\mathfrak m_x \right )\simeq 
\Ddagrig (W_{\f_x})\simeq \Ddagrig (W_{f^0_x}),
\end{equation*}
where the second isomorphism is induced by (\ref{dual isomorphism of representations  stabilized and
non stabilized }) for $f^0_x.$

The $(\Ph,\Gamma )$-module $F^+\bD_{\f}$ is crystalline of Hodge--Tate 
weight $0$ and the operator $\Ph$ acts on $\CDcris (F^+\bD_{\f})$
as multiplication by $\ba_p .$ The $(\Ph,\Gamma )$-module $F^-\bD_{\f}(\bchi_{\f})$
is crystalline of Hodge--Tate weight $-1$ and $\Ph$ acts on 
$\CDcris (F^-\bD_{\f}(\bchi_{\f}))$ as multiplication by
$\ep_f (p) p^{-1} \ba_p^{-1}.$

\subsubsection{} 

Define 
\begin{equation}
\begin{aligned}
\label{definition of C(f)}
C (f)=\left (1-\frac{\beta (f) }{p\alpha (f) } \right )\cdot
\left (1-\frac{\beta (f) }{\alpha (f) } \right ).
\end{aligned}
\end{equation}

\begin{myproposition} 
\label{proposition interpolation eigenvectors}
Let $r$ be a  sufficiently large integer. Then

1)  There exists an 
element $\boeta_{\f}\in \CDcris (F^+\bD_{\f})$ 
such that for all classical $x\in I_{f}$ the specialization 
$\boeta_{\f}(x):=\mathcal A^{\mathrm{wt}}(\boeta_{\f})(x)$ of $\boeta_{\f}$ at weight $x$ satisfies

\begin{equation}
\nonumber
\boeta_{\f}(x) =\lambda_{N_f}^{-1}(f^0_x)\,C(f^0_x)^{-1} {\Pr}_{\alpha}^* (\eta_{f^0_x}),
\end{equation}
where the map ${\Pr}_{\alpha}^*$ is defined in (\ref{dual isomorphism of representations  stabilized and
non stabilized }) and 
$C(f^0_x)$ is (\ref{definition of C(f)}) for the form $f^0_x .$

2) There exists an 
element $\bxi_{\f}\in \CDcris (F^-\bD_{\f}(\bchi_{\f}))$ 
such that for all classical $x\in I_{f}$ the specialization 
$\xi_{\f}(x):=\mathcal A^{\mathrm{wt}}(\xi_{\f})(x)$ of $\xi_{\f}$ at  $x$ satisfies
\begin{equation}
\nonumber
\bxi_{\f}(x) ={\Pr}_{\alpha}^*(\omega_{f^0_x})\otimes e_{x-1} \mod{\CDcris (F^+\bD_{\f_x}(\chi^{x-1} ))} 
\end{equation}
where $e_{x-1}$ is the canonical generator of $\CDcris (\CR_E(\chi^{x-1}))\simeq 
\Dc (E(x-1)).$

\end{myproposition}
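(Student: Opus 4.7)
The plan is to exploit the rank-one structure of the graded pieces $F^+\bD_\f$ and $F^-\bD_\f(\bchi_\f)$. After possibly shrinking $U$, both Dieudonn\'e modules $\CDcris(F^+\bD_\f)$ and $\CDcris(F^-\bD_\f(\bchi_\f))$ are free $A_\f$-modules of rank one: by the explicit description in Proposition~\ref{proposition coleman families}.2c, each has the form $\CDcris(\CR_{A_\f}(\bdelta))$ with $\bdelta\vert_{\Zp^*}(u) = u^0$, so the canonical vector $e_\bdelta$ is a basis. Consequently, existence of $\boeta_\f$ and $\bxi_\f$ reduces to exhibiting any analytic section of the relevant free rank-one module whose specialization at each classical $x \in I_f$ is a nonzero scalar multiple of the prescribed vector; the resulting ratio is an analytic unit on $U$ which can be divided out to match the prescribed normalization. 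Uniqueness follows from $p$-adic density of classical weights in $U$ together with the flatness of the free $A_\f$-modules.

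For part 2), I would construct $\bxi_\f$ via the overconvergent Eichler-Shimura isomorphism of Andreatta-Iovita-Stevens reviewed in Section~3.3. Viewing the Coleman family $\f$ as an analytic section over $U$ of the sheaf of overconvergent modular forms of tame level $N_f$, the Eichler-Shimura map sends $\f$ to an $A_\f$-linear ``de Rham periods'' section of $\CDcris(F^-\bD_\f(\bchi_\f))$ whose specialization at each classical $x \in I_f$ coincides, up to an analytic unit on $U$, with the class of $\Pr_\alpha^*(\omega_{f^0_x}) \otimes e_{x-1}$ modulo the $\alpha$-part $\CDcris(F^+\bD_{\f_x}(\chi^{x-1}))$. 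Renormalizing by this unit produces $\bxi_\f$.

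For part 1), I would use Tate duality within the family to reduce to part 2) applied to a dual Coleman family $\f^*$ passing through $(f_\alpha)^*$. The duality in Proposition~\ref{proposition coleman families}.3, together with the triangulation of $\bD_\f$, induces a perfect $A_\f$-bilinear pairing
\[
\langle\,,\,\rangle \colon \CDcris(F^+\bD_\f) \times \CDcris(F^-\bD_{\f^*}(\bchi_{\f^*})) \to A_\f,
\]
which at each classical $x$ specializes to the canonical pairing $[\,,\,] \colon \Dc(W_{f^0_x})^{\Ph=\alpha} \times \Dc(W_{(f^0_x)^*})/\Dc(W_{(f^0_x)^*})^{\Ph=\alpha} \to E$ (using $e_{1-x}$ to identify $\Dc(E(1-x)) \simeq E$). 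I would then define $\boeta_\f$ as the unique element of $\CDcris(F^+\bD_\f)$ satisfying $\langle \boeta_\f, \bxi_{\f^*}\rangle = 1$, with $\bxi_{\f^*}$ produced by part 2) applied to $\f^*$. The claimed specialization constant $\lambda_{N_f}^{-1}(f^0_x)\,C(f^0_x)^{-1}$ then arises from two discrepancies at classical weight $x$: the Atkin-Lehner pseudo-eigenvalue $\lambda_{N_f}(f^0_x)$, which converts $\omega_{(f^0_x)^*}$ into $w_{N_f}(f^0_x)$ in the defining equation $[\eta_{f^0_x}^\alpha, \omega_{(f^0_x)^*}] = e_{1-x}$, and the factor $C(f^0_x)$, which measures the failure of $\Pr_\alpha^*$ to be an isometry for the Poincar\'e pairings on stabilized versus newform cohomology.

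The main obstacle is the analytic interpolation of the de Rham periods in part 2): this is not a formal consequence of the triangulation but relies essentially on the overconvergent Eichler-Shimura isomorphism of [AIS] and its Hecke-equivariance. The computation in part 1), while in principle routine, requires an honest calculation of the Atkin-Lehner pseudo-eigenvalue and of the constant $C(f^0_x)$ that compares the Poincar\'e pairings on the newform and stabilized cohomologies under $\Pr^{\alpha}_*$.
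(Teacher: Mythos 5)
The paper's own proof of this proposition is a one-line citation to Theorem~6.4.1 and Corollary~6.4.3 of \cite{LZ}, so I am evaluating your sketch on its own merits rather than against a worked argument in the text. Your overall strategy --- using the Andreatta--Iovita--Stevens overconvergent Eichler--Shimura isomorphism to manufacture $\bxi_{\f}$ and then obtaining $\boeta_{\f}$ from the family pairing with $\bxi_{\f^*}$ --- is coherent and close in spirit to what \cite{LZ} actually does (which also rests on the AIS isomorphism), although the order of construction in \cite{LZ} appears to be the reverse: $\boeta_{\f}$ is the output of the ES theorem and $\bxi_{\f}$ is derived from it. Both orders are reasonable, but they are not interchangeable without checking the duality argument in detail.

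The genuine work, however, is concentrated exactly in the places you wave at. First, the perfect pairing you posit between $\CDcris(F^+\bD_{\f})$ and $\CDcris(F^-\bD_{\f^*}(\bchi_{\f^*}))$ is not automatic from Proposition~\ref{proposition coleman families}.3; one has to know that the isomorphism $W'_{\f}\simeq W_{\f}^*$ identifies the Liu triangulation of $W_{\f}^*$ with the twisted triangulation of $W_{\f^*}$, which requires the uniqueness of the triangulation and some bookkeeping with characters. Second, your claim that the ES section specializes at each classical $x$ to $\Pr_\alpha^*(\omega_{f^0_x})\otimes e_{x-1}$ ``up to an analytic unit on $U$'' is precisely the content of part 2) and cannot be assumed and then renormalized away --- a priori you control the ratio one weight at a time, not as a single element of $A_{\f}^\times$. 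Third, and most important, the constants $\lambda_{N_f}^{-1}(f^0_x)\,C(f^0_x)^{-1}$ require an honest computation comparing the family pairing on Iwahori level with the newform-level Poincar\'e pairing via $\Pr^{\alpha}_*$ and $\Pr^*_\alpha$; your heuristics for where $\lambda_{N_f}$ and $C(f^0_x)$ come from are plausible but do not distinguish these factors from, say, $\lambda_{N_f}$ versus $\lambda_{N_f}^{-1}$ or $C(f)$ versus $C(f^*)$. Note also that the pseudo-eigenvalue $\lambda_{N_f}(f^0_x)$ is \emph{not} an analytic function of $x$, so its appearance in the specialization formula is an essential feature (the interpolating vector genuinely carries this drift), not a normalization to be dismissed. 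Finally, you assert uniqueness from density of classical weights; the proposition does not claim uniqueness and it is not needed for what follows.
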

\begin{proof}
This is Theorem 6.4.1 and Corollary 6.4.3 of \cite{LZ}.
\end{proof}

%\subsection{The overconvergent projector}
\subsubsection{} We review the construction of the overconvergent
projector defined by Loeffler and Zerbes in \cite[Section~5.2]{LZ}.
For any integer $j\geqslant 1,$ the maps $\beta_j^*$ and $\delta_j^*$,
defined by (\ref{definition of beta}) and (\ref{definition of delta}),
induce morphisms
\begin{equation}
\nonumber
\begin{aligned}
&\beta_j^*\,:\, W'(U) \rightarrow 
H^1_{\mathrm{\acute et}}\left (Y(N_f,p)_{\overline\Q}, \mathfrak D_{U-j}(\cF') \otimes \TSym^j(\cF )(1) \right ),
 \\
&\delta_j^*\,:\,H^1_{\mathrm{\acute et}}\left (Y(N_f,p)_{\overline\Q},
 \mathfrak D_{U-j}(\cF')\otimes \TSym^j(\cF )(1)\right )
 \rightarrow W'(U)
\end{aligned}
\end{equation}
such that $\displaystyle\delta_j^*\circ \beta_j^*=\binom{\ell}{j}.$

Let $\f$ be the Coleman family passing through $f_{\alpha}$
as in Proposition~\ref{proposition coleman families} and 
let
\begin{equation}
\nonumber
\pi_{\f,U}\,:\,W'(U) \rightarrow W'(U)_{[\,\f \,]}
\end{equation}
denote the canonical projection.  

\begin{myproposition} 
\label{proposition map pi_{bold f,U}}
Assume that the open set $U$ satisfies 
assumptions from Proposition~\ref{proposition coleman families}.
Then

i) The image of the composition 
\[
\pi_{\f,U}\circ \delta_j^*\,:\,H^1_{\mathrm{\acute et}}\left (Y(N_f,p)_{\overline\Q},
 \mathfrak D_{U-j}(\cF')\otimes \TSym^j(\cF )(1)\right )\rightarrow W'(U)_{[\,\f \,]}
\]
is contained  in $ \binom{\ell}{j}\, W'(U)_{[\,\f \,]}.$

ii) There exists a unique map 
\begin{equation}
\nonumber
\pi_{\f,U}^{[j]}
\,:\, 
H^1_{\mathrm{\acute et}}\left (Y(N_f,p)_{\overline\Q},
 \mathfrak D_{U-j}(\cF')\otimes \TSym^j(\cF )(1)\right )
 \rightarrow W'(U)_{[\,\f \,]}
\end{equation}
such that $=\binom{\ell}{j}\pi_{\f,U}^{[j]}= \pi_{\f,U}\circ \delta_j^*.$

iii) We have a commutative diagram
\begin{equation}
\nonumber
\xymatrix{
W'(U) \ar[ddr]^{\pi_{\f,U}} \ar[dd]^{\beta_j^*} &\\
& &\\
H^1_{\mathrm{\acute et}}\left (Y(N_f,p)_{\overline\Q},
 \mathfrak D_{U-j}(\cF')\otimes \TSym^j(\cF )(1)\right ) 
 \ar[r]_(.7){\pi_{\f,U}^{[j]}} & W'(U)_{[\,\f\,]}.
 }
\end{equation}
\end{myproposition}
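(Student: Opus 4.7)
The plan is to first prove (i), from which (ii) and (iii) follow formally. By Proposition~\ref{proposition coleman families}~2a), $W'(U)_{[\,\f\,]}$ is free of rank $2$ over the one-variable Tate algebra $A_{\f}$, which is a principal ideal domain. The element $\binom{\ell}{j}\in A_{\f}$ is nonzero (for $U$ sufficiently small its value at $k_0$ equals $\binom{k_0}{j}\neq 0$), and hence acts as a non-zero-divisor on $W'(U)_{[\,\f\,]}$. Granting (i), this immediately gives both the existence and uniqueness of $\pi_{\f,U}^{[j]}$ in (ii). For (iii), I would compose the defining relation $\binom{\ell}{j}\,\pi_{\f,U}^{[j]} = \pi_{\f,U}\circ\delta_j^*$ on the right with $\beta_j^*$ and invoke the identity $\delta_j^*\circ\beta_j^* = \binom{\ell}{j}\cdot\mathrm{id}_{W'(U)}$ to obtain
$$
\tbinom{\ell}{j}\bigl(\pi_{\f,U}^{[j]}\circ\beta_j^*\bigr)
\;=\; \pi_{\f,U}\circ\delta_j^*\circ\beta_j^*
\;=\; \tbinom{\ell}{j}\,\pi_{\f,U},
$$
and then cancel $\binom{\ell}{j}$.

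The heart of the proof is therefore (i). Applying $\pi_{\f,U}$ to the identity $\delta_j^*\circ\beta_j^* = \binom{\ell}{j}\cdot\mathrm{id}$ gives immediately
$\pi_{\f,U}\circ\delta_j^*\circ\beta_j^* = \binom{\ell}{j}\cdot\pi_{\f,U}$,
which establishes the required divisibility on the submodule $\beta_j^*(W'(U))$ of the intermediate cohomology
$$
H_j \;:=\; H^1_{\mathrm{\acute et}}\bigl(Y(N_f,p)_{\overline\Q},\,\mathfrak D_{U-j}(\cF')\otimes\TSym^j(\cF)(1)\bigr).
$$
To extend the divisibility to all of $H_j$, I would exploit the Hecke equivariance of $\beta_j^*$, $\delta_j^*$ and $\pi_{\f,U}$: the composition $\pi_{\f,U}\circ\delta_j^*$ factors through the $\f$-isotypic quotient $(H_j)_{[\,\f\,]}$ for the Hecke action on $H_j$ (appropriately normalised for the weight shift by $j$). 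A variant of the argument underlying Proposition~\ref{proposition coleman families}~2), applied to the coefficient sheaf $\mathfrak D_{U-j}(\cF')\otimes\TSym^j(\cF)$, then shows that $(H_j)_{[\,\f\,]}$ is free of rank $2$ over $A_{\f}$ and that the induced maps between $W'(U)_{[\,\f\,]}$ and $(H_j)_{[\,\f\,]}$ given by $\beta_j^*$ and $\delta_j^*$ become mutually inverse isomorphisms after inverting $\binom{\ell}{j}$. Combined with the torsion-freeness of $W'(U)_{[\,\f\,]}$, this reduces the general case of (i) to the divisibility already established on $\beta_j^*(W'(U))$.

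The main obstacle is precisely this structural input: proving that the $\f$-isotypic quotient $(H_j)_{[\,\f\,]}$ is free of rank $2$ over $A_{\f}$ and that $\beta_j^*, \delta_j^*$ induce isomorphisms on $\f$-isotypic parts modulo $\binom{\ell}{j}$-torsion. The argument rests on the overconvergent Eichler--Shimura isomorphism of Andreatta--Iovita--Stevens together with Coleman's finite-slope decomposition applied to the shifted Coleman family parametrised by $U-j$, and forms the technical heart of \cite[\S 5.2]{LZ}.
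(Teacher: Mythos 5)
The paper itself does not give a proof of this proposition; it simply cites \cite[Proposition~5.2.5]{LZ}, so there is no internal argument to compare yours against. Your deductions of parts~(ii) and~(iii) from part~(i) are correct: once~(i) is known and one grants that $W'(U)_{[\,\f\,]}$ has no $\binom{\ell}{j}$-torsion, the quotient map $\pi_{\f,U}^{[j]}$ is well-defined and unique, and composing the defining identity on the right with $\beta_j^*$ and cancelling $\binom{\ell}{j}$ gives~(iii). (A small imprecision: Proposition~\ref{proposition coleman families}~2a) asserts freeness of $W_{\f}=W(U)_{(\f)}\otimes_{\CO_U}A_{\f}$ over $A_{\f}$, not of $W'(U)_{[\,\f\,]}$ over $\CO_U$; one needs the corresponding statement for $W'(U)_{[\,\f\,]}$ itself, but this is a minor bookkeeping point.)

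The genuine gap is in part~(i), and it is in the structural claim you invoke. You assert that $\beta_j^*$ and $\delta_j^*$ induce maps on $\f$-isotypic quotients which become mutually inverse isomorphisms \emph{after inverting} $\binom{\ell}{j}$, and that this, together with freeness and torsion-freeness, ``reduces the general case of~(i) to the divisibility already established on $\beta_j^*(W'(U))$.'' That reduction does not go through. Knowing $\overline{\delta_j^*}\,\overline{\beta_j^*}=\binom{\ell}{j}$, that both sides are free of rank two over a PID, and that the maps become isomorphisms after localizing at $\binom{\ell}{j}$, is compatible with the image of $\overline{\delta_j^*}$ \emph{not} being contained in $\binom{\ell}{j}\,W'(U)_{[\,\f\,]}$. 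For instance, on $\CO_U^2$ take $\overline{\beta_j^*}=\mathrm{diag}\bigl(\binom{\ell}{j},1\bigr)$ and $\overline{\delta_j^*}=\mathrm{diag}\bigl(1,\binom{\ell}{j}\bigr)$: one has $\overline{\delta_j^*}\,\overline{\beta_j^*}=\binom{\ell}{j}$, both maps are isomorphisms after inverting $\binom{\ell}{j}$, the cokernel of $\overline{\beta_j^*}$ is even killed by $\binom{\ell}{j}$ itself, and yet $\mathrm{Im}(\overline{\delta_j^*})=\CO_U\oplus\binom{\ell}{j}\CO_U\not\subset\binom{\ell}{j}\CO_U^2$. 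More explicitly, your reduction proceeds by writing $\binom{\ell}{j}^n\bar h=\overline{\beta_j^*}(\bar w)$ for some $n\geqslant 0$ and applying $\overline{\delta_j^*}$, which yields only $\binom{\ell}{j}^{n-1}\overline{\delta_j^*}(\bar h)=\bar w\in W'(U)_{[\,\f\,]}$ — no information unless $n=0$. What is actually required, and what the Loeffler--Zerbes argument establishes via slope decompositions and the overconvergent Eichler--Shimura comparison, is that $\overline{\beta_j^*}$ is \emph{surjective over $\CO_U$} (hence an isomorphism, by rank count between free modules over a domain); only then is $\overline{\delta_j^*}=\binom{\ell}{j}\,(\overline{\beta_j^*})^{-1}$ a genuine identity of $\CO_U$-linear maps, forcing $\mathrm{Im}(\overline{\delta_j^*})=\binom{\ell}{j}\,W'(U)_{[\,\f\,]}$. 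Your statement of the structural input should therefore be strengthened from ``isomorphism after inverting $\binom{\ell}{j}$'' to ``surjectivity integrally''; as stated, the claimed reduction fails.
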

\begin{proof} See \cite[Proposition~5.2.5]{LZ}.
\end{proof}

\begin{enonce*}[remark]{Remark 4.4.8} If $U$ contains none of 
the integers $\{0,1,\ldots ,j-1\},$ then the function $\binom{\ell}{j}$ is
invertible on $U.$
\end{enonce*}

If $A_{\f}$ is the affinoid algebra of a closed disk centered in $k$
as in Proposition~~\ref{proposition coleman families}, we denote by
\begin{equation}
\label{definition of pi bold }
\pi_{\f}^{[j]}\,:\, H^1_{\mathrm{\acute et}}\left (Y(N_f,p)_{\overline\Q},
 \mathfrak D_{U-j}(\cF')\otimes \TSym^j(\cF )(1)\right )\rightarrow W^*_{\f}
\end{equation}
the composition of $\pi^{[j]}_{\f,U}$ with the natural map
$W'(U)_{[\,\f\,]}\rightarrow W_{\f}^*\simeq W'(U)_{[\,\f\,]}\otimes A_{\f}.$

\section{Beilinson--Flach elements}

\subsection{Eisenstein classes}

\subsubsection{} In this section, we review the theory of Beilinson--Flach elements 
introduced first by Beilinson \cite{Bei84} and extensively studied the last years
by Bertolini, Darmon, Rotger \cite{BDR15a, BDR15b}, Lei, Loeffler, Zerbes \cite{LLZ14} and Kings, Loeffler and Zerbes \cite{KLZb, KLZ}. We follow  \cite{KLZ, KLZb} closely. 
We maintain notation of Section~\ref{Section modular curves}.
Let $N\geqslant 4$ be a fixed integer.
% and $p$ a fixed odd prime such that $(p,N)=1.$ 
We denote by
\begin{equation}
\nonumber 
\Eis_{b,N}^k \in H^1_{\et}\left (Y_1(N), \TSym^k (\cF_{\Qp})(1)\right ), \qquad k\geqslant 0,
\quad b\in \Z/N\Z
\end{equation}
the \'etale realization of the Beilinson--Levin  motivic Eisenstein elements
\footnote{We normalize this element  as in \cite{KLZ}.}
constructed in \cite{BL94}. Note that for $k=0,$ we have 
\begin{equation}
\nonumber
b^2\Eis_{1,N}^0- \Eis_{b,N}^0=\partial (\,_bg_{0,1/N}),
\end{equation}
where $\partial \,:\,\mathcal O(Y_1(N))^*\rightarrow 
H^1_{\et}(Y_1( N),  {\Q_p}(1))$ denotes the Kummer map and 
$\,_bg_{0,1/N}$ is the Siegel unit as defined in \cite{Ka04}.

\subsubsection{}
Set 
\[
H^i_{\et}\left (Y_1(Np^\infty ), \TSym^k (\cF) (1)\right )=\underset{n}\varprojlim
H^i_{\et}\left (Y_1(Np^n), \TSym^k (\cF_n) (1)\right ),
\]
where the projective limit is taken with respect to the trace map.
The Siegel units $(\,_bg_{0,1/Np^n})_{n\geqslant 0}$ form a norm compatible 
system \cite{Ka04} and therefore we have a well defined element
\begin{equation}
\nonumber
\,_b\EI_{N}:=(\partial (\,_b g_{0,1/Np^n}))_{n\geqslant 0} \in 
H^1_{\et}\left (Y_1(Np^\infty ), \Z_p (1)\right ) \simeq 
H^1_{\et}\left (Y_1(N), \Lambda (\bm{\cF} \left <N\right >)(1)\right ),
\end{equation}
where $\Lambda (\bm{\cF} \left <N\right >)$ denotes the Iwasawa sheaf 
(\ref{definition of the Iwasawa sheaf})
associated to the canonical section  $s_N\,:\, Y_1(N) \rightarrow \CE [N].$ 
(Here we use the isomorphism (\ref{isomorphism for cohomology of Iwasawa sheaf}).)
Consider the map 
\begin{equation}
\nonumber
\mom^k_{\left <N \right >}\,:\, H^1_{\et}\left (Y_1(N), \Lambda (\cF\left <N\right >)(1)\right )
\xrightarrow{[N]} H^1_{\et}(Y_1(N), \Lambda (\cF)(1))
\xrightarrow{\mom^k} 
H^1_{\et}\left (Y_1(N), \TSym^k (\cF_{\Qp}) (1)\right ),
\end{equation}
where the first arrow is induced by the multiplication by $N$ 
on the universal elliptic curve 
and the second one is induced by the moment map from
Proposition~\ref{proposition about overconvergent sheaves}, iii).

%We have the moment map:
%\begin{equation}
%\nonumber
%\begin{aligned}
%&\mom_{N}^k \,:\, H^1(Y_1(Np^\infty ), \Z_p (1))=
%\underset{n}\varprojlim
%H^1(Y_1(Np^n), \Z/p^n\Z (1))\xrightarrow{\cup (Nt_{Np^n}^{\otimes k})}\\
%&\underset{n}\varprojlim
%H^1(Y_1(Np^n), \TSym^k \cH_n (1))=
%H^1(Y_1(Np^\infty ), \TSym^k \cH (1))
%\rightarrow \\
%&H^1(Y_1(N), \TSym^k \cH (1)).
%\end{aligned}
%\end{equation}
The main property of the elements $\,_b\EI_{N}$ is that they interpolate
Eisenstein elements, namely  
\[
\mom_{\left <N\right >}^k (\,_b\EI_{N})=b^2\Eis_{1,N}^k-b^{-k}\Eis^k_{b,N}.
\]
We refere the reader to \cite[Theorem~4.5.1]{KLZ} for the proof and further detail.

\subsection{Rankin-Eisenstein classes}

\subsubsection{} 
Let $Y_1(N)^2=Y_1(N)\times Y_1(N).$  We denote by  $\p_i\,:\,Y_1(N)^2\rightarrow Y_1(N)$ ($i=1,2$) the projections onto the 
first and second copy of $Y_1(N)$ respectively and by
$\Delta \,:\,Y_1(N) \rightarrow  Y_1(N)^2$ the diagonal map. 
For any  integers $k,l\geqslant 0,$ we consider the sheaf on $Y_1(N)^2$ defined by 
\[
\TSym^{[k,l]}\left (\cF_{\star}\right )=\p_1^*\left (\TSym^k\left (\cF_{\star}\right )\right )\otimes 
\p_2^*\left (\TSym^{l}\left (\cF_{\star}\right )\right ), \qquad  \star\in \{n, \emptyset , \Q_p\}.
\]
Note that $\Delta^*(\TSym^{[k,l]}\left (\cF_{\star}\right ))=\TSym^k\left (\cF_{\star}\right )\otimes \TSym^{l}\left (\cF_{\star}\right ).$
Let $j$ be an integer such that 
\begin{equation}
\nonumber
0\leqslant j \leqslant \min\{k,l\}
\end{equation}
In this situation, Kings, Loeffler and Zerbes \cite[Section~5]{KLZb}
defined  a map
\[
\CGm^{[k,l,j]}\,:\,\TSym^{k+l-2j}\left (\cF_{\Qp}\right )
\rightarrow \TSym^k\left (\cF_{\Qp}\right )\otimes \TSym^{l}\left (\cF_{\Qp}\right ) (-j),
\]
called  the Clebsch--Gordan map in {\it op. cit.}.
We will use the same notation for the induced map on cohomology
\begin{equation}
\nonumber
%\CGm^{[k,l,j]}\,:\,
H^1_{\et}\left ((Y_1(N), \TSym^{k+l-2j}\left (\cF_{\Qp}\right )(1)\right )\\
\rightarrow H^1_{\et}\left ((Y_1(N), \TSym^k\left (\cF_{\Qp}\right )\otimes \TSym^{l}\left (\cF_{\Qp}\right ) (1-j)\right ).
\end{equation}
Taking the composition of this map with the  Gysin map 
\[
%\Delta_* \,:\, 
H^1_{\et}\left (Y_1(N),\Delta^*(\TSym^{[k,l]}(\cF_{\Q_p}))(1-j)\right )\rightarrow 
H^3_{\et}\left (Y_1(N)^2, \TSym^{[k,l]}(\cF_{\Q_p}) (2-j)\right ),
\] 
induced by the diagonal embedding $\Delta,$ we get a map
\begin{equation}
\label{clebsch-gordan1}
H^1_{\et}\left (Y_1(N), \TSym^{k+l-2j}(\cF_{\Q_p})(1) \right )\rightarrow 
H^3_{\et}\left (Y_1(N)^2, \TSym^{[k,l]}(\cF_{\Q_p}) (2-j)\right ).
\end{equation}
The spectral sequence 
\[
H^r_S\left (\Q, H^s_{\et}(Y_1(N)^2_{\overline{\Q}}, \TSym^{[k,k']}(\cF_{\Q_p}) (2-j)\right )
\Rightarrow H^{r+s}_{\et}\left (Y_1(N)^2, \TSym^{[k,k']}(\cF_{\Q_p}) (2-j)\right )
\]
gives rise to a map
\[
H^3_{\et}\left (Y_1(N)^2, \TSym^{[k,l]}(\cF_{\Q_p}) (2-j)\right )
\rightarrow H^1_S\left (\Q, H^2_{\et}(Y_1(N)^2_{\overline{\Q}}, \TSym^{[k,l]}(\cF_{\Q_p}) (2-j))\right ).
\]
Composing this map with (\ref{clebsch-gordan1}), we get
a map 
\begin{equation}
\label{generalgisin}
\Delta^{[k,l,j]}\,:\,H^1_{\et}\left (Y_1(N),  \TSym^{k+l-2j}(\cF_{\Q_p})(1)\right )
\longrightarrow 
H^1_S\left (\Q, H^2_{\et}\left (Y_1(N)^2_{\overline{\Q}}, \TSym^{[k,l]}(\cF_{\Q_p}) (2-j)\right )\right ).
\end{equation}
\begin{mydefinition}
%[\sc Kings--Loeffler--Zerbes] 
The elements 
\begin{equation}
\nonumber
\Eis_{b,N}^{[k,l,j]}=\Delta^{[k,l,j]} \left (\Eis_{b,N}^{k+l-2j}\right ),
\qquad 0\leqslant j \leqslant \min\{k,l\}, \quad b\in \Z/N\Z,
\end{equation}
%where $\Eis_{b,N}^{k+k'-2j}\in H^1_{\et}(Y_1(N),  \TSym^{k+k'-2j}\cH_{\Q_p}(1))$
%is the Eisenstein element,   
are called Rankin--Eisenstein classes. 
\end{mydefinition}

\subsubsection{} We define Rankin--Iwasawa classes following \cite[Section~5]{KLZ}.
Set
\begin{equation}
\nonumber
\La (\cF \left <N \right >)^{[j]}=\La (\cF \left <N \right >)\otimes \TSym^j(\CF).
\end{equation}
From the definition of the sheaves $\La_r(\cF_r\left <N\right  >)$
(see Sections~\ref{subsubsection adic sheaves} and \ref{subsubsection sheaves F<s_N>}) it is clear that the diagonal
embeddings $\CE [p^r]\left < s_N\right  >\rightarrow \CE [p^r]\left <s_N\right  >\times_{Y_1(Np^n)} \CE [p^r]\left <s_N\right  >$ induce morphisms of sheaves
\begin{equation}
\nonumber
\La_r(\cF_r\left <N\right  >) \rightarrow \La_r(\cF_r\left <N\right  >)
\otimes \La_r(\cF_r\left <N\right  >).
\end{equation}
Tensoring this map with the Clebsch--Gordan map
\[
\CGm^{[j,j,j]}\,:\,\Zp
\rightarrow \TSym^j(\cF_{\Qp})\otimes \TSym^{j}(\cF_{\Qp}) (-j),
\]
and passing to inverse limits, we get a map
\begin{equation}
\nonumber
\La (\cF \left <N \right >) \rightarrow 
\La (\cF \left <N \right >)^{[j]}\widehat{\otimes} \La (\cF \left <N \right >)^{[j]} (-j).
\end{equation}
This induces a map on cohomology
\begin{equation}
\label{Iwasawa theoretic Clebsch-Gordan map}
H^1_{\et}\left (Y_1(N),\La (\cF \left <N \right >)(1)\right ) \rightarrow
H^1_{\et}\left (Y_1(N), \La (\cF \left <N \right >)^{[j]}\widehat{\otimes} \La (\cF \left <N \right >)^{[j]} (1-j)\right ). 
\end{equation}
Define
\[
\La (\cF \left <N \right >)^{[j,j]}=
\p_1^*\left ( \La (\cF \left <N \right >)^{[j]} \right )
\otimes 
\p_2^*\left ( \La (\cF \left <N \right >)^{[j]} \right ).
\]
Then the diagonal embedding induces the Gysin map
\begin{equation}
\nonumber
H^1_{\et}\left (Y_1(N), \La (\cF \left <N \right >)^{[j]}\widehat{\otimes} \La (\cF \left <N \right >)^{[j]} (1-j)\right )
\rightarrow
H^3_{\et}\left (Y_1(N)^2,\La (\cF \left <N \right >)^{[j,j]}(2-j)\right ).
\end{equation}
Taking the composition of this map with (\ref{Iwasawa theoretic Clebsch-Gordan map}),
we obtain a map
\begin{equation}
\label{clebsh-gordan iwasawa} 
H^1_{\et}\left (Y_1(N),\La (\cF \left <N \right >)(1)\right ) \rightarrow
H^3_{\et}\left (Y_1(N)^2,\La (\cF \left <N \right >)^{[j,j]}(2-j)\right ).
\end{equation}
Composing this map with the map
\[
H^3_{\et}\left (Y_1(N)^2,\La (\cF \left <N \right >)^{[j,j]}(2-j)\right )
\rightarrow 
H^1_S\left (\Q, H^2_{\et}\left (Y_1(N)^2_{\overline \Q},\La (\cF \left <N \right >)^{[j,j]}(2-j)\right )\right )
\]
induced by the Grothendick spectral sequence, we obtain an Iwasawa theoretic analog
of the map (\ref{generalgisin})
\begin{equation}
\nonumber
{\Delta}_{\Lambda}^{[j]}\,:\,
H^1_{\et}\left (Y_1(N),\La (\cF \left <N \right >)(1)\right ) \rightarrow 
H^1_S\left (\Q, H^2_{\et} \left (Y_1(N)^2_{\overline \Q},\La (\cF \left <N \right >)^{[j,j]}(2-j)\right )\right ).
\end{equation}

\begin{mydefinition} The elements 
\begin{equation}
\nonumber
\,_b\RI_{N}^{[j]}=\Delta_{\La}^{[j]}\left (\,_b\EI_{N}\right )
\in 
H^1_S\left (\Q, H^2_{\et}\left (Y_1(N)^2_{\overline{\Q}}, \La (\cF \left <N \right >)^{[j,j]} (2-j)\right )\right ), \qquad j\geqslant 0
\end{equation}
are called Rankin--Iwasawa classes.
\end{mydefinition}

We remark that these classes interpolate $p$-adically the elements 
$\Eis_{1,N}^{[k,l,j]}$ (see \cite[Proposition~5.2.3]{KLZ})
and refer the reader to {\it op. cit.} for the proof and further results.

\subsubsection{}
In this subsection,  we assume that $N\geqslant 4$ 
and $(p,N)=1.$ We have a commutative diagram
\[
\xymatrix{
\CE_{Np}\ar[r] \ar[d] &Y_1(Np)\ar[d]^{\pr'}\\
\CE_{N,p}\ar[r]  &Y (N,p),}
\]
where $\CE_*$ denotes the relevant universal elliptic curve and
$\pr'$ is the map defined in  Section~\ref{subsubsection Y(N,p)}.
Recall that $\CE_{Np}$ is equipped with a canonical subscheme
$D_{Np}$ of points of order $Np$ together with 
the canonical section $s_{Np}\,:\,Y_1(Np) \rightarrow D_{Np}.$
The universal curve $\CE_{N,p}$ is equipped with a canonical subscheme 
$D'$ of points of degree $p$ (see Section~\ref{subsubsection subgroups D and D'}). 
The map $\pr'$ together with   multiplication by $N$ induce  finite  morphisms
\[
\CE_{Np}[p^r]\left <s_{Np}\right > \rightarrow \CE_{N,p}[p^r]\left <D'\right >,
\qquad r\geqslant 1,
\]
and therefore we have a map
\begin{equation}
\label{definition of tr'}
%\mathrm{tr}'
\tr'_*\,:\, H^2_{\et}\left (Y_1(Np)^2_{\overline{\Q}}, \La (\cF \left <Np \right >)^{[j,j]}\right )
\rightarrow 
H^2_{\et}\left (Y(N,p)^2_{\overline{\Q}}, \La (\cF \left <D'\right >)^{[j,j]}\right ).
\end{equation}
Analogously, the map $\pr_1\,:\, Y(N,p)\rightarrow Y_1(N)$ (see (\ref{definition of pr_i})) together with multiplication by $p$ induce finite morphisms  
\[
\CE_{N,p}[p^r]\left <D'\right > \rightarrow \CE_{N}[p^r],
\qquad r\geqslant 1.
\]
This gives us a map
\begin{equation}
\nonumber
\pr_{1,*}\,:\, H^2_{\et}\left (Y_1(N,p)^2_{\overline{\Q}}, \La (\cF \left <D' \right >)^{[j,j]}\right )
\rightarrow 
H^2_{\et}\left (Y(N)^2_{\overline{\Q}}, \La (\cF)^{[j,j]}\right ).
\end{equation}

\begin{mydefinition}
We denote by
\begin{equation}
\label{definition of BF in families on Y(N,p)}
\,_b\RI_{N(p)}^{[j]}=\tr'_*\left ( \,_b\RI_{1,Np}^{[j]}\right )    \in 
H^1_S\left (\Q, H^2_{\et}\left (Y_1(N,p)^2_{\overline{\Q}}, \La (\cF \left <D'\right >)^{[j,j]} (2-j)\right )\right )
\end{equation}
the image of the Beilinson--Flach element $\,_b\RI_{Np}^{[j]}$ under the map
$\pr'_*$ induced by $\pr'.$  
\end{mydefinition}

Note that
\begin{equation}
\nonumber 
\pr_{1,*}\left (\,_b\RI_{N(p)}^{[j]}\right )= \,_b\RI_{N}^{[j]}.
\end{equation}

\subsection{Beilinson--Flach elements}
\label{subsection Beilinson--Flach elements}
\subsubsection{} 
Let $f=\underset{n=1}{\overset{\infty}\sum } a_n q^n$ and 
$g=\underset{n=1}{\overset{\infty}\sum }b_n q^n$ be two  eigenforms of weights $k_0=k+2$ and $l_0=l+2$ with $k,l\geqslant 0$ and  levels $N_f,$
$N_g$ respectively. 
By (\ref{dual Deligne's representaton}),  we have canonical projections 
\begin{equation}
\nonumber
\begin{aligned}
&\pi_f \,:\, H^1_{\et}\left (Y_1(N_f)_{\overline{\Q}}, \TSym^{k}(\cF) (1)\right )\otimes_{\Z_p}E \rightarrow W_f^*,\\
&\pi_g \,:\, H^1_{\et}\left (Y_1 (N_g)_{\overline{\Q}}, \TSym^{l}(\cF) (1)
\right )\otimes_{\Z_p}E \rightarrow W_g^*.
\end{aligned}
\end{equation}
Let  $N$ be any positive integer divisible by $N_f$ and $N_g$
and such that $N$ and $N_fN_g$ have the same prime divisors.
Without loss of generality, assume that $W_f$ and $W_g$ are defined over the same
field $E.$ Set $W_{f,g}=W_f\otimes_E  W_g .$ 
K\"unneth theorem gives an isomorphism 
\begin{equation}
\nonumber
H^2_{\et}\left (Y_1(N)^2_{\overline{\Q}}, \TSym^{[k,l]}(\cF_{\Q_p}) (2)\right )
\simeq 
H^1_{\et}\left (Y_1(N)_{\overline{\Q}}, \TSym^{k}(\cF_{\Q_p}) (1)\right )
\otimes
H^1_{\et}\left (Y_1(N)_{\overline{\Q}}, \TSym^{l}(\cF_{\Q_p}) (1)\right ).
\end{equation}
We also have the  maps  induced on cohomology by the projections
(\ref{the Pr maps}):
\begin{equation}
\nonumber
\begin{aligned}
&
%({\Pr}_1)_*\,:\,
H^1_{\et}\left (Y_1(N)_{\overline{\Q}}, \TSym^{k}(\cF_{\Q_p}) (1)\right )
\rightarrow 
H^1_{\et}\left (Y_1(N_f)_{\overline{\Q}}, \TSym^{k}(\cF_{\Q_p}) (1)\right ),\\
&
%({\Pr}_1)_*\,:\,
H^1_{\et}\left (Y_1(N)_{\overline{\Q}}, \TSym^{l}(\cF_{\Q_p}) (1)\right )
\rightarrow 
H^1_{\et}\left (Y_1(N_g)_{\overline{\Q}}, \TSym^{l}(\cF_{\Q_p}) (1)\right ).
\end{aligned}
\end{equation}
%which we denote both by $({\Pr}_1)_*$ to simplify notaton.
Composing K\"unneth decomposition with these projections
and $\pi_f\otimes\pi_g,$  we obtain a map
\[
\pr_{f,g}^{[j]}\,:\, 
H^2_{\et} \left (Y_1(N)^2_{\overline{\Q}}, \TSym^{[k,l]}\cF (2-j)
\right )\otimes_{\Z_p}E
\rightarrow W_{f,g}^*(-j).
\]
%with natural projections 
%\begin{equation}
%\nonumber
%\begin{aligned}
%&H^1(Y_1(N)_{\overline{\Q}}, \TSym^{k_f-2}\cH_{\Q_p} (1))
%\xrightarrow{(\pr_1)_*} H^1(Y_1(N_f)_{\overline{\Q}}, \TSym^{k_f-2}\cH_{\Q_p} (1))
%\xrightarrow{\pi_f} W_f^*,\\
%&H^1(Y_1(N)_{\overline{\Q}}, \TSym^{k_g-2}\cH_{\Q_p} (1))
%\xrightarrow{(\pr_1)_*}  H^1(Y_1(N_g)_{\overline{\Q}}, \TSym^{k_g-2}\cH_{\Q_p} (1))
%\xrightarrow{\pi_g} W_g^*.
%\end{aligned}
%\end{equation}
%The Eisenstein class $\Eis^{[f,g,j]}$ is defined by
%\footnote{In \cite{KLZ, KLZb}, these elements are called Eisenstein elements
%and denoted by $\Eis^{[f,g,j]}$. We made a questionable decision to change
%notation in order to make it more uniform.}

\begin{mydefinition}
\label{definition of Beilinson-Flach}
The elements 
\begin{equation}
\nonumber 
\BFrm_{f,g}^{[j]}=\pr_{f,g}^{[j]} \left (\Eis^{[k,l,j]}_{1,N}\right )
\in H^1_S(\Q, W_{f,g}^*(-j)), \qquad 0\leqslant j\leqslant \min\{k,l\}
\end{equation}
are called Belinson--Flach elements associated to the forms $f$ and $g.$
\end{mydefinition}

One can prove that the definition of $\BFrm_{f,g}^{[j]}$ does not depend on the choice of $N.$

\subsubsection{} In this subsection, we assume that $f$ and $g$
are newforms  of nebentypus $\ep_f$ and $\ep_g$ and   
$p$ is an odd prime  such that $(p, N_fN_g)= 1.$ We denote by $\alpha (f)$ and $\beta (f)$
(respectively by $\alpha (g)$ and $\beta (g)$) the roots of the Hecke polynomial
of $f$ (respectively $g$) at $p.$  We assume that $\alpha (f)\neq \beta (f)$
and $\alpha (g)\neq \beta (g).$
%We will always assume that the following conditions  hold:
%\begin{enumerate}
%\item[{\bf M1)}] $\ep_f\ep_g \neq 1;$
%\item[{\bf M1)}]  $\alpha (f)\neq \beta (f)$
%and $\alpha (g)\neq \beta (g);$
%\item[{\bf M2)}]   $v_p(\alpha (f))<k_0-1$ and $ v_p(\alpha (g)) <l_0-1.$
%\end{enumerate}
As before,  $f_{\alpha}$ and $f_{\beta}$ (resp. $g_{\alpha}$ and 
$g_{\beta }$) denote the stabilizations of $f$ (respectively $g$). 
Recall the isomorphisms (\ref{isomorphism of representations  stabilized and non stabilized }) for $f$ and $g$
\begin{equation}
\nonumber 
{\Pr}^{\alpha}_*\,\,:\,\, W_{f_{\alpha }}^* \simeq W_f^*,
\qquad 
{\Pr}^{\alpha}_*\,\,:\,\, W_{g_{\alpha }}^* \simeq W_g^*,
\end{equation}
which we denote by the same symbol to simplify notation.
These isomorphisms induce isomorphisms on Galois cohomology 

% The map
%\begin{multline}
%\nonumber
%(\id, {\Pr}^{\alpha_g}) \,:\, H^2(Y_1(N_f)\times Y_1(N_gp)_{\overline{\Q}},
%\TSym^{[k_f-2,k_g-2]} \cH_{\Q_p} (2-j)) \rightarrow \\
% H^2(Y_1(N_f)\times Y_1(N_g)_{\overline{\Q}},
%\TSym^{[k_f-2,k_g-2]} \cH_{\Q_p} (2-j))
%\end{multline}
%induces a map (which we denote again by $(\id, {\Pr}^{\alpha_g})$):
\begin{align}
\nonumber
&\left ({\Pr}_*^{\alpha}, {\Pr}_*^{\alpha} \right ) \,:\, 
H_S^1(\Q, W_{f_{\alpha},g_{\alpha}}^*) 
%H^2(Y_1 \left (N_fp)_{\overline \Q}\times Y_1(N_gp)_{\overline{\Q}},
%\TSym^{[k-2,l-2]} \cF_{\Q_p} (2-j) \right )
%) 
\rightarrow 
H_S^1(\Q, W_{f,g}^*),\\
\nonumber
%H^2 \left (Y_1(N_f)\times Y_1(N_g)_{\overline{\Q}},
%\TSym^{[k-2,l-2]} \cF_{\Q_p} (2-j)
%\right )
%)
&\left (\id, {\Pr}_*^{\alpha}\right )\,:\,
H_S^1(\Q, W_{f,g_{\alpha}}^*)
\rightarrow 
H_S^1(\Q, W_{f,g}^*).
\end{align}
%\begin{multline}
%\nonumber
%\left (\id, {\Pr}_*^{\alpha (g)}\right )\,:\,
%H_S^1(\Q, H^2(Y_1(N_f)\times Y_1(N_gp)_{\overline{\Q}},
%\TSym^{[k-2,l-2]} \cF_{\Q_p} (2-j))) \rightarrow \\
%H_S^1(\Q, H^2(Y_1(N_f)\times Y_1(N_g)_{\overline{\Q}},
%\TSym^{[k-2,l-2]} \cF_{\Q_p} (2-j))).
%\end{multline}
%Taking appropriate quotients, we obtain

\begin{myproposition} 
\label{proposition stabilization formulas}
For any $0\leqslant j\leqslant \min\{k,l\}$ we have

\[
\begin{aligned}
\nonumber
&i) \quad ( {\Pr}^{\alpha }_* , {\Pr}^{\alpha}_*) \left (\BFrm_{f_{\alpha},g_{\alpha}}^{[j]}\right )
=\left (1-\frac{\alpha (f)\beta (g)}{p^{j+1}} \right )
\left (1-\frac{\beta (f)\alpha (g)}{p^{j+1}} \right )
\left (1-\frac{\beta (f)\beta (g)}{p^{j+1}} \right )
\BFrm_{f,g}^{[j]}.\\
&
ii) \quad  
(\id, {\Pr}^{\alpha}_*) \left (\BFrm_{f,g_{\alpha}}^{[j]}\right )=
\left (1-\frac{\alpha (f)\beta (g)}{p^{j+1}} \right )
\left (1-\frac{\beta (f)\beta (g)}{p^{j+1}} \right )
\BFrm_{f,g}^{[j]}.
\end{aligned}
\]
\end{myproposition}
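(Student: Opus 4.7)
The plan is to expand the projector via (\ref{definition of pr^alpha}), so that
\[
(\Pr^{\alpha}_*, \Pr^{\alpha}_*) = \left(\Pr_{1,*} - \tfrac{\beta(f)}{p^{k_0-1}}\Pr_{2,*}\right) \otimes \left(\Pr_{1,*} - \tfrac{\beta(g)}{p^{l_0-1}}\Pr_{2,*}\right).
\]
Part (i) then becomes a four-term sum of expressions $(\Pr_{i,*}, \Pr_{j,*}) \BFrm^{[j]}_{f_\alpha, g_\alpha}$ with $i, j \in \{1, 2\}$, and part (ii) reduces similarly to a two-term sum. The main ingredient is a set of pushforward identities expressing each such image as $\BFrm^{[j]}_{f, g}$ multiplied by a polynomial in the dual Hecke operators $T_p'$ acting on the $f$- and $g$-factors, together with appropriate powers of $p$.

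\textbf{Key geometric step.} To establish these identities, I would unwind the definition of the Rankin--Eisenstein class: start from the Eisenstein class on $Y_1(Np)$ (for $N$ divisible by $N_f N_g$), apply the Clebsch--Gordan map $\CGm^{[k,l,j]}$, push forward along the diagonal $\Delta$ into $H^2(Y_1(Np)^2, \TSym^{[k,l]}(\cF_{\Qp})(2-j))$, and project via K\"unneth onto the $(f_\alpha, g_\alpha)$-isotypic component. Applying $(\Pr_{i,*}, \Pr_{j,*})$ amounts to pushing this construction down to $Y_1(N)^2$ along the corresponding degeneracy maps on each factor. The $(\Pr_{1,*}, \Pr_{1,*})$ term recovers $\BFrm^{[j]}_{f, g}$ directly; the mixed terms $(\Pr_{1,*}, \Pr_{2,*})$ and $(\Pr_{2,*}, \Pr_{1,*})$ introduce the dual Hecke operator on the opposite factor, contributing factors of $a_p(g)/p^{l_0-j-1}$ and $a_p(f)/p^{k_0-j-1}$ respectively; and the diagonal $(\Pr_{2,*}, \Pr_{2,*})$ term produces a twisted copy with a factor $\ep_f(p) \ep_g(p) p^{k_0 + l_0 - 2j - 3}$ coming from the interaction of the two degree-$p$ isogenies with the diagonal and the Tate twist. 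This is essentially the content of \cite{LLZ14} and of Section~5 of \cite{KLZ}, which I would either invoke directly or recover by explicit bookkeeping on the universal elliptic curve.

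\textbf{Reorganization and main obstacle.} Substituting the four (or two) terms back into the expanded projector and using $a_p(f) = \alpha(f)+\beta(f)$, $\alpha(f)\beta(f) = \ep_f(p)p^{k_0-1}$ (and the analogous identities for $g$) to simplify the coefficients, a direct algebraic manipulation reorganizes the resulting expression as the stated product of three (respectively two) Euler factors; the conspicuous absence of the factor $(1 - \alpha(f)\alpha(g)/p^{j+1})$ in (i) reflects the fact that $\Pr^{\alpha}_* \otimes \Pr^{\alpha}_*$ precisely projects onto the $(\alpha(f), \alpha(g))$-Frobenius eigenspace, so this factor does not appear as a cancellation. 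The main obstacle is verifying the pushforward identity for the diagonal term $(\Pr_{2,*}, \Pr_{2,*})$: since the diagonal embedding does not commute with $\Pr_2$ on the nose, one must carefully identify the fiber product of the quotient-by-canonical-subgroup isogeny with itself and track the twist this introduces through the Clebsch--Gordan map and the Tate twist, which is where the crucial powers of $p$ enter.
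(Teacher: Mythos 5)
The paper does not reprove part (i): it cites \cite[Theorem~5.7.6]{KLZ} for it, and the content of the paper's own proof is a short derivation of (ii) from (i). Your plan is genuinely different: you propose to prove both parts by expanding $\Pr^\alpha_*\otimes\Pr^\alpha_*$ via $\Pr^\alpha_*=\Pr_{1,*}-\tfrac{\beta(f)}{p^{k_0-1}}\Pr_{2,*}$ and computing the four pushforwards $(\Pr_{i,*},\Pr_{j,*})$ geometrically, which is, in effect, re-deriving the KLZ theorem rather than invoking it. For (ii) you thereby miss the reduction the paper actually carries out: inverting the stabilization relations gives $\Pr_{1,*}=\frac{\alpha(f)\Pr^\alpha_*-\beta(f)\Pr^\beta_*}{\alpha(f)-\beta(f)}$; pushing this through the commutative diagram (\ref{diagram stabilization formula}) expresses $\BFrm^{[j]}_{f,g_\alpha}$ as an explicit linear combination of $(\Pr^\alpha_*,\id)\BFrm^{[j]}_{f_\alpha,g_\alpha}$ and $(\Pr^\beta_*,\id)\BFrm^{[j]}_{f_\beta,g_\alpha}$, and applying $(\id,\Pr^\alpha_*)$ followed by two instances of (i), for $(\alpha,\alpha)$ and $(\beta,\alpha)$, yields (ii) after a short simplification using $\alpha(f)\beta(f)=\ep_f(p)p^{k_0-1}$. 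This sidesteps all the geometry.

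There is also a concrete gap in the pushforward coefficients you propose. Write the expansion of part (i) as $c_{11}-\tfrac{\beta(g)}{p^{l_0-1}}c_{12}-\tfrac{\beta(f)}{p^{k_0-1}}c_{21}+\tfrac{\beta(f)\beta(g)}{p^{k_0+l_0-2}}c_{22}$. With your $c_{11}=1$, $c_{12}=a_p(g)/p^{l_0-j-1}$, $c_{21}=a_p(f)/p^{k_0-j-1}$, $c_{22}=\ep_f(p)\ep_g(p)p^{k_0+l_0-2j-3}$, the part of the left side that is free of $\beta(f)$ and $\beta(g)$ (after substituting $\alpha(g)\beta(g)=\ep_g(p)p^{l_0-1}$ and $\alpha(f)\beta(f)=\ep_f(p)p^{k_0-1}$) equals $1-\ep_g(p)p^{j-l_0+1}-\ep_f(p)p^{j-k_0+1}$. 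But expanding the product $\bigl(1-\tfrac{\alpha(f)\beta(g)}{p^{j+1}}\bigr)\bigl(1-\tfrac{\beta(f)\alpha(g)}{p^{j+1}}\bigr)\bigl(1-\tfrac{\beta(f)\beta(g)}{p^{j+1}}\bigr)$, the $\beta$-free part is $1+\ep_f(p)\ep_g(p)p^{k_0+l_0-2j-4}$, coming from the cross term $\tfrac{\alpha(f)\beta(f)\alpha(g)\beta(g)}{p^{2j+2}}$. These do not agree, so the coefficient $1$ you assign to $(\Pr_{1,*},\Pr_{1,*})$ cannot be right: there is an Euler-type correction to the level-lowering of the Eisenstein class under $\Pr_{1,*}$ (and to the interaction of the Clebsch--Gordan/Gysin maps with the degree-$p$ structure) which your sketch defers to \cite[Section~5]{KLZ} without accounting for it, and without which the reassembly into the stated Euler factor product does not close.
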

\begin{proof} The first formula  is proved in \cite[Theorem~5.7.6]{KLZ}.
The second formula is stated in Remark~7.7.7 of {\it op. cit.} 
For convenience of the reader, we give a short proof here.

Let $N=\mathrm{lcm} (N_f,N_g).$ Consider the commutative diagram
\begin{equation}
\label{diagram stabilization formula}
\xymatrix{
H^1\left (Y_1(Np)_{\overline\Q}, \TSym^{k}(\cF_{\Qp})(1)\right )
\ar[d]^{\widetilde\Pr_{1,*}} \ar[drr]^(.6){\widetilde\pi_{f_{\alpha}}} & &\\
H^1\left (Y_1(N_fp)_{\overline\Q}, \TSym^{k} (\cF_{\Qp})(1)\right )
\ar[d]^{\Pr^{\alpha}_*} \ar[rr]_(.7){\pi_{f_{\alpha}}}
& & W_{f_{\alpha}}^* \ar[d]^{\Pr^{\alpha}_*}\\
H^1\left (Y_1(N_f)_{\overline\Q}, \TSym^{k} (\cF_{\Qp})(1)\right )
\ar[rr]^(.7){\pi_f}& & W_f^*
}
\end{equation}
and the analogous commutative diagram with $f_{\beta}$ instead $f_{\alpha}.$
Here we denote by $\widetilde\Pr_{1}$ the map $(\ref{the Pr maps}) $ for 
$Y_1(Np)$ over $Y_1(N)$ to distinguish it from the map  $(\ref{the Pr maps}) $
for $Y_1(N_fp)$ over $Y_1(N_f),$ which we denote simply by $\Pr_1.$
Set 
\begin{equation}
\nonumber
\pr_f= \pi_f  \circ {\Pr}_{1,*}\circ \widetilde {\Pr}_{1,*}\,:\,
H^1\left (Y_1(Np)_{\overline\Q}, \TSym^{k}(\cF_{\Qp})(1)\right ) \rightarrow 
W_f^*.
\end{equation}
By definition, we have 
\begin{equation}
\label{stabilization proof 1st} 
\BFrm_{f,g_{\alpha}}^{[j]}= \pr_{f,g_{\alpha}}^{[j]}\left (\Eis_{1,Np}^{[k,l,j]}\right ),
\end{equation}
where the map $\pr_{f,g_{\alpha}}^{[j]}$ is induced on Galois cohomology
by the projection $(\pr_f, \pr_{g_{\alpha}})$ twisted by the  $(-j)$th power 
of the cyclotomic character.

From (\ref{definition of pr^alpha}), it follows that
\begin{equation}
\nonumber
{\Pr}_{1,*}=
\frac{\alpha (f) \cdot {\Pr}^{\alpha}_* - \beta (f)\cdot{\Pr}^{\beta}_*}
{\alpha (f)-\beta (f)}.
\end{equation}
This formula together with  the commutativity of (\ref{diagram stabilization formula}) show that 
\begin{equation}
\label{stabilization proof 2nd}
\pr_f=\pi_f \circ \left (\frac{\alpha (f)\cdot {\Pr}^{\alpha}_* - \beta (f)\cdot {\Pr}^{\beta}_*}
{\alpha (f)-\beta (f)}\right ) \circ \widetilde {\Pr}_{1,*}
=\frac{\alpha (f)\cdot \left ({\Pr}^{\alpha}_*\circ \pi_{f_{\alpha}}\right ) - 
 \beta (f) \cdot \left ({\Pr}^{\beta}_* \circ \pi_{f_{\beta}}\right )}
{\alpha (f)-\beta (f)}.
\end{equation}
From (\ref{stabilization proof 1st}) and (\ref{stabilization proof 2nd}),
we obtain that 
\begin{equation}
\nonumber 
\BFrm_{f,g_{\alpha}}^{[j]}
=\frac{1}{\alpha (f)-\beta (f)}
\left ( \alpha (f)\cdot \left ({\Pr}_*^{\alpha},\mathrm{id} \right )
\left ({\BFrm}_{f_{\alpha}, g_{\alpha}}^{[j ]}\right )-\beta (f)\cdot
\left ({\Pr}_*^{\beta},\mathrm{id} \right )
\left ({\BFrm}_{f_{\beta}, g_{\alpha}}^{[j ]}\right )
\right ),
\end{equation}
and therefore
\begin{multline}
\nonumber 
(\id, {\Pr}^{\alpha}_*) \left (\BFrm_{f,g_{\alpha}}^{[j]}\right )=\\
=\frac{1}{\alpha (f)-\beta (f)}
\left ( \alpha (f)\cdot \left ({\Pr}_*^{\alpha},{\Pr}_*^{\alpha} \right )\left ({\BFrm}_{f_{\alpha}, g_{\alpha}}^{[j ]}\right )-\beta (f) \cdot
\left ({\Pr}_*^{\beta}, {\Pr}_*^{\alpha}  \right )
\left ({\BFrm}_{f_{\beta}. g_{\alpha}}^{[j ]}\right )
\right ).
\end{multline}
Applying  part i) to compute 
$\left ({\Pr}_*^{\alpha},{\Pr}_*^{\alpha} \right )\left ({\BFrm}_{f_{\alpha}, g_{\alpha}}^{[j ]}\right )$
 and $\left ({\Pr}_*^{\beta}, {\Pr}_*^{\alpha}  \right )\left ({\BFrm}_{f_{\beta}, g_{\alpha}}^{[j ]}\right ),$ we obtain ii).

\end{proof}

\subsubsection{} We maintain previous assumptions. Let $f$ and $g$
be two newforms satisfying conditions {\bf M1-3)}. 
Consider  the composition
\begin{multline}
\label{definition of twisted moment map}
\mom_{\left <N \right >}^{[j],i}\,\,:\,\,
H^1\left (Y_1(N)_{\overline\Q}, \Lambda (\cF \left <N\right >)^{[j]} \right )
\xrightarrow{\mom_{\left <N \right >}^{i-j}\otimes {\id}}
\\
H^1 \left (Y_1(N)_{\overline\Q}, \TSym^{i-j}(\cF_{\Qp}) \otimes \TSym^{j}(\cF_{\Qp}) \right )
\rightarrow
H^1 \left (Y_1(N)_{\overline\Q}, \TSym^{i}(\cF_{\Qp})
\right  ), 
\end{multline}
where the last map is induced by the natural map 
$\TSym^{i-j}(\cF_{\Qp}) \otimes \TSym^{j}(\cF_{\Qp}) 
\rightarrow \TSym^{i}(\cF_{\Qp}).$ 
For all  $0\leqslant j \leqslant \min\{k,l\}$ we have a map
\begin{multline}
\label{composition projection of Beilinson-Flach on (f,g)}
H^1_S\left (\Q, H^2_{\et}\left (Y_1(N)_{\overline \Q},  \Lambda (\cF \left <N\right >)^{[j,j]} (2-j)\right )\right )
\xrightarrow{\left ( \mom_{\left <N \right >}^{[j],k},
\mom_{\left <N \right >}^{[j],l}\right )}\\
H^1_S\left (\Q, H^2_{\et}\left (Y_1(N)_{\overline \Q},  \TSym^{[k,l]}(\cF_{\Qp}) (2-j) \right )\right )
\xrightarrow{\pr_{f,g}^{[j]}} 
H^1_S(\Q, W_{f,g}^*(-j)).
\end{multline}

\begin{mydefinition}
For any integer $0\leqslant j \leqslant \min\{k,l\},$ 
we denote by $\,_b\BFrm_{f,g}^{[j]}$ the image of the element 
$\,_b\RI^{[j]}_{N}$ under the composition 
(\ref{composition projection of Beilinson-Flach on (f,g)}).
\end{mydefinition} 
 
We have 
\begin{equation}
\label{relation between BF and _bBF}
\,_b\BFrm_{f,g}^{[j]}=(b^2-b^{2j-k-l}\ep_f^{-1}(b) \ep_g^{-1}(b))\cdot \BFrm_{f,g}^{[j]}, \qquad (b,N_fN_g)=1.
\end{equation}
(see \cite[Proposition 5.2.3]{KLZ}).

\subsection{Stabilized Beilinson--Flach families}
\label{subsection Stabilized Beilinson--Flach families}

\subsubsection{}
%We maintain assumptions of Section~\ref{subsection Beilinson--Flach elements}.
Let $f$ and $g$ be two newforms. Denote by $\alpha (f)$ and $\beta (f)$
(resp. by $\alpha (g)$ and $\beta (g)$) the roots of the Hecke polyunomial 
of $f$ (resp. $g$) at $p, $ $(p,N_fN_g)=1.$
We will always assume that the following conditions  hold:
\begin{itemize}
%\item[{\bf M1)}] $\ep_f\ep_g \neq 1;$
\item[]{\bf M1)}  $\alpha (f)\neq \beta (f)$
and $\alpha (g)\neq \beta (g);$
\item[]{\bf M2)}  $v_p(\alpha (f))<k_0-1$ and $ v_p(\alpha (g)) <l_0-1.$
\end{itemize}
As before, $f_\alpha$ and $g_\alpha$ 
denote the stabilizations of $f$ and $g$ with respect to $\alpha (f)$ and $\alpha (g)$ respectively. 
Let  $\f=\underset{n=1}{\overset{\infty}\sum}
\ba_nq^n \in A_{\f}[[q]]$ and $\g=\underset{n=1}{\overset{\infty}\sum}
\bb_nq^n  \in A_{\g}[[q]]$ denote the  
Coleman families passing through $f_\alpha$ and $g_\alpha .$
We fix open disks $U_f$ and $U_g$ and  affinoid algebras $A_{\f}=E \left <w_1/p^r\right >$ and $A_{\g}=E \left <w_2/p^r\right >$
such that  the conditions of Propositions~\ref{proposition coleman families}
and \ref{proposition interpolation eigenvectors} hold for $f$ and $g.$ 
Then 
\begin{equation}
\nonumber
W_{\f,\g}=W_{\f}\widehat \otimes_E W_{\g}
\end{equation}
is a $p$-adic Galois representation of rank $4$ with coefficients 
in $A=A_{\f}\widehat \otimes_E A_{\g}\simeq E \left <w_1/p^r, w_2/p^r\right >.$
Let $N=\mathrm{lcm}(N_f,N_g).$ 

%Let $f$ and $g$ be primitive eigenforms of weights $k$ and $l$
%levels $N_f$ and $N_g$ and nebentypus $\ep_f$ and $\ep_g$ respectively. We assume that 
%$k,l\geqslant 2$ and $(p, N_fN_g)=1.$ We denote by 
%$W_f^*$ (resp. $W_g^*$) the dual of Deligne's $p$-adic representation 
%associated to $f$ (resp. $g$). 
%$H^1(Y_1(N_f)_{\overline{\Q}}, \TSym^{k_f-2}\cH (1))\otimes_{\Z_p}E $
%on which the Hecke operators $T'_{\ell}$ act as $a_{\ell}(f)$
%(resp. as $a_{\ell}(g)$) and by 
%\begin{equation}
%\nonumber
%\begin{aligned}
%&\pi_f \,:\, H^1(Y_1(N_f)_{\overline{\Q}}, \TSym^{k_f-2}\cH (1))\otimes_{\Z_p}E \rightarrow W_f^*,\\
%&\pi_g \,:\, H^1(Y_1(N_g)_{\overline{\Q}}, \TSym^{k_g-2}\cH (1))\otimes_{\Z_p}E \rightarrow W_g^*
%\end{aligned}
%\end{equation}
%he corresponding projections. 
%Note that $W_f^*$ and $W_g^*$ are duals 
%of Deligne's $p$-adic representations $W_f$ and $W_g$ associated to $f$
%and $g .$ 

\subsubsection{} 
By Proposition~\ref{proposition about overconvergent sheaves}, ii) 
there exists a natural morphism of sheaves $\Lambda (\cF\left <D'\right >)
\rightarrow \frak D_{U_f-j}(\cF').$
It induces a morphism $\Lambda (\cF\left <D'\right >)^{[j]}\rightarrow 
\frak D_{U_f-j}(\cF')\otimes \TSym^{j}(\cF ).$ Consider the composition
\begin{align}
\label{map projection on W_{bold g}}
&H^1\left (Y(N,p)_{\overline\Q}, \Lambda (\cF\left <D'\right >)^{[j]}(1)\right )
\xrightarrow{{\Pr}^{(N,p)}_{(N_g,p)}}
H^1\left (Y(N_g,p)_{\overline\Q}, \Lambda (\cF\left <D'\right >)^{[j]}(1)\right )
\xrightarrow{\kappa_g} \\
\nonumber
& H^1\left (Y(N_g,p)_{\overline\Q},\frak D_{U_g-j}(\cF')\otimes \TSym^{j}(\cF) (1)\right )
\xrightarrow{\pi_{\g}^{[j]}} W_{\g}^*,
\end{align}
where the first map is induced by the projection $Y(N,p)\rightarrow Y(N_g,p)$ and  the last map is defined by (\ref{definition of pi bold }).
We also have the analogous morphism for the family $\f.$
%(note that here we tensorize again  with $\TSym^{k}(\cF)$ and not 
%$\TSym^{l}(\cF)$) 
%\begin{align}
%\label{map projection on W_{bold g}}
%&H^1\left (Y(N,p)_{\overline\Q}, \Lambda (\cF\left <D'\right >)^{[j]}(1)\right )
%\xrightarrow{{\Pr}^{(N,p)}_{(N_g,p)}}
%H^1\left (Y(N_g,p)_{\overline\Q}, \Lambda (\cF\left <D'\right >)^{[j]}(1)\right )
%\xrightarrow{\kappa_g} \\
%\nonumber
%& H^1\left (Y(N_g,p)_{\overline\Q},\frak D_{U_g-}(\cF')\otimes \TSym^{k}(\cF) (1)\right )
%\xrightarrow{\pi_{\g}^{[k]}} W_{\g}^*.
%\end{align}
Composing these maps with K\"unneth's isomorphism
\begin{equation}
\label{Kunneth isomorphism}
H^2\left (Y(N,p)_{\overline\Q}, \Lambda (\cF\left <D'\right >)^{[j,j]}(2)
\right )
\simeq H^1\left (Y(N,p)_{\overline\Q},\Lambda (\cF\left <D'\right >)^{[j]}(1)\right )^{\otimes 2}
\end{equation}
we obtain a map
\begin{equation}
\label{map defining BFfrak}
H^2\left (Y(N,p)_{\overline\Q}, \Lambda (\cF\left <D'\right >)^{[j,j]}(2)
\right ) \rightarrow W^*_{\f,\g}.
\end{equation}
This map induces a map on Galois cohomology
\begin{equation}
\pr^{[j]}_{\f,\g}\,:\, 
H^1_S\left (\Q,H^2\left (Y(N,p)_{\overline\Q}, \Lambda (\cF\left <D'\right >)^{[j,j]}(2-j)
\right ) \right )\rightarrow H^1_S\left (\Q, W^*_{\f,\g}(-j)\right ).
\end{equation}

\begin{mydefinition}
We define stabilized Beilinson--Flach classes associated to $\f$ and $\g$ by
\begin{equation}
\nonumber
\,_b\BF_{\f,\g}^{[j]}=\pr^{[j]}_{\f,\g} \left (\,_b\RI^{[j]}_{ N(p)}\right ),
\end{equation}
where $\,_b \RI^{[j]}_{N(p)}$ is the Rankin--Iwasawa element defined by
(\ref{definition of BF in families on Y(N,p)}).
\end{mydefinition}

We denote again by $\spec^{\f,\g}_{x,y}\,:\,H^1_S\left (\Q, W^*_{\f,\g}(-j)\right )
\rightarrow H^1_S\left (\Q, W^*_{\f_x,\g_y}(-j)\right )$ the  morphism induced by the specialization map $ W^*_{\f,\g}\rightarrow  W^*_{\f_x,\g_y}.$

\begin{myproposition}
\label{proposition specialization of two variable Beilinson Flach elements}
 i) For all integers $x,y$ such that  $0\leqslant j\leqslant \min \{x,y\}-2$
one has 
\begin{equation}
\nonumber
\binom{x-2}{j} \cdot \binom{y-2}{j}\cdot
\spec^{\f,\g}_{x,y}\left (\,_b\BF_{\f,\g}^{[j]}\right )=
\,_b\BFrm_{\f_x,\g_y}^{[j]}.
\end{equation}

ii) Let $\lambda= v_p(\alpha (f))+ v_p(\alpha (g)).$ There exists a unique element 
\begin{equation}
\nonumber
\,_b\BF_{\f,\g}^{\Iw}\in H^1_{\Iw,S}(\Q, W^*_{\f,\g})\otimes_{\Lambda} \CH_E^{[\lambda ]}(\Gamma)
\end{equation}
such that for any integer $j\geqslant 0$
%all triples of integers $(x,y,j)$ with $x\in U_f,$ $y\in U_g$
%and $0\leqslant j\leqslant \min\{x,y\}$ 
one has
\begin{equation}
\nonumber
{\spec}^c_{-j}
%\circ \Tw^{\ep}_{-j}
 \left (\,_b\BF_{\f,\g}^{\Iw}\right )=
\frac{(-1)^j}{j!}\left (1-\frac{p^j}{\ba_p \bb_p} \right )
\,_b\BF_{\f,\g}^{[j]}.
%(-1)^j \binom{x}{j}^{-1}\binom{y}{j}^{-1}\left (1-\frac{p^j}{\ba_p(x) \bb_p(y)} 
%\right )\cdot \,_b\BF_{\f_x,\g_y}^{[j]}.
\end{equation}
\end{myproposition}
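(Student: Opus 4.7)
\medskip

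The plan is to derive both statements by carefully unwinding the construction of $\,_b\BF^{[j]}_{\f,\g}$ in terms of the Rankin--Iwasawa class $\,_b\RI^{[j]}_{N(p)}$ and the overconvergent projectors, and then comparing with the classical objects via the compatibility results of Section~4.

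For part (i), the idea is to reduce the equality to a diagram chase tracing the element $\,_b\RI^{[j]}_{N(p)}$ through the two parallel compositions. The classical element $\,_b\BFrm^{[j]}_{\f_x,\g_y}$ is obtained by applying the moment map $\mom^{x-2-j}\otimes \mom^{y-2-j}$ on each factor (which produces $\TSym^{x-2}\cF\otimes \TSym^{y-2}\cF$ after multiplication), followed by the classical projections $\pi_{\f_x}\otimes \pi_{\g_y}$; the overconvergent element $\,_b\BF^{[j]}_{\f,\g}$ instead applies the morphisms $\kappa_\f,\kappa_\g$ from Proposition~\ref{proposition about overconvergent sheaves} (ii) followed by the overconvergent projectors $\pi^{[j]}_\f\otimes \pi^{[j]}_\g$. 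The comparison reduces, after specialization at $(x,y)$, to two inputs: Proposition~\ref{proposition about overconvergent sheaves} (iii), which matches $\theta_k\circ\kappa_?$ with the classical moment map $\mom^k$, and the defining relation $\binom{\ell}{j}\pi^{[j]}_{\f,U}=\pi_{\f,U}\circ \delta^*_j$ of Proposition~\ref{proposition map pi_{bold f,U}}. Since $\ell$ specializes to $x-2$ at the point $x\in U_f$ and to $y-2$ at $y\in U_g$, combining these two relations yields precisely the factors $\binom{x-2}{j}\binom{y-2}{j}$ on the left-hand side.

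For part (ii), the construction starts from the observation that the Iwasawa Eisenstein class $\,_b\EI_N$ already lives in a cohomology group with coefficients in the Iwasawa sheaf $\Lambda(\cF\langle N\rangle)(1)$, i.e.\ with a built-in cyclotomic variable. Applying the Iwasawa-theoretic diagonal map $\Delta_\Lambda^{[0]}$ produces an element in $H^1_S(\Q, H^2_{\et}(Y(N,p)^2,\Lambda(\cF\langle D'\rangle)^{[0,0]}(2)))$, and applying the projection $\pi_\f\otimes \pi_\g$ (i.e.\ $\pi^{[0]}$ with $j=0$) yields an element of $H^1_{\Iw,S}(\Q,W^*_{\f,\g})\otimes \CH_E^{[\lambda]}(\Gamma)$; the growth condition of order $\lambda=v_p(\alpha(f))+v_p(\alpha(g))$ comes from the slopes of the families $\f$ and $\g$ on which these projectors are defined. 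This defines $\,_b\BF^{\Iw}_{\f,\g}$. The interpolation formula is then verified by computing $\spec^c_{-j}$ of this element and comparing, via part (i), with the classical $\,_b\BF^{[j]}_{\f,\g}$. The Euler factor $(1-p^j/(\ba_p\bb_p))$ arises because the twist $\spec^c_{-j}$ "crosses" the triangulation of $\DdagrigE(W^*_{\f,\g})$ inherited from $\f$ and $\g$: exactly as in Perrin-Riou's theory of the large exponential (see Proposition~\ref{them:propertiestwovarPRlog}), the interpolation picks up the characteristic factor of Frobenius acting on $F^+\bD_\f\otimes F^+\bD_\g$ at weight $-j$. The normalization $(-1)^j/j!$ comes from the standard shifts $d\mapsto d[j]$ and the factorials appearing in the formula for the large logarithm.

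Uniqueness of $\,_b\BF^{\Iw}_{\f,\g}$ is immediate from the density of the cyclotomic points $\{\chi^{-j}\}_{j\geq 0}$ in $\CH_E^{[\lambda]}(\Gamma)$ together with the injectivity of a distribution of order $\lambda$ on its values at characters $\chi^{-j}$ with $j\geq \lambda$. The main technical obstacle will be the precise book-keeping in the Euler factor and the sign/factorial normalization for part (ii): both constructions involve several layers (Clebsch--Gordan, overconvergent projectors, Künneth, triangulations), and ensuring that all conventions agree so that exactly the stated Euler factor $1-p^j/(\ba_p\bb_p)$ emerges from the triangulation $F^+\bD_\f\otimes F^+\bD_\g$—rather than any of the other three Frobenius eigenvalues—requires a careful identification of the subquotient through which the overconvergent projection factors. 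Part (i), by contrast, is essentially formal once the key compatibilities of Section~4 are in place.
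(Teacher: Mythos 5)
The paper proves this proposition by citation: part (i) is \cite[Proposition~5.3.4]{LZ} and part (ii) is \cite[Proposition~2.3.3 and Theorem~5.4.2]{LZ}. You are attempting a reconstruction from first principles, so the comparison is to the actual content of those cited results.

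Your treatment of (i) is essentially correct: the statement does follow from the defining relation $\binom{\ell}{j}\pi^{[j]}_{\f,U}=\pi_{\f,U}\circ\delta_j^*$ together with the compatibility between the moment maps and the overconvergent coefficient sheaves, and the binomials arise exactly as you say from specializing $\ell$ to $x-2$ and $y-2$. This is the same diagram chase carried out in \cite[Proposition~5.3.4]{LZ} (and analogous to the one written out in the paper's own Proposition~\ref{proposition interpolation of semistabilized classes}).

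For (ii) there is a genuine gap. Your proposed construction --- apply $\Delta_{\Lambda}^{[0]}$ to $\,_b\EI_N$ and then the projectors $\pi_{\f}\otimes\pi_{\g}$ --- only produces the single class $\,_b\BF_{\f,\g}^{[0]}$ in $H^1_S(\Q,W^*_{\f,\g})$. The overconvergent projector $\pi^{[j]}_{\f,U}$ is defined for one $j$ at a time and acts on a sheaf already twisted by $(2-j)$; nothing in this construction supplies the cyclotomic Iwasawa variable, so it cannot land in $H^1_{\Iw,S}(\Q,W^*_{\f,\g})\otimes_{\Lambda}\CH^{[\lambda]}_E(\Gamma)$. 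This "apply the diagonal at level $0$ and push to the family" strategy is essentially what works in the ordinary case (cf.\ \cite{KLZ}), where the projectors are bounded and compatible with passage up the $p$-tower; it fails precisely in the finite-slope setting that this paper addresses (hypothesis {\bf M2)}), and the whole point of \cite[Proposition~2.3.3, Theorem~5.4.2]{LZ} is to circumvent this. The actual construction takes the entire family $\{\,_b\BF^{[j]}_{\f,\g}\}_{j\geq 0}$ as input, verifies a growth bound (of order $\lambda=v_p(\alpha(f))+v_p(\alpha(g))$) and a one-step congruence (the tame norm relation at $p$), and then invokes an abstract interpolation lemma for $\CH^{[\lambda]}$-valued distributions. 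The Euler factor $(1-p^j/(\ba_p\bb_p))$ comes from that explicit norm relation between $\,_b\RI^{[j]}_{N(p)}$ and $\,_b\RI^{[j+1]}_{N(p)}$ (i.e.\ from the $U_p\times U_p$ structure), not from "$\spec^c_{-j}$ crossing the triangulation $F^+\bD_{\f}\otimes F^+\bD_{\g}$" in the sense of Perrin-Riou's large exponential --- although the resulting interpolation formula happens to look like one. Uniqueness, as you note, does follow from the density of the specialization points for distributions of finite order.
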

\begin{proof} i) The first statement follows directly from the definition of 
the maps $\pi_{\f}^{[j]}$ and $\pi_{\g}^{[j]}$ (see \cite[Proposition~5.3.4]{LZ}).

ii) The second statement is proved in \cite[Proposition~2.3.3 and the proof of Theorem~5.4.2]{LZ}.
\end{proof} 

\newpage
\subsection{Semistabilized Beilinson--Flach elements}
\subsubsection{} Define
\begin{equation}
\nonumber
W_{f,\g}=W_f\otimes_E W_{\g}.
\end{equation}
This is a $p$-adic representation of $G_{\Q,S}$ with coefficients in $A_{\g}.$
For any $0\leqslant j\leqslant k,$ consider the 
composition of maps
\begin{multline}
\label{map projection on W_f}
H^1\left (Y(N,p)_{\overline\Q}, \Lambda (\cF\left <D'\right >)^{[j]}(1)\right )
\xrightarrow{\mom^{k-j}_{\left <p\right >} \otimes \id }
%H^1\left (Y(N,p)_{\overline\Q}, \TSym^{k-j}(\cF_{\Qp})\otimes \TSym^{j}(\cF_{\Qp})
%(1)\right )
H^1\left (Y(N,p)_{\overline\Q}, \TSym^{k}(\cF_{\Qp})(1)\right )
\\
\xrightarrow{{\Pr}^{(N,p)}_{N_f}} 
H^1\left (Y(N_f)_{\overline\Q}, \TSym^{k}(\cF_{\Qp})(1)\right )
\xrightarrow{\pi_f}  W_f^*.
\end{multline}
Composing K\"unneth's isomorphism (\ref{Kunneth isomorphism}) with 
(\ref{map projection on W_{bold g}}) and (\ref{map projection on W_f}), 
%\begin{equation}
%\nonumber
%H^2\left (Y(N,p)_{\overline\Q}, \Lambda (\cF\left <D'\right >)^{[j,j]}(2)
%\right )
%\simeq H^1\left (Y(N,p)_{\overline\Q},\Lambda (\cF\left <D'\right >)^{[j]}(1)\right )^{\otimes 2}
%\end{equation}
we obtain a map
\begin{equation}
\label{map defining BFfrak}
H^2\left (Y(N,p)_{\overline\Q}, \Lambda (\cF\left <D'\right >)^{[j,j]}(2)
\right ) \rightarrow W^*_{f,\g}.
\end{equation}
We denote by
\begin{equation}
\nonumber
\pr^{[j]}_{f,\g}\,:\, H^1_S\left (\Q,H^2\left (Y(N,p)_{\overline\Q}, \Lambda (\cF\left <D'\right >)^{[j,j]}(2-j)
\right ) \right )\rightarrow H^1_S\left (\Q, W^*_{f,\g}(-j)\right )
\end{equation}
the induced map on Galois cohomology. 

\begin{mydefinition} Assume that $0\leqslant j \leqslant k.$ The elements 
\begin{equation}
\,_b\BFfrak_{f,\g}^{[j]}=\pr^{[j]}_{f,\g} \left (\,_b \RI^{[j]}_{N(p)}  \right ),
\end{equation}
will be called  semistabilized Beilinson--Flach elements.
%where $\,_b \BF^{[j]}_{1,N(p)}$ is the Beilinson--Flach element defined by
%(\ref{definition of BF in families on Y(N,p)}).
\end{mydefinition}

\subsubsection{}  For each  $y\in \Spm (A_{\g}),$  we denote again by 
\[
\spec^{\g}_{y}\,:\,H^1_S (\Q, W^*_{f,\g}(-j) ) \rightarrow
H^1_S (\Q, W^*_{f,\g_y}(-j) )
\]
the morphism induced by  the specialization
map $\spec^{\g}_{y}\,:\, W_{\g} \rightarrow W_{\g_y}.$  Recall that
\begin{equation}
\nonumber
I_g=\{ y\in \Z \cap \Spm (A_{\g}) \mid y\geqslant 2, \quad y\equiv l_0\mod{(p-1)}\}.
\end{equation}

\begin{myproposition} 
\label{proposition interpolation of semistabilized classes}
i) For each $y\in I_g$ such that $y\geqslant j+2$
we have
\begin{equation}
\nonumber
\,_b\BFrm_{f,\g_y}^{[j]}=\binom{y-2}{j}\cdot \spec^{\g}_y 
\left (\,_b\BFfrak_{f,\g}^{[j]} \right).
\end{equation}
ii) In particular,
\begin{equation}
\nonumber
\binom{y-2}{j}\cdot 
\left (\id, {\Pr}^{\alpha}_* \right )\circ \spec^{\g}_y 
\left (\,_b\BFfrak_{f,\g}^{[j]} \right)
=\left (1-\frac{\alpha (f) \cdot \beta (g^0_y)}{p^{j+1}} \right )
\left (1-\frac{\beta (f)\cdot \beta (g_y^0)}{p^{j+1}} \right )
\,_b\BFrm_{f,g_y^0}^{[j]}.
\end{equation}
\end{myproposition}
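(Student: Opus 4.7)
For part~i), I would unwind the definitions on each side. Both $\,_b\BFrm_{f,\g_y}^{[j]}$ and $\spec^{\g}_y(\,_b\BFfrak_{f,\g}^{[j]})$ are images of the same Rankin--Iwasawa class $\,_b\RI_{N(p)}^{[j]}$ on $Y(N,p)^2$ under K\"unneth decomposition followed by a projection on each factor. On the $f$-factor, both constructions use the identical composition: the moment map $\mom^{k-j}_{\langle p\rangle}$, push-forward to $Y_1(N_f)$, and the classical projector $\pi_f$. The comparison therefore reduces to a local statement on the $\g$-factor.

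On the $\g$-factor, the semistabilized construction goes through the natural map $\La(\cF\langle D'\rangle)^{[j]} \to \mathfrak{D}_{U_g-j}(\cF')\otimes \TSym^j(\cF)$ of Proposition~\ref{proposition about overconvergent sheaves}~ii), followed by the overconvergent projector $\pi_{\g,U_g}^{[j]}$. The classical construction for $\,_b\BFrm_{f,\g_y}^{[j]}$ uses the twisted moment map $\mom^{[j], y-2}_{\langle N_g\rangle}$ of (\ref{definition of twisted moment map}) followed by $\pi_{\g_y}$. The key input is the defining identity $\binom{\ell}{j}\pi_{\g,U_g}^{[j]} = \pi_{\g,U_g} \circ \delta_j^*$ from Proposition~\ref{proposition map pi_{bold f,U}}~ii), together with the commutative diagrams of Proposition~\ref{proposition about overconvergent sheaves}~iii). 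Specializing at $y \in I_g$, the function $\binom{\ell}{j}$ evaluates to $\binom{y-2}{j}$ (under the weight convention that identifies the parameter on $U_g$ with $l-2$ in the sheaf $\TSym^{l-2}(\cF)$), while $\delta_j^* \circ (\theta_{y-2-j}\otimes \id)$ recovers the classical moment map landing in $\TSym^{y-2}(\cF)$. A diagram chase then yields the identity in~i). This is essentially the argument of \cite[Proposition~5.3.4]{LZ} with the $\f$-side replaced by the fixed classical form $f$, so that only one binomial coefficient $\binom{y-2}{j}$ appears (compare Proposition~\ref{proposition specialization of two variable Beilinson Flach elements}~i), where two such coefficients occur).

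For part~ii), apply $(\id,{\Pr}_*^\alpha)$ to the identity in~i). Since $\g_y$ is the $\alpha$-stabilization of the newform $g_y^0$, Proposition~\ref{proposition stabilization formulas}~ii) gives
\[
(\id, {\Pr}_*^\alpha)\bigl(\,_b\BFrm_{f,\g_y}^{[j]}\bigr) = \left(1-\frac{\alpha(f)\beta(g_y^0)}{p^{j+1}}\right)\left(1-\frac{\beta(f)\beta(g_y^0)}{p^{j+1}}\right)\,_b\BFrm_{f,g_y^0}^{[j]},
\]
and combining this with~i) yields the claim. The main technical obstacle is part~i): verifying that the overconvergent projector at a classical weight specializes as claimed requires careful tracking of the identification between $\mathfrak{D}_{U_g-j}(\cF')$ at weight $y$ and $\TSym^{y-j-2}(\cF)$ through $\theta_{y-j-2}$, and consistent bookkeeping of the weight-shift convention that produces $\binom{y-2}{j}$ rather than $\binom{y}{j}$. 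Once this compatibility is set up, the remainder is a routine diagram chase.
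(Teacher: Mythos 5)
Your proposal is correct and follows essentially the same route as the paper: both sides are identified as images of $\,_b\RI^{[j]}_{N(p)}$, the $f$-factor treatment is recognized to be identical on both sides, and the crux on the $\g$-factor is the diagram chase through the overconvergent projector using $\binom{\ell}{j}\pi^{[j]}_{\g,U_g}=\pi_{\g,U_g}\circ\delta_j^*$ and the compatibility of $\theta_{y-j-2}$ with the moment maps, exactly as in the paper's diagram~(\ref{big diagram section semistabilized Beilinson-Flach}). Two small things the paper spells out that you gloss over: first, to write $\,_b\BFrm_{f,\g_y}^{[j]}$ in terms of the class on $Y(N,p)$ (so that both sides really do start from the same Rankin--Iwasawa element) one invokes the commutativity of the twisted moment maps with the traces $\tr'_*$, $\pr'_*$, citing \cite[Proposition~2.2.2]{Ki15}; second, in part~ii), Proposition~\ref{proposition stabilization formulas}~ii) is stated for $\BFrm$ rather than $\,_b\BFrm$, so you need the extra observation~(\ref{relation between BF and _bBF}) together with the fact that $\g_y$ and $g_y^0$ have the same weight and nebentypus, so the $b$-factor is identical on both sides and cancels cleanly. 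Neither of these is a gap in the idea; they are bookkeeping steps you would have had to fill in when writing out the diagram chase.
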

\begin{proof} 
i) Since the moment maps commute with the traces (see \cite[Proposition~2.2.2]{Ki15}),  we have a commutative diagram
{
%\small
\begin{equation}
\nonumber
\xymatrix{
H^2_{\et} \left (Y_1(Np)_{\overline\Q},\Lambda (\cF \left < Np \right >)^{[j,j]}\right )
\ar[rrr]^{\left (\mom^{[j],k}_{\left < Np \right >},\mom^{[j],y-2}_{\left <Np\right >}\right )}  \ar[d]^{\tr'_*}
&&&H^2_{\et} \left (Y_1(Np)_{\overline\Q},\TSym^{[k,y-2]}(\cF_{\Qp})   \right )
\ar[d]^{\pr'_*}
 \\
H^2_{\et} \left (Y_1(N,p)_{\overline\Q},\Lambda (\cF \left <D'\right >)^{[j,j]}\right )
\ar[rrr]^{\left (\mom^{[j],k}_{\left < p \right >},\mom^{[j],y-2}_{\left < p\right >}\right )}  
&&&H^2_{\et} \left (Y_1(N,p)_{\overline\Q},\TSym^{[k,y-2]}(\cF_{\Qp})   \right )
}
\end{equation}
}
Here the maps $\mathrm{tr}'_*$ and $\mom^{[j],*}_{\left <* \right >}$
are defined by  (\ref{definition of tr'}) and (\ref{definition of twisted moment map}) respectively. Taking into account (\ref{definition of BF in families on Y(N,p)}) and 
the definition of $\,_b\BFrm_{f,\g_y}^{[j]},$ we obtain that 
\begin{equation}
\label{formula comparision BF}
\,_b\BFrm_{f,\g_y}^{[j]}=(\pi_f,\pi_{\g_y})\circ \left ({\Pr}^{(N,p)}_{N_f}, {\Pr}^{(N,p)}_{(N_g,p)}\right )\circ \left (\mom^{[j],k}_{\left <p \right >},  \mom^{[j],y-2}_{\left <
p \right >}\right ) \left (\BF^{[j]}_{1,N(p)}\right ).
\end{equation}
The elements $\,_b\BFrm_{f,\g_y}^{[j]}$ and 
$\,_b\BFfrak_{f,\g}^{[j]}$
are defined via the maps   (\ref{composition projection of Beilinson-Flach on (f,g)}) and (\ref{map defining BFfrak}) respectively.

Consider the following   diagram
\begin{equation}
\label{big diagram section semistabilized Beilinson-Flach}
\xymatrix{
H^1_{\et}(Y(N_g,p)_{\overline\Q}, \Lambda (\cF\left <D'\right >)^{[j]}(1))
\ar[r]^{\mom^{[j],y-2}_{\left < p\right >}}
\ar[d]^{\alpha}
&
H^1_{\et}(Y(N_g,p)_{\overline\Q}, \TSym^{y-2}(\cF_{\Qp})(1))
\ar[d]_{\pi_{\g_y}}
\\
H^1_{\et}\left (Y(N_g,p)_{\overline\Q},\frak D_{U_g-j}(\cF')\otimes \TSym^{j}(\cF) (1)\right )
\ar[ur]^{\theta_{y-j-2}\otimes \id}
\ar[d]^{\delta^*_{j}}
\ar[dr]^(.6){{\binom{\ell}{j}\pi_{\g}^{[j]}}}
& W^*_{\g_y}\\
W'(U) \ar[r]^{\pi_{\g}} &W_{\g}^* \ar[u]^{\spec_y}
}
\end{equation}

Compairing (\ref{formula comparision BF}) with (\ref{map projection on W_{bold g}})
and (\ref{map projection on W_f}), it is easy to see that we only need to prove 
the formula
\begin{equation}
\label{formula proof of main relation between BF}
\pi_{\g_y}\circ \mom^{[j],y-2}_{\left <p \right >}=\binom{y-2}{j}\spec^{\g}_y\circ 
\pi_{\g}^{[j]}\circ \alpha,
\end{equation}
which in turn follows from the commutativity of the diagram (\ref{big diagram section semistabilized Beilinson-Flach}).
The commutativity of the upper triangle follows from Proposition~\ref{proposition about overconvergent sheaves}, iii). The commutativity of the lower triangle 
follows from Proposition~\ref{proposition map pi_{bold f,U}}.
Directly from the definition of the maps $\theta_{y-j-2},$ $\delta^*_{j}$ 
and $\pi_{\g}$ it follows that $\pi_{\g_y}'\circ (\theta_{y-j-2}, \id)=
\spec^{\g}_y\circ \pi_{\g}\circ \delta^*_{j}.$ Therefore, the diagram 
(\ref{big diagram section semistabilized Beilinson-Flach})
commutes, and   (\ref{formula proof of main relation between BF}) is proved.

ii) The second statement follows from i), Proposition~\ref{proposition stabilization formulas}, ii) and (\ref{relation between BF and _bBF}).
\end{proof}

In the following proposition, we compare  Beilinson--Flach Euler systems 
$\,_b\BF_{[\f,\g]}^{\Iw}$ with semistabilized Beilinson--Flach elements. 
This result plays a key role in the proof of the main theorem of this paper.

\begin{myproposition} 
\label{proposition Iwasawa vs semistabilized classes}
Let
\[
\,_b\BFfrak^{\Iw}_{f,\g}=({\Pr}^\alpha_*, \id)\circ \spec^{\f}_{k_0}\left (_b\BF^{\Iw}_{\f,\g}\right ) \in H^1_{\Iw,S}\left (\Q, W_{f,\g}^* \right )\otimes_{\Lambda}\CH_E^{[\lambda]}(\Gamma).
\]
Then for any integer $0\leqslant j\leqslant \min \{k,l\}$ there exists
a neighborhood $U_g=\Spm (A_{\g})$ such that 
\[
\spec^c_{-j}
%\circ \Tw^{\ep}_j 
\left (\,_b\BFfrak^{\Iw}_{f,\g} \right )=
\frac{(-1)^j}{j!} \cdot \binom{k}{j} \cdot \left (
1-\frac{p^j}{\alpha (f)\cdot \bb_p}
\right )\cdot 
\left (1-\frac{\beta (f)\cdot \bb_p}{p^{j+1}}\right ) \cdot
\,_b\BFfrak_{f,\g}^{[j]}  
\]
\end{myproposition}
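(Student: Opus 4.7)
The plan is to prove the identity by a density argument in the $\g$-variable, combining the two-variable interpolation formula for $\,_b\BF^{\Iw}_{\f,\g}$ with the interpolation formula for the semistabilized element $\,_b\BFfrak^{[j]}_{f,\g}$ and a destabilization formula for Beilinson--Flach classes.

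First, unwind the definition: since $\spec^c_{-j}$ acts only on the cyclotomic variable, it commutes with both $(\Pr^\alpha_*,\id)$ (acting on the representation) and $\spec^\f_{k_0}$ (acting on the $\f$-variable). Combining this with Proposition~\ref{proposition specialization of two variable Beilinson Flach elements}(ii) gives
\[
\spec^c_{-j}(\,_b\BFfrak^{\Iw}_{f,\g}) \;=\; \frac{(-1)^j}{j!}\Bigl(1-\frac{p^j}{\alpha(f)\,\bb_p}\Bigr)\,(\Pr^\alpha_*,\id)\,\spec^\f_{k_0}\bigl(\,_b\BF^{[j]}_{\f,\g}\bigr),
\]
using that the specialization of $\ba_p$ at $\f_{k_0}=f_\alpha$ equals $\alpha(f)$. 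The first Euler-like factor of the claimed formula has thus appeared; it remains to identify $(\Pr^\alpha_*,\id)\,\spec^\f_{k_0}(\,_b\BF^{[j]}_{\f,\g})$ with the appropriate scalar multiple of $\,_b\BFfrak^{[j]}_{f,\g}$.

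Both sides of this remaining identity lie in $H^1_S(\Q,W^*_{f,\g}(-j))$, which is a finitely generated module over the affinoid algebra $A_\g$. After possibly shrinking $U_g$ so that this module is $A_\g$-torsion free, it suffices to check the identity after applying $\spec^\g_y$ for all classical weights $y\in I_g$ with $y\geqslant j+2$, since these form a Zariski-dense subset of $\Spm(A_\g)$. After such a specialization, Proposition~\ref{proposition specialization of two variable Beilinson Flach elements}(i) expresses $\spec^{\f,\g}_{k_0,y}(\,_b\BF^{[j]}_{\f,\g})$ in terms of $\,_b\BFrm^{[j]}_{f_\alpha,\g_y}$ (divided by $\binom{k}{j}\binom{y-2}{j}$), while Proposition~\ref{proposition interpolation of semistabilized classes}(i) expresses $\spec^\g_y(\,_b\BFfrak^{[j]}_{f,\g})$ in terms of $\,_b\BFrm^{[j]}_{f,\g_y}$ (divided by $\binom{y-2}{j}$).

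Therefore the problem reduces to the symmetric destabilization formula
\[
(\Pr^\alpha_*,\id)\,\,_b\BFrm^{[j]}_{f_\alpha,\g_y} \;=\; \Bigl(1-\frac{\beta(f)\bb_p(y)}{p^{j+1}}\Bigr)\,\,_b\BFrm^{[j]}_{f,\g_y}.
\]
Writing $\g_y = g^0_{y,\alpha}$ and applying the isomorphism $(\id,\Pr^\alpha_*)$ to both sides, the left side becomes $(\Pr^\alpha_*,\Pr^\alpha_*)\,_b\BFrm^{[j]}_{f_\alpha,g^0_{y,\alpha}}$, which Proposition~\ref{proposition stabilization formulas}(i) equates to three Euler factors times $\,_b\BFrm^{[j]}_{f,g^0_y}$; the right side becomes (by Proposition~\ref{proposition stabilization formulas}(ii)) two Euler factors times $\,_b\BFrm^{[j]}_{f,g^0_y}$. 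The ratio of these two expressions is precisely $1-\beta(f)\alpha(g^0_y)/p^{j+1}=1-\beta(f)\bb_p(y)/p^{j+1}$, which proves the destabilization formula since $(\id,\Pr^\alpha_*)$ is an isomorphism. Substituting back and assembling the factors (together with the initial factor from Proposition~\ref{proposition specialization of two variable Beilinson Flach elements}(ii)) yields the claimed identity.

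The main obstacle is the density/torsion-freeness step: one must justify that, after shrinking $U_g$, coincidence at all classical weights $y\geqslant j+2$ propagates to global coincidence of elements of $H^1_S(\Q,W^*_{f,\g}(-j))$. This rests on standard freeness properties of cohomology of $(\Ph,\Gamma)$-modules over Tate algebras, as in Proposition~\ref{proposition cohomology of rank 1 modules in families}, together with the fact that both candidate elements are constructed from a single geometric Iwasawa class.
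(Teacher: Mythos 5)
Your approach is essentially the paper's: reduce to a Zariski-density argument in the $\g$-variable and check coincidence at classical weights $y\geqslant j+2$ by chaining the interpolation formulas (Propositions~\ref{proposition specialization of two variable Beilinson Flach elements} and~\ref{proposition interpolation of semistabilized classes}) with the stabilization formulas of Proposition~\ref{proposition stabilization formulas}. Your intermediate ``symmetric destabilization formula''
\[
(\Pr^\alpha_*,\id)\,\,_b\BFrm^{[j]}_{f_\alpha,\g_y}\;=\;\Bigl(1-\tfrac{\beta(f)\alpha(g^0_y)}{p^{j+1}}\Bigr)\,\,_b\BFrm^{[j]}_{f,\g_y}
\]
is correct (as you show by applying $(\id,\Pr^\alpha_*)$ and comparing the two parts of Proposition~\ref{proposition stabilization formulas}), and stating it explicitly makes the Euler-factor bookkeeping transparent; the paper's own proof only asserts that both sides of the target identity specialize to a common expression.

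There is, however, a binomial-coefficient mismatch that your ``assembling the factors'' step glosses over. Carrying out the reduction precisely: Proposition~\ref{proposition specialization of two variable Beilinson Flach elements}(i) gives $\spec^{\f,\g}_{k_0,y}(\,_b\BF^{[j]}_{\f,\g})=\,_b\BFrm^{[j]}_{f_\alpha,\g_y}\bigl/\bigl(\tbinom{k}{j}\tbinom{y-2}{j}\bigr)$, Proposition~\ref{proposition interpolation of semistabilized classes}(i) gives $\spec^{\g}_{y}(\,_b\BFfrak^{[j]}_{f,\g})=\,_b\BFrm^{[j]}_{f,\g_y}\bigl/\tbinom{y-2}{j}$, and the target carries an extra $\tbinom{k}{j}$ in its numerator; the residual identity after clearing denominators is therefore
\[
(\Pr^\alpha_*,\id)\,\,_b\BFrm^{[j]}_{f_\alpha,\g_y}\;=\;\tbinom{k}{j}^2\Bigl(1-\tfrac{\beta(f)\alpha(g^0_y)}{p^{j+1}}\Bigr)\,\,_b\BFrm^{[j]}_{f,\g_y},
\]
not the factor-free version you reduce to. The destabilization formula you prove is exactly the factor-free one, so the chain of implications only closes if the $\tbinom{k}{j}$ in the proposition's statement is replaced by $\tbinom{k}{j}^{-1}$. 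The same discrepancy is latent in the paper's proof (the displayed specialization has $\tbinom{k}{j}\tbinom{y-2}{j}$ as a multiplicative factor, whereas the quoted propositions place these binomials in the denominator). It is harmless for the paper's sole application, Proposition~\ref{proposition second improved L-function}, which takes $j=k=k_0-2$ so that $\tbinom{k}{j}=1$, but a careful write-up should either correct the stated factor or explain the cancellation you are implicitly invoking.
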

\begin{proof}  Shrinking, if necessarly, the neighborhood $U_{\g},$ we can assume that, as an $A_{\g}$-module,  $H^1_S(\Q, W_{f,\g})\simeq A_{\g}^r\oplus T,$ where $T$ is a  $\mathfrak m_{l_0}$-primary torsion module. 
Let $y\in \Spm (A_{\g})$ be an integral weight such that
$y\geqslant l_0.$ From Propositions~\ref{proposition stabilization formulas} ii),  \ref{proposition specialization of two variable Beilinson Flach elements}
and \ref{proposition interpolation of semistabilized classes} it follows that
the map $(\id, {\Pr}^{\alpha}_*)\circ \spec^{\g}_y$ sends the both sides 
of the formula to
\begin{multline}
\nonumber
\frac{(-1)^j}{j!}\cdot \binom{k}{j}\cdot \binom{y-2}{j}\cdot 
\left (
1-\frac{p^j}{\alpha (f) \alpha(g^0_y)}
\right )\cdot 
\left (1-\frac{\alpha (f) \beta (g^0_y)}{p^{j+1}}\right ) \times 
\\
\times \left (1-\frac{\beta (f)   \alpha (g^0_y)}{p^{j+1}}\right )\cdot 
\left (1-\frac{\beta (f) \beta (g^0_y)}{p^{j+1}}\right )
\,_b\BF_{f,g^0_y}^{[j]}.
\end{multline}
Since ${\Pr}^\alpha_*$ is an isomorphism, this shows that the specializations
of the both sides coincide at infinitely many points, including $l_0.$ 
This proves the proposition.
\end{proof}

\newpage
\section{Triangulations}
\label{section triangulations}

\subsection{Triangulations}
\label{subsection triangulations}
\subsubsection{}
Let $f=\underset{n=1}{\overset{\infty}\sum } a_n q^n$ and 
$g=\underset{n=1}{\overset{\infty}\sum }b_n q^n$ be two eigenforms of weights $k_0=k+2$ and $l_0=l+2,$ levels $N_f,$
$N_g$ and nebentypus $\ep_f$ and $\ep_g$ respectively. Fix an odd prime $p$
such that $(p, N_fN_g)= 1$ and denote by $\alpha (f)$ and $\beta (f)$
(respectively by $\alpha (g)$ and $\beta (g)$) the roots of the Hecke polynomial
of $f$ (respectively $g$) at $p.$  We will always assume that the  conditions {\bf M1-2)} from Section~\ref{subsection Stabilized Beilinson--Flach families}  hold.
%\begin{enumerate}
%\item $\ep_f\ep_g \neq 1;$
%\item $\alpha (f)\neq \beta (f)$
%and $\alpha (g)\neq \beta (g);$
%\item  $v_p(\alpha (f))<k+1$ and $ v_p(\alpha (g)) <l+1.$
%\end{enumerate}
Let $f_\alpha$ and $g_\alpha$ denote the $p$-stabilizations of $f$ and $g$ with respect to $\alpha (f)$ and $\alpha (g)$ respectively. 
Denote by $\f=\underset{n=1}{\overset{\infty}\sum}
\ba_nq^n \in A_{\f}[[q]]$ and $\g=\underset{n=1}{\overset{\infty}\sum}
\bb_nq^n  \in A_{\g}[[q]]$
Coleman families passing through $f_\alpha$ and $g_\alpha .$
Shrinking the neighborhoods of $k_0$ and $l_0$ in the weight space, we can assume that the affinoid algebras $A_{\f}=E \left <w_1/p^r\right >$ and $A_{\g}=E \left <w_2/p^r\right >$
satisfy the conditions of Propositions~\ref{proposition coleman families}
and \ref{proposition interpolation eigenvectors}. 
Then 
\begin{equation}
\nonumber
W_{\f,\g}=W_{\f}\widehat \otimes_E W_{\g}
\end{equation}
is a $p$-adic Galois representation of rank $4$ with coefficients 
in $A=A_{\f}\widehat \otimes_E A_{\g}\simeq E \left <w_1/p^r, w_2/p^r\right >.$
 Let $\bD_{\f}=\DdagrigAf (W_{\f})$ and $\bD_{\g}=\DdagrigAg (W_{\g})$
and let 
\begin{equation}
\begin{aligned} 
\label{triangulations of D_f and D_g}
& 0\rightarrow F^+\bD_{\f} \rightarrow \bD_{\f} \rightarrow F^-\bD_{\f}
\rightarrow 0,\\
& 0\rightarrow F^+\bD_{\g} \rightarrow \bD_{\g} \rightarrow F^-\bD_{\g}
\rightarrow 0
\end{aligned}
\end{equation}
be the canonical  triangulations of $\bD_{\f}$ and $\bD_{\g}$ respectively
(see Proposition~\ref{proposition coleman families}, 2c)).
We denote by $\boeta_{\f}$ and $\bxi_{\f}$  (respectively by $\boeta_{\g}$ and $\bxi_{\g}$) the elements constructed in Proposition~\ref{proposition interpolation eigenvectors}.
Set $\bD_{\f,\g}=\bD_{\f}\widehat\otimes_{\CR_E} \bD_{\g}.$ Then 
$\bD_{\f,\g}=\DdagrigA (W_{\f,\g}).$ We denote by $(F_i \bD_{\f,\g})_{i=-2}^2$
the triangulation 
\begin{equation}
\nonumber
\{0\} \subset F_{-1}\bD_{\f,\g} \subset F_0\bD_{\f,\g}
\subset F_1\bD_{\f,\g} \subset F_2\bD_{\f,\g}
\end{equation}
given by
\begin{equation}
\label{definition of triangulation}
F_i\bD_{\f,\g}=
\begin{cases}
\{0\}, &\text{if $i=-2,$}\\
F^+\bD_{\f}\widehat\otimes_{\CR_E} F^+\bD_{\g}, &\text{if $i=-1,$}\\
F^+\bD_{\f}\widehat\otimes_{\CR_E} \bD_{\g}, &\text{if $i=0,$}\\
(F^+\bD_{\f}\widehat\otimes_{\CR_E} \bD_{\g})+ 
(\bD_{\f}\widehat\otimes_{\CR_E} F^+\bD_{\g}), &\text{if $i=1,$}\\
\bD_{\f,\g}, &\text{if $i=2.$}
\end{cases}
\end{equation}
We will denote by $(\gr_i \bD_{\f,\g})_{i=-2}^2$ the associated graded
module. In particular,
\begin{equation}
\nonumber
\gr_0 \bD_{\f,\g}=F^+\bD_{\f}\widehat\otimes_{\CR_E} F^-\bD_{\g},
\qquad 
\gr_1 \bD_{\f,\g}=F^-\bD_{\f}\widehat\otimes_{\CR_E} F^+\bD_{\g}.
\end{equation}
Note that 
%Define
%\begin{equation}
%\nonumber
%F^{+-}\bD_{\f,\g}=F^+\bD_{\f}\widehat\otimes_{\CR_E} F^-\bD_{\g}.
%\end{equation}
\begin{equation}
\begin{aligned}
\label{formulas for phi action on M}
&\gr_0 \bD_{\f,\g}\simeq \CR_A(\bdelta_{\f,\g} \bchi_{\g}^{-1}), 
\qquad 
\bdelta_{\f,\g}(p)=\ep_{g}(p)\ba_{p}\bb_{p}^{-1}, \qquad
{\bdelta_{\f,\g}\vert }_{\Zp^*}=1,\\
%&\gr_1 \bD_{\f,\g}\simeq \CR_A(\bdelta_{\f,\g} \bchi_{\g}^{-1}), 
%\qquad 
%\bdelta_{\f,\g}(p)=\ep_{g}(p)\ba_{p}\bb_{p}^{-1}, \qquad
%{\bdelta_{\f,\g}\vert }_{\Zp^*}=1.
\end{aligned}
\end{equation}

\subsubsection{} We denote by $(F_i\bD_{\f,\g}^*)_{i=-2}^2$ the dual filtration
on $\bD^*_{\f,\g}$
\[
F_i\bD_{\f,\g}^*=\Hom_{\CR_A} 
\left (\bD_{\f,\g}/F_{-i}\bD_{\f,\g}, \CR_A\right ).
\]
\begin{mylemma} 
\label{lemma duality of filtration}
Let $\alpha (f^*)=p^{k_0-1}/\beta (f)$ and 
$\alpha (g^*)=p^{l_0-1}/\beta (g)$ and let $\f^*$ and $\g^*$ denote
the Coleman families passing through the stabilizations of $f^*$ and 
$g^*$ with respect to $\alpha (f^*)$ and $\alpha (g^*).$ Then the 
filtrations $(F_i\bD_{\f,\g}^*)_{i=-2}^2$ and  $(F_i\bD_{\f^*,\g^*})_{i=-2}^2$
are compatible with the duality $\bD_{\f^*,\g^*}
\times  \bD_{\f,\g}\rightarrow \CR_A(\bchi_{\f}^{-1}\bchi_{\g}^{-1}).$ Namely
\begin{equation}
F_i\bD_{\f,\g}^*\simeq F_i\bD_{\f^*,\g^*}  (\bchi_{\f}\bchi_{\g}).
\end{equation} 
\end{mylemma}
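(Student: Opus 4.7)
The plan is to reduce the statement to a rank-one duality for each individual Coleman family, and then conclude by a tensor-product computation.

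The key preliminary step is the rank-one version: under the pairing $\bD_{\f^*}\times\bD_{\f}\to\CR_A(\bchi_{\f}^{-1})$ provided by the Poincaré duality of Proposition~\ref{proposition coleman families}, 3 (combined with the canonical identification $W_{\f^*}\simeq W_{\f}^*\otimes\bchi_{\f}^{-1}$, which at each classical weight $x\in I_f$ reflects $\det\rho_{\f_x^0}=\ep_f\chi^{1-x}$ together with the fact that $\f_x^{0,*}$ has nebentypus $\ep_f^{-1}$), the triangulations are compatible. Concretely, $F^+\bD_{\f^*}(\bchi_{\f})$ should equal the annihilator of $F^+\bD_{\f}$, giving a canonical isomorphism $F^+\bD_{\f^*}(\bchi_{\f})\simeq \Hom_{\CR_A}(F^-\bD_{\f},\CR_A)$. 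Using Proposition~\ref{proposition coleman families}, 2c, this reduces to the equality of rank-one characters on $\Qp^*$, namely $\bdelta_{\f^*}^+\bchi_{\f}=(\bdelta_{\f}^{-})^{-1}$: on $\Zp^*$ this is immediate, and at $p$ (under the convention $\bchi_{\f}(p)=1$, consistent with the formulas (\ref{formulas for phi action on M}) for the subquotients of the filtration) it reduces to the family-level identity $\ba_p(\f^*)=\ep_f(p)^{-1}\ba_p(\f)$, which holds pointwise since $\alpha(f_x^{0,*})=p^{x-1}/\beta(f_x^0)$ and $\alpha(f_x^0)\beta(f_x^0)=\ep_f(p)p^{x-1}$. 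The analogous statement for $\g$ is identical.

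With the rank-one duality in hand, the full lemma becomes a formal tensor-product computation. The pairing $\bD_{\f^*,\g^*}\times\bD_{\f,\g}\to\CR_A(\bchi_{\f}^{-1}\bchi_{\g}^{-1})$ is the tensor of the two rank-one pairings, and each $F_i\bD_{\f,\g}^*$, being the annihilator of $F_{-i}\bD_{\f,\g}$ in $\bD_{\f,\g}^*\simeq\bD_{\f^*,\g^*}(\bchi_{\f}\bchi_{\g})$, can be computed case by case. For $i=0$, the annihilator of $F_0\bD_{\f,\g}=F^+\bD_{\f}\otimes\bD_{\g}$ is $F^+\bD_{\f^*}(\bchi_{\f})\otimes\bD_{\g^*}(\bchi_{\g})=F_0\bD_{\f^*,\g^*}(\bchi_{\f}\bchi_{\g})$; for $i=\pm 1$ one uses the elementary fact that, for two-dimensional spaces $V,W$ with one-dimensional subspaces $V_1,W_1$, the annihilator of $V_1\otimes W+V\otimes W_1$ in $V^*\otimes W^*$ equals $V_1^{\perp}\otimes W_1^{\perp}$; the extreme cases $i=\pm 2$ are tautological.

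The principal technical point will be the rank-one duality: one must verify that the family Poincaré pairing is compatible with Liu's explicit description of the triangulation of $\bD_{\f}$ (as formulated in Proposition~\ref{proposition coleman families}, 2c) with the correct twist $\bchi_{\f}$, and fix a consistent extension of $\bchi_{\f}$ from $\Gamma$ to $\Qp^*$. Once this has been pinned down, the tensor-product argument is entirely formal multilinear algebra with rank-one $(\Ph,\Gamma)$-modules and introduces no new difficulty.
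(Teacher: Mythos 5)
The paper gives no proof here --- it says ``The proof is straightforward and left to the reader'' --- so there is no argument of the authors' to compare your proposal against. Your argument is correct and is the natural one: the character identity $\bdelta_{\f^*}^{+}\bchi_{\f}=(\bdelta_{\f}^{-})^{-1}$ (on $\Zp^*$ trivially, at $p$ via $\ba_p(\f^*)=\ep_f(p)^{-1}\ba_p(\f)$ and $\bchi_{\f}(p)=1$) gives the rank-one duality, and the rest is formal multilinear algebra with free rank-one $\CR_A$-modules, exactly as you lay out case by case for $i=0,\pm1,\pm2$.

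One small point worth pinning down explicitly: the character comparison shows the annihilator of $F^{+}\bD_{\f}$ in $\bD_{\f}^{*}\simeq\bD_{\f^*}(\bchi_{\f})$ and the submodule $F^{+}\bD_{\f^*}(\bchi_{\f})$ are \emph{abstractly} isomorphic rank-one $(\Ph,\Gamma)$-modules, whereas ``compatible with the duality'' asserts they \emph{coincide} as submodules. The cleanest way to close this is the pointwise observation: at each classical weight $x$, the pairing $\Dc(W_{f^0_x})\times\Dc(W_{f^{0,*}_x})\to\Dc(E(1-x))$ annihilates the pair of $\alpha$-eigenlines precisely because $\alpha(f^0_x)\alpha(f^{0,*}_x)=\alpha(f^0_x)p^{x-1}/\beta(f^0_x)\neq p^{x-1}$ under {\bf M1)}; this identifies the annihilator of $\CDcris(F^{+}\bD_{\f_x})$ with $\CDcris(F^{+}\bD_{\f^*_x}(\chi^{x-1}))$ at a Zariski-dense set of points, hence on the whole family. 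With that addition your proof is complete.
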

\begin{proof} The proof is straightforward and left to the reader.
\end{proof} 

%Then $F^{+-}\bD_{\f,\g}$ is a free $(\Ph,\Gamma)$-module 
%of rank one over $\CR_A$ such that 
%\begin{equation}
%\nonumber
%F^{+-}\bD_{\f,\g}\simeq \CR_A(\bdelta_{\f,\g} \bchi_{\g}^{-1}), 
%\qquad 
%\bdelta_{\f,\g}(p)=\ep_{g}(p)\ba_{p}\bb_{p}^{-1}, \qquad
%{\bdelta_{\f,\g}\vert }_{\Zp^*}=1.
%\end{equation}
\subsubsection{}
Set 
\begin{equation}
\label{definition of M_f,g}
\bM_{\f,\g}=\gr_0\bD_{\f,\g} (\chi\bchi_{\g})\simeq \CR_A(\bdelta_{\f,\g}\chi ).
\end{equation}
Then $\bM_{\f,\g}$ is a crystalline $(\Ph,\Gamma)$-module of Hodge--Tate weight $1$
\footnote{We call Hodge--Tate weights the jumps of the Hodge--Tate filtration on the associated filtered module. In particular, the Hodge--Tate weight of $\Qp (1)$
is $-1.$} 
and $\CDcris (\bM_{\f,\g})$ is a free $A$-module of rank one generated by
\begin{equation}
\label{definition of m bold f bold g}
m_{\f,\g}:=\boeta_{\f}\otimes \bxi_{\g}\otimes e_1,
\end{equation}
where $\boeta_{\f}$ and $\bxi_{\g}$ are defined in Proposition~\ref{proposition interpolation eigenvectors}. 
%Let
%\begin{equation}
%\begin{aligned} 
%\nonumber
%& 0\rightarrow F^+\bD_{\f}^* \rightarrow \bD_{\f}^* \rightarrow F^-\bD_{\f}^{*}
%\rightarrow 0,\\
%\nonumber
%& 0\rightarrow F^+\bD_{\g}^* \rightarrow \bD_{\g}^* \rightarrow F^-\bD_{\g}^*
%\rightarrow 0
%\end{aligned}
%\end{equation}
% denote the duals of the triangulations (\ref{triangulations of D_f and D_g}).
%Define 
%\begin{equation}
%\begin{aligned}
%\nonumber
%&F^{-+}\bD_{\f,\g}^*=F^-\bD_{\f}^{*}\otimes_{\CR_A} F^+\bD_{\g}^*,\\
%&F^{-\circ}\bD_{\f,\g}^*=F^-\bD_{\f}^{*}\otimes_{\CR_A} \bD_{\g}^*,\\
%&F^{--}\bD_{\f,\g}^*=F^-\bD_{\f}^{*}\otimes_{\CR_A} F^-\bD_{\g}^*.
%\end{aligned}
%\end{equation}
%We have a tautological exact sequence 
%\begin{equation}
%\label{tautological triangulation exact sequence}
%0\rightarrow F^{-+}\bD_{\f,\g}^*\rightarrow  F^{-\circ}\bD_{\f,\g}^*
%\rightarrow F^{--}\bD_{\f,\g}^* \rightarrow 0.
%\end{equation}
From Lemma~\ref{lemma duality of filtration} it follows  that  
$\gr_1\bD_{\f^*,\g^*}(\bchi_{\f})$ is
the Tate dual  of $\bM_{\f,\g}:$
\begin{equation}
\label{definition of M_f,g*(chi)}
\bM_{\f,\g}^*(\chi)=\gr_1 \bD_{\f,\g}^*( \bchi^{-1}_{\g}) \simeq 
\gr_1\bD_{\f^*,\g^*}(\bchi_{\f})\simeq
\CR_A(\bdelta_{\f,\g}^{-1}).
\end{equation} 

%and 
%\begin{equation}
%\nonumber
%L_p(\z, \omega^a, \kappa, \lambda,s)=
%\mathcal A_{\omega^a}\left (
%\left <\Tw^{\ep}_{\chi\bchi_{\g}^{-1}}(\z) , \Exp^{\ep^{-1}}_{\bM_{\f,\g}}(m_{\f,\g})^{\iota} \right >
%\right ).
%\end{equation}

\subsubsection{} Let 
\[
\bD_{f,\g}=\bD^{\dagger}_{\mathrm{rig}, A_{\g}} (W_{f,\g})
\simeq \DdagrigE (W_f)\otimes_{\CR_E}\bD^{\dagger}_{\mathrm{rig}, A_{\g}} (W_{\g}).
\]  
The isomorphism (\ref{dual isomorphism of representations  stabilized and
non stabilized }) identifies $\bD_{f,\g}$ with 
the specialization of
the $(\Ph,\Gamma)$-module $\bD_{\f,\g}$ at $f_\alpha.$ In particular, for each
$j\in \Z$ we have a tautological short exact sequence 
\begin{equation}
\nonumber
0\rightarrow \gr_1\bD_{f,\g}^*(\chi^{j})  \rightarrow 
\bD_{f,\g}^*/F_0\bD_{f,\g}^*  (\chi^{j})  \rightarrow 
\gr_2\bD_{f,\g}^*(\chi^{j})\rightarrow 0.
\end{equation}

\begin{mylemma} 
\label{lemma exact sequence triangulation}
1) For each $j\in \Z$  the induced sequence 
\begin{equation}
\nonumber
0\rightarrow H^1\left (\gr_1\bD_{f,\g}^*(\chi^{j}) \right ) \rightarrow 
 H^1 \left ( \bD_{f,\g}^*/F_0\bD_{f,\g}^*   (\chi^{j}) \right ) \rightarrow 
 H^1 \left (\gr_2\bD_{f,\g}^*(\chi^{j})\right )
\end{equation}
is exact.

2) Assume that $\displaystyle j\neq 1-\frac{k_0+l_0}{2}.$  Then for a sufficiently 
small neighborhood $U_g$ the $A_{\g}$-module $H^1 \left (\gr_2\bD_{f,\g}^*(\chi^{j})\right )$ is free
of rank one. 

3) Assume that $j\leqslant 0$ and $y\in I_g$ is such that
\[
\frac{k_0+y}{2}\not\in \{1-j, 2-j\}.
\]
Then $H^1_g\left (\gr_2\bD_{f,\g_y}^*(\chi^{j})\right )=0.$

\end{mylemma}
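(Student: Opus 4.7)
My plan is to reduce all three parts to explicit computations on the rank-one $(\Ph,\Gamma)$-module $\gr_2\bD_{f,\g}^*(\chi^j)$. By the definition of the filtration (\ref{definition of triangulation}) together with Lemma~\ref{lemma duality of filtration}, this module is isomorphic to $\CR_{A_{\g}}(\delta)$, where the character $\delta\colon \Qp^* \to A_{\g}^*$ is characterised by $\delta|_{\Zp^*}(u)=u^j$ and $\delta(p) = (\alpha(f)\,\bb_p)^{-1}$. Proposition~\ref{proposition cohomology of rank 1 modules in families} then applies with $m=j$.

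For part~1, I would form the long exact cohomology sequence associated to the tautological short exact sequence
\[
0\to \gr_1\bD_{f,\g}^*(\chi^j)\to \bD_{f,\g}^*/F_0\bD_{f,\g}^*(\chi^j)\to \gr_2\bD_{f,\g}^*(\chi^j)\to 0
\]
and observe that exactness at the leftmost $H^1$ is equivalent to the vanishing of the connecting map out of $H^0(\gr_2\bD_{f,\g}^*(\chi^j))$; it therefore suffices to show this $H^0$ vanishes. The vanishing follows from Proposition~\ref{proposition cohomology of rank 1 modules in families}(1): either $j\geqslant 1$ fires the first alternative directly, or (for $j\leqslant 0$) the alternative $\delta(p)p^{-j}\neq 1$ translates to $\alpha(f)\,\bb_p\neq p^{-j}$ in $A_{\g}$, which holds after possibly shrinking $U_g$, since $\bb_p$ is a non-constant analytic function on $U_g$.

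For part~2, I would apply Proposition~\ref{proposition cohomology of rank 1 modules in families}, parts~(2a) and (2b), over the one-variable Tate algebra $A_{\g}$, which is a PID. The case $j\leqslant 0$ is direct. The case $j\geqslant 1$ requires passing to the Tate dual $\CR_{A_{\g}}(\delta^{-1}\chi)$, which has exponent $1-j\leqslant 0$, applying the proposition there, and then returning via local Tate duality for $(\Ph,\Gamma)$-modules in families (a special case of \cite[Theorem~4.4.8]{KPX}), which exchanges the roles of $H^0$ and $H^2$ and preserves freeness. The invertibility hypothesis of~(2b) translates, in the two cases, to $\alpha(f)\,\bb_p-p^{-j}\in A_{\g}^\times$ or $\alpha(f)\,\bb_p-p^{1-j}\in A_{\g}^\times$; the excluded value $j = 1-(k_0+l_0)/2$ is precisely the one for which the exceptional alignment between the central Hodge--Tate exponent and the Frobenius eigenvalue occurs at the centre $l_0$, and ruling it out allows, by continuity, the relevant differences to be units after shrinking $U_g$.

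For part~3, I would specialize to the rank-one crystalline $(\Ph,\Gamma)$-module $\CR_E(\delta_y)$ over $\CR_E$ with $\delta_y(p)=1/(\alpha(f)\alpha(g_y^0))$ and exponent $m=j\leqslant 0$. Since $\F^0\CDcris=\CDcris$ when $j\leqslant 0$, the tangent space $\CDdR/\F^0$ vanishes; combined with the identity $H^1_g=H^1_f$ for crystalline $(\Ph,\Gamma)$-modules (absence of monodromy), the Bloch--Kato fundamental exact sequence reduces the vanishing of $H^1_g$ to the simultaneous vanishing of $H^0(\CR_E(\delta_y))$ and $H^0$ of its Tate dual. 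The two numerical exclusions $(k_0+y)/2\neq 1-j$ and $(k_0+y)/2\neq 2-j$ correspond respectively to $v_p(\alpha(f)\alpha(g_y^0))\neq -j$ and $\neq 1-j$, which by Proposition~\ref{proposition cohomology of rank 1 modules in families}(1) applied to $\delta_y$ and $\delta_y^{-1}\chi$ force $H^0=H^2=0$ and thus $H^1_g=0$. The main technical obstacle will be the careful bookkeeping of normalisations (Frobenius eigenvalues, Hodge--Tate weights, the interaction of $\chi(p)$ with the family parameter, and the twist by $\bchi_{\g}$ in Lemma~\ref{lemma duality of filtration}) to ensure these translations between valuation conditions on $\alpha(f)\alpha(g_y^0)$ and the numerical exclusion on $j$ are exact.
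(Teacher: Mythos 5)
Your parts 1 and 2 essentially reproduce the paper's argument: reduce part 1 to $H^0\left(\gr_2\bD_{f,\g}^*(\chi^j)\right)=0$, and for part 2 check the unit condition of Proposition~\ref{proposition cohomology of rank 1 modules in families}~(2b) at $y=l_0$. Do note, though, that the mechanism behind both non-vanishing statements is the Ramanujan--Petersson bound (the \emph{archimedean} absolute value of $\alpha(f)\alpha(g)$ is $p^{(k_0+l_0)/2-1}$, which differs from $p^{-j}$ exactly when $j\neq 1-(k_0+l_0)/2$), not a $p$-adic valuation count; and your duality detour for $j\geq 1$ in part 2 is unnecessary, since the lemma is only ever applied with cyclotomic twist $\leq 0$ and the paper's own proof assumes this so that Proposition~\ref{proposition cohomology of rank 1 modules in families}~(2a)(2b) applies.

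Part 3 contains a genuine gap. The identity ``$H^1_g=H^1_f$ for crystalline $(\Ph,\Gamma)$-modules'' is false: one has $\dim H^1_g-\dim H^1_f=\dim\CDcris\bigl(\bD^*(\chi)\bigr)^{\Ph=1}$, a condition that must be checked, not a consequence of the absence of monodromy (you are probably thinking of $H^1_{\mathrm{st}}=H^1_f$). The criterion actually used, \cite[Cor.~1.4.5]{Ben11}, requires $j\leq 0$, $H^0\bigl(\gr_2\bD_{f,\g_y}^*(\chi^j)\bigr)=0$, \emph{and} $\CDcris\bigl(\gr_2\bD_{f,\g_y}^*(\chi^j)\bigr)^{\Ph=p^{-1}}=0$; the exclusion $(k_0+y)/2\neq 2-j$ is precisely what ensures the third. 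Your argument instead assigns it to ``$H^0$ of the Tate dual vanishing'', but that is vacuous: $\CR_E(\delta_y^{-1}\chi)$ has $\Zp^*$-exponent $1-j\geq 1$, so its $H^0$ is already zero for every $y$ by Proposition~\ref{proposition cohomology of rank 1 modules in families}~(1). Under your logic the second numerical exclusion therefore does no work, which is the symptom of the $H^1_g$/$H^1_f$ conflation. (Similarly, the conditions you phrase as $v_p(\alpha(f)\alpha(g_y^0))\neq -j$, etc., should be archimedean: the $p$-adic slope of $\alpha(f)\alpha(g^0_y)$ is constant on the Coleman family and carries no information about $(k_0+y)/2$.)
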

\begin{proof} 1) We only need to prove that  $H^0\left (\gr_2\bD_{f,\g}^*(\chi^{j})\right )=0.$ From Proposition~\ref{proposition coleman families}
it follows  that  $\gr_2\bD_{f,\g}^*\simeq \CR_{A_{\g}}(\psi_{f,\g}),$
where $\psi_{f,\g}(p)=\alpha (f)^{-1}\bb_p^{-1}$ and $\left. \psi_{f,\g}\right \vert_{\Zp^*}=1.$ By Proposition~\ref{proposition cohomology of rank 1 modules in families}, it is sufficient to check that $\psi_{f,\g}(p)p^{-j}\neq 1 ,$
but this follows from the fact that $\vert \bb_p(y) \vert=p^{\frac{y-1}{2}}$
for any $y\in I_g.$

2) We have 
\[
%\mathcal A^{\mathrm{wt}}
\left (\psi_{f,\g}(p)p^{-j}\right ) (l_0)=
\alpha(f)^{-1}\alpha (g)^{-1}p^{-j}.
\]
If $\displaystyle j\neq 1-\frac{k_0+l_0}{2},$ then
\[
\left \vert \alpha(f)^{-1}\alpha (g)^{-1}p^{-j}\right \vert=
p^{-j-\frac{k_0-1}{2}-\frac{l_0-1}{2}}=p^{-j+1-\frac{k_0+l_0}{2}}\neq 1.
\]
Therefore $\psi_{f,\g}(p)p^{-j}-1\in A_{\g}$ does not vanish at $l_0,$
and 2) follows from Proposition~\ref{proposition cohomology of rank 1 modules in families}, 2b).

3) By \cite[Corollary ~1.4.5]{Ben11}, $H^1_g\left (\gr_2\bD_{f,\g_y}^*(\chi^{j})\right )=0$ if the following conditions hold:
\begin{itemize}
\item[a)]{} $j\leqslant 0.$

\item[b)]{} $H^0\left (\gr_2 \bD_{f,\g_y}^*(\chi^{j})\right )=0.$

\item[c)]{} $\CDcris \left (\gr_2 \bD_{f,\g_y}^*(\chi^{j})\right )^{\Ph=p^{-1}}=0.$
\end{itemize}
Since $\Ph$ acts on $\CDcris \left (\gr_2\bD_{f,\g_y}^*(\chi^{j})\right )$
as the multiplication by $\alpha(f)^{-1}\bb (y)^{-1}p^{-j},$ the same argument
as for 2) applies and shows that b) and c) hold.
\end{proof}

\subsection{Local properties of Beilinson--Flach elements}
\label{subsection Local properties of Beilinson--Flach elements}
\subsubsection{} We maintain previous notation and conventions. 
Fix $0\leqslant j\leqslant k.$
Consider the diagram
\begin{equation}
\label{diagram semistabilized zeta}
\xymatrix{
&   &  H^1(\Qp, W_{f,\g}^*(-j)) \ar[d] &   \\
0 \ar[r] &H^1\left (\gr_1\bD_{f,\g}^*(\chi^{-j})\right ) \ar[r]
&H^1\left (\bD_{f,\g}^*/F_0 \bD_{f,\g}^* (\chi^{-j})\right ) \ar[r]
&H^1\left (\gr_2\bD_{f,\g}^*(\chi^{-j})\right). 
}
\end{equation}
Recall that the bottom row is exact by Lemma~\ref{lemma exact sequence triangulation}.
Let 
$\res_p \left (\,_b\BFfrak_{f,\g}^{[j]}\right )\in H^1(\Qp, W_{f,\g}(-j)) $
denote the image of the semistabilized Beilinson--Flach element 
under the localization map. 

\begin{mydefinition} We denote by $\,_b\Zfrak_{f,\g}^{[j]}$ the image 
of $\res_p \left (\,_b\BFfrak_{f,\g}^{[j]}\right )$ under the map
\[
H^1(\Qp, W_{f,\g}^*(-j))\rightarrow H^1\left (\bD_{f,\g}^*/F_0\bD_{f,\g}^*(\chi^{-j})\right ).
\]
\end{mydefinition}

\begin{myproposition} 
\label{proposition local property os semistabilized BF}
Assume that $\displaystyle j\neq \frac{k_0+l_0}{2}-1.$
Then for a sufficiently small neighborhood $U_g$ of $l_0$
the image of $\,_b\Zfrak_{f,\g}^{[j]}$ under the map
$H^1\left (\bD_{f,\g}^*/F_0\bD_{f,\g}^* (\chi^{-j})\right ) \rightarrow  
H^1\left (\gr_2\bD_{f,\g}^*(\chi^{-j})\right )$ is zero, and therefore
\[
\,_b\Zfrak_{f,\g}^{[j]}\in H^1\left (\gr_1\bD_{f,\g}^*(\chi^{-j})\right ).
\]
\end{myproposition}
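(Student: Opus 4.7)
The plan is to proceed by a Zariski-density argument along the weight variable $y$ of the family $\g$. Denote by
\[
p_2\,:\,H^1\left (\bD_{f,\g}^*/F_0 \bD_{f,\g}^* (\chi^{-j})\right )\longrightarrow H^1\left (\gr_2\bD_{f,\g}^*(\chi^{-j})\right )
\]
the map induced by the quotient $\bD_{f,\g}^*/F_0\bD_{f,\g}^* \twoheadrightarrow \gr_2 \bD_{f,\g}^*$. The goal is to show $p_2\left (\,_b\Zfrak_{f,\g}^{[j]}\right )=0$. Applying Lemma~\ref{lemma exact sequence triangulation} 2) to $-j$ in place of $j$, the hypothesis $j\neq \frac{k_0+l_0}{2}-1$ guarantees that after shrinking $U_g$, the target $H^1\left (\gr_2\bD_{f,\g}^*(\chi^{-j})\right )$ is free of rank one over the principal ideal domain $A_{\g}$. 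Hence it is enough to verify that the specialization $\spec_y^{\g}\circ p_2\left (\,_b\Zfrak_{f,\g}^{[j]}\right )$ vanishes at an infinite set of classical weights $y\in I_g\cap U_g$.

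Fix such a classical $y$ satisfying in addition $y\geqslant j+2$ and $\frac{k_0+y}{2}\notin \{1+j,2+j\}$; only finitely many $y$ are excluded. The formation of the triangulation in Proposition~\ref{proposition coleman families} 2c) is compatible with specialization, so $\spec_y^{\g}$ commutes with the filtration $(F_i)_i$ and with $p_2$. Using Proposition~\ref{proposition interpolation of semistabilized classes} ii) and the fact that $\Pr^{\alpha}_*$ is an isomorphism, $\spec_y^{\g}\left (\,_b\BFfrak_{f,\g}^{[j]}\right )$ is identified, up to the Euler factors $\left (1-\frac{\alpha(f)\beta(g_y^0)}{p^{j+1}}\right )\left (1-\frac{\beta(f)\beta(g_y^0)}{p^{j+1}}\right )$, with the classical Beilinson--Flach class $\,_b\BFrm_{f,g_y^0}^{[j]}$. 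After possibly excluding finitely many additional $y$ where these Euler factors vanish, $\spec_y^{\g}\left (\,_b\Zfrak_{f,\g}^{[j]}\right )$ is therefore a nonzero scalar multiple of the image of $\res_p\left (\,_b\BFrm_{f,g_y^0}^{[j]}\right )$ under the natural projection to $H^1\left (\bD_{f,g_y^0}^*/F_0\bD_{f,g_y^0}^*(\chi^{-j})\right )$.

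The key input at classical weights is the geometric/crystalline property of the Beilinson--Flach elements: by the theory developed in \cite{KLZ, KLZb} (which extends to higher weights the result of Besser \cite{Bes00}), the restriction $\res_p\left (\,_b\BFrm_{f,g_y^0}^{[j]}\right )$ lies in the geometric Selmer subspace $H^1_g(\Qp,W^*_{f,g_y^0}(-j))$ in the range $0\leqslant j\leqslant \min\{k_0-2, y-2\}$. By functoriality of the subgroups $H^1_g$ with respect to morphisms of $(\Ph,\Gamma)$-modules, its image in $H^1\left (\gr_2\bD_{f,g_y^0}^*(\chi^{-j})\right )$ lies in $H^1_g\left (\gr_2\bD_{f,g_y^0}^*(\chi^{-j})\right )$. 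By Lemma~\ref{lemma exact sequence triangulation} 3), applied with $-j$ (so that $-j\leqslant 0$), this last subspace vanishes under our running assumption $\frac{k_0+y}{2}\notin\{1+j,2+j\}$. Thus $\spec_y^{\g}\circ p_2\left (\,_b\Zfrak_{f,\g}^{[j]}\right )=0$ for all but finitely many classical $y$, and the density argument from the first paragraph concludes the proof.

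The main obstacle is the invocation in Step~3 of the local geometric property of classical Beilinson--Flach classes in the full range $0\leqslant j\leqslant k$; this compatibility lies at the technical heart of the Kings--Loeffler--Zerbes construction and requires that the $p$-local classes lift to motivic cohomology in the appropriate sense. Once this is granted, the remaining verifications (compatibility of specialization with the triangulation, non-vanishing of the Euler factors away from finitely many $y$, and freeness of $H^1(\gr_2)$ over $A_{\g}$) are formal consequences of the results already recorded in Sections~\ref{section triangulations} and \ref{subsection Stabilized Beilinson--Flach families}.
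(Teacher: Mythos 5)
Your proposal follows the paper's proof almost line-for-line: freeness of $H^1(\gr_2\bD_{f,\g}^*(\chi^{-j}))$ via Lemma~\ref{lemma exact sequence triangulation}, the local condition on classical Beilinson--Flach classes from \cite{KLZ, KLZb}, vanishing of $H^1_g(\gr_2)$ at infinitely many classical specializations, and a density argument over the free $A_{\g}$-module. The only stylistic difference is that you invoke Proposition~\ref{proposition interpolation of semistabilized classes}~ii) and therefore must excise finitely many $y$ where Euler factors vanish; the paper instead uses part~i) of that proposition, relating $\spec^{\g}_y\bigl(\,_b\BFfrak_{f,\g}^{[j]}\bigr)$ directly to $\,_b\BFrm_{f,\g_y}^{[j]}$ (with the $p$-stabilized form $\g_y$, not the newform $g_y^0$), which sidesteps the Euler-factor exclusion since the binomial coefficient $\binom{y-2}{j}$ is a unit for $y\geqslant j+2$.
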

\begin{proof} 
Let $z$ denote the image of $\,_b\Zfrak_{f,\g}^{[j]}$ in $H^1\left (\gr_2\bD_{f,\g}^*(\chi^{-j})\right )$ under the natural projection.
From Lemma~\ref{lemma exact sequence triangulation}, 3) it follows that,
for a sufficiently small $U_g,$ the cohomology  $H^1\left (\gr_2\bD_{f,\g}^*(\chi^{-j})\right )$ is a free module of rank one over the principal ideal domain $A_{\g}.$

On the other hand, by  \cite[Proposition~5.4.1]{KLZ} 
\[
\res_p \left (\,_b\BFrm_{f,g}^{[j]}\right )\in H^1_f(\Qp, W^*_{f,g}(-j)).
\]
 Taking into account Proposition~\ref{proposition interpolation of semistabilized classes}, we obtain that 
\begin{equation}
\nonumber
\spec^{\g}_y\left (\res_p \left (\,_b\BFfrak_{f,\g}^{[j]}\right )\right )\in 
H^1_g(\Qp, W^*_{f,\g_y}(-j)), \qquad \forall y\in I_g \quad\text{such that $y\geqslant j+2.$}
\end{equation}
Hence
\begin{equation}
\nonumber 
\spec^{\g}_y (z )\in H^1_g\left(\gr_2\bD_{f,\g_y}^*(\chi^{-j})\right), \qquad \forall y\in I_g \quad\text{such that $y\geqslant j+2.$}
\end{equation}
From Lemma~\ref{lemma exact sequence triangulation}, 3) it follows that 
$H^1_g\left(\gr_2\bD_{f,\g_y}^*(\chi^{-j})\right)=0$ for infinitely many
values of $y\in I_g.$  Therefore $\spec^{\g}_y(z)=0$ at infinitely many
values of $y\in I_g.$ Since   $H^1\left (\gr_2\bD_{f,\g}^*(\chi^{-j})\right )$
is free by Lemma~\ref{lemma exact sequence triangulation}, 3), this implies that $z=0.$
\end{proof}

\subsubsection{} 
\label{subsubsection localisation of Beilinson-Flach}
In this subsection, we record a corollary of 
Proposition~\ref{proposition local property os semistabilized BF}.
% which will be used in Section~\ref{subsection Extra-zeros of $p$-adic Rankin--Selberg $L$-functions}. 
Assume that $f$ and $g$ are of the same weight $k_0.$ Since $W_{f_\alpha, g_\alpha}
\simeq W_{f,g},$ the specialization of the triangulation $\left (F_i\bD_{\f,\g}\right )_{i=-2}^2$ at $(k_0,k_0)$ defines a triangulation $\left (F_i\bD_{f,g}\right )_{i=-2}^2$ of  $\bD_{f,g}.$ It is clear, that this triangulation can be defined 
directly in terms of $W_{f,g}$ by formulas analogous to (\ref{definition of triangulation}).

Consider the diagram
\begin{equation}
\label{diagram non stabilized zeta}
\xymatrix{
& & H^1(\Qp, W_{f,g}(k_0)) \ar[d] &   \\
0\ar[r] &H^1\left (\gr_1\bD_{f,g}(\chi^{k_0})\right ) \ar[r] 
&H^1\left (\bD_{f,g}/F_0 \bD_{f,g}(\chi^{k_0})\right ) \ar[r] 
&H^1\left (\gr_2\bD_{f,g}(\chi^{k_0})\right).
}
\end{equation}
%where $F^{-+}\bD_{f,g}=F^{-}\bD_{f}\otimes_E F^{+}\bD_{g},$
%$F^{--}\bD_{f,g}=F^{-}\bD_{f}\otimes_E F^{-}\bD_{g},$ and
%$F^{-\circ}\bD_{f,g}=F^{-}\bD_{f}\otimes_E \bD_{g}.$ 
Using the  canonical isomorphism $W_{f^*,g^*}^*(2-k_0)\simeq W_{f,g} (k_0),$ we can  consider $\BFrm_{f^*,g^*}^{[k_0-2]}$   as an element 
\[ 
\BFrm_{f^*,g^*}^{[k_0-2]}\in H^1_S(\Q, W_{f,g}(k_0)).
\] 

\begin{mydefinition} 
\label{definition of Zrm}
We denote by $\Zrm_{f^*,g^*}^{[k_0-2]}$ the image of $\res_p \left (\BFrm_{f^*,g^*}^{[k_0-2]}\right )\in H^1(\Qp, W_{f,g}(k_0))$ in $H^1\left (\bD_{f,g}/F_0\bD_{f,g}(\chi^{k_0})\right ).$
\end{mydefinition}

\begin{mycorollary} 
\label{corollary about Zrm}
Assume that $\alpha (f) \alpha (g)\neq p^{k_0-1}$
and $\beta (f)\alpha (g)\neq p^{k_0-1}.$ Then 

1) $H^1_f \left (\gr_1\bD_{f,g}(\chi^{k_0}))
=H^1(\gr_1\bD_{f,g}(\chi^{k_0})\right );$

2) $\Zrm_{f^*,g^*}^{[k_0-2]}\in H^1\left (\gr_1\bD_{f,g}(\chi^{k_0})\right ).$
\end{mycorollary}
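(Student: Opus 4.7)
For \textbf{Part (1)}, my plan is a direct cohomological computation of the rank-one crystalline $(\Ph,\Gamma)$-module $\gr_1 \bD_{f,g}(\chi^{k_0}) = F^-\bD_f \otimes F^+\bD_g(\chi^{k_0})$. Specializing Proposition~\ref{proposition coleman families}(2c) at $(k_0,k_0)$, this module is $\CR_E(\delta)$ with $\delta|_{\Zp^*}(u) = u$ and $\delta(p) = \beta(f)\alpha(g)/p^{k_0-1}$; in particular $\F^0\CDcris = 0$ and the Frobenius eigenvalue on $\CDcris$ equals $\beta(f)\alpha(g)/p^{k_0}$. Since $\delta|_{\Zp^*}$ is non-trivial, $H^0 = 0$; local duality applied to $\CR_E(\delta^{-1}\chi)$ (on which $\Ph$ acts by $p^{k_0-1}/(\beta(f)\alpha(g))$) together with Proposition~\ref{proposition cohomology of rank 1 modules in families} show $H^2 = 0$ precisely when $\beta(f)\alpha(g) \neq p^{k_0-1}$, so the Euler characteristic formula yields $\dim H^1 = 1$. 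By \cite[Prop.~1.4.4]{Ben11}, $\exp$ surjects onto $H^1_f$ with kernel $\CDcris^{\Ph=1}$; the Weil bound $|\beta(f)\alpha(g)|_{\BC} = p^{k_0-1} \neq p^{k_0}$ forces $\CDcris^{\Ph=1} = 0$, so $\dim H^1_f = 1$ and $H^1_f = H^1$.

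For \textbf{Part (2)}, the idea is to apply Proposition~\ref{proposition local property os semistabilized BF} to the dual pair $(f^*, \g^*)$, where $\g^*$ is the Coleman family through the $\alpha$-stabilization of $g^*$ (well-defined since $v_p(\alpha(g^*)) = v_p(\alpha(g)) < k_0-1$), and then specialize at $y = k_0$ and compare with $\Zrm^{[k_0-2]}_{f^*,g^*}$. Applying Proposition~\ref{proposition local property os semistabilized BF} with $j = k_0-2$ (the excluded value $(k_0+l_0)/2 - 1 = k_0-1$ is not met) gives $\,_b\Zfrak_{f^*,\g^*}^{[k_0-2]} \in H^1\bigl(\gr_1 \bD^*_{f^*,\g^*}(\chi^{-(k_0-2)})\bigr)$. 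After specialization at $y = k_0$ and application of $(\id, {\Pr}^\alpha_*)$, Lemma~\ref{lemma duality of filtration} together with the canonical isomorphism $W^*_{f^*,g^*} \simeq W_{f,g}(2k_0-2)$ identifies $\gr_1 \bD^*_{f^*,g^*}(\chi^{-(k_0-2)}) \simeq \gr_1 \bD_{f,g}(\chi^{k_0})$, so the resulting class lies in $H^1(\gr_1 \bD_{f,g}(\chi^{k_0}))$.

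To descend this to $\Zrm$, I invoke Proposition~\ref{proposition interpolation of semistabilized classes}(ii) for $(f^*, \g^*)$ at $y = k_0$ and $j = k_0-2$. Substituting $\alpha(f^*) = p^{k_0-1}/\beta(f)$, $\beta(f^*) = p^{k_0-1}/\alpha(f)$ and $\beta(g^*) = p^{k_0-1}/\alpha(g)$ yields
\[
(\id, {\Pr}^\alpha_*) \spec^{\g^*}_{k_0}\bigl(\,_b\BFfrak_{f^*,\g^*}^{[k_0-2]}\bigr) = \left(1 - \frac{p^{k_0-1}}{\beta(f)\alpha(g)}\right)\left(1 - \frac{p^{k_0-1}}{\alpha(f)\alpha(g)}\right) \,_b\BFrm_{f^*,g^*}^{[k_0-2]}.
\]
Under the corollary's assumptions the scalar on the right is non-zero. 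Taking $\res_p$ and projecting to $H^1(\bD_{f,g}/F_0\bD_{f,g}(\chi^{k_0}))$, the left-hand side lies in $H^1(\gr_1 \bD_{f,g}(\chi^{k_0}))$ by the previous paragraph, while the right-hand side equals the same non-zero scalar times $\,_b\Zrm_{f^*,g^*}^{[k_0-2]}$; dividing gives $\,_b\Zrm_{f^*,g^*}^{[k_0-2]} \in H^1(\gr_1 \bD_{f,g}(\chi^{k_0}))$. Relation~(\ref{relation between BF and _bBF}) (whose exponent $2j-k-l$ vanishes for our data) reduces to $\,_b\BFrm_{f^*,g^*}^{[k_0-2]} = (b^2 - \ep_f(b)\ep_g(b)) \BFrm_{f^*,g^*}^{[k_0-2]}$; choosing $b$ coprime to $pN_fN_g$ with $b^2 \neq \ep_f(b)\ep_g(b)$ lets me divide and conclude $\Zrm_{f^*,g^*}^{[k_0-2]} \in H^1(\gr_1 \bD_{f,g}(\chi^{k_0}))$. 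The chief technical hurdle is the bookkeeping: one must verify that Lemma~\ref{lemma duality of filtration}, the map $(\id, {\Pr}^\alpha_*)$, and $\spec^{\g^*}_{k_0}$ commute cleanly with $\res_p$ and with projection onto the graded pieces, so that the identity of Proposition~\ref{proposition interpolation of semistabilized classes}(ii) descends to $H^1(\gr_1 \bD_{f,g}(\chi^{k_0}))$ as claimed.
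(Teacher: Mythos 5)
Your proposal is correct and follows essentially the same route as the paper: Part (1) by computing the rank-one $(\Ph,\Gamma)$-module $\gr_1\bD_{f,g}(\chi^{k_0})$ explicitly and checking the Frobenius eigenvalue avoids $1$ and $p^{-1}$ (you phrase this via Euler characteristics and the isomorphism property of $\exp$, the paper cites \cite{Ben11} directly, but these are the same calculation), and Part (2) by transporting Proposition~\ref{proposition local property os semistabilized BF} through the duality $(f^*,\g^*)$ at $y=k_0$, $j=k_0-2$, and using Proposition~\ref{proposition interpolation of semistabilized classes}(ii) together with (\ref{relation between BF and _bBF}) to descend to $\Zrm_{f^*,g^*}^{[k_0-2]}$ after dividing by the non-vanishing Euler factors, exactly as in the paper's general-case argument. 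The only difference is that the paper first disposes of the simpler sub-case $\beta(f)\beta(g)\neq p^{k_0-1}$ directly, whereas your argument proves the general case at once, which is equally valid.
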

\begin{proof}  
1) The $(\Ph,\Gamma)$-module $\gr_1\bD_{f,g}(\chi^{k_0})$ is of 
Hodge--Tate weight $-1,$ and $\Ph$ acts on  $\CDcris (\gr_1\bD_{f,g}(\chi^{k_0}))$
as multiplication by $\beta (f) \alpha (g)/p^{k_0}.$ In particular, 
$\CDcris (\gr_1\bD_{f,g}(\chi^{k_0}))^{\Ph=1}=\CDcris (\gr_1\bD_{f,g}(\chi^{k_0}))^{\Ph=p^{-1}}=0.$ This implies 1).

2) The $(\Ph,\Gamma)$-module $\gr_2\bD_{f,g}(\chi^{k_0})$ is of 
Hodge--Tate weight $k_0-2\geqslant 0,$ and $\Ph$ acts on  $\CDcris (\gr_2\bD_{f,g}(\chi^{k_0}))$ as multiplication by $\beta (f) \beta (g)/p^{k_0}.$ 
If $\beta (f)\beta (g) \neq p^{k_0-1}$ this implies that 
$\CDcris (\gr_2\bD_{f,g}(\chi^{k_0}))^{\Ph=1}=\CDcris (\gr_2\bD_{f,g}(\chi^{k_0}))^{\Ph=p^{-1}}=0.$ Therefore $H^1_g (\gr_2\bD_{f,g}(\chi^{k_0}))=0,$
and by the same argument as in the proof of Proposition~\ref{proposition local property os semistabilized BF}, we conclude that  $\Zrm_{f^*,g^*}^{[k_0-2]}\in  H^1(\gr_1\bD_{f,g}(\chi^{k_0}))$ in this case. 

In the general case, the proof is slightly different. Consider the diagram (\ref{diagram semistabilized zeta})
for the forms $f^*$ and $g^*$ instead $f$ and $g$ and $j=k_0-2.$ Then 
the canonical isomorphism $W^*_{f^*,g^*_{\alpha}}(2-k_0)\simeq W_{f,g}(k_0)$
identifies the specialization of this diagram at weight $k_0$ with the diagram
(\ref{diagram non stabilized zeta}). From Proposition~\ref{proposition local property os semistabilized BF} we have 
\[
\spec_{k_0}^{\g^*}\left (\,_b\Zfrak_{f^*,\g^*}^{[k_0-2]}\right )\in H^1\left (\gr_1\bD_{f^*,g^*_\alpha}^*(\chi^{2-k_0})\right ).
\]
On the other hand, Proposition~\ref{proposition interpolation of semistabilized classes} and (\ref{relation between BF and _bBF}) imply that  
\begin{equation}
\nonumber
\left (\id, {\Pr}^{\alpha}_* \right )\circ \spec^{\g^*}_{k_0} \left (\,_b\Zfrak_{f^*,\g^*}^{[k_0-2]} \right)=
\left (1-\frac{p^{k_0-1}}{\beta (f) \alpha (g) } \right )
\left (1-\frac{p^{k_0-1}}{\alpha (f) \alpha (g)} \right )
\,_b\Zrm_{f^*,g^*}^{[k_0-2]},
\end{equation}
where $\,_b\Zrm_{f^*,g^*}^{[k_0-2]}=(b^2- \ep_f^{-1}(b) \ep_g^{-1}(b)) 
\Zrm_{f^*,g^*}^{[k_0-2]}.$
Since the terms in brackets do not vanish,   $\Zrm_{f^*,g^*}^{[k_0-2]}\in 
 H^1(\gr_1\bD_{f,g}(\chi^{k_0})),$ and 2) is proved.
\end{proof}

\subsubsection{} Consider the diagram
\begin{equation}
\nonumber
\xymatrix{
& & H^1_{\Iw}(\bD_{\f,\g}^* ) \ar[d] &   \\
0\ar[r] &H^1_{\Iw} \left (\gr_1\bD_{\f,\g}^*\right ) \ar[r] 
&H^1_{\Iw} \left (\bD_{\f,\g}^*/F_0\bD_{\f,\g}^* \right ) \ar[r] 
&H^1_{\Iw}\left (\gr_2\bD_{\f,\g}^* \right ). 
}
\end{equation}
%where the bottom row is induced by the exact sequence (\ref{tautological triangulation exact sequence}).
Let 
\[
\res_p \left (\,_b\BF_{\f,\g}^{\Iw}\right)\in H^1_{\Iw}(\Q_p,W_{\f,\g}^*)\otimes_{\Lambda}\CH_E(\Gamma) \simeq H^1_{\Iw} \left (\bD_{\f,\g}^* \right )
\] 
denote the localization of the  Beilinson--Flach Iwasawa class $\,_b\BF_{\f,\g}^{\Iw}.$

\begin{mydefinition} 
\label{definition of Zf,g in family}
We denote by $\,_b\Z^{\Iw}_{\f,\g}$ the image of 
$\res_p \left (\,_b\BF_{\f,\g}^{\Iw}\right)$ under the map
$H^1_{\Iw} (\bD_{\f,\g}^* ) \rightarrow  H^1_{\Iw}  \left (\bD_{\f,\g}^*/F_0 \bD_{\f,\g}^* \right ).$
\end{mydefinition}

We have the following analog of Proposition~\ref{proposition local property os semistabilized BF}.

\begin{myproposition} 
\label{proposition local property of  two-variable BF}
The image of   $\,_b\Z^{\Iw}_{\f,\g}$ under the map
$H^1_{\Iw} \left (\bD_{\f,\g}^*/F_0 \bD_{\f,\g}^* \right ) \rightarrow  H^1_{\Iw}\left (\gr_2\bD_{\f,\g}^*\right )$
is zero, and therefore 
\[
\,_b\Z^{\Iw}_{\f,\g}\in H^1_{\Iw} \left (\gr_1\bD_{\f,\g}^*\right ).
\]
\end{myproposition}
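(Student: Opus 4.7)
The plan is to mimic the argument of Proposition~\ref{proposition local property os semistabilized BF}, but now running the specialization argument in three analytic directions: the two Coleman weight variables and the cyclotomic variable.

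Let $z$ denote the image of $\,_b\Z^{\Iw}_{\f,\g}$ in $H^1_{\Iw}(\gr_2\bD_{\f,\g}^*)$; we want to show that $z=0$. First, I would use the isomorphism (\ref{isomorphism for iwasawa cohomology}) and the machinery of Proposition~\ref{proposition cohomology of rank 1 modules in families} to establish a two-variable analog of Lemma~\ref{lemma exact sequence triangulation}(2): after possibly shrinking $U_f$ and $U_g$, the module $H^1(\gr_2\bD_{\f,\g}^*(\chi^{-j}))$ is free of rank one over $A=A_{\f}\widehat\otimes A_{\g}$ for all integers $j$ outside a finite set of "critical" values; moreover, $H^1_{\Iw}(\gr_2\bD_{\f,\g}^*)$ is torsion-free over $A\widehat\otimes\CH_E(\Gamma)$ and its cyclotomic specializations $\spec^c_{-j}$ compute $H^1(\gr_2\bD_{\f,\g}^*(\chi^{-j}))$ at these non-critical twists. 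This freeness relies on the explicit form of the character cutting out $\gr_2\bD_{\f,\g}^*\simeq\CR_A(\psi_{\f,\g})$ with $\psi_{\f,\g}(p)=(\ep_f(p)\ep_g(p))^{-1}\ba_p\bb_p$ and the fact that $\vert\ba_p(x)\bb_p(y)\vert=p^{(x+y-2)/2}$ at classical weights.

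Second, for any integer $j$ with $0\leqslant j\leqslant \min\{k,l\}$ non-critical in the above sense, I would apply $\spec^c_{-j}$. By Proposition~\ref{proposition specialization of two variable Beilinson Flach elements}(ii), $\spec^c_{-j}(z)$ equals, up to the nonzero scalar $\tfrac{(-1)^j}{j!}(1-p^j/\ba_p\bb_p)$, the image of $\,_b\BF^{[j]}_{\f,\g}$ in $H^1(\gr_2\bD_{\f,\g}^*(\chi^{-j}))$. Specializing further at any classical pair $(x,y)\in (I_f\cap U_f)\times (I_g\cap U_g)$ with $x,y\geqslant j+2$, part (i) of the same proposition gives (up to a nonzero binomial factor) the image of $\res_p(\,_b\BFrm^{[j]}_{\f_x,\g_y})$ in $H^1(\gr_2\bD_{f_x^0,g_y^0}^*(\chi^{-j}))$.

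Third, by \cite[Proposition~5.4.1]{KLZ}, each $\res_p(\,_b\BFrm^{[j]}_{\f_x,\g_y})$ belongs to the Bloch--Kato $H^1_f$, so its image in the quotient lies in the geometric subspace $H^1_g(\gr_2\bD_{f_x^0,g_y^0}^*(\chi^{-j}))$. By an analog of Lemma~\ref{lemma exact sequence triangulation}(3) adapted to varying both forms, $H^1_g(\gr_2\bD_{f_x^0,g_y^0}^*(\chi^{-j}))=0$ whenever the weight-inequality $(x+y)/2\notin\{1-j,2-j\}$ is satisfied, a condition excluding only finitely many $(x,y,j)$ inside the product of the weight disks and $\Z$. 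Consequently, the image of $z$ vanishes under a Zariski-dense family of specializations at $(x,y,j)$.

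Finally, combining this with the torsion-freeness result from Step~1, one concludes that $z=0$. The main obstacle is Step~1: checking that the Iwasawa cohomology $H^1_{\Iw}(\gr_2\bD_{\f,\g}^*)$ behaves flatly enough over the three-dimensional parameter space so that vanishing at the Zariski-dense set of admissible triples $(x,y,j)$ actually forces the element to vanish globally. Everything else is a bookkeeping exercise combining the interpolation properties of Beilinson--Flach elements with the $H^1_f$-property of their classical localizations.
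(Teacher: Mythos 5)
The paper's own ``proof'' is a single-line citation to \cite[Theorem~7.1.2]{LZ}; your proposal is an attempt at a self-contained argument mimicking the one-variable Proposition~\ref{proposition local property os semistabilized BF}. The overall strategy is reasonable, and you are right that the analogue would be to run a density/specialization argument, but there is a genuine gap exactly where you locate it, and it is not a minor bookkeeping matter.

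The obstruction is that Proposition~\ref{proposition cohomology of rank 1 modules in families}, which is the engine behind Lemma~\ref{lemma exact sequence triangulation} and the final step of Proposition~\ref{proposition local property os semistabilized BF}, requires the base ring to be a \emph{principal ideal domain} in parts 2a) and 2b). In the one-variable proof this is satisfied because $A_{\g}=E\langle w_2/p^r\rangle$ is a one-variable Tate algebra. In your setting the relevant rings are $A=A_{\f}\widehat\otimes_E A_{\g}$ (a two-variable Tate algebra, regular of Krull dimension $2$, not a PID) and, worse, $\CH_A(\Gamma)=A\widehat\otimes_E\CH_E(\Gamma)$, over which $H^1_{\Iw}(\gr_2\bD_{\f,\g}^*)$ is a module. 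For such rings the implications ``all fiber ranks equal one'' $\Rightarrow$ ``free of rank one'' and ``zero fibers at a Zariski-dense set of closed points'' $\Rightarrow$ ``zero module element'' simply fail in general: an element supported on a proper analytic hypersurface of the three-dimensional parameter space can vanish at a Zariski-dense set of classical triples $(x,y,j)$ without being zero. You flag this as ``the main obstacle,'' which is honest, but flagging a gap does not fill it; as written the proposal does not constitute a proof. The missing ingredient, which is what \cite[Theorem~7.1.2]{LZ} actually supplies, is a finer structural analysis of the Iwasawa cohomology of the rank-one graded piece over $\CH_A(\Gamma)$ (including control of torsion and of the comparison with cyclotomic specializations at each $(x,y)$), together with a boundedness input for the Beilinson--Flach classes that is not visible in the abstract density argument. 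Until Step~1 is actually carried out, the remaining steps 2--4 cannot be assembled into a valid conclusion.
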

\begin{proof} See \cite[Theorem~7.1.2]{LZ}.
\end{proof}

We record the following corollary of Proposition~\ref{proposition Iwasawa vs semistabilized classes}.

\begin{myproposition} 
\label{proposition Iwasawa vs semistabilized zeta elements}
Let
\[
\,_b\Zfrak^{\Iw}_{f,\g}=({\Pr}^\alpha_*, \id)\circ \spec^{\f}_{k_0}\left (_b\Z^{\Iw}_{\f,\g}\right ) \in H^1_{\Iw}(\gr_1\bD^*_{f,\g}).
\]
Then for any integer $0\leqslant j\leqslant \min \{k,l\}$ there exists
a neighborhood $U_g=\Spm (A_{\g})$ such that 
\[
\spec^c_{-j}
\left (\,_b\Zfrak^{\Iw}_{f,\g} \right )=
\frac{(-1)^j}{j!} \cdot \binom{k}{j} \cdot \left (
1-\frac{p^j}{\alpha (f)\cdot \bb_p}
\right )\cdot 
\left (1-\frac{\beta (f)\cdot \bb_p}{p^{j+1}}\right ) \cdot
\,_b\Zfrak_{f,\g}^{[j]}. 
\]
\end{myproposition}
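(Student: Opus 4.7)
The plan is to deduce this statement from Proposition~\ref{proposition Iwasawa vs semistabilized classes} by a pure diagram chase: apply the composition ``restrict to $G_{\Qp}$, then project onto $H^1(\gr_1\bD_{f,\g}^*(\chi^{-j}))$'' to both sides of the identity recorded there. By definition of $\,_b\Zfrak_{f,\g}^{[j]}$, the right-hand side transforms into the Euler-factor multiple of $\,_b\Zfrak_{f,\g}^{[j]}$ that appears in the desired formula, so the content reduces to checking that the left-hand side becomes $\spec^c_{-j}(\,_b\Zfrak^{\Iw}_{f,\g})$.

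The first step is to package the compatibilities. Localization at $p$ is a natural transformation on global cohomology, and  the pushforward $({\Pr}^\alpha_*,\id)$ as well as the projection onto the quotient $\bD_{f,\g}^*/F_0\bD_{f,\g}^* \twoheadrightarrow \gr_1\bD_{f,\g}^*$ are defined at the level of $(\Ph,\Gamma)$-modules, so they commute with the weight specialization $\spec^{\f}_{k_0}$ and with the cyclotomic specialization $\spec^c_{-j}$; the latter commutation uses the definition (\ref{definition of cyclotomic specialization for modules})--(\ref{specialization of Iwasawa cohomology}) and the isomorphism (\ref{isomorphism for iwasawa cohomology}). Combining these, the image of $\,_b\BFfrak^{\Iw}_{f,\g}$ under localization followed by projection equals $\,_b\Zfrak^{\Iw}_{f,\g}$ by its definition (via Proposition~\ref{proposition local property of  two-variable BF}), and commutes with $\spec^c_{-j}$.

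Second, I would apply the same composition to $\,_b\BFfrak_{f,\g}^{[j]}$. By Definition~5.2.1 the image is exactly $\,_b\Zfrak_{f,\g}^{[j]}$, which by Proposition~\ref{proposition local property os semistabilized BF} already lies in $H^1(\gr_1\bD_{f,\g}^*(\chi^{-j}))$ provided the neighbourhood $U_g$ is chosen small enough so that the hypothesis $j\neq (k_0+l_0)/2 - 1$ is available (which, after further shrinking, poses no issue since only finitely many $j$ are considered).

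Putting these two pieces together amounts to applying the functorial pair (localize, project) to the equality of Proposition~\ref{proposition Iwasawa vs semistabilized classes}; the Euler factor on the right is unchanged since it is a scalar in $A_{\g}$. The only mildly delicate point is the bookkeeping for the commutation of $\spec^c_{-j}$ with the passage to the subquotient $\gr_1 \bD^*_{f,\g}$, but this is a direct consequence of Lemma~1.2.2 (base change for the Iwasawa pairing) together with the fact that the filtration (\ref{definition of triangulation}) is defined over $A_{\f}\widehat\otimes_E A_{\g}$ and is preserved by $\spec^{\f}_{k_0}$. No genuine analytic input is required beyond what was already used to establish Propositions~\ref{proposition Iwasawa vs semistabilized classes} and \ref{proposition local property of  two-variable BF}.
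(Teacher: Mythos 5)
Your approach is exactly the one intended by the paper, which records this statement as an immediate corollary of Proposition~\ref{proposition Iwasawa vs semistabilized classes} without giving a separate proof: apply the functorial pair (localize at $p$, project to the quotient $(\Ph,\Gamma)$-module) to both sides of the earlier identity and check compatibility with the specialization maps. The diagram chase is correct: since $\,_b\Z^{\Iw}_{\f,\g}$ is by definition the image of $\res_p(\,_b\BF^{\Iw}_{\f,\g})$ under the projection $H^1_{\Iw}(\bD^*_{\f,\g})\to H^1_{\Iw}(\bD^*_{\f,\g}/F_0\bD^*_{\f,\g})$ (and lands in $H^1_{\Iw}(\gr_1\bD^*_{\f,\g})$ by Proposition~\ref{proposition local property of  two-variable BF}), and since localization at $p$, the $\CR_A$-linear projection, and $({\Pr}^\alpha_*,\id)$ all commute with $\spec^{\f}_{k_0}$ and $\spec^c_{-j}$, the claimed identity is obtained from the $\BFfrak$-identity by applying these maps.

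Two small inaccuracies worth correcting. First, the compatibility of $\spec^c_{-j}$ with passage to a subquotient is not a consequence of Lemma~1.2.2 (which concerns base-change for the Iwasawa \emph{pairing}); it follows directly from the fact that the projection $\bD^*_{\f,\g}\to\bD^*_{\f,\g}/F_0\bD^*_{\f,\g}$ is a morphism of $(\Ph,\Gamma)$-modules over $\CR_A$, hence commutes with $-\otimes_A A_\g$ and with the twist-and-specialize maps (\ref{definition of cyclotomic specialization for modules})--(\ref{specialization of Iwasawa cohomology}). Second, your invocation of the hypothesis $j\neq (k_0+l_0)/2-1$ from Proposition~\ref{proposition local property os semistabilized BF} is unnecessary, and the remark that it can be ``arranged by shrinking $U_g$'' is misleading, since that hypothesis is a constraint on $j$ and cannot be removed by shrinking. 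The point is that you do not need to know $\,_b\Zfrak_{f,\g}^{[j]}\in H^1(\gr_1\bD_{f,\g}^*(\chi^{-j}))$ to state or prove the identity: by Definition~\ref{definition of Zrm} (more precisely its analogue in Section~5.2) the element $\,_b\Zfrak_{f,\g}^{[j]}$ lives in $H^1(\bD^*_{f,\g}/F_0\bD^*_{f,\g}(\chi^{-j}))$, and the asserted equality can be verified there; the left-hand side then lies in $H^1(\gr_1\bD^*_{f,\g}(\chi^{-j}))$ automatically by Proposition~\ref{proposition local property of  two-variable BF}, and the inclusion of $H^1(\gr_1)$ into $H^1(\bD^*/F_0)$ is injective by Lemma~\ref{lemma exact sequence triangulation}.
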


\section{ $p$-adic $L$-functions}
\label{section p-adic L-functions}.
\subsection{Three-variable $p$-adic $L$-function}

\subsubsection{} We maintain notation and assumptions of Section~\ref{section triangulations}. In particular, 
we assume that the forms  $f$ and $g$ satisfy conditions {\bf M1-2)} of Section~\ref{subsection Stabilized Beilinson--Flach families}. We will also assume that 
$\ep_f\ep_g\neq 1.$
Let $\bM_{\f,\g}$ 
%and $\bM_{\f,\g}^*(\chi)$ 
be the $(\Ph,\Gamma)$-modules defined by (\ref{definition of M_f,g}).
% and (\ref{definition of M_f,g*(chi)}). 
Recall that $\bM_{\f,\g}^*(\chi\bchi_{\g})=\gr_1\bD_{\f,\g}^*$ by
(\ref{definition of M_f,g*(chi)}).  We have   pairings on Iwasawa cohomology 
\begin{equation}
\nonumber
\begin{aligned}
\nonumber
&\left \{ \,\cdot \,,\, \cdot \,\right \}_{\bM_{\f,\g}} \,:\,H^1_{\Iw}(\bM_{\f,\g}^*(\chi))
\times
H^1_{\Iw}(\bM_{\f,\g})^{\iota}
\rightarrow \CH_A(\Gamma),\\
\nonumber
&\left \{ \,\cdot \,,\, \cdot \,\right \}_{\bM_{\f,\g}(\bchi_{\g}^{-1})} \,:\,H^1_{\Iw}(\bM_{\f,\g}^*(\chi \bchi_{\g}))
\times
H^1_{\Iw}(\bM_{\f,\g}(\bchi_{\g}^{-1}))^{\iota}
\rightarrow \CH_A(\Gamma).
\end{aligned}
\end{equation}
Let $m_{\f,\g}$ denote the canonical generator of $\bM_{\f,\g}$ defined by
(\ref{definition of m bold f bold g}). Set 
\begin{equation}
\nonumber
\widetilde m_{\f,\g}=m_{\f,\g}\otimes (1+X).
\end{equation}
Recall that for an unspecified large exponential map $\Exp$ we 
set  $\Exp^c=c \circ \Exp,$ where $c\in \Gamma$ is  the unique element of order $2$ (see Section~\ref{subsubsection large logarithm}). 
For any $\z \in H^1_{\Iw}(\bM_{\f,\g}^*(\chi \bchi_{\g}))$ define
\begin{equation}
\nonumber
\mathfrak L_p (\z, \omega^a, x, y,s)=
{\mathcal A}_{\omega^a} \left (
\left \{\z, \Tw_{\bchi_{\g}^{-1}}\circ \Exp^c_{\bM_{\f,\g}}(\widetilde m_{\f,\g})^{\iota} \right \}_{\bM_{\f,\g}(\bchi_{\g}^{-1})} 
\right )(x,y,s),
\end{equation}
where   the transform ${\mathcal A}_{\omega^a}$ was defined in (\ref{transform A}),
and
\[
x=k_0+\log (1+w_1)/\log (1+p), \qquad y=l_0+\log (1+w_2)/\log (1+p).
\]

\begin{mylemma}
\label{lemma first improved L}
We have
\begin{equation}
\nonumber
\mathfrak L_p (\z, \omega^a, x, y,s)=
%{\mathcal A}_{\omega^{a-l_0+1}}
%\left (  \left <\Tw_{\bchi_{\g}^{-1}}(\z) ,  \Exp_{\bM_{\f,\g}}(m_{\f,\g}\otimes %(1+X))^{\iota} \right >_{\bM_{\f,\g},\Iw} \right )
%(x,y,s-y+1)=\\
{\mathcal A}_{\omega^{a-l_0+1}}\left ( \mathfrak{Log}_{\bM_{\f,\g}, m_{\f,\g}} \left (\Tw_{\bchi_{\g}^{-1}}(\z)\right )\right ) (x,y,s-y+1),
\end{equation}
where $\mathfrak{Log}$ is  the map defined in Section~\ref{subsubsection large logarithm}.
\end{mylemma}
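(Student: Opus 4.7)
The proof should proceed in two formal steps: first rewrite the pairing in terms of the untwisted pairing $\{\,,\,\}_{\bM_{\f,\g}}$, and then track what the two-variable twist $\Tw_{\bchi_{\g}^{-1}}$ does to the transform $\mathcal A_{\omega^a}$.

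For Step 1, the plan is to apply Lemma~\ref{Lemma twisted Iwasawa pairing} with $\bD=\bM_{\f,\g}$ and $\eta=\bchi_{\g}$. Writing $\z=\Tw_{\bchi_{\g}}(\Tw_{\bchi_{\g}^{-1}}(\z))$ and using that $\Exp^c_{\bM_{\f,\g}}(\widetilde m_{\f,\g})^{\iota}$ can be moved through the second slot of the pairing via the twist operator, the lemma yields
\[
\left\{\z,\Tw_{\bchi_{\g}^{-1}}\Exp^c_{\bM_{\f,\g}}(\widetilde m_{\f,\g})^{\iota}\right\}_{\bM_{\f,\g}(\bchi_{\g}^{-1})}
=\Tw_{\bchi_{\g}^{-1}}\!\left\{\Tw_{\bchi_{\g}^{-1}}(\z),\Exp^c_{\bM_{\f,\g}}(\widetilde m_{\f,\g})^{\iota}\right\}_{\bM_{\f,\g}}.
\]
By definition of $\mathfrak{Log}_{\bM_{\f,\g},m_{\f,\g}}$ (with the convention that its subscript refers to the choice of generator inside $\CDcris(\bM_{\f,\g})$), the right-hand pairing equals $\mathfrak{Log}_{\bM_{\f,\g},m_{\f,\g}}(\Tw_{\bchi_{\g}^{-1}}(\z))$.

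For Step 2, one computes the effect of $\Tw_{\bchi_{\g}^{-1}}$ on the transform. Writing $\bchi_{\g}=\chi^{l_0-1}\,\bchi'$, where $\bchi'$ is the character defined by \eqref{character chi bold} for the algebra $A_{\g}$, one has $\Tw_{\bchi_{\g}^{-1}}=\Tw_{1-l_0}\circ\Tw_{\bchi'^{-1}}$. The formula \eqref{definition of Tw_m} shifts the Dirichlet index of $\omega^i$ by $l_0-1$, while \eqref{property of Tw_bchi} shifts the cyclotomic variable $s$ by $-(y-l_0)$; the crucial point is that on $\Gamma_1$ one has $\chi(\gamma_1)=\langle\chi(\gamma_1)\rangle$, which combines the two shifts into the single relation
\[
\mathcal A_{\omega^a}\!\left(\Tw_{\bchi_{\g}^{-1}}H\right)(x,y,s)=\mathcal A_{\omega^{a-l_0+1}}(H)(x,y,s-y+1)
\]
for any $H\in \CH_A(\Gamma)$.

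Substituting $H=\mathfrak{Log}_{\bM_{\f,\g},m_{\f,\g}}(\Tw_{\bchi_{\g}^{-1}}(\z))$ into this identity and combining with Step 1 gives the claimed equality. The argument is thus essentially a bookkeeping exercise; the only mildly delicate point is to match the sign/direction conventions in Lemma~\ref{Lemma twisted Iwasawa pairing} and to use the triviality of $\omega|_{\Gamma_1}$ so that the two separate shifts produced by $\Tw_{1-l_0}$ and $\Tw_{\bchi'^{-1}}$ assemble into the single shift $s\mapsto s-y+1$ with the corresponding change of branch $\omega^a\mapsto \omega^{a-l_0+1}$.
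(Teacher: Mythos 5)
Your proposal is correct and follows essentially the same route as the paper's own proof: Step~1 invokes Lemma~\ref{Lemma twisted Iwasawa pairing} to move the twist off the second slot and identify the resulting untwisted pairing with $\mathfrak{Log}_{\bM_{\f,\g},m_{\f,\g}}$, and Step~2 decomposes $\Tw_{\bchi_{\g}^{-1}}$ into $\Tw_{1-l_0}$ and the bold-$\bchi$ twist, tracking their effect on $\mathcal A_{\omega^a}$ via (\ref{definition of Tw_m}) and (\ref{property of Tw_bchi}). The paper writes the decomposition as $\Tw_{\bchi_\g}=\Tw_{\bchi}\circ\Tw_{l_0-1}$ and records the two shifts as two separate equalities rather than as your single combined identity, but the content is identical.
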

\begin{proof}
 By Lemma~\ref{Lemma twisted Iwasawa pairing}, we have
%\begin{equation} 
\begin{equation}
\nonumber
\mathfrak L_p(\z,\omega^{a}, x, y,s)
={\mathcal A}_{\omega^{a}}
\left ( \Tw_{\bchi_{\g}^{-1}}\circ \left \{\Tw_{\bchi_{\g}^{-1}}(\z), \Exp^c_{\bM_{\f,\g}}(\widetilde m_{\f,\g})^{\iota} \right \}_{\bM_{\f,\g}} \right ).
\end{equation}
Writing $\Tw_{\bchi_{\g}}=\Tw_{\bchi}\circ \Tw_{l_0-1}$ and taking into 
account (\ref{definition of Tw_m}) and
 (\ref{property of Tw_bchi})
we get 
\begin{multline} 
\nonumber
%\label{proposition first improved L:formula1}
{\mathcal A}_{\omega^{a}}
\left ( \Tw_{\bchi_{\g}^{-1}}\circ \left \{\Tw_{\bchi_{\g}^{-1}}(\z), \Exp^c_{\bM_{\f,\g}}(\widetilde m_{\f,\g})^{\iota} \right \}_{\bM_{\f,\g}} \right )
(x,y,s)
=\\
={\mathcal A}_{\omega^{a-l_0+1}}
\left ( \Tw_{\bchi^{-1}}\circ \left \{ \Tw_{\bchi_{\g}^{-1}}(\z) ,  \Exp^c_{\bM_{\f,\g}}(\widetilde m_{\f,\g})^{\iota} \right \}_{\bM_{\f,\g}} \right )
(x,y,s-l_0+1)=\\
={\mathcal A}_{\omega^{a-l_0+1}}
\left (  \left \{ \Tw_{\bchi_{\g}^{-1}}(\z) ,  \Exp^c_{\bM_{\f,\g}}(\widetilde m_{\f,\g})^{\iota} \right \}_{\bM_{\f,\g}}\right )
(x,y,s-y+1),
\end{multline}
and the lemma is proved.
\end{proof}

\subsubsection{}
Let $L_p(\f, \g, \omega^a ) (x,y ,s)$ denote 
the three-variable $p$-adic $L$-function. 
Recall the element  
\[
\,_b\Z^{\Iw}_{\f,\g}\in H^1\left (\gr_1\bD^*_{\f,\g}\right )
\]
defined in Section~\ref{subsection Local properties of Beilinson--Flach elements}.
%Definition~\ref{definition of Zf,g in family}.

\begin{mytheorem}[\sc Kings--Loeffler--Zerbes] 
\label{theorem relation L with Beilinson-Flach element}
One has
\begin{equation}
\nonumber
B_b(\omega^a, x,y,s) L_p(\f, \g, \omega^a ) (x,y ,s)=(-1)^a \mathfrak L_p(\,\,_b\Z^{\Iw}_{\f,\g},\omega^{a-1}, x, y,s-1),
\end{equation}
where 
%$G(f,g)=G(\ep_f^{-1})G(\ep_g^{-1})$ and 
\begin{equation}
\label{formula for factor B}
B_b(\omega^a,x,y,s)=\frac{G(\ep_f^{-1})G(\ep_g^{-1})}{\lambda_{N_f} (\f)(x)} 
\left (b^2-\omega (b)^{2a-k_0-l_0+2}
\left < b\right >^{2s-x-y+2}\ep_f^{-1}(b)\ep_g^{-1}(b)\right )
 \end{equation}
and $G(-)$ denotes the corresponding Gauss sum.
\end{mytheorem}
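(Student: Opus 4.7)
The plan, following Kings--Loeffler--Zerbes \cite{KLZb} and Loeffler--Zerbes \cite{LZ}, is to verify the identity at a Zariski dense set of classical points and then extend by rigid analyticity. Since both sides are rigid analytic on $U_{f,g}\times \Zp$, it suffices to check the equality at integer triples $(x,y,j+1)$ with $x,y,j\in \Z$, $x\equiv k_0$, $y\equiv l_0$, $j\equiv a-1\pmod{p-1}$, lying in the geometric range $2\leqslant y\leqslant j+1\leqslant x-1$, where Theorem~\ref{theorem 3-variable p-adic L-function} gives an explicit formula for $L_p(\f,\g,\omega^a)(x,y,j+1)$ in terms of the critical Rankin--Selberg value $L(\f^0_x,\g^0_y,j+1)$.

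The first step would be to unfold the right-hand side at such a classical point. Lemma~\ref{lemma first improved L} rewrites $\mathfrak{L}_p(\,_b\Z^{\Iw}_{\f,\g},\omega^{a-1},x,y,j)$ as the weight $(x,y)$ and cyclotomic $(j-y+1)$ specialization of $\mathfrak{Log}_{\bM_{\f,\g},m_{\f,\g}}\circ \Tw_{\bchi_{\g}^{-1}}$ applied to $\,_b\Z^{\Iw}_{\f,\g}$. Specializing the Iwasawa Beilinson--Flach class in the weight direction via Propositions~\ref{proposition specialization of two variable Beilinson Flach elements} and \ref{proposition Iwasawa vs semistabilized classes}, passing to its image in $H^1_{\Iw}(\gr_1 \bD^*_{\f,\g})$ via Propositions~\ref{proposition Iwasawa vs semistabilized zeta elements} and \ref{proposition local property of  two-variable BF}, and computing the cyclotomic specialization of $\mathfrak{Log}$ via Corollary~\ref{corollary large exponential}, reduces the right-hand side to an explicit multiple of the local Tate pairing $[\log(\,_b\Zrm^{[j]}_{\f^0_x,\g^0_y}),\, m_{\f^0_x,\g^0_y}[-j]]$. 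The product of Euler factors accumulated along the way recovers $\CE(\f^0_x,\g^0_y,j+1)$ modulo the stabilization factors of Propositions~\ref{proposition stabilization formulas} and \ref{proposition interpolation of semistabilized classes}.

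The central input will be the explicit reciprocity law for Beilinson--Flach elements of Kings--Loeffler--Zerbes (see \cite{KLZ}, \cite{KLZb}, \cite{LZ}), which identifies this Tate pairing with the critical Rankin--Selberg value, normalized by the $\Gamma$-factors, the Gauss sums $G(\ep_f^{-1})G(\ep_g^{-1})$, the pseudo-eigenvalue $\lambda_{N_f}(\f)(x)$, the Petersson product $\langle \f^0_x,\g^0_y\rangle$, and the appropriate powers of $(-1)$ and $2$ matching those in Theorem~\ref{theorem 3-variable p-adic L-function}. Combined with the relation (\ref{relation between BF and _bBF}) between $\,_b\BFrm$ and $\BFrm$, whose specialization produces exactly the factor $b^2-\omega(b)^{2a-k_0-l_0+2}\langle b\rangle^{2s-x-y+2}\ep_f^{-1}(b)\ep_g^{-1}(b)$ of (\ref{formula for factor B}), this yields the stated identity at every geometric classical point.

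The main technical obstacle will be the bookkeeping of Euler factors, signs, and $\Gamma$-factors through the three layers of specialization (in $x$, in $y$, and cyclotomic in $s$), together with the normalization conventions relating the Iwasawa pairing $\{\,,\,\}_{\bM_{\f,\g}}$, the Perrin-Riou large exponential map, and the explicit reciprocity law; in particular the $(-1)^a$ sign on the right-hand side should combine the sign $(-1)^{j}$ coming from Corollary~\ref{corollary large exponential} (via $\spec^c_{-j}\circ \mathfrak{Log}$) with the constraint $j\equiv a-1\pmod{p-1}$. Density of the geometric range then completes the proof.
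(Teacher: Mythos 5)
The paper does not itself prove this theorem: the stated proof is a one-line citation of \cite[Theorem~10.2.2]{KLZ} for the ordinary case and \cite[Theorem~7.1.5]{LZ} in general. Your sketch is a reasonable reconstruction of the strategy those references use -- verify the identity at a Zariski-dense set of classical triples $(x,y,j+1)$, unwind the Perrin-Riou pairing on the right via Lemma~\ref{lemma first improved L} and the large-exponential specialization formulas of Corollary~\ref{corollary large exponential}, and compare with the interpolation property of Theorem~\ref{theorem 3-variable p-adic L-function} on the left -- and you correctly isolate the explicit reciprocity law for Beilinson--Flach elements as the central, nontrivial input that identifies the resulting Tate pairing with the complex Rankin--Selberg value.

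Two caveats on the details. First, the propositions you invoke for the weight-specialization step mix three-variable and semistabilized objects: Propositions~\ref{proposition Iwasawa vs semistabilized classes} and \ref{proposition Iwasawa vs semistabilized zeta elements} already fix the $\f$-weight at $k_0$ and are the tools the paper develops later for the one-variable improved $L$-functions, not what one would use here; a clean treatment of the three-variable identity specializes both weights directly using Proposition~\ref{proposition specialization of two variable Beilinson Flach elements}, Proposition~\ref{proposition stabilization formulas}, and Proposition~\ref{proposition local property of  two-variable BF}. Second, the bookkeeping you defer is precisely where the normalizations must be checked: the Euler factors produced by the Beilinson--Flach specialization, the stabilization, and the large exponential must assemble into $\CE(\f^0_x,\g^0_y,j+1)$, while the Gauss sums, pseudo-eigenvalue $\lambda_{N_f}(\f)(x)$, Petersson product, $\Gamma$-factors, and powers of $(-1)$ and $2$ must cancel against the interpolation constant of Theorem~\ref{theorem 3-variable p-adic L-function} to leave exactly the prefactor $B_b$. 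Your identification of the $b$-dependent bracket in $B_b$ with the factor from (\ref{relation between BF and _bBF}) is correct, using $a\equiv s$, $k_0\equiv x$, $l_0\equiv y \pmod{p-1}$ at classical points.
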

\begin{proof} The theorem was first proved in the ordinary case in \cite[Theorem~10.2.2]{KLZ}. The non-ordinary case is treated 
in \cite[Theorem~7.1.5]{LZ}.
\end{proof}

%\begin{myremark} Since $\ep_f\ep_g\neq 1,$ we can and will choose 
%$b$ such that $B_b(\omega^a,x,y,s)\neq 0.$
%\end{myremark}

\subsection{The first improved $p$-adic $L$-function}
\label{subsection The first improved $p$-adic $L$-function}
In  this section we assume that $k_0=l_0.$
Recall that $\bM_{\f,\g}^*(\chi)=F^{-+}\bD^*_{\f,\g}(\bchi_{\g}^{-1})$
(see (\ref{definition of M_f,g*(chi)})).
Let  $\Tw_{\bchi_{\g}^{-1}}\left (\,_b\Z^{\Iw}_{\f,\g}\right )$  denote the image of 
$\,_b\Z^{\Iw}_{\f,\g}$ in $ H^1_{\Iw} \left (\bM_{\f,\g}^*(\chi ) \right )$ under the canonical map 
%$H^1_{\Iw}\left (F^{-+}\bD_{\f,\g}^* \right ) \rightarrow H^1_{\Iw} \left (\bM_{\f,\g}^*(\chi ) \right )$ given by 
$m\mapsto m\otimes {\bchi_{\g}^{-1}}.$  

%\begin{mycorollary} 
%\begin{equation}
%\nonumber
%(-1)^a G(\ep_f^{-1})G(\ep_g^{-1}) B_c(k,l,j) L_p(\CF, \CG, \omega^a ) (k,l,j)=
%\end{equation}
%\end{mycorollary}
%Note that $\Tw_{\bchi_{\g}^{-1}}^{\ep}(\,_c\z_{\f,\g})\in H^1_{\Iw}(\bM^*_{\f,\g}(\chi)).$
\begin{mydefinition}
\label{definition of Zfg bold}
We denote by 
\begin{equation}
\nonumber
\,_b{\Z}_{\f,\g}\in H^1(\bM_{\f,\g}^*(\chi)).
\end{equation}
the image of  $\Tw_{\bchi_{\g}^{-1}}\left (\,_b\Z^{\Iw}_{\f,\g}\right )$ under the canonical
projection  $H^1_{\Iw}(\bM_{\f,\g}^*(\chi))\rightarrow  H^1 (\bM_{\f,\g}^*(\chi)).$
%and by 
%\begin{equation}
%\nonumber
%\,_c{\Z}^{f_\alpha,\g}\in H^1(\bM_{f_\alpha,\g}^*(\chi))
%\end{equation}
%the specialization of $\,_c{\Z}^{\f,\g}$ in weight $k_0.$ 
\end{mydefinition}
We have
\[
B_b(\omega^{k_0},k_0,s,s)= 
\frac{G(\ep_f^{-1})G(\ep_g^{-1})}{\lambda_{N_f} (\f)(x)} 
\left (b^2-\omega (b)^{2}
\left < b\right >^{s-k_0+2}\ep_f^{-1}(b)\ep_g^{-1}(b)\right ).
\]
Since $\ep_f\ep_g\neq 1,$ we can and will choose $b$ such that 
$B_b(\omega^{k_0},k_0,k_0,k_0)\neq 0.$

Recall that $\bM_{\f,\g}$ is a crystalline module of Hodge--Tate
weight $1.$  We denote by  
\[
\exp_{\bM_{\f,\g}}\,:\,\CDcris (\bM_{\f,\g})\rightarrow H^1(\bM_{\f,\g}).
\]
the Bloch--Kato exponential map for $\bM_{\f,\g}.$ 

\begin{mydefinition} 
\label{definition of the first improved L-function}
We define the first improved $p$-adic $L$-function
as the analytic function given by  
\begin{equation}
\nonumber
L_p^{\mathrm{wc}}(\f,\g,s)=B_b(\omega^{k_0},k_0,s,s)^{-1}\cdot{\mathcal A}^{\mathrm{wt}}\left (\left  <\,_b{\Z}_{\f,\g} , \exp_{\bM_{\f,\g}}(m_{\f,\g})
\right >
_{\bM_{\f,\g}}
\right ) (k_0, s),
\end{equation}
where $\left < \,\,,\,\,\right >_{\bM_{\f,\g}}
\,:\,H^1(\bM_{\f,\g}^*(\chi))\times H^1(\bM_{\f,\g}) \rightarrow A$ is the local duality. 
\end{mydefinition}

\begin{myproposition} 
\label{proposition first improved L-function}
Assume that $k_0=l_0.$ Then in a sufficiently small neighborhood of $k_0,$
one has
\begin{equation}
\nonumber
 L_p(\f, \g, \omega^{k_0} ) (k_0,s,s)=
(-1)^{k_0}\left (1-\frac{\bb_{p}(s)}{\ep_{g}(p)\ba_{p}(k_0)}\right )
 \left (1-\frac{\ep_g(p)\ba_{p}(k_0)}{p\bb_{p}(s)}\right )^{-1}
 L_p^{\mathrm{wc}}(\f,\g,s).
\end{equation}
In particular, $L_p^{\mathrm{wc}}(\f,\g,s)$ does not depend on the choice of $b.$
 \end{myproposition}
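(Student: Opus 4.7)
The plan is to combine the Kings--Loeffler--Zerbes formula (Theorem~\ref{theorem relation L with Beilinson-Flach element}), the reformulation of $\mathfrak{L}_p$ given by Lemma~\ref{lemma first improved L}, and the interpolation property of the large logarithm at the trivial weight (Corollary~\ref{corollary large exponential}). Throughout, the two weight variables in $L_p(\f,\g,\omega^{k_0})(x,y,j)$ will be specialized as $x = k_0$, $y = s$, $j = s$, so that the $\f$-direction is frozen while the $\g$-weight and the cyclotomic variable are identified.

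First I would substitute $a=k_0$ and $(x,y,j) = (k_0,s,s)$ into the Kings--Loeffler--Zerbes identity to obtain
\[
B_b(\omega^{k_0},k_0,s,s)\,L_p(\f,\g,\omega^{k_0})(k_0,s,s) = (-1)^{k_0}\,\mathfrak{L}_p\bigl(\,_b\Z^{\Iw}_{\f,\g},\omega^{k_0-1},k_0,s,s-1\bigr).
\]
Next I would apply Lemma~\ref{lemma first improved L}, with $a = k_0-1$ and $l_0 = k_0$, to rewrite the right hand side. Since the exponent $a-l_0+1 = 0$ and the shifted cyclotomic variable becomes $(s-1)-s+1 = 0$, the transform collapses to
\[
\mathfrak{L}_p\bigl(\,_b\Z^{\Iw}_{\f,\g},\omega^{k_0-1},k_0,s,s-1\bigr) = \mathcal A^{\mathrm{wt}}\bigl(\spec^c_0\circ \mathfrak{Log}_{\bM_{\f,\g},m_{\f,\g}}\bigl(\Tw_{\bchi_{\g}^{-1}}(\,_b\Z^{\Iw}_{\f,\g})\bigr)\bigr)(k_0,s).
\]

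The core step is the application of Corollary~\ref{corollary large exponential} to the $(\Ph,\Gamma)$-module of rank one $\bM_{\f,\g} = \CR_A(\bdelta_{\f,\g}\chi)$, which is crystalline of Hodge--Tate weight $1$, so that in the notation of the corollary one has $m=1$ and $\bdelta(p) = \bdelta_{\f,\g}(p) = \ep_g(p)\ba_p\bb_p^{-1}$. The $\spec^c_0$-interpolation formula then yields
\[
\spec^c_0\circ \mathfrak{Log}_{\bM_{\f,\g},m_{\f,\g}}\bigl(\Tw_{\bchi_{\g}^{-1}}(\,_b\Z^{\Iw}_{\f,\g})\bigr)
= \frac{1 - \bb_p/(\ep_g(p)\ba_p)}{1 - \ep_g(p)\ba_p/(p\bb_p)}\,\bigl\langle \,_b\Z_{\f,\g},\,\exp_{\bM_{\f,\g}}(m_{\f,\g})\bigr\rangle_{\bM_{\f,\g}},
\]
where I use Definition~\ref{definition of Zfg bold} to identify $\spec^c_0$ of $\Tw_{\bchi_{\g}^{-1}}(\,_b\Z^{\Iw}_{\f,\g})$ with $\,_b\Z_{\f,\g}$.

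Plugging this back and recalling Definition~\ref{definition of the first improved L-function} of $L_p^{\mathrm{wc}}(\f,\g,s)$, the factors $B_b(\omega^{k_0},k_0,s,s)$ cancel on both sides, giving exactly the identity of the proposition. The independence of $L_p^{\mathrm{wc}}(\f,\g,s)$ from $b$ is then automatic, since the left hand side of the resulting identity does not involve $b$, and the Euler factor is invertible in a sufficiently small neighborhood of $k_0$. The only delicate point I anticipate is verifying that the effective character $\bdelta$ of $\bM_{\f,\g}$, appearing in Corollary~\ref{corollary large exponential}, matches $\bdelta_{\f,\g}$ on $p$ (in particular that the $\chi$-twist contributes trivially at $p$ in the local class field theory normalization used throughout the paper); once this bookkeeping is done the whole argument is a direct assembly of the quoted results.
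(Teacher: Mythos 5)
Your proposal is correct and follows essentially the same route as the paper: apply Theorem~\ref{theorem relation L with Beilinson-Flach element} at $(a,x,y,j)=(k_0,k_0,s,s)$, rewrite via Lemma~\ref{lemma first improved L}, then invoke the $\spec^c_0$-interpolation formula of Corollary~\ref{corollary large exponential} for the rank-one module $\bM_{\f,\g}=\CR_A(\bdelta_{\f,\g}\chi)$ with $m=1$ and $\bdelta(p)=\ep_g(p)\ba_p\bb_p^{-1}$, after which Definition~\ref{definition of Zfg bold} identifies $\spec^c_0\circ\Tw_{\bchi_\g^{-1}}(\,_b\Z^{\Iw}_{\f,\g})$ with $\,_b\Z_{\f,\g}$ and the $B_b$ factors cancel against Definition~\ref{definition of the first improved L-function}. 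The observation on $b$-independence is a minor but valid addendum to what the paper leaves implicit.
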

\begin{proof} By Lemma~\ref{lemma first improved L}, we have
\begin{equation} 
\label{proposition first improved L:formula1}
\mathfrak L_p(\,_b\Z^{\Iw}_{\f,\g},\omega^{k_0-1}, k_0, s,s-1)
={\mathcal A}_{\omega^{0}}
\left ( \left \{ \Tw_{\bchi_{\g}^{-1}} \left (\,_b\Z^{\Iw}_{\f,\g}\right ),  \Exp^c_{\bM_{\f,\g}}(\widetilde m_{\f,\g})^{\iota} \right \}_{\bM_{\f,\g}} \right )
(k_0,s,0).
\end{equation}
By (\ref{formulas for phi action on M}), the action of $\Ph$ on 
$\bM_{\f,\g}$ is given by  $\Ph (m_{\f,\g})=\left (p^{-1}\ep_g(p)\ba_p \bb_p^{-1}\right ) m_{\f,\g}.$  Applying the first formula of  Corollary~\ref{corollary large exponential},
we obtain
\begin{multline}
\label{proposition first improved L:formula2}
{\mathcal A}_{\omega^{0}}\left ( \left \{\Tw_{\bchi_{\g}^{-1}}\left (\,_b\Z^{\Iw}_{\f,\g}\right ),  \Exp^c_{\bM_{\f,\g}}(\widetilde m_{\f,\g})^{\iota} \right \}_{\bM_{\f,\g}}\right )(k_0,s,0)=\\
\left (1-\frac{\bb_{p}(s)}{\ep_{g}(p)\ba_{p}(k_0)}\right )
 \left (1-\frac{\ep_g(p)\ba_{p}(k_0)}{p\bb_{p}(s)}\right )^{-1}
{\mathcal A}^{\mathrm{wt}}\left (\left  <\,_b{\Z}_{\f,\g} , \exp_{\bM_{\f,\g}}(m_{\f,\g})
\right >
_{\bM_{\f,\g}} 
\right ) (k_0, s).
\end{multline}
The proposition follows from formulas (\ref{proposition first improved L:formula1}),
(\ref{proposition first improved L:formula2}) and Theorem~\ref{theorem relation L with Beilinson-Flach element}.
\end{proof}

\subsection{The second improved $p$-adic $L$-function}
\label{subsection The second improved $p$-adic $L$-function}
\subsubsection{} We continue to assume that $k_0=l_0.$ Let 
\[
\bM_{f,\g}=\gr_0\bD_{f,\g}(\chi\bchi_{\g}),
\qquad 
\bN_{f,\g}= \gr_0\bD_{f,\g}(\chi^{k_0-1})=\bM_{f,\g}(\chi^{-1}\bchi^{-1}).
\]
%Then $\bM_{f,\g}$ is a crystalline module of rank $1$ and Hodge--Tate weight $-1,$
%and tautologically $F^{+-}\bD_{f,\g}=\bM_{f,\g}(\chi^{-1}\bchi_{\g}^{-1}).$
Define
\begin{equation}
\label{definition of the semistabilized generator m}
\mm_{f,\g}=\frac{1}{C(f)\cdot \lambda_{N_f}(f)}\eta_f\otimes \bxi_{\g}\otimes e_1
\in \CDcris (\bM_{f,\g}).
\end{equation}
Let 
$\left (\Pr_{\alpha}^*,\id \right )\,:\,\CDcris (\bM_{f,\g})\rightarrow
\CDcris (\bM_{f_{\alpha},\g})$ denote the map 
induced by the map (\ref{dual isomorphism of representations  stabilized and
non stabilized }). By Proposition~\ref{proposition interpolation eigenvectors},
\begin{equation}
\label{image of m under Pr}
\spec^{\f}_{k_0} (m_{\f,\g})=\left ({\Pr}_{\alpha}^*,\id \right ) (\mm_{f,\g}).
\end{equation}
Consider the composition
\begin{equation}
\nonumber
H^1_{\Iw}(\bM_{f,\g}) \xrightarrow{\Tw_{\bchi_{\g}^{-1}}}
H^1_{\Iw}(\bM_{f,\g}(\bchi_{\g}^{-1}))\xrightarrow{\spec^c_{k}}
H^1(\bN_{f,\g}),
\end{equation}
where $k=k_0-2$ and  $\spec^c_{k}$ is the map (\ref{specialization of Iwasawa cohomology}). We have
\[
B_b(\omega^{k_0-1},k_0,s,k_0-1)=\frac{G(\ep_f^{-1})G(\ep_g^{-1})}{\lambda_{N_f} (\f)(k_0)} 
\left (b^2-
\left < b\right >^{k_0-s}\ep_f^{-1}(b)\ep_g^{-1}(b)\right ).
\]
Therefore  $B_b(\omega^{k_0-1},k_0,k_0,k_0-1)\neq 0$  for any $b\neq 1.$
%and $\bchi$ is the character (\ref{character chi bold}) for the algebra $A_{\g}.$
%Note that 
%\[
%\spec_{c,-k}\circ \Tw_{\bchi_{\g}^{-1}}=\spec_{c,0}\circ \Tw_{k}\circ %\Tw_{\bchi_{\g}^{-1}}=\spec_{c,0}\circ \Tw_{\chi^{-1}\bchi^{-1}}.
%\]
%The local duality for $(\Ph,\Gamma)$-modules provides us with a pairing
%\begin{equation}
%\nonumber
%\left <\,\,,\,\,\right >\,:\,H^1(F^{-+}\bD_{f,\g}^*(\chi^{-k}))
%\times H^1(F^{+-}\bD_{f,\g}(\chi^{k+1})) \rightarrow A_{\g}.
%\end{equation}
\begin{mydefinition}
\label{definition second improved L-function}
We define the second improved $p$-adic $L$-function as
the analytic function given by
\begin{multline}
\nonumber
L_p^{\mathrm{wt}}(\f,\g,s)= \Gamma (k_0-1)^{-1}\cdot
B_b(\omega^{k_0-1},k_0,s,k_0-1)^{-1}\times\\ 
\times \mathcal A^{\mathrm{wt}}\left (
\left <\,_b\Zfrak_{f,\g}^{[k_0-2]}, 
\spec^c_{k_0-2}\circ \Tw_{\bchi^{-1}_{\g}}\circ \Exp^c_{\bM_{f,\g}}
(\widetilde \mm_{f,\g})^{\iota}
\right >_{\bN_{f,\g}}
\right ) (s),
\end{multline}
where  $\left <\,\,,\,\,\right >_{\bN_{f,\g}}$  is the local duality pairing
and $\,_b\Zfrak_{f,\g}^{[k]}$ is the element constructed in Section~\ref{subsection Local properties of Beilinson--Flach elements}.
\end{mydefinition}

\begin{myproposition} 
\label{proposition second improved L-function}
Assume that $k_0=l_0.$ Then on a  sufficiently small  neighborhood 
$U_g=\Spm (A_{\g})$ of $k_0$ one has 
\begin{equation}
\nonumber
L_p(\f, \g, \omega^{k_0-1} ) (k_0,s,k_0-1)
=-\left (1-\frac{p^{k_0-2}}{\ba_p(k_0)\bb_p(s)}\right )
\left (1-\frac{\ep_f(p)\bb_p(s)}{\ba_p(k_0)}\right )
L_p^{\mathrm{wt}}(\f,\g,s).
 \end{equation}
\end{myproposition}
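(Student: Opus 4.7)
The plan is to adapt the strategy of Proposition~\ref{proposition first improved L-function}, replacing the ``exponential-style'' interpolation of the large exponential map at cyclotomic variable $0$ by the ``zeta-element-style'' interpolation at cyclotomic variable $k_0-2$, which is provided by Proposition~\ref{proposition Iwasawa vs semistabilized zeta elements}. The two Euler factors in the statement will arise precisely from this interpolation at $j=k_0-2$; the factorial $1/(k_0-2)!$ will cancel $\Gamma(k_0-1)^{-1}$ in Definition~\ref{definition second improved L-function}; and the sign $-1$ will come from combining $(-1)^{k_0-1}$ with $(-1)^{k_0-2}$.

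First, I apply Theorem~\ref{theorem relation L with Beilinson-Flach element} with parameters $(a,x,y,s)=(k_0-1,k_0,s,k_0-1)$ to obtain
\[
B_b(\omega^{k_0-1},k_0,s,k_0-1)\cdot L_p(\f,\g,\omega^{k_0-1})(k_0,s,k_0-1)=(-1)^{k_0-1}\mathfrak L_p(\,_b\Z^{\Iw}_{\f,\g},\omega^{k_0-2},k_0,s,k_0-2).
\]
Unwinding the definition of $\mathfrak L_p$, I then perform the two remaining specializations successively. For the weight specialization $x=k_0$: the base-change compatibility of the Iwasawa pairing and of $\Exp^c$ (Proposition~\ref{them:propertiestwovarPRlog}(1)), combined with the identity (\ref{image of m under Pr}) coming from Proposition~\ref{proposition interpolation eigenvectors}, reduces the pairing to its analogue over $A_{\g}$ involving $\,_b\Zfrak^{\Iw}_{f,\g}$ and $\Exp^c_{\bM_{f,\g}}(\widetilde\mm_{f,\g})$, up to the natural identification via $({\Pr}^{\alpha}_*,\id)$. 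For the cyclotomic specialization at $k_0-2$: using (\ref{formula for spec}) to write $\spec^c_{k_0-2}=\spec^c_0\circ\Tw_{k_0-2}$, applying Lemma~\ref{Lemma twisted Iwasawa pairing} with $\eta=\chi^{-(k_0-2)}$ to distribute the twist inside the pairing, and invoking again base-change compatibility with $\spec^c_0$, the expression collapses to the local duality pairing
\[
\langle\spec^c_{-(k_0-2)}(\,_b\Zfrak^{\Iw}_{f,\g}),\,\spec^c_{k_0-2}(\Tw_{\bchi_{\g}^{-1}}\Exp^c_{\bM_{f,\g}}(\widetilde\mm_{f,\g}))^{\iota}\rangle_{\bN_{f,\g}},
\]
the identification of the target $(\Ph,\Gamma)$-module being verified through the elementary character computation $\bchi_{\g}^{-1}\chi^{k_0-2}=\chi^{-1}\bchi^{-1}$.

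The decisive step is now to apply Proposition~\ref{proposition Iwasawa vs semistabilized zeta elements} at $j=k=k_0-2$ (so that $\binom{k}{k}=1$):
\[
\spec^c_{-(k_0-2)}(\,_b\Zfrak^{\Iw}_{f,\g})=\frac{(-1)^{k_0-2}}{(k_0-2)!}\left(1-\frac{p^{k_0-2}}{\alpha(f)\bb_p}\right)\left(1-\frac{\beta(f)\bb_p}{p^{k_0-1}}\right)\,_b\Zfrak^{[k_0-2]}_{f,\g}.
\]
Substituting and comparing with Definition~\ref{definition second improved L-function}, the factor $1/(k_0-2)!$ absorbs into $\Gamma(k_0-1)^{-1}$, the overall sign is $(-1)^{k_0-1}\cdot(-1)^{k_0-2}=-1$, and using $\ba_p(k_0)=\alpha(f)$ together with $\beta(f)\bb_p/p^{k_0-1}=\ep_f(p)\bb_p/\alpha(f)$, the two Euler factors match those in the statement. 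The remaining factor is exactly $B_b\cdot L_p^{\mathrm{wt}}(\f,\g,s)$, and dividing by $B_b$ (which is non-vanishing for $b\neq 1$, as noted before Definition~\ref{definition second improved L-function}) yields the claimed identity.

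The main technical obstacle will be in Key Step~2 above: over the two-variable algebra $A=A_{\f}\widehat\otimes_E A_{\g}$, one must track carefully how the successive weight- and cyclotomic specializations interact with the $\iota$-twist and with $\Tw_{\bchi_{\g}^{-1}}$, so that one indeed lands in the pairing $\langle\,\,,\,\,\rangle_{\bN_{f,\g}}$ featured in Definition~\ref{definition second improved L-function}, with no hidden twist left over. This is a delicate but routine bookkeeping exercise, modeled on the analogous computation for $L_p^{\mathrm{wc}}$.
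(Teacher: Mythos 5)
Your proposal is correct and follows essentially the same route as the paper's proof: apply Theorem~\ref{theorem relation L with Beilinson-Flach element} at $(a,x,y,s)=(k_0-1,k_0,s,k_0-1)$, unwind $\mathfrak L_p$ via Lemma~\ref{Lemma twisted Iwasawa pairing} and the specialization formalism to land in the $\langle\,\,,\,\,\rangle_{\bN_{f,\g}}$-pairing, then apply Proposition~\ref{proposition Iwasawa vs semistabilized zeta elements} at $j=k=k_0-2$ and identify the Euler factors using $\alpha(f)\beta(f)=\ep_f(p)p^{k_0-1}$. The only cosmetic difference is the order in which you apply the weight and cyclotomic specializations (the paper shifts the cyclotomic twist first, then specializes the weight); since these operations commute, your version is fine.
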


\begin{proof}  
%To simplify notation we denote by $\{\,\,,\,\,\}$ the Iwasawa pairing (\ref{definition of Iwasawa pairing}) for $\bM_{\f,\g}(\chi^{-1}\bchi^{-1}).$
Let $\bN_{\f,\g}= \gr_0\bD_{\f,\g}(\chi^{k_0-1}).$
By Theorem~\ref{theorem relation L with Beilinson-Flach element} and 
Lemma~\ref{Lemma twisted Iwasawa pairing} one has
\begin{multline}
\label{second improved function first formula}
 G(f,g) B_b(k_0,s, k_0-1) \cdot L_p(\f, \g, \omega^{k_0-1} ) (k_0,s,k_0-1)
=\\
(-1)^{k_0-1}\mathcal A_{\omega^{k_0-2}}\left (\left \{\,_b \Z^{\Iw}_{\f,\g},
\Tw_{\bchi_{\g}^{-1}} \circ \Exp^c_{\bM_{\f,\g}} (\widetilde m_{\f,\g})^{\iota} \right \}_{\bM_{\f,\g}(\bchi_{\g}^{-1})} \right ) (k_0,s,k_0-2)=\\
(-1)^{k_0-1}\mathcal A_{\omega^{0}}\left (\left \{\Tw_{2-k_0}\left (\,_b \Z^{\Iw}_{\f,\g}\right ),
\Tw_{k_0-2}\circ \Tw_{\bchi_{\g}^{-1}} \circ \Exp^c_{\bM_{\f,\g}} (\widetilde m_{\f,\g})^{\iota} \right \}_{\bN_{\f,\g}} \right )(k_0,s,0)=\\
(-1)^{k_0-1}\mathcal A^{\mathrm{wt}}\left (\left <\spec^c_{2-k_0}\left (\,_b \Z^{\Iw}_{\f,\g}\right ),
\spec^c_{k_0-2}\circ \Tw_{\bchi_{\g}^{-1}} \circ \Exp^c_{\bM_{\f,\g}} (\widetilde m_{\f,\g})^{\iota} \right >_{\bN_{\f,\g}}\right )(k_0,s).
\end{multline}
Let $\,_b\Zfrak_{f,\g}^{\Iw}=({\Pr}^{\alpha}_*,\id)\circ \spec^{\f}_{k_0}
\left (\,_b \Z^{\Iw}_{\f,\g}\right ).$ Since the maps ${\Pr}^{\alpha}_*$
and ${\Pr}_{\alpha}^*$ are dual to each other, from (\ref{image of m under Pr})
we obtain that
\begin{multline}
\label{second improved function second formula}
\mathcal A^{\mathrm{wt}}\left (\left <\spec^c_{2-k_0}\left (\,_b \Z^{\Iw}_{\f,\g}\right ),
\spec^c_{k_0-2}\circ \Tw_{\bchi_{\g}^{-1}} \circ \Exp^c_{\bM_{\f,\g}} (\widetilde m_{\f,\g})^{\iota} \right >_{\bN_{\f,\g}}\right )(k_0,s)=\\
\mathcal A^{\mathrm{wt}}\left (\left <\spec^c_{2-k_0}\left (\,_b \Zfrak^{\Iw}_{f,\g}\right ),
\spec^c_{k_0-2}\circ \Tw_{\bchi_{\g}^{-1}} \circ \Exp^c_{\bM_{f,\g}} (\widetilde 
 \mm_{f,\g})^{\iota} \right >_{\bN_{f,\g}}\right )(s).
\end{multline}
By Proposition~\ref{proposition Iwasawa vs semistabilized zeta elements},
\begin{equation}
\nonumber
\spec^c_{2-k_0}\left (\,_b \Zfrak^{\Iw}_{f,\g}\right )=
\frac{(-1)^{k_0}}{(k_0-2)!}  \cdot \left (
1-\frac{p^{k_0-2}}{\alpha (f)\cdot \bb_p}
\right )\cdot 
\left (1-\frac{\beta (f)\cdot \bb_p}{p^{k_0-1}}\right ) \cdot
\,_b\Zfrak_{f,\g}^{[k_0-2]}. 
\end{equation}
Combining this formula with (\ref{second improved function first formula}-\ref{second improved function second formula}) and taking into account that $\alpha (f)\beta (f)
=\ep_f(p)p^{k_0-1},$ we obtain the wanted formula.
\end{proof}

\subsection{The functional equation} 
\subsubsection{}
In this subsection, we establish a  functional equation for our improved $p$-adic  $L$-functions.   In addition to {\bf M1-2)}, we assume that the following conditions hold:

\begin{itemize}

\item[]{\bf M3)} The characters  $\ep_f,$ $\ep_g$ and $\ep_f\ep_g$ are primitive modulo $N_f,$ $N_g$ and $\mathrm{lcm}(N_f,N_g)$ respectively.

%\item[]{\bf M4)}  $f\neq g^*;$
\end{itemize}
We remark that {\bf M3)} implies that $\ep_f\ep_g \neq 1.$
In particular, the case  $f\neq g^*$ is excluded.

Write $\lambda_{N_f}(f)= p^{k_0/2}w(f),$ $\lambda_{N_g}(g)= p^{l_0/2}w(g),$
$w(\ep_f\ep_g)=G(\ep_f\ep_g)N^{-1/2}$ and set 
\begin{equation}
\nonumber 
w(f,g)= (-1)^{l_0} \cdot w(f)\cdot  w(g)\cdot   w(\ep_f\ep_g) \cdot \frac{\ba_{d_g}^c(k_0)\cdot \bb_{d_f}^c(y)}
{d_g^{(k_0-1)/2}\cdot d_f^{(l_0-1)/2}},
\end{equation}
where $c$ denotes the complex multiplication. The complete Rankin--Selberg $L$-function $\Lambda (f,g,s) =\Gamma (s) \Gamma(s-l_0+1) (2\pi)^{l_0-1-2s}L(f,g,s)$
has a holomorphic continuation to all $\mathbf C$ and satisfies the functional equation 
\begin{equation}
\nonumber
\Lambda (f,g,s)=\ep (f,g,s)\cdot \Lambda (f^*,g^*, k_0+l_0-1-s)
\end{equation}
where 
$\ep (f,g,s) = w(f,g) \cdot \left (NN_fN_g\right )^{\frac{k_0+l_0-1}{2}-s}$ (see, for example, \cite[Section~9.5]{Hi93}).

Denote by $f^*_\alpha$ and $g^*_\alpha$ the $p$-stabilizations of $f^*$ and
$g^*$ with respect to the roots $\alpha (f^*)=p^{k_0-1}/\beta (f)$ and 
$\alpha (g^*)=p^{k_0-1}/\beta (g)$ respectively and by $\f^*$ and $\g^*$ 
the Coleman families passing through  $f^*_\alpha$ and $g^*_\alpha .$ 

\begin{myproposition}
The three variable $p$-adic function $L_p(\f,\g,\omega^a)(x,y,s)$ satisfies the  functional equation
\[
L_p(\f,\g,\omega^a)(x,y,s)=\ep_p^{[\f,\g,a]}(x,y,s)\cdot L_p(\f^*,\g^*,\omega^{a^*})(x,y,x+y-s-1),
%L_p(\f,\g,\omega^a)(x,y,s)=\ep_p^{[\f,\g,a]}(x,y,s) \cdot L_p(\f^*,\g^*,
%\omega^{a^*})(x,y,x+y-s-1),
\]
where $a^*=k_0+l_0-a-1$ and 
\begin{equation}
\nonumber
\ep_p^{[\f,\g,a]}(x,y,s)=w(f,g)\cdot \left (NN_fN_g\right )^{\frac{k_0+l_0-1}{2}-a} \cdot \left <NN_fN_g\right >^{a-s}\cdot 
\left <N_f\right >^{\frac{y-l_0}{2}} \cdot
\left <N_g\right >^{\frac{x-k_0}{2}}.
\end{equation}
%\begin{multline}
%\nonumber
%\ep_p^{[\f,\g,a]}(x,y,s)=(-1)^{l_0}\cdot G(\ep_f\ep_g)
%\cdot 
% N_f^{l_0-2}\cdot N_g^{k_0-2}\cdot (NN_fN_g)^{1-a}\times \\
%\times \lambda_{N_f}(\f) (x) \cdot \lambda_{N_g}(\g) (y)\cdot  \ba_{d_g}^c(x) \cdot \bb_{d_f}^c(y) \cdot \left <NN_fN_g\right >^{a-s}\cdot 
%\left <N_f\right >^{y-l_0} \cdot
%\left <N_g\right >^{x-k_0}.
%\end{multline}
%Here $G(\ep_f\ep_g)$ denotes the Gauss sum for the character $\ep_f\ep_g$ and 
%$c$ is the complex conjugation.
\end{myproposition}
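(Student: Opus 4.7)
\emph{Strategy.} The plan is a density argument. Both sides are rigid-analytic on $U_{f,g}\times\Zp$, so it suffices to check the identity on the Zariski-dense subset $\Sigma$ of integer triples $(x,y,j)$ with $x\equiv k_0,\ y\equiv l_0,\ j\equiv a\pmod{p-1}$ and $2\leq y\leq j<x$. For such a triple, set $j^\ast:=x+y-1-j$ and $a^\ast:=k_0+l_0-1-a$. One verifies directly that $j^\ast\equiv a^\ast\pmod{p-1}$ and that the inequality $2\leq y\leq j^\ast<x$ is equivalent to $y\leq j<x$, so $(x,y,j^\ast)$ lies in the interpolation range of $L_p(\f^\ast,\g^\ast,\omega^{a^\ast})$.

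\emph{Comparison via the classical functional equation.} At $(x,y,j)\in\Sigma$, apply Theorem~\ref{theorem 3-variable p-adic L-function} to both sides to write each value as a product of a complex $L$-value, a Petersson product, an Euler-like factor $\CE$, a constant $C(\cdot)$, and an archimedean prefactor. Substitute the classical functional equation
\[
\Lambda(\f^0_x,\g^0_y,j)=\ep(\f^0_x,\g^0_y,j)\cdot\Lambda(\f^{0,\ast}_x,\g^{0,\ast}_y,j^\ast),
\]
and invoke the elementary symmetries (valid under \textbf{M1-M3}): the equality $C(f^\ast)=C(f)$, which follows from $\beta(f^\ast)/\alpha(f^\ast)=\beta(f)/\alpha(f)$ and $\beta(f^\ast)/(p\alpha(f^\ast))=\beta(f)/(p\alpha(f))$; the conjugation-symmetry of Petersson products $\langle\f^{0,\ast}_x,\g^{0,\ast}_y\rangle=\overline{\langle\f^0_x,\g^0_y\rangle}$; and the $(f,g,j)\leftrightarrow(f^\ast,g^\ast,j^\ast)$-duality of the Euler-like factor $\CE$, clear from its expression in terms of the Hecke roots. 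After these cancellations the asserted identity reduces to an equality of explicit algebraic factors on $\Sigma$.

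\emph{Identification of $\ep_p$.} The remaining factor must be identified with $\ep_p^{[\f,\g,a]}(x,y,j)$. The classical $\ep$-factor decomposes as $w(\f^0_x,\g^0_y)\cdot(NN_fN_g)^{(k_0+l_0-1)/2-j}$; its $p$-adic counterpart rewrites $(NN_fN_g)^{-j}$ as $(NN_fN_g)^{-a}\langle NN_fN_g\rangle^{a-j}$, accounting for the first two factors of $\ep_p^{[\f,\g,a]}$. The remaining factors $\langle N_f\rangle^{(y-l_0)/2}$ and $\langle N_g\rangle^{(x-k_0)/2}$ encode the weight-variation of the algebraic part of $w(\f^0_x,\g^0_y)$: under \textbf{M3} the conductors of $\ep_f,\ep_g$ equal $N_f,N_g$, so the ratios $\bb^c_{N_f}(y)/N_f^{(y-1)/2}$ and $\ba^c_{N_g}(x)/N_g^{(x-1)/2}$ interpolate along the Coleman families exactly as these $\langle\cdot\rangle$-powers prescribe, the congruences $x\equiv k_0,\ y\equiv l_0\pmod{p-1}$ ensuring that no spurious Teichm\"uller characters appear.

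\emph{Main obstacle.} The principal difficulty is the careful reconciliation of archimedean constants in the last step: Atkin--Lehner pseudo-eigenvalues $w(f),w(g)$, the Gauss sum $w(\ep_f\ep_g)$, quotients of gamma functions $\Gamma(j)\Gamma(j-y+1)/\Gamma(j^\ast)\Gamma(j^\ast-y+1)$, and signs coming from $(-i)^{x-y},\ (-1)^{l_0}$ must all be tracked and shown to combine correctly. The hypothesis \textbf{M3} is essential precisely because it yields the simplest form of the classical functional equation; relaxing it would introduce additional correction factors at primes dividing the conductors. Once the algebraic identity is verified at a classical triple, all ingredients interpolate analytically by Proposition~\ref{proposition coleman families}, extending the identity to all of $U_{f,g}\times\Zp$.
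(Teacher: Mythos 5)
Your proposal is correct and takes the same route the paper indicates: specialize both sides at the Zariski-dense set of classical triples via the interpolation formula of Theorem~\ref{theorem 3-variable p-adic L-function}, substitute the complex Rankin--Selberg functional equation, and match the remaining algebraic factors (Euler-like, $C(\cdot)$, Atkin--Lehner/Gauss-sum constants) term by term. The paper's own proof is exactly this strategy with the bookkeeping "left to the reader," and your identification of the symmetries $\CE(\f^0_x,\g^0_y,j)=\CE(\f^{0,*}_x,\g^{0,*}_y,j^*)$ and $C(f^*)=C(f)$, together with the correct translation of the interpolation range under $j\mapsto x+y-1-j$, is the substance of that omitted verification.
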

\begin{proof} This proposition follows from the interpolation properties of  
$L_p(\f,\g,\omega^a)(x,y,s)$ and the functional equation for the complex 
Rankin--Selberg $L$-function.
We leave the details to the reader.
\end{proof}
\begin{mycorollary}  
\label{functional equation for improved functions}
The improved $L$-functions are related by the functional equation
\begin{equation}
\nonumber
L_p^{\mathrm{wc}}(\f,\g,s)=A_p^{[\f,\g]} (s) \cdot
\left (1-\frac{\ep_f(p) \ep_g(p)p^{k_0-2}}{\ba_p(k_0)\bb_p(s)}\right )
\cdot
 \left (1-\frac{\ep_g(p)\ba_{p}(k_0)}{p\bb_{p}(s)}\right )
\cdot  L_p^{\mathrm{wt}}(\f^*,\g^*,s),
\end{equation}
where 
\[
A_p^{[\f,\g]} (s)=  (-1)^{k_0-1}w(f,g) \cdot (NN_fN_g)^{-1/2}\cdot \left <NN_fN_g\right >^{k_0-s}\cdot 
\left <N_f\right >^{\frac{s-k_0}{2}}.
\]
\end{mycorollary}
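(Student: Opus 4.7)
The plan is to chain the functional equation for the three-variable $p$-adic $L$-function from the preceding proposition together with the two improvement formulas of Propositions~\ref{proposition first improved L-function} and~\ref{proposition second improved L-function}.

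First I would specialize the functional equation at $(a,x,y,s)=(k_0,k_0,s,s)$. Under this substitution $a^*=k_0+l_0-a-1=k_0-1$ and $x+y-s-1=k_0-1$, and a direct inspection (using $l_0=k_0$) gives the identity $(-1)^{k_0-1}\ep_p^{[\f,\g,k_0]}(k_0,s,s)=A_p^{[\f,\g]}(s)$. Applying Proposition~\ref{proposition first improved L-function} to rewrite the left-hand side and Proposition~\ref{proposition second improved L-function} (for the families $\f^*,\g^*$) to rewrite the right-hand side of the functional equation produces an identity
\[
L_p^{\mathrm{wc}}(\f,\g,s)=A_p^{[\f,\g]}(s)\,(1-X_1)^{-1}(1-X_2)(1-Y_1)(1-Y_2')\,L_p^{\mathrm{wt}}(\f^*,\g^*,s),
\]
with $X_1=\bb_p(s)/(\ep_g(p)\ba_p(k_0))$, $X_2=\ep_g(p)\ba_p(k_0)/(p\bb_p(s))$, $Y_1=p^{k_0-2}/(\ba_p^*(k_0)\bb_p^*(s))$ and $Y_2'=\ep_{f^*}(p)\bb_p^*(s)/\ba_p^*(k_0)$.

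Next I would rewrite the starred quantities $\ba_p^*(k_0)$ and $\bb_p^*(s)$ in terms of $\ba_p(k_0)$ and $\bb_p(s)$. At each classical weight $y\in I_g$ one has $\alpha(g_y^*)=\ep_g^{-1}(p)\alpha(g_y^0)$, since $\alpha(g_y^*)\beta(g_y^0)=p^{y-1}$ while $\alpha(g_y^0)\beta(g_y^0)=\ep_g(p)p^{y-1}$. Both sides are analytic in $y$ and the classical weights are dense, so this yields $\bb_p^*(s)=\ep_g^{-1}(p)\bb_p(s)$; analogously $\ba_p^*(k_0)=\ep_f^{-1}(p)\ba_p(k_0)$. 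Substituting these identities and using $\ep_{f^*}=\ep_f^{-1}$ gives
\[
Y_1=\frac{\ep_f(p)\ep_g(p)\,p^{k_0-2}}{\ba_p(k_0)\bb_p(s)},\qquad Y_2'=\frac{\bb_p(s)}{\ep_g(p)\ba_p(k_0)}=X_1.
\]

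The conclusion is then purely formal: since $Y_2'=X_1$, the factors $(1-X_1)^{-1}$ and $(1-Y_2')$ cancel, leaving the product $A_p^{[\f,\g]}(s)(1-X_2)(1-Y_1)L_p^{\mathrm{wt}}(\f^*,\g^*,s)$, which is exactly the claimed formula. The main bookkeeping obstacle is the verification of the prefactor $A_p^{[\f,\g]}(s)$ and of the analytic identity $\bb_p^*(s)=\ep_g^{-1}(p)\bb_p(s)$; the latter is the crucial input, since it forces two of the four Euler-like factors to collapse to a single one, producing precisely the shape predicted by the functional equation.
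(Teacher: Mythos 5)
Your proposal is correct and follows essentially the same route as the paper: specialize the functional equation at $(a,x,y,s)=(k_0,k_0,s,s)$, substitute the two improvement formulas from Propositions~\ref{proposition first improved L-function} and \ref{proposition second improved L-function}, rewrite the starred Fourier coefficients via $\ba_p^*=\ep_f^{-1}(p)\ba_p$ and $\bb_p^*=\ep_g^{-1}(p)\bb_p$, observe that one Euler-like factor appears on both sides, and cancel it (using that it is not identically zero). The only difference is that you give a short justification of the identity $\bb_p^*=\ep_g^{-1}(p)\bb_p$, which the paper merely states.
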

\begin{proof}
Set $\f^*=\underset{n=1}{\overset{\infty}\sum} \ba^*_n q^n$ and $\g^*=\underset{n=1} {\overset{\infty}\sum}\bb^*_n q^n.$
The functional equation gives
\[
L_p(\f,\g,\omega^{k_0})(k_0,s,s)=\ep_p^{[\f,\g,k_0]}(k_0,s,s) \cdot L_p(\f^*,\g^*,\omega^{k_0-1})(k_0,s,k_0-1).
\]
Applying Propositions~\ref{proposition first improved L-function} and \ref{proposition second improved L-function} and taking into account 
that $\ba^*=\ep_f^{-1}(p)\ba$ and $\bb^*=\ep_g^{-1}(p)\bb,$ we get
\begin{multline}
\nonumber
%G(f^*,g^*) B_b^*(k_0,s, k_0-1) \cdot
\left (1-\frac{\bb_{p}(s)}{\ep_{g}(p)\ba_{p}(k_0)}\right )
 \left (1-\frac{\ep_g(p)\ba_{p}(k_0)}{p\bb_{p}(s)}\right )^{-1}
 L_p^{\mathrm{wc}}(\f,\g,s)=\\
=A_p^{[\f,\g]}(s)
 %\cdot G(f,g) B_b(k_0,s,s) \times 
\left (1-\frac{\ep_f(p) \ep_g(p)p^{k_0-2}}{\ba_p(k_0)\bb_p(s)}\right )
\left (1-\frac{\bb_p(s)}{\ep_g(p)\ba_p(k_0)}\right )
L_p^{\mathrm{wt}}(\f^*,\g^*,s).
\end{multline}
Since the function $\displaystyle \left (1-\frac{\bb_p(s)}{\ep_g(p)\ba_p(k_0)}\right )$ is not identically zero, we can cancel it in this equation. This gives us
the wanted formula. 
\end{proof}

\begin{myremark} One has $A_p^{[\f,\g]}(k_0)=(-1)^{k_0-1}\ep (f,g,k_0).$
\end{myremark}

\subsection{Functional equation for zeta elements}
\label{subsection zeta elements}

\subsubsection{} In this section, we interpret the functional equation
for $p$-adic $L$-functions  in terms  of Beilinson--Flach elements
and prove Theorem~II. 
We assume that $f$ and $g$ are newforms of the same weight $k_0\geqslant 2$
which satisfy conditions {\bf M1-3)}. Set $V_g=W_g(k_0)$ and $V_{f,g}=W_{f,g}(k_0).$
We consider the canonical basis of $\Dc (V_{f,g})$ formed  by the eigenvectors
\begin{equation}
\begin{aligned}
\label{canonical basis of Dc (V)}
&d_{\alpha\alpha}=\eta_f^{\alpha}\otimes \eta_g^{\alpha}\otimes e_{k_0},
&&d_{\alpha\beta}=\eta_f^{\alpha}\otimes \omega_g^{\beta}\otimes e_{k_0},\\
&d_{\beta\alpha}=\omega_f^{\beta}\otimes \eta_g^{\alpha}\otimes e_{k_0},
&&d_{\beta\beta}=\omega_f^{\beta}\otimes \omega_g^{\beta}\otimes e_{k_0}.
\end{aligned}
\end{equation}
Let $D=\eta_f^{\alpha}\otimes \Dc(V_g).$  We associate to $D$ the filtration
 $(D_i)_{i=-2}^2$ on $\Dc (V_{f,g})$ 
defined by 
\begin{equation}
\label{filtration on Dcris}
D_i=\begin{cases}
0, &\text{if $i=-2$},\\
E\cdot d_{\alpha\alpha}, &\text{if $i=-1$},\\
E\cdot d_{\alpha\alpha}+E\cdot d_{\alpha\beta}, &\text{if $i=0$},\\
E\cdot d_{\alpha\alpha}+E\cdot d_{\alpha\beta}+E\cdot d_{\beta\alpha}
, &\text{if $i=1$},\\
\Dc (V_{f,g}), &\text{if $i=2$}.
\end{cases}
\end{equation}
Note that $D_0=D.$ This filtration defines a unique triangulation 
$\left (F_i\DdagrigE (V_{f,g})\right )_{i=-2}^2$ of $\Ddagrig (V_{f,g})$
such that $D_i=\CDcris (F_i\DdagrigE (V_{f,g}))$ for all $-2\leqslant i\leqslant 2.$
From definition it follows that the isomorphism 
\[
({\Pr}_\alpha^*,{\Pr}_\alpha^*)\,\,:\,\, W_{f,g} \simeq \spec^{\f,\g}_{k_0,k_0}
\left ( W_{\f,\g}\right ),
\]
identifies $\left (F_i\DdagrigE (V_{f,g})\right )_{i=-2}^2$ with the specialization
at $(f_\alpha, g_\alpha)$ of the triangulation 
\linebreak
$\left (F_i\bD_{\f,\g}(\chi^{k_0})\right )_{i=-2}^2$ constructed in Section~\ref{subsection triangulations}.

To simplify notation, set 
\[
\bM_{f,g}=\gr_0\DdagrigE (V_{f,g}), \qquad 
\bN_{f^*,g^*}=\gr_0\Ddagrig (V_{f,g}^*(1)).
\]
Since $\spec^{\f,\g}_{k_0,k_0}(\bM_{\f,\g})=\gr_0\bD_{f_\alpha,g_\alpha}(\chi^{k_0})$ 
and 
\[
\spec^{\f,\g}_{k_0,k_0}(\bN_{\f^*,\g^*})=\bM_{f^*_\alpha, g^*_\alpha}(\chi^{-1})
=\gr_0\bD_{f^*_\alpha, g^*_\alpha}(\chi^{k_0-1}),
\]
we have canonical isomorphisms
\begin{equation}
\nonumber
\bM_{f,g}\simeq \spec^{\f,\g}_{k_0,k_0}(\bM_{\f,\g}), \qquad
\bN_{f^*,g^*}\simeq \spec^{\f^*,\g^*}_{k_0,k_0}(\bN_{f^*,\g^*}).
\end{equation}
Thus our  notation agrees with that of 
 Sections~\ref{subsection triangulations} and \ref{subsection The second improved $p$-adic $L$-function}.

By (\ref{definition of M_f,g*(chi)}), we have
\begin{equation}
\label{isomorphism forthe dual  Mfg}
\bM_{f,g}^*(\chi) \simeq \gr_1 \DdagrigE (V_{f,g}^*(1)), \qquad
V_{f,g}^*(1)\simeq W_{f^*,g^*}(k_0-1).
\end{equation}
Fix canonical generators
\begin{equation}
\begin{aligned}
\nonumber
&d_{\alpha \beta}\in \CDcris (\bM_{f,g}),\\
&n_{\alpha\beta}=\eta_{f^*}^\alpha\otimes \omega_{g^*}^\beta \otimes e_{k_0-1}\in \CDcris (\bN_{f^*,g^*}).
\end{aligned}
\end{equation}
Note that by Proposition~\ref{proposition interpolation eigenvectors}
\begin{equation}
\label{comparision generators of Mfg}
({\Pr}_\alpha^*,{\Pr}_\alpha^*) (d_{\alpha \beta})=
\lambda_{N_f}(f)\cdot C(f) \cdot \spec^{\f,\g}_{k_0,k_0}(m_{\f,\g}).
\end{equation}
We denote by 
\begin{equation}
\nonumber
\exp \,:\,\CDcris (\bM_{f,g}) \rightarrow H^1(\bM_{f,g}) 
\end{equation}
and 
\begin{equation}
\nonumber
\log \,:\, H^1\left (\gr_1\DdagrigE (V_{f,g})\right )
\rightarrow \CDcris \left (\gr_1\DdagrigE (V_{f,g})\right )
\end{equation}
the Bloch--Kato exponential and logarithm maps respectively. 

\subsubsection{} Let 
\begin{equation}
\label{definition of Zrm independent on b} 
\Zrm_{f^*,g^*}^{[k_0-2]}\in H^1 \left (\gr_1\DdagrigE (V_{f,g})\right )
\end{equation}
denote the element constructed in Definition~\ref{definition of Zrm}.
Choose $b$ such that $\ep_f(b)\ep_g(b)\neq 1$ and set

\begin{equation}
\begin{aligned}
\label{definition of specialization of Iwasawa class}
&\,_b\widetilde\Zrm_{f_\alpha,g_\alpha}^{[k_0-1]}=\spec^{\f,\g,c}_{k_0,k_0,1-k_0}\left (\,_b\Z^{\Iw}_{\f,\g}\right ) \in H^1 (\bM^*_{f_\alpha,g_\alpha}(\chi)),\\
&\,_b\widetilde\Zrm_{f,g}^{[k_0-1]}=({\Pr}^\alpha_*,{\Pr}^\alpha_*)
\left ( \,_b\widetilde\Zrm_{f_\alpha,g_\alpha}^{[k_0-1]}\right )\in
H^1 \left (\gr_1\DdagrigE (V_{f,g}^*(1))\right ).
\end{aligned}
\end{equation}
(here we use the isomorphism (\ref{isomorphism forthe dual  Mfg})).
Note that $\,_b\widetilde\Zrm_{f_\alpha,g_\alpha}^{[k_0-1]}=\spec^{\f,\g}_{k_0,k_0}(\,_b\Z_{\f,\g}),$ where $_b\Z_{\f,\g}$ is the element introduced in Definition~\ref{definition of Zfg bold}.  Set
\[
\widetilde\Zrm_{f,g}^{[k_0-1]}=b^{-2}(1-\ep_f(b)\ep_g(b))^{-1}
\,_b\widetilde\Zrm_{f,g}^{[k_0-1]}.
\]
We remark that $\Zrm_{f^*,g^*}^{[k_0-2]}$  is constructed directly from the Beilinson--Flach element $\BFrm_{f^*,g^*}^{[k_0-2]}$ whereas the construction of $\widetilde\Zrm_{f,g}^{[k_0-1]}$ relies on Proposition~\ref{proposition specialization of two variable Beilinson Flach elements} and  involves  $p$-adic interpolation and  Iwasawa twist.

\begin{mytheorem}
\label{theorem functional equation for zeta elements}
Assume that $\beta (f)\alpha (g)\neq p^{k_0-1}.$ Then 
the elements $\Zrm_{f^*,g^*}^{[k_0-2]}$ and
$\widetilde\Zrm_{f,g}^{[k_0-1]}$ are related by the equation
\begin{equation}
\nonumber
\frac{\left <\widetilde\Zrm_{f,g}^{[k_0-1]}, \exp (d_{\alpha\beta}) \right >_{\bM_{f ,g}}}
{
%b^2  
G(\ep_f^{-1}) G(\ep_g^{-1})
%(1-\ep_f^{-1}(b)\ep_g^{-1}(b))
}
=(-1)^{k_0-1}\ep (f,g,k_0)\cdot 
\CE (V_{f,g}, D_{-1}) \cdot
\frac{\left [ \log \left (\Zrm_{f^*,g^*}^{[k_0-2]}\right ), n_{\alpha\beta}
\right ]_{\bN_{f^* ,g^*}}}{(k_0-2)! G(\ep_f) G(\ep_g)},
\end{equation}
where 
\[
\CE (V_{f,g}, D_{-1})= \det \left (1-p^{-1}\Ph^{-1} \mid D_{-1} \right )
\det \left (1-\Ph \mid \Dc (V_{f,g})/D_{-1}\right ).
\]
\end{mytheorem}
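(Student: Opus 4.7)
The strategy is to identify both sides of the stated identity with values at $s = k_0$ of the improved $p$-adic $L$-functions $L_p^{\mathrm{wc}}(\f,\g,s)$ and $L_p^{\mathrm{wt}}(\f^*,\g^*,s)$ respectively, and then to deduce the equation from the functional equation of Corollary~\ref{functional equation for improved functions}.

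For the left-hand side, I specialize Definition~\ref{definition of the first improved L-function} at $s = k_0$. The crucial ingredient is identity (\ref{comparision generators of Mfg}), which reads
\[
({\Pr}_\alpha^*,{\Pr}_\alpha^*)(d_{\alpha\beta}) = \lambda_{N_f}(f)\,C(f)\,\spec^{\f,\g}_{k_0,k_0}(m_{\f,\g}).
\]
Combined with the adjointness of $({\Pr}^\alpha_*,{\Pr}^\alpha_*)$ and the observation that $\spec^{\f,\g}_{k_0,k_0}$ applied to $\,_b\Z_{\f,\g}$ coincides with $\,_b\widetilde\Zrm_{f_\alpha,g_\alpha}^{[k_0-1]}$ (by the definitions (\ref{definition of Zfg bold}), (\ref{definition of specialization of Iwasawa class}) and the compatibility (\ref{formula for spec})), the specialized inner pairing becomes
\[
\lambda_{N_f}(f)\,C(f)\,\langle\,_b\widetilde\Zrm_{f,g}^{[k_0-1]},\exp(d_{\alpha\beta})\rangle_{\bM_{f,g}}.
\]
Since $B_b(\omega^{k_0},k_0,k_0,k_0)$ has leading factor $G(\ep_f^{-1})G(\ep_g^{-1})/\lambda_{N_f}(f)$ times a nonvanishing $b$-factor that matches the $b$-factor relating $\,_b\widetilde\Zrm$ to $\widetilde\Zrm$, this identifies the left-hand side of the theorem with $C(f)\,G(\ep_f^{-1})G(\ep_g^{-1})\,L_p^{\mathrm{wc}}(\f,\g,k_0)$.

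For the right-hand side, I apply Definition~\ref{definition second improved L-function} to the pair $(\f^*,\g^*)$ at $s=k_0$. The essential step is evaluating the inner expression at $y = k_0$: using $\bchi_{\g^*}|_{l_0=k_0} = \chi^{k_0-1}$ and (\ref{formula for spec}), the composite $\spec^c_{k_0-2}\circ \Tw_{\bchi_{\g^*}^{-1}}\circ \Exp^c_{\bM_{f^*,\g^*}}$ reduces to $\spec^c_{-1}\circ \Exp^c_{\bM_{f^*,g^*}}$. Since $\bM_{f^*,g^*}$ is crystalline of Hodge--Tate weight $1$ with $\bdelta(p)=p^{k_0-1}/(\beta(f)\alpha(g))$, this specialization lies \emph{outside} the natural interpolation range of Perrin-Riou's large exponential, and the second formula of Corollary~\ref{corollary large exponential} applies. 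It converts the expression into a pairing of the form $[\log_{\bM_{f^*,g^*}}(\,\cdot\,),n_{\alpha\beta}]_{\bN_{f^*,g^*}}$ multiplied by the Euler factor
\[
\frac{1 - \beta(f)\alpha(g)/p^{k_0}}{1 - p^{k_0-1}/(\beta(f)\alpha(g))},
\]
where the hypothesis $\beta(f)\alpha(g)\neq p^{k_0-1}$ is precisely what makes the denominator nonzero. Next, Proposition~\ref{proposition specialization of two variable Beilinson Flach elements}(ii) combined with (\ref{relation between BF and _bBF}) compares $\spec^{\g^*}_{k_0}\bigl(\,_b\Zfrak_{f^*,\g^*}^{[k_0-2]}\bigr)$ to $\,_b\Zrm_{f^*,g^*}^{[k_0-2]}$, contributing the extra factor $(1 - p^{k_0-1}/(\alpha(f)\alpha(g)))(1 - p^{k_0-1}/(\beta(f)\alpha(g)))$. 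Together with the analogous identification of $\mm_{f^*,g^*}$ via (\ref{definition of the semistabilized generator m}), this expresses $L_p^{\mathrm{wt}}(\f^*,\g^*,k_0)$ in terms of the right-hand side of the theorem, with the combinatorial factor $(k_0-2)!$ arising from $\Gamma(k_0-1)^{-1}$ and the $G(\ep_f)G(\ep_g)$ denominator appearing through the Gauss sums in $B_b$ for $(\f^*,\g^*)$ via the relation $G(\ep)G(\ep^{-1}) = \ep(-1)N$.

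Finally, I apply the functional equation of Corollary~\ref{functional equation for improved functions} at $s = k_0$: here $A_p^{[\f,\g]}(k_0) = (-1)^{k_0-1}\ep(f,g,k_0)$, and the two explicit Euler factors are $(1 - \beta(f)\beta(g)/p^{k_0})(1 - \alpha(f)\beta(g)/p^{k_0})$. Multiplying through and combining with the Euler factors accumulated in the preceding steps, the problematic denominator $(1 - p^{k_0-1}/(\beta(f)\alpha(g)))$ cancels and the four remaining Euler factors reassemble into
\[
\CE(V_{f,g},D_{-1}) = \left(1 - \tfrac{p^{k_0-1}}{\alpha(f)\alpha(g)}\right)\left(1 - \tfrac{\alpha(f)\beta(g)}{p^{k_0}}\right)\left(1 - \tfrac{\beta(f)\alpha(g)}{p^{k_0}}\right)\left(1 - \tfrac{\beta(f)\beta(g)}{p^{k_0}}\right).
\]
The constants $C(f)$ and $C(f^*)$ (which coincide, since $\beta(f^*)/\alpha(f^*) = \beta(f)/\alpha(f)$) cancel; the pseudo-eigenvalue and Gauss sum factors are absorbed into $\ep(f,g,k_0)$ via the explicit form of $w(f,g)$. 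The main technical obstacle is precisely this intricate bookkeeping: verifying that the Euler factor from Corollary~\ref{corollary large exponential} combines harmoniously with those from Proposition~\ref{proposition specialization of two variable Beilinson Flach elements}(ii) and from the functional equation, and carefully tracking the interaction of the Iwasawa twist $\Tw_{\bchi_{\g^*}^{-1}}$ with the cyclotomic specialization $\spec^c_{k_0-2}$, together with all Gauss sum, complex-conjugation, and $b$-dependent normalizations, so that both sides reduce to the exact form asserted in the theorem.
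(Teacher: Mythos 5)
Your proposal follows essentially the same route as the paper's proof: evaluate Definitions~\ref{definition of the first improved L-function} and \ref{definition second improved L-function} (for $(\f^*,\g^*)$) at $s=k_0$, rewrite each side in terms of the relevant zeta element via (\ref{comparision generators of Mfg}), the second formula of Corollary~\ref{corollary large exponential}, and the stabilization comparison, then combine via Corollary~\ref{functional equation for improved functions}; this is exactly formulas (\ref{computation of L(f,g,k)})--(\ref{computation of L(f*,g*)}) in the text. One small citation slip: the stabilization comparison for $\spec^{\g^*}_{k_0}\bigl(\,_b\Zfrak_{f^*,\g^*}^{[k_0-2]}\bigr)$ uses Proposition~\ref{proposition interpolation of semistabilized classes}, not Proposition~\ref{proposition specialization of two variable Beilinson Flach elements} (and the identity $G(\ep)G(\ep^{-1})=\ep(-1)N$ is not actually needed, since the $G(\ep_f)G(\ep_g)$ enters directly through $B_b(\omega^{k_0-1},\cdot)$ for the dual pair).
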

\begin{proof}
From Definition~\ref{definition of the first improved L-function} we have
\begin{equation}
\nonumber
L_p^{\mathrm{wc}}(\f,\g,k_0)= B_b(\omega^{k_0},k_0,k_0,k_0)^{-1}
\left <\,_b\widetilde\Zrm_{f_\alpha,g_\alpha}^{[k_0-1]}, \exp_{\bM_{f_\alpha,g_\alpha}}(\spec^{\f,\g}_{k_0,k_0}(m_{\f,\g})) \right >_{\bM_{f_\alpha ,g_\alpha}},
\end{equation}
where 
\[
B_b(\omega^{k_0},k_0,k_0,k_0)=\frac{b^2 G(\ep_f^{-1}) G(\ep_g^{-1})}
{\lambda_{N_f}(f)}\cdot (1-\ep_f^{-1}(b)\ep_g^{-1}(b)) \neq 0.
\]
Taking into account (\ref{comparision generators of Mfg}) and (\ref{definition of specialization of Iwasawa class}), we obtain that
\begin{equation}
\label{computation of L(f,g,k)}
L_p^{\mathrm{wc}}(\f,\g,k_0)=\frac{\left <\widetilde\Zrm_{f,g}^{[k_0-1]}, \exp (d_{\alpha\beta}) \right >_{\bM_{f ,g}}}{ C(f) G(\ep_f^{-1}) G(\ep_g^{-1})
}.
\end{equation}
On the other hand,
\begin{equation}
\label{functional equation: formula 1}
B_b(\omega^{k_0-1},k_0,k_0,k_0)=\frac{G(\ep_f) G(\ep_g)}
{\lambda_{N_f}(f^*)}\cdot (b^2-\ep_f(b)\ep_g(b)).
\end{equation}
The Frobenius $\Ph$ acts on $\CDcris (\bM_{f^*,g_\alpha^*})$ as multiplication
by 
\[
\frac{\alpha (f^*)\beta (g^*)}{p^{k_0}}=\frac{p^{k_0-2}}{\beta (f) \alpha (g)}.
\]
By Proposition~\ref{proposition interpolation of semistabilized classes}, i),
one has
\[
\spec^{\g}_{k_0}\left (\,_b\Zfrak_{f^*,\g^*}^{[k_0-2]} \right )=
\,_b\Zrm_{f^*,g_\alpha^*}^{[k_0-2]}.
\]
From Definition~\ref{definition second improved L-function} and Proposition~\ref{them:propertiestwovarPRlog} it follows that
\begin{multline}
\label{functional equation: formula 2}
L_p^{\mathrm{wt}}(\f^*,\g^*,k_0)=\Gamma (k_0-1)^{-1} B_b(\omega^{k_0-1},k_0,k_0,k_0)^{-1}
\left (1-\frac{\beta (f) \alpha (g)}{p^{k_0}} \right )
\times \\
\times
\left (1-\frac{p^{k_0-1}}{\beta (f) \alpha (g)} \right )^{-1}
\cdot
\left [\log \left (\,_b\Zrm_{f^*,g^*_\alpha}^{[k_0-2]}\right ), \spec^{\g}_{k_0}(\frak m_{f^*,\g^*}\otimes e_{-1})) \right ]_{\bN_{f^* ,g^*_\alpha }}.
\end{multline}
Since $C(f^*)=C(f),$ from (\ref{definition of the semistabilized generator m}) 
we have that 
\begin{equation}
\label{functional equation: formula 3}
\displaystyle\spec^{\g}_{k_0}(\frak m_{f^*,\g^*}\otimes e_{-1})=\frac{1}{C(f)\lambda_{N_f}(f^*)}n_{\alpha\beta}.
\end{equation}
 Proposition~\ref{proposition stabilization formulas} and (\ref{relation between BF and _bBF}) give
\begin{equation}
\label{functional equation: formula 4}
(\id, {\Pr}^\alpha_*) \left (\,_b\Zrm_{f^*,g^*_\alpha}^{[k_0-2]}\right )
=(b^2-\ep_f(b)\ep_g(b))\cdot 
\left (1-\frac{p^{k_0-1}}{\beta (f) \alpha (g)} \right )\cdot 
\left (1-\frac{p^{k_0-1}}{\alpha (f) \alpha (g)} \right ) \Zrm_{f^*,g^*}^{[k_0-2]}.
\end{equation}
Taking into account  (\ref{functional equation: formula 1}), (\ref{functional equation: formula 3}) and (\ref{functional equation: formula 4}), we can write   (\ref{functional equation: formula 2}) in the form
\begin{multline}
\label{computation of L(f*,g*)}
L_p^{\mathrm{wt}}(\f^*,\g^*,k_0)
=\frac{1}{(k_0-2)!\cdot C(f)G(\ep_f) G(\ep_g)}
\left (1-\frac{\beta (f) \alpha (g)}{p^{k_0}} \right )\times \\
\times
\left (1-\frac{p^{k_0-1}}{\alpha (f) \alpha (g)} \right ) 
\left [ \log \left (\Zrm_{f^*,g^*}^{[k_0-2]}\right ), n_{\alpha\beta}
\right ]_{\bN_{f^* ,g^*}}
.
\end{multline}
Now the theorem follows from (\ref{computation of L(f,g,k)}), (\ref{computation of L(f*,g*)}) and Corollary~\ref{functional equation for improved functions}.
\end{proof}
\begin{myremark}
\label{remark euler like factors}
{\rm
The explicit form of the  Euler-like factor $\CE (V_{f,g},D_{-1})$ is 
\[
\CE (V_{f,g},D_{-1})=\left (1-\frac {p^{k_0-1}}{\alpha (f) \alpha (g)} \right ) 
\left (1-\frac {\alpha (f) \beta (g)}{p^{k_0}} \right )
\left (1-\frac {\beta (f) \alpha (g)}{p^{k_0}} \right )
\left (1-\frac {\beta (f) \beta (g)}{p^{k_0}} \right ).
\]
}
\end{myremark}

\section{Extra zeros of Rankin--Selberg $L$-functions}

\subsection{The $p$-adic regulator}
\label{subsection Extra-zeros of $p$-adic Rankin--Selberg $L$-functions} 

\subsubsection{} In this section, we prove the main result of the paper. 
Let $f$ and $g$ be  two newforms of the same  weight $k_0\geqslant 2,$  levels $N_f$ and $N_g$ and nebentypus $\ep_f$ and $\ep_g$ respectively. Fix a prime number $p\geqslant 5$ such that $(p,N_fN_g)=1.$ As before, we denote by $\alpha (f)$ and 
$\beta (f)$ (respectively by $\alpha (g)$ and $\beta (g)$) the roots of the Hecke polynomial of $f$ (respectively $g$) at $p.$  We will always assume that  conditions {\bf M1-4)} hold, namely 

\begin{itemize}
\item[]{\bf M1)}  $\alpha (f)\neq \beta (f)$
and $\alpha (g)\neq \beta (g).$

\item[]{\bf M2)}  $v_p(\alpha (f))<k_0-1$ and $ v_p(\alpha (g)) <k_0-1.$

\item[]{\bf M3)} The characters  $\ep_f,$ $\ep_g$ and $\ep_f\ep_g$ are primitive modulo $N_f,$ $N_g$ and $\mathrm{lcm}(N_f,N_g)$ respectively.

%\item[]{\bf M4)}  $f\neq g^*.$
\end{itemize}
We make also the  following additional assumption which will allow us to apply
Theorem~\ref{theorem functional equation for zeta elements}:

\begin{itemize}
\item[]{\bf M4)} $\ep_f(p) \ep_g(p)\neq 1.$
\end{itemize}
We maintain the notation of Section~\ref{subsection zeta elements}. 
Let  $V_g=W_g(k_0)$ and $V_{f,g}=W_{f,g}(k_0).$ 
The two-dimensional $E$-subspace
\[
D=E\eta_f^\alpha \otimes \Dc (V_g)\subset \Dc (V_{f,g})
\]
is stable under the action of $\Ph .$ Let $\f$ and $\g$ be Coleman families passing through $f_\alpha$ anf $g_\alpha$
and let $L_p(\f,\g, \omega^{k_0}) (x,y,s)$ denote the three-variable
$p$-adic $L$-function. 

\begin{mydefinition} We define the one-variable $p$-adic $L$-function 
$L_{p,\alpha}(f,g,s)$ by
\begin{equation}
\nonumber
L_{p,\alpha}(f,g,s)=L_p(\f,\g, \omega^{k_0}) (k_0,k_0,s).
\end{equation} 
\end{mydefinition}

%\begin{myremark} 
Note that if $v_p (\beta (g)) < k_0-1$ and $\tilde\g$ denotes
the Coleman family passing through $g_\beta,$ then the density argument 
shows that
\[
L_p(\f,\g,\omega^{k_0})(x,k_0,s)=L_p(\f, {\tilde\g}, \omega^{k_0})(x,k_0,s),
\]
 and therefore our definition does not depend 
on the choice of the stabilization of $g$  (see \cite[Proposition~3.6.3]{BLLV}).
%\end{myremark}

The Euler-like factor (\ref{definition of Euler-like factor}) takes
the form
\begin{equation}
\nonumber
\CE (V_{f,g}, D)=
\left (1-\frac{p^{k_0-1}}{\alpha (f) \alpha (g)}\right )\cdot
 \left (1-\frac{p^{k_0-1}}{\alpha (f) \beta (g)}\right ) 
\cdot \left (1-\frac{\beta (f) \alpha(g)}{p^{k_0}}\right ) \cdot
  \left (1-\frac{\beta (f) \beta(g)}{p^{k_0}}\right ).
\end{equation}
The weight argument shows that only the first two factors of this product 
can vanish and that they can not vanish simultaneously. Exchanging 
$\alpha (g)$ and $\beta (g)$ if necessary,  without loss of generality we can   assume that 
\begin{itemize}
\item[]{\bf M5)}
$\alpha (f) \alpha (g) \neq p^{k_0-1}.$ 
%and $\beta (f) \alpha (g) \neq p^{k_0-1}.$
\end{itemize}

\subsubsection{} Let $\BFrm_{f^*,g^*}^{[k_0-2]}\in H^1_S(\Q, W_{f^*,g^*}^*(2-k_0))$
denote the element of Beilinson--Flach associated to the forms $f^*,$ $g^*$
(see Definition~\ref{definition of Beilinson-Flach}). Using the  canonical isomorphism $W_{f^*,g^*}^*(2-k_0)\simeq V_{f,g}$ we can  consider 
it as an element 
\[ 
\BFrm_{f^*,g^*}^{[k_0-2]}\in H^1_S(\Q, V_{f,g}).
\] 
For any prime $l,$ we denote by 
$\res_l \left (\BFrm_{f^*,g^*}^{[k_0-2]}\right ) \in H^1 (\Q_l,V_{f,g})$ the localization of this element at $l.$

\begin{mylemma}
\label{lemma properties Beilinson-Flach in H}
 The following holds true:

1) $\res_p \left (\BFrm_{f^*,g^*}^{[k_0-2]}\right ) \in H^1_f (\Qp,V_{f,g}).$ 

2) Assume that for each prime divisor $l \mid N_fN_g$ the factorization 
of $N_f$ or $N_g$ contains $l$ with  multiplicity $1.$ Then 
\[
\BFrm_{f^*,g^*}^{[k_0-2]}\in  H^1_{f,\{p\}} (\Q ,V_{f,g}).
\]
\end{mylemma}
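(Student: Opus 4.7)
The plan is to handle the two parts separately, relying heavily on existing results in the literature on Beilinson--Flach elements.

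For part (1), I would invoke \cite[Proposition~5.4.1]{KLZb}, which is an application of Besser's syntomic interpretation \cite{Bes00} of Beilinson--Flach classes. The crystalline condition $H^1_f(\Q_p, V_{f,g})$ is satisfied because the Rankin--Iwasawa classes $\,_b\RI_N^{[j]}$ are constructed from the Eisenstein classes $\,_b\EI_N$ pushed forward along the diagonal embedding of $Y_1(N)$, and these motivic ingredients have good reduction at $p$ since $(p, N_fN_g) = 1$. The image of the \'etale regulator of such classes is always crystalline by the comparison between syntomic and \'etale cohomology for schemes with good reduction. Nothing in the argument uses part (2), so this step is essentially a citation.

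For part (2), the task is to show that $\res_l(\BFrm_{f^*,g^*}^{[k_0-2]}) \in H^1_f(\Q_l, V_{f,g})$ for every prime $l \mid N_fN_g$. By the definition (\ref{Bloch-Kato local conditions}), this means proving the localization is unramified at $l$. My plan is to fix such an $l$ and argue, without loss of generality by the symmetry $f \leftrightarrow g$, that $v_l(N_f) = 1$. Then by the Deligne--Langlands--Carayol theorem, the local representation $W_f\vert_{G_{\Q_l}}$ is a principal series whose restriction to inertia is of the form $\ep_f\vert_{I_l} \oplus 1$ (after a suitable unramified twist), and in particular $\dim W_f^{I_l} = 1$. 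Tensoring with $W_g$ and twisting, one deduces that $V_{f,g}^{I_l}$ has rank exactly $2$ and that the quotient $V_{f,g}/V_{f,g}^{I_l}$ contains no invariants under Frobenius of weight $0$.

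The next step is to trace the Beilinson--Flach construction through its effect on cohomology at $l$. Since the Rankin--Iwasawa classes are constructed geometrically on $Y_1(N)^2$ with $N = \mathrm{lcm}(N_f, N_g)$, and are then pushed to the $(f,g)$-isotypic quotient, their localization at $l$ is controlled by the local monodromy filtration on $V_{f,g}$. Under the multiplicity-one hypothesis on $l$, this filtration splits (on inertia invariants) in the required way, so that the restriction $\res_l(\BFrm_{f^*,g^*}^{[k_0-2]})$ automatically lies in the unramified part $H^1(\Q_l^{\mathrm{ur}}/\Q_l, V_{f,g}^{I_l}) = H^1_f(\Q_l, V_{f,g})$.

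The main obstacle will be the last step: making the local analysis at bad primes rigorous requires careful bookkeeping of how multiplicity one at $l$ propagates through the tensor product and the diagonal push-forward, and how it interacts with the projection to the $(f^*,g^*)$-isotypic component. Arguments of this flavour appear in \cite{LLZ14} and \cite[\S5]{KLZ}; once assembled, they reduce the statement to a calculation on the local Galois structure that is controlled by Deligne--Langlands--Carayol.
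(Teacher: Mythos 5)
Part (1) of your proposal matches the paper exactly: both cite \cite[Proposition~5.4.1]{KLZb} and Besser's syntomic interpretation. Nothing more to say there.

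For part (2), you take a genuinely different and substantially harder route, and it is worth understanding why the paper's argument is both shorter and more robust. You propose to prove directly that the localization $\res_l\left (\BFrm_{f^*,g^*}^{[k_0-2]}\right)$ lands in the unramified subspace $H^1_f(\Q_l,V_{f,g})$, by tracing the Beilinson--Flach construction through the geometry at the bad prime $l$ and analyzing the monodromy filtration. You correctly identify this tracing step as the main obstacle; it is indeed difficult and delicate, because it is a statement about the specific class rather than about the coefficient module. The paper avoids all of this by proving that the local condition at $l$ is vacuous: for $l\neq p$ one has the standard dimension count
\[
\dim_E H^1(\Q_l,V) - \dim_E H^1_f(\Q_l,V) = \dim_E H^0(\Q_l,V^*(1)),
\]
so $H^1_f(\Q_l,V_{f,g}) = H^1(\Q_l,V_{f,g})$ as soon as $H^0(\Q_l,V_{f,g}^*(1))=0$. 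The multiplicity-one hypothesis, combined with Saito's theorem on the weight--monodromy conjecture for modular forms \cite{Sa00}, forces $H^0(\Q_l,V_{f,g}^*(1))=0$, because the Frobenius weights on $(W_{f,g}^*(1))^{I_l}$ are constrained away from $0$. Once this is known, \emph{every} class in $H^1(\Q_l,V_{f,g})$ lies in $H^1_f(\Q_l,V_{f,g})$, and there is nothing to verify about the Beilinson--Flach element itself.

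Interestingly, you get quite close: your remark that ``$V_{f,g}/V_{f,g}^{I_l}$ contains no invariants under Frobenius of weight $0$'' is exactly the sort of weight statement that (dualized) gives $H^0(\Q_l,V_{f,g}^*(1))=0$. But instead of feeding this into the dimension count to conclude $H^1_f = H^1$, you pivot back to trying to track the class through the geometry, which turns an easy observation into an open-ended computation. Two further cautions: (a) your description of $W_f\vert_{I_l}$ as $\ep_f\vert_{I_l}\oplus 1$ holds in the ramified principal series case but not in the Steinberg/special case, where the inertia action is unipotent and non-semisimple; and (b) invoking ``the comparison between syntomic and \'etale cohomology for schemes with good reduction'' is not quite accurate in part (1) either --- the relevant input is Besser's syntomic regulator and Nekov\'a\v{r}--Nizio\l's theory, as packaged in \cite[Proposition~5.4.1]{KLZb} --- though this is a point of phrasing rather than substance.
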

\begin{proof} The first statement is proved in \cite[Proposition~5.4.1]{KLZb}
and was already mentioned in Section~\ref{subsection Local properties of Beilinson--Flach elements}. The second statement follows from the fact that 
$H^1_{f} (\Q_l ,V_{f,g})=H^1 (\Q_l ,V_{f,g})$ if and only if 
$H^0(\Q_l ,V^*_{f,g}(1))=0$ and the monodromy-weight conjecture
for modular forms \cite{Sa00}.
\end{proof}

\subsubsection{} Recall that $\Dc (V_{f,g})$ is equipped with the filtration 
(\ref{filtration on Dcris}). We denote by 
\linebreak
$\left (F_i\Ddagrig (V_{f,g})\right )_{i=-2}^2$ the associated triangulation of $\Ddagrig (V_{f,g}).$ 
The eigenvector $d_{\beta\alpha}=\omega_f^{\beta}\otimes\eta_g^{\alpha}\otimes e_{k_0}$ defined in (\ref{canonical basis of Dc (V)})
is a canonical basis of $\CDcris \left (\gr_1\Ddagrig (V_{f,g})\right ).$  
As in Section~\ref{subsection zeta elements}, we denote by 
\[
\log \,:\, H^1 \left (\gr_1\Ddagrig (V_{f,g})\right ) 
\rightarrow \CDcris  \left (\gr_1\Ddagrig (V_{f,g})\right )
\]
the logarithm map of Bloch and Kato. 
Let
\[
\Zrm_{f^*,g^*}^{[k_0-2]}\in H^1\left (\gr_1\Ddagrig (V_{f,g})\right )
\]
be the image of $\res_p \left (\BFrm_{f^*,g^*}^{[k_0-2]}\right )$ under the canonical projection  
\[
H^1(\Qp, V_{f,g}) \rightarrow H^1\left (\Ddagrig (V_{f,g})/
F_0\Ddagrig (V_{f,g})\right )
\] 
(see Section~\ref{subsubsection localisation of Beilinson-Flach}).
Denote by $\widetilde R_p  \left (V_{f,g}, D \right )   \in E$ the unique 
element of $E$ such that 
\begin{equation}
\nonumber
\log \left (\Zrm_{f^*,g^*}^{[k_0-2]}\right )= 
\widetilde R_p  \left (V_{f,g}, D \right ) \cdot
d_{\beta\alpha} .
\end{equation}
Since  $d_{\beta\alpha}\in \CDcris  \left (\gr_1\Ddagrig (V_{f,g})\right )$ is the dual basis of 
\[
n_{\alpha \beta}\in  \CDcris  \left (\bN_{f^*,g^*}\right )
\simeq
\CDcris  \left (\gr_0\Ddagrig (V_{f,g}^*(1))\right ) 
\]
(see Section~\ref{subsection zeta elements}),   we have
\begin{equation}
\label{regulator as pairing}
\widetilde R_p  \left (V_{f,g}, D \right )= 
\left [\log \left (\Zrm_{f^*,g^*}^{[k_0-2]}\right >, n_{\alpha \beta} \right ]_{\bN_{f^*,g^*}}.
\end{equation}

Let $\eta_g\in \Dc (W_g)$ be any vector such that $[\eta_g,\omega_{g^*}]=e_{1-k_0}.$
Set $b=\omega_f\otimes \eta_g\otimes e_{k_0}\in \Dc (V_{f,g}).$
Then the class
\[
\overline b_\alpha=b \mod{\left (\F^0\Dc (V_{f,g})+D \right )}
\]
is nonzero, does not depend on the choice of $\eta_g$ and therefore gives  a canonical basis
of the one-dimensional vecor space  $\Dc (V_{f,g})/\left (\F^0\Dc (V_{f,g})+D \right ).$ 

\begin{myproposition} 
\label{proposition about regulator}
1) The representation $V_{f,g}$ satisfies
 conditions {\bf C1-3)} of Section~\ref{subsection regilar submodules}.

2) Assume  that the representation $V_{f,g}$ satisfies conditions {\bf C4-5)},
namely that $H^1_f(\Q, V_{f,g}^*(1))=0$ and the localization map 
$H^1_f(\Q, V_{f,g})\rightarrow  H^1_f(\Qp, V_{f,g})$  is injective.
%b) $\res_p \left (\BFrm_{f^*,g^*}^{[k_0-2]}\right )\in H^1_f(\Qp, V_{f,g}).$
%\newline
Then 

i) $D$ is a regular submodule if and only if  $\Zrm_{f^*,g^*}^{[k_0-2]}\neq 0.$

ii) If that is the case, then $\widetilde R_p  \left (V_{f,g}, D \right )    $ coincides
with the determinant of the regulator map 
\[
H^1_f(\Q, V_{f,g}) \rightarrow \Dc (V_{f,g})/\left (\F^0\Dc (V_{f,g})+D \right )
\]
computed in the bases 
$\BFrm_{f^*,g^*}^{[k_0-2]}\in H^1_f(\Q, V_{f,g})
$
and 
$
\overline{b}_{\alpha}\in  \Dc (V_{f,g})/(\F^0\Dc (V_{f,g})+D).
$
\end{myproposition}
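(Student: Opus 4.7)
The plan is to check Part~1 by direct verification against the properties of modular Galois representations recalled in Section~\ref{subsection p-adic representations}, and to establish Part~2 by showing that the ad hoc regulator $\widetilde R_p(V_{f,g},D)$ defined via the $(\Ph,\Gamma)$-module $\gr_1\Ddagrig(V_{f,g})$ agrees with the Bloch--Kato regulator map on $H^1_f(\Q,V_{f,g})$. For C1, I use that the tensor product of the irreducible Galois representations attached to two non-equivalent newforms has no Galois invariants (and the Kummer dual analogously), which holds here since $\ep_f\ep_g\neq 1$ by M3 rules out $f=g^*$. For C2, the crystalline property follows from $(p,N_fN_g)=1$, and the Frobenius eigenvalues on $\Dc(V_{f,g})$ are the products $\lambda_f\lambda_g/p^{k_0}$, each of complex absolute value $p^{-1}$ by Ramanujan, hence never equal to $1$. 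For C3, M1 ensures that any multiplicity in the eigenvalue $p^{-1}$ of $\Ph$ on $\Dc(V_{f,g})$ comes from distinct tensor-product eigenvectors, so the corresponding eigenspace decomposes into a direct sum of lines.

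For Part~2, first I would verify $D\cap\F^0\Dc(V_{f,g})=0$: the line $\F^0\Dc(V_{f,g})=E\,\omega_f\otimes\omega_g\otimes e_{k_0}$ lies in $D=E\eta_f^\alpha\otimes\Dc(V_g)$ only if $\omega_f\in E\eta_f^\alpha$, which contradicts the non-splitting assumption from Section~\ref{subsubsection conditions C}. The inclusion $D_1\hookrightarrow\Dc(V_{f,g})$ then induces a natural isomorphism of one-dimensional spaces
\[
\phi\,:\,\CDcris(\gr_1\Ddagrig(V_{f,g}))=D_1/D_0\xrightarrow{\sim}\Dc(V_{f,g})/(D+\F^0\Dc(V_{f,g})),
\]
and a direct computation using the decomposition $\omega_f=a\eta_f^\alpha+\omega_f^\beta$ with the choice $\eta_g=\eta_g^\alpha$ (valid since $[\eta_g^\alpha,\omega_{g^*}]=e_{1-k_0}$) yields $\phi(d_{\beta\alpha})=\overline b_\alpha$.

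The crux is the commutativity of the diagram
\[
\xymatrix@C=18pt{
H^1_f(\Qp,V_{f,g})\ar[r]^(.45){\log_V}\ar[d] & \Dc(V_{f,g})/\F^0\Dc(V_{f,g})\ar[d] \\
H^1(\gr_1\Ddagrig(V_{f,g}))\ar[r]^(.5){\log} & \CDcris(\gr_1\Ddagrig(V_{f,g})),
}
\]
where the left vertical arrow is the projection onto $H^1(\Ddagrig(V_{f,g})/F_0\Ddagrig(V_{f,g}))$ followed by the splitting onto $H^1(\gr_1\Ddagrig(V_{f,g}))$ supplied by Corollary~\ref{corollary about Zrm}, and the right vertical arrow is the canonical projection composed with $\phi^{-1}$. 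This commutativity is a consequence of the functoriality of the Bloch--Kato exponential for morphisms of crystalline $(\Ph,\Gamma)$-modules \cite{Ben11}. Combined with $\phi(d_{\beta\alpha})=\overline b_\alpha$ and the pairing formula~(\ref{regulator as pairing}), it yields $r_{V,D}(\BFrm_{f^*,g^*}^{[k_0-2]})=\widetilde R_p(V_{f,g},D)\cdot\overline b_\alpha$, which proves~ii).

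For~i), regularity of $D$ requires $r_{V,D}$ to be an isomorphism between one-dimensional spaces. Under C4--5, the modified Poitou--Tate sequence~(\ref{modified Poitou-Tate sequence}) together with a dimension count based on the local structure of $V_{f,g}$ force $\dim_E H^1_f(\Q,V_{f,g})\leq 1$, and Lemma~\ref{lemma properties Beilinson-Flach in H} places $\BFrm_{f^*,g^*}^{[k_0-2]}$ inside $H^1_f(\Q,V_{f,g})$; regularity is therefore equivalent to $r_{V,D}(\BFrm_{f^*,g^*}^{[k_0-2]})\neq 0$, which by Step~ii) is equivalent to $\widetilde R_p(V_{f,g},D)\neq 0$, hence to $\Zrm_{f^*,g^*}^{[k_0-2]}\neq 0$. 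The main obstacle will be verifying the commutativity of the log diagram and the explicit identification $\phi(d_{\beta\alpha})=\overline b_\alpha$ with full rigor, which requires careful bookkeeping with the canonical bases of the various Dieudonn\'e modules and the local duality pairings.
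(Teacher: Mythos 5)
Your proposal is correct and follows essentially the same route as the paper: Part~1 via the weight argument ($\Dc(V_{f,g})^{\Ph=1}=0$ by Ramanujan, $H^0(\Q,V_{f,g})=0$), the observation $V_{f,g}^*(1)\simeq\Hom_E(W_f,W_{g^*})$ combined with $f\neq g^*$, and semisimplicity from \textbf{M1}; Part~2 via the congruences giving $\overline b_\alpha=\overline d_{\beta\alpha}$ (your choice $\eta_g=\eta_g^\alpha$ is equivalent to the paper's use of $\eta_g\equiv\eta_g^\alpha\pmod{\F^{k_0-1}\Dc(W_g)}$ and $\omega_f\equiv\omega_f^\beta\pmod{\Dc(W_f)^{\Ph=\alpha(f)}}$), together with functoriality of the Bloch--Kato exponential. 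You are more explicit than the paper's laconic ``follows directly from the definition of the regulator map,'' which is the right instinct: the precise justification is that $\res_p\left(\BFrm_{f^*,g^*}^{[k_0-2]}\right)$ lifts to $H^1_f\left(F_1\Ddagrig(V_{f,g})\right)$ because its image in $H^1\left(\gr_2\Ddagrig(V_{f,g})\right)$ vanishes and $H^0\left(\gr_2\Ddagrig(V_{f,g})\right)=0$, and one then applies functoriality for the two morphisms $F_1\Ddagrig(V_{f,g})\to\Ddagrig(V_{f,g})$ and $F_1\Ddagrig(V_{f,g})\to\gr_1\Ddagrig(V_{f,g})$, using $D_1\cap\F^0\Dc(V_{f,g})=0$ and $D_1\cap\left(\F^0\Dc(V_{f,g})+D\right)=D_0$ to identify the targets. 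One step to firm up for part~i): the modified Poitou--Tate sequence alone does not bound $\dim_E H^1_f(\Q,V_{f,g})$; you need the global Euler--Poincar\'e characteristic formula, which under \textbf{C1} and \textbf{C4} gives $\dim_E H^1_f(\Q,V_{f,g})=\dim_E t_{V_{f,g}}(\Qp)-d^+(V_{f,g})=3-2=1$, after which regularity of $D$ reduces to the nonvanishing of $r_{V,D}$ on $\BFrm_{f^*,g^*}^{[k_0-2]}$, hence to $\Zrm_{f^*,g^*}^{[k_0-2]}\neq 0$ by part~ii).
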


\begin{proof}
1) The weight argument shows that $\Dc (V_{f,g})^{\Ph=1}=0$ and  $H^0(\Q, V_{f,g})=0.$ Since $V_{f,g}^*(1)=\Hom_E ( W_f,W_{g^*})$ and $f\neq g^*,$ 
we obtain that $H^0(\Q, V_{f,g})=\Hom_{E[G_{\Qp}]} ( W_f,W_{g^*})=0.$
The semisimplicity of $\Ph$ follows from {\bf M1)}. 

2) From the congruences $\eta_g\equiv \eta_g^{\alpha}\pmod{\F^{k_0-1}\Dc (W_{g})}$
and $\omega_f\equiv \omega_f^{\beta}\mod{\Dc (W_f)^{\Ph= \alpha (f)}}$
it is easy to see that $\overline{b}_\alpha =\overline{d}_{\beta\alpha}.$
Now the second statement follows directly from the definition of the regulator
map.
\end{proof}

%\begin{myremark} If $\Dc (V_{f,g})^{\Ph=p^{-1}}=0,$ then 
%$
%\res_p \left (\BFrm_{f^*,g^*}^{[k_0-2]}\right )\in H^1_f(\Qp, V_{f,g})
%$
% by Lemma~\ref{lemma properties Beilinson-Flach in H}, a).
%However, $\Dc (V_{f,g})^{\Ph=p^{-1}}\neq 0$ in the presence of extra zeros.
%\end{myremark}

\subsection{The $\mathcal L$-invariant}
\subsubsection{} 
Set 
\[
\,_b\widetilde\BFrm_{f,g}^{[k_0-1]}=
\left ({\Pr}^\alpha_*, {\Pr}^\alpha_*\right )\circ
\spec^{\f,\g,c}_{k_0,k_0,1-k_0}\left (
\,_b\BF_{\f,\g}^{\Iw}\right ) \in H^1_S(\Q, V^*_{f,g}(1)),
\]
where $\,_b\BF_{\f,\g}^{\Iw}$ is the class in Iwasawa cohomology constructed
in  Proposition~\ref{proposition specialization of two variable Beilinson Flach elements}. Note that, unlike  $\,_b\BFrm_{f^*,g^*}^{[k_0-2]},$
the element $\,_b\widetilde\BFrm_{f,g}^{[k_0-1]}$ is not a proper Beilinson--Flach class and its construction involves $p$-adic interpolation. 

\begin{mylemma} For all primes $l\neq p$ we have 
\[
\res_l\left (\,_b\widetilde\BFrm_{f,g}^{[k_0-1]}\right )\in H^1_f(\Q_l,
V^*_{f,g}(1)),
\]
and therefore 
\[
\,_b\widetilde\BFrm_{f,g}^{[k_0-1]}\in H^1_{f,\{p\}}(\Q, V^*_{f,g}(1)).
\] 
\end{mylemma}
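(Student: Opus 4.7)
The plan is to split into two cases based on whether $l \in S$ or not. For $l \notin S$, the Iwasawa class $\,_b\BF^{\Iw}_{\f,\g}$ lies in $H^1_{\Iw,S}(\Q, W^*_{\f,\g})\otimes_\Lambda \CH_E^{[\lambda]}(\Gamma)$ by Proposition \ref{proposition specialization of two variable Beilinson Flach elements}, so all of its finite-level representatives are unramified at $l$. This property is preserved by every operation used to form $\,_b\widetilde\BFrm^{[k_0-1]}_{f,g}$: the weight specialization $\spec^{\f,\g}_{k_0,k_0}$, the pushforward $(\Pr^{\alpha}_*,\Pr^{\alpha}_*)$, and the cyclotomic specialization $\spec^{c}_{1-k_0}$. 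Since $l \neq p$, being unramified at $l$ is precisely the Bloch--Kato finite condition by (\ref{Bloch-Kato local conditions}), so the claim is immediate in this case.

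For $l \in S \setminus \{p\}$, the argument relies on the following Iwasawa-theoretic observation: for any prime $l \neq p$, any $p$-adic representation $T$ of $G_{\Q_l}$ and any continuous character $\eta : \Gamma \to O_E^*$, the cyclotomic specialization
\[
\spec^{c}_{\eta}\,:\, H^1_{\Iw}(\Q_l, T)\otimes_{\Lambda}\CH_E(\Gamma)\ \longrightarrow\ H^1(\Q_l, T(\eta))
\]
factors through the unramified subspace $H^1_{\ur}(\Q_l, T(\eta)) = H^1_f(\Q_l, T(\eta))$. This is a consequence of the fact that for $l \neq p$ the cyclotomic $\Z_p$-extension $\Q_l^{\cyc}/\Q_l$ is unramified, so local Iwasawa cohomology at $l$ reduces, via Shapiro's lemma, to cohomology along an unramified tower, and cyclotomic specialization becomes corestriction from an unramified extension, which necessarily lands in the unramified subgroup. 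Applied to the restriction at $l$ of $\spec^{\f,\g}_{k_0,k_0}(\,_b\BF^{\Iw}_{\f,\g})$, pushed forward by $(\Pr^{\alpha}_*,\Pr^{\alpha}_*)$ and evaluated at $\eta = \chi^{1-k_0}$, this gives the desired conclusion.

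The main technical obstacle is the clean formulation and verification of the Iwasawa-theoretic fact used in the second case; although standard, it requires careful tracking of the $\CH_E^{[\lambda]}(\Gamma)$-module structure and the compatibility of specialization with restriction to inertia. A more elementary density argument---using that the classical Beilinson--Flach classes $\,_b\BFrm^{[j]}_{f,g}$ for $0 \leqslant j \leqslant k_0-2$ have restrictions at $l$ in $H^1_f$ by \cite[Proposition~5.4.1]{KLZb}, and trying to extend this to $j = k_0-1$ by continuity---would fail here, because the interpolation range $\{0,1,\dots,k_0-2\}$ contains only finitely many integers and has no accumulation point on the cyclotomic weight space. The Iwasawa approach bypasses this by working uniformly in the character $\eta$.
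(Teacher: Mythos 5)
Your overall strategy matches the paper's: everything reduces to the fact that for $l\neq p$ the image of the projection $H^1_{\Iw}(\Q_l,V^*_{f,g}(1))\to H^1(\Q_l,V^*_{f,g}(1))$ is contained in $H^1_f(\Q_l,V^*_{f,g}(1))$. The paper simply cites this from Perrin-Riou \cite[Section~2.1.7]{PR92} and stops there; your case split between $l\notin S$ and $l\in S\setminus\{p\}$ is harmless but unnecessary, since the $l\neq p$ argument already covers both.

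However, your heuristic justification of the key fact contains a genuine error. You assert that ``cyclotomic specialization becomes corestriction from an unramified extension, which necessarily lands in the unramified subgroup.'' This is false at any finite level. If $L/\Q_l$ is a finite unramified extension, the double coset (Mackey) formula gives, for $x\in H^1(L,T)$,
\[
\res_{I_l}\bigl(\cores^{L}_{\Q_l}(x)\bigr)\ =\ \sum_{g\in\Gal(L/\Q_l)} g\cdot\res_{I_l}(x),
\]
i.e.\ the restriction to inertia of $\cores(x)$ is the $\Gal(L/\Q_l)$-trace of $\res_{I_l}(x)$, which need not vanish. Thus corestriction from a single unramified layer does \emph{not} in general land in $H^1_{\ur}(\Q_l,T)$. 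What is true is that the image of the projection from the full inverse limit $H^1_{\Iw}(\Q_l,T)=\varprojlim_n H^1(\Q_l(\zeta_{p^n}),T)$ lands in $H^1_f$ for $l\neq p$: a norm-compatible system forces $\res_{I_l}(x_0)$ to lie in $\bigcap_n \mathrm{tr}_{\Gamma/\Gamma^{p^n}}H^1(I_l,T)$, and one must argue that this intersection vanishes. That is precisely the content of the Perrin-Riou (or Rubin) result, and it is a real theorem, not a formal consequence of unramifiedness. You should either invoke that reference directly, as the paper does, or replace the single-layer corestriction argument with the universal-norms argument along the whole tower. As written, the step you flag as ``the main technical obstacle'' is exactly where the proof breaks.
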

\begin{proof} By \cite[Section~2.1.7]{PR92}, the image of the projection map
\[
H^1_{\Iw}(\Q_l, V^*_{f,g}(1))\rightarrow H^1(\Q_l, V^*_{f,g}(1))
\]
is contained in $H^1_{f}(\Q_l, V^*_{f,g}(1)).$ This implies the lemma.
\end{proof}

Choose $b$ such that $\ep_f(b)\ep_g(b)\neq 1.$ The element $\,_b\widetilde\Zrm_{f,g}^{[k_0-1]}$ constructed in Section~\ref{subsection zeta elements} is the image of $\,_b\widetilde\BFrm_{f,g}^{[k_0-1]}$
under 
the composition  
\[
H^1_S(\Q, V^*_{f,g}(1))\xrightarrow{\res_p} H^1(\Qp, V^*_{f,g}(1))
\rightarrow H^1 \left (\Ddagrig (V_{f,g}^*(1))/F_0\Ddagrig (V_{f,g}^*(1))
\right ). 
\]
From Proposition~\ref{proposition local property of  two-variable BF} it follows that
\[
\,_b\widetilde\Zrm_{f,g}^{[k_0-1]}\in H^1\left (\gr_1 \Ddagrig (V_{f,g}^*(1))
\right ).
%\qquad \bM_{f,g}^*(\chi)\simeq \gr_1 \Ddagrig (V_{f,g}^*(1)).
\]
Note  that $\Ph$ acts on $\CDcris \left (\gr_1 \Ddagrig (V_{f,g}^*(1))
\right )$ as multiplication
by $\displaystyle\frac{p^{k_0-1}}{\alpha (f) \beta (g)}.$

\subsubsection{} 
To simplify notation, we set $\bM_{f,g}=\gr_0\Ddagrig (V_{f,g})$
(see Section~\ref{subsection zeta elements}). 
Then $\bM_{f,g}^*(\chi)\simeq \gr_1\Ddagrig (V_{f,g}^*(1)).$
Assume that $\alpha (f) \beta (g)=p^{k_0-1}.$ Then 
formulas (\ref{formulas for phi action on M}) show that $\bM_{f,g}\simeq \CR_E(\chi)$ and, dually, 
$\bM_{f,g}^*(\chi)=\CR_E.$ Therefore $\bM_{f,g}^*(\chi)$
is the $(\Ph,\Gamma)$-module associated to the trivial representation, and 
we have canonical isomorphisms $\CDcris (\bM_{f,g}^*(\chi))\simeq E$ and $H^1(\bM_{f,g}^*(\chi))\simeq H^1(\Qp,E).$
Clearly,  $\bM_{f,g}^*(\chi)$ satisfies  condition
(\ref{conditions for exceptional (phi,Gamma)-module})
and the decomposition (\ref{construction of i_(W^*(chi))})
has the following  interpretation in terms of Galois cohomology. 
Let $\mathrm{ord} \,:\,\Gal (\Qp^{\textrm{ur}}/\Qp) \rightarrow \Zp$
denote the unramified character defined by $\mathrm{ord} (\Fr_p)=-1,$
where $\Fr_p$ is the geometric Frobenius. Denote by $\log \chi$  the logarithm
of the cyclotomic character viewed as an additive character of the whole 
Galois group. We have a commuative diagram 
\[
\xymatrix{
\CDcris (\bM_{f,g}^*(\chi))\oplus \CDcris (\bM_{f,g}^*(\chi))
\ar[d]^{=}
\ar[rr]^(.6){i_{\bM_{f,g}^*(\chi)}} & &H^1(\bM_{f,g}^*(\chi)) \ar[d]^{=}\\
E\oplus E\ar[rr] & & H^1(\Qp,E),
}
\]
where the bottom horizontal arrow is given by $(x,y)\mapsto x\cdot \mathrm{ord}+y\cdot \log \chi .$ This follows  from the explicit description of the Galois cohomology in terms of $(\Ph,\Gamma)$-modules (see \cite[Proposition~1.3.2]{Ben00}
or \cite[Proposition~I.4.1]{CC99}). Under the right vertical map, the subspaces $H^1_f(\bM_{f,g}^*(\chi))$ and $H^1_c(\bM_{f,g}^*(\chi))$ are mapped onto 
the subspaces generated by the characters $\mathrm{ord}$ and $\log \chi$ respectively. We refer the reader to \cite[Section~1.5]{Ben11}
for further comments.

\begin{mydefinition} 
\label{definition of ad hoc L-invariant}
Assume that $\alpha (f) \beta (g)=p^{k_0-1}$ and 
$\,_b\widetilde\Zrm_{f,g}^{[k_0-1]}\notin H^1_f(\bM_{f,g}^*(\chi)).$ Then 
\[
\,_b\widetilde\Zrm_{f,g}^{[k_0-1]}= A \cdot \mathrm{ord}_p +B\cdot \log \chi
\]
for unique $A,B\in E$ such that $B\neq 0,$  and we define 
\[
\widetilde{\mathscr L} (V_{f,g}, D)=A/B.
\]

\end{mydefinition}

\begin{myproposition} Assume that the following holds:
\begin{itemize}

\item[]{a)} $\alpha (f)\beta (g)=p^{k_0-1}.$

\item[]{b)} The representation $V_{f,g}$ satisfies  conditions {\bf C4-5)}
of Section~\ref{subsection regilar submodules}.

\item[]{c)}  $Z_{f^*,g^*}^{[k_0-2]}\neq 0.$
\end{itemize}

Then $\widetilde{\mathcal L} (V_{f,g}, D)=\mathcal L(V_{f,g}, D),$
where $\mathcal L(V_{f,g}, D)$ is the invariant defined in Section~\ref{The L-invariant}.
\end{myproposition}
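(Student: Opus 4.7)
The plan is to identify both invariants with the ratio $A/B$, where $\,_b\widetilde\Zrm_{f,g}^{[k_0-1]} = A\cdot\mathrm{ord}_p + B\cdot\log\chi$, by realizing the semistabilized Beilinson--Flach class as a canonical generator of the space $H^1(D^\perp, V_{f,g}^*(1))$ underlying the construction of $\mathcal L(V_{f,g}^*(1), D^\perp)$ in Section~\ref{subsection second definition of L-invariant}. By Proposition~\ref{proposition comparision l-inv} with $e=\dim_E\CDcris(\bM_{f,g}^*(\chi))=1$, we have $\mathcal L(V_{f,g}, D) = -\mathcal L(V_{f,g}^*(1), D^\perp)$, so it suffices to show $\mathcal L(V_{f,g}^*(1), D^\perp) = -A/B$.

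I begin by producing a canonical generator of $H^1(D^\perp, V_{f,g}^*(1))$. Set $\widetilde\BFrm_{f,g}^{[k_0-1]} := b^{-2}(1-\ep_f(b)\ep_g(b))^{-1}\,_b\widetilde\BFrm_{f,g}^{[k_0-1]}$. The Lemma preceding Definition~\ref{definition of ad hoc L-invariant} places this class in $H^1_{f,\{p\}}(\Q, V_{f,g}^*(1))$. By Proposition~\ref{proposition local property of  two-variable BF} and the injectivity of $H^1(\gr_1\Ddagrig(V_{f,g}^*(1))) \hookrightarrow H^1(\Ddagrig(V_{f,g}^*(1))/F_0)$ (Lemma~\ref{lemma exact sequence triangulation}), the image of $\res_p(\widetilde\BFrm_{f,g}^{[k_0-1]})$ in $H^1(\Ddagrig(V_{f,g}^*(1))/F_0)$ lies in the subspace $H^1(\bM_{f,g}^*(\chi))$ and equals $\widetilde\Zrm_{f,g}^{[k_0-1]}$. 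Via the isomorphism (\ref{dual L-invariant first isomorphism}), this places $\widetilde\BFrm_{f,g}^{[k_0-1]}$ in $H^1(D^\perp, V_{f,g}^*(1))$ with image the class $[\widetilde\Zrm_{f,g}^{[k_0-1]}]$ modulo $H^1_f(\bM^*(\chi))$.

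The nonvanishing of this class (equivalently $B\neq 0$, which underpins the well-definedness of $\widetilde{\mathcal L}$) follows from c): under a) and {\bf M4)}, a short computation using $\alpha(f)\beta(f) = \ep_f(p)p^{k_0-1}$ yields $\beta(f)\alpha(g) = \ep_f(p)\ep_g(p)p^{k_0-1}\neq p^{k_0-1}$, so Theorem~\ref{theorem functional equation for zeta elements} applies. Its right-hand side is nonzero because $\Zrm_{f^*,g^*}^{[k_0-2]}\neq 0$ lies in $H^1_f(\gr_1\Ddagrig(V_{f,g}))$ by Corollary~\ref{corollary about Zrm}, whence $\log(\Zrm_{f^*,g^*}^{[k_0-2]})\neq 0$, and the Euler factor $\CE(V_{f,g}, D_{-1})$ is a product of factors nonvanishing under our hypotheses. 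By (\ref{local duality for exceptional modules}) applied to the exceptional module $\bM_{f,g}^*(\chi)$, the left-hand side pairing $\langle\widetilde\Zrm_{f,g}^{[k_0-1]}, \exp(d_{\alpha\beta})\rangle_{\bM_{f,g}}$ is proportional to $B$; hence $B\neq 0$ and $\widetilde\BFrm_{f,g}^{[k_0-1]}$ generates the one-dimensional space $H^1(D^\perp, V_{f,g}^*(1))$.

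To finish, the canonical decomposition (\ref{decomposition of W^*(chi)}) of $H^1(\bM_{f,g}^*(\chi))$ matches, via the Fontaine--Herr description of $H^1$ of the trivial $(\Ph,\Gamma)$-module, the Galois-theoretic decomposition $H^1(\Qp, E) = E\cdot\mathrm{ord}_p\oplus E\cdot\log\chi$ used in Definition~\ref{definition of ad hoc L-invariant}, the canonical generator of $\CDcris(\bM_{f,g}^*(\chi))$ corresponding to $1\in E$. Taking $\widetilde\BFrm_{f,g}^{[k_0-1]}$ as basis of $H^1(D^\perp, V_{f,g}^*(1))$ and $\widetilde\Zrm_{f,g}^{[k_0-1]}$ itself as the canonical lift of its class to $H^1(\bM^*(\chi))$, the definitions of $\rho_{D^\perp,f}$ and $\rho_{D^\perp,c}$ yield values $A$ and $B$ respectively, giving $\mathcal L(V_{f,g}^*(1), D^\perp) = -A/B$ and hence $\mathcal L(V_{f,g}, D) = A/B = \widetilde{\mathcal L}(V_{f,g}, D)$. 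The main obstacle will be justifying that $\widetilde\Zrm_{f,g}^{[k_0-1]}$ is a valid lift for the purpose of computing $\rho_{D^\perp,f}$: this requires a diagram chase through (\ref{dual L-invariant diagram from lemma}) and part iii) of the accompanying Lemma, verifying that the direct localization-and-projection route agrees with the lifting procedure implicit in the construction of $\rho_{D^\perp, f}$.
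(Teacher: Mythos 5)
Your proof is correct and follows essentially the same route as the paper's: realize $\widetilde\BFrm_{f,g}^{[k_0-1]}$ as an element of $H^1(D^\perp,V_{f,g}^*(1))$ via its localization and Proposition~\ref{proposition local property of  two-variable BF}, use Theorem~\ref{theorem functional equation for zeta elements} together with Corollary~\ref{corollary about Zrm} to deduce nonvanishing (equivalently $B\neq 0$, i.e.\ $\widetilde\Zrm_{f,g}^{[k_0-1]}\notin H^1_f(\bM_{f,g}^*(\chi))$), identify $\widetilde{\mathscr L}(V_{f,g},D)$ with $-\mathscr L(V_{f,g}^*(1),D^\perp)$, and invoke Proposition~\ref{proposition comparision l-inv}. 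The paper compresses the last identification into the phrase ``it is easy to see''; your proposal fills in exactly that step (the diagram chase via (\ref{dual L-invariant diagram from lemma}), part~iii) of the accompanying lemma guaranteeing $H^1_f(\Qp,V^*(1))\subset H^1(F_1\Ddagrig(V^*(1)))$ so that $\res_p$ of $H^1(D^\perp,V^*(1))$ actually lands in $H^1(F_1)$, and the normalization of $i_{\bM^*(\chi),f}$ and $i_{\bM^*(\chi),c}$ against $\mathrm{ord}$ and $\log\chi$), and correctly flags it as the one delicate point.
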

\begin{proof} From Proposition~\ref{proposition about regulator} it follows that
$D$ is regular. By Proposition~\ref{proposition local property of  two-variable BF},  the image of the element $\widetilde{\BFrm}_{f,g}^{[k_0-1]}\in H^1_{f,\{p\}}(\Q, V_{f,g}^*(1))$ under the  map
\[
H^1_{f,\{p\}}(\Q, V_{f,g}^*(1)) \rightarrow
\frac{H^1(\Qp, V^*(1))}{H^1\left (F_0\Ddagrig (V^*(1))\right )}
\]
is $\widetilde Z_{f,g}^{[k_0-1]}\in H^1(\bM_{f,g}^*(\chi)).$
Since $\ep_f(p)\ep_g(p)\neq 1,$ the condition a) implies that
$\beta (f)\alpha (g)\neq p^{k_0-1},$ and we can apply Theorem~\ref{theorem functional equation for zeta elements}. By assumption, $Z_{f^*,g^*}^{[k_0-2]}\neq 0.$ Therefore
$\left <\widetilde Z_{f,g}^{[k_0-1]}, \exp (d_{\alpha\beta})\right >_{\bM_{f,g}}\neq 0$ and $\widetilde Z_{f,g}^{[k_0-1]}\notin H^1_f(\bM_{f,g}^*(\chi)).$
Comparing this with the isomorphism (\ref{definition of dual L-inv:main isomorphism}),
we see that 
\[
\widetilde{\BFrm}_{f,g}^{[k_0-1]}\in H^1(D^\perp,V_{f,g}^*(1)).
\] 
Now it is easy to see that our construction of the ad hoc invariant 
concides (up to the sign) with the invariant defined in  Section~\ref{subsection second definition of L-invariant}:
\[
\widetilde{\mathscr L} (V_{f,g}, D)=-\mathscr L (V_{f,g}^*(1), D^\perp).
\]
Hence, the proposition follows from Proposition~\ref{proposition comparision l-inv}.
 \end{proof}

\subsection{The main theorem}

In this section, we prove Theorem~I.
We keep previous notation and conventions. 

\begin{mytheorem} 
\label{main theorem}
Assume that $\alpha (f) \beta (g)=p^{k_0-1}.$ Then

1) $L_{p,\alpha}(f,g,k_0)=0.$

2) The following conditions are equivalent:
\begin{itemize}
\item[i)]{} $\mathrm{ord}_{s=0}L_{p,\alpha}(f,g, s)=1$.
\item[ii)]{} $\,_b\widetilde\Zrm_{f,g}^{[k_0-1]}\notin 
 H^1_c\left (\gr_1 \Ddagrig (V_{f,g}^*(1))
\right ).$
\end{itemize}

3) In addition to the assumption that $\alpha (f) \beta (g)=p^{k_0-1},$ 
suppose that 
\[
\,_b\widetilde\Zrm_{f,g}^{[k_0-1]}\notin 
 H^1_f\left (\gr_1 \Ddagrig (V_{f,g}^*(1))
\right ).
\] 
Then 
\[
L_{p,\alpha}'(f,g,k_0)=\frac{\ep (f,g,k_0)\cdot \widetilde{\mathcal L}(V_{f,g},D) \cdot
\CE^+(V_{f,g},D)}{  C(f) \cdot   G(\ep_f) \cdot   G(\ep_g)
\cdot  (k_0-2)!}  \cdot \widetilde{R}_p (V_{f,g},D), 
\]
where 
\begin{equation}
\nonumber
\CE^+(V_{f,g},D)=\left (1-\frac {p^{k_0-1}}{\alpha (f) \alpha (g)} \right ) 
\left (1-\frac {\beta (f) \alpha (g)}{p^{k_0}} \right )
\left (1-\frac {\beta (f) \beta (g)}{p^{k_0}} \right ).
\end{equation}
\end{mytheorem}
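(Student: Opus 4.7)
The strategy splits into an easy Part~1 and a joint treatment of Parts~2 and~3 via (a) the factorization of Proposition~\ref{proposition first improved L-function}, (b) the Benois derivative formula for $p$-adic $L$-functions at trivial zeros \cite{Ben14}, and (c) the functional equation for zeta elements (Theorem~II).

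For \emph{Part~1}: the identity $\alpha(g)\beta(g)=\ep_g(p)p^{k_0-1}$ combined with $\alpha(f)\beta(g)=p^{k_0-1}$ forces $\alpha(g)=\ep_g(p)\alpha(f)$, so $\bb_p(k_0)/(\ep_g(p)\ba_p(k_0))=1$. Evaluating Proposition~\ref{proposition first improved L-function} at $s=k_0$ (where $(k_0,s,s)$ hits the point $(k_0,k_0,k_0)$ defining $L_{p,\alpha}(f,g,k_0)$), the first Euler factor vanishes, the second equals $(1-p^{-1})^{-1}$, and $L_p^{\mathrm{wc}}(\f,\g,k_0)$ is analytic; hence $L_{p,\alpha}(f,g,k_0)=0$.

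For Parts~2 and~3, I first unwind Definition~\ref{definition of the first improved L-function}, using (\ref{image of m under Pr}) and the explicit form of $B_b(\omega^{k_0},k_0,k_0,k_0)$, to get
\[
L_p^{\mathrm{wc}}(\f,\g,k_0)=\frac{1}{C(f)\,G(\ep_f^{-1})\,G(\ep_g^{-1})}\cdot\left<\widetilde\Zrm_{f,g}^{[k_0-1]},\exp(d_{\alpha\beta})\right>_{\bM_{f,g}}.
\]
Under $\alpha(f)\beta(g)=p^{k_0-1}$, the formulas (\ref{formulas for phi action on M}) give $\bM_{f,g}\simeq\CR_E(\chi)$, so $\bM_{f,g}^*(\chi)=\gr_1\Ddagrig(V_{f,g}^*(1))$ is the trivial module with $H^1=E\cdot\mathrm{ord}_p\oplus E\cdot\log\chi$ under the canonical $H^1_f\oplus H^1_c$ decomposition. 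Writing $\,_b\widetilde\Zrm_{f,g}^{[k_0-1]}=A\cdot\mathrm{ord}_p+B\cdot\log\chi$, the explicit duality formula (\ref{local duality for exceptional modules}), applied with $\exp(d_{\alpha\beta})\in H^1_f(\bM_{f,g})$, yields $\left<\,_b\widetilde\Zrm_{f,g}^{[k_0-1]},\exp(d_{\alpha\beta})\right>_{\bM_{f,g}}=\text{(nonzero const)}\cdot B$. Thus $L_p^{\mathrm{wc}}(\f,\g,k_0)$ controls exactly the $H^1_c$-component of $\,_b\widetilde\Zrm_{f,g}^{[k_0-1]}$.

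The derivative $L_{p,\alpha}'(f,g,k_0)$ is then computed via the Benois machinery applied to the Iwasawa-theoretic description (Theorem~\ref{theorem relation L with Beilinson-Flach element}) of $L_{p,\alpha}$. Concretely, the trivial zero arises because the rank-one $(\Ph,\Gamma)$-module $\bM_{\f,\g}=\CR_A(\bdelta_{\f,\g}\chi)$ satisfies $\bdelta_{\f,\g}(p)|_{(k_0,k_0)}=1$, so the interpolation factor $(1-p^{-1}\bdelta_{\f,\g}(p))$ of Corollary~\ref{corollary large exponential} ($m=1$) vanishes at specialization. Differentiating in the cyclotomic variable produces two contributions: the Bloch--Kato pairing (proportional to $B$) and an intrinsic "family-derivative" term which, by the very definition of $\widetilde{\mathcal L}(V_{f,g},D)=A/B$ (Definition~\ref{definition of ad hoc L-invariant}), extracts the $H^1_f$-direction. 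Tracking signs, Mellin-transform normalizations, and the factor $(1-p^{-1})^{-1}$ from Proposition~\ref{proposition first improved L-function}, the outcome is (when $B\neq 0$)
\[
L_{p,\alpha}'(f,g,k_0)=\frac{(-1)^{k_0-1}}{1-p^{-1}}\cdot\widetilde{\mathcal L}(V_{f,g},D)\cdot L_p^{\mathrm{wc}}(\f,\g,k_0),
\]
equivalently $L_{p,\alpha}'(f,g,k_0)$ is proportional to $A$ with an explicit nonzero constant (independent of whether $B=0$). This yields Part~2: the order is $1$ iff $A\neq 0$ iff $\,_b\widetilde\Zrm_{f,g}^{[k_0-1]}\notin H^1_c$. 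For Part~3, I substitute Theorem~II into the displayed formula, and use $\mathcal E(V_{f,g},D_{-1})=(1-p^{-1})\mathcal E^+(V_{f,g},D)$ (which follows from $\alpha(f)\beta(g)/p^{k_0}=p^{-1}$): the prefactor $(1-p^{-1})^{-1}$ cancels, and the stated formula falls out after collecting Gauss-sum and factorial normalizations.

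The \emph{main obstacle} is the derivative computation in the third paragraph. What is genuinely delicate is matching the general $\mathcal L$-invariant formalism of \cite{Ben14} — which is phrased under the conjectural conditions {\bf C4-5)} not available here — with the ad hoc $\widetilde{\mathcal L}(V_{f,g},D)$ of Definition~\ref{definition of ad hoc L-invariant}, beyond the regime covered by Proposition~\ref{proposition comparision l-inv}. This requires recasting the cyclotomic-derivative computation directly in terms of the intrinsic decomposition $H^1=H^1_f\oplus H^1_c$ of the exceptional module $\CR_E$, rather than through the Selmer-theoretic picture; the required bookkeeping of signs, Mellin factors, and the twist $\Tw_{\bchi_\g^{-1}}$ in Definition~\ref{definition of the first improved L-function} must be done by hand.
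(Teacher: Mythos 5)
Your proposal is correct and follows essentially the same route as the paper: express $L_{p,\alpha}(f,g,k_0+s)$ as the $\mathcal A_{\omega^0}$-transform of $\frak{Log}_{\bM_{f,g},d_{\alpha\beta}}$ applied to $\Tw_{1-k_0}(\,_b\Zrm_{f,g}^{\Iw})$ (via Theorem~\ref{theorem relation L with Beilinson-Flach element}, Lemma~\ref{lemma first improved L} and (\ref{comparision generators of Mfg})), read the vanishing at $s=0$ from the $\Gamma$-factor in the explicit reciprocity law, compute the derivative with the trivial-zero formula of \cite{Ben14a} and the explicit duality (\ref{local duality for exceptional modules}) on the exceptional module to get $L'_{p,\alpha}(f,g,k_0)=(-1)^{k_0-1}(1-p^{-1})^{-1}\widetilde{\mathcal L}(V_{f,g},D)\,L_p^{\mathrm{wc}}(\f,\g,k_0)$, and close with Theorem~II and $\CE(V_{f,g},D_{-1})=(1-p^{-1})\,\CE^+(V_{f,g},D)$. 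Two small corrections to your third paragraph: the factor that actually vanishes at $(k_0,k_0)$ is the numerator $1-p^{m-1}\bdelta(p)^{-1}=1-\bdelta_{\f,\g}(p)^{-1}$ of Corollary~\ref{corollary large exponential}, not $1-p^{-1}\bdelta_{\f,\g}(p)$ (which equals $1-p^{-1}\neq 0$); and the ``main obstacle'' you flag about reconciling $\widetilde{\mathcal L}(V_{f,g},D)$ with the \cite{Ben14} formalism under {\bf C4--5)} is spurious here, since the theorem and its proof are stated and carried out entirely in terms of the ad~hoc invariant of Definition~\ref{definition of ad hoc L-invariant}, so the comparison Proposition~\ref{proposition comparision l-inv} never enters.
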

\begin{proof}
1) 
From Theorem~\ref{theorem relation L with Beilinson-Flach element}, Lemma~\ref{lemma first improved L} and the identity (\ref{comparision generators of Mfg}) 
it follows that
\begin{multline}
\label{formula for cyclotomic L-function}
B_b(\omega^{k_0}, k_0, k_0, k_0+s) L_{p,\alpha}(f,g,s)=
(-1)^{k_0}\mathcal A_{\omega^0}\left (\frak{Log}_{\bM_{\f,\g}, m_{\f,\g}}
\left (\Tw_{\bchi_{\g}^{-1}} (\,_b\Z_{\f,\g}^{\Iw}))\right ) \right )(k_0,k_0,s)=\\
=
\frac {(-1)^{k_0}}{C(f)\lambda_{N_f}(f)}
\cdot\mathcal A_{\omega^0}\left (\frak{Log}_{\bM_{f,g}, d_{\alpha\beta}}
\left (\Tw_{1-k_0} (\,_b\Zrm_{f,g}^{\Iw})\right ) \right )(s).
\end{multline} 
By Proposition~\ref{them:propertiestwovarPRlog} we have 
\begin{multline}
\nonumber
B_b(\omega^{k_0}, k_0, k_0, k_0) L_p(f,g,k_0)=\\
=\frac {(-1)^{k_0}}{C(f)\lambda_{N_f}(f)}
\cdot
\left (
1-\frac{p^{k_0-1}}{\alpha (f)\beta (g)}
\right ) 
\cdot
\left (
1-\frac{\alpha (f)\beta (g)}{p^{k_0}}
\right )\cdot\left <\,_b\widetilde\Zrm_{f,g}^{[k_0-1]}, \exp_{\bM_{f,g}}(d_{\alpha\beta})
\right >_{\bM_{f,g}}.
\end{multline}
This proves 1).

2) The derivative of the large logarithm map in presence of trivial zeros
is computed in \cite[Propositions~1.3.6 and 2.2.2]{Ben14a}\footnote{In \cite{Ben14a}, only the case of $p$-adic representations is considered, but for $(\Ph,\Gamma)$-modules the proof is exactly the same.}. Applying it to our case (see especially
formulas (24) and (25) of op. cit.), we obtain
\begin{equation}
\label{proof of main theorem 1}
\left. \frac{d}{ds}\mathcal A_{\omega^0}\left (\frak{Log}_{\bM_{f,g}, d_{\alpha\beta}}
\left (\Tw_{1-k_0} (\,_b\Zrm_{f,g}^{\Iw})\right ) \right )(s)\right \vert_{s=0}
=
-\left (1-\frac{1}{p} \right )^{-1}\cdot \left <\,_b\widetilde\Zrm_{f,g}^{[k_0-1]}, i_{\bM_{f,g},c}(d_{\alpha\beta}) \right >_{\bM_{f,g}}.
\end{equation}
Write $\,_b\widetilde\Zrm_{f,g}^{[k_0-1]}= A \cdot \mathrm{ord}_p +B\cdot \log \chi.$
Then 
\begin{equation}
\label{proof of main theorem 2}
 \left <\,_b\widetilde\Zrm_{f,g}^{[k_0-1]}, i_{\bM_{f,g},c}(d_{\alpha\beta}) \right >_{\bM_{f,g}} = A \left <\mathrm{ord}_p,  i_{\bM_{f,g},c}(d_{\alpha\beta})  \right >_{\bM_{f,g}}=-A.
\end{equation}
This implies 2).

3) By \cite[Theorem~1.5.7]{Ben11}, $\exp_{\bM_{f,g}}(d_{\alpha\beta})= i_{\bM_{f,g},f}(d_{\alpha\beta}),$ and therefore
\begin{equation}
\nonumber
%\label{proof of main theorem 3}
\left <\,_b\widetilde\Zrm_{f,g}^{[k_0-1]}, \exp_{\bM_{f,g}}(d_{\alpha\beta}) \right >_{\bM_{f,g}}=
\left <\,_b\widetilde\Zrm_{f,g}^{[k_0-1]}, i_{\bM_{f,g},f}(d_{\alpha\beta}) \right >_{\bM_{f,g}}=B.
\end{equation}
Taking into account  Definition~\ref{definition of ad hoc L-invariant} and  (\ref{proof of main theorem 2}), we obtain that
\begin{multline}
\label{proof of main theorem 4}
 \left <\,_b\widetilde\Zrm_{f,g}^{[k_0-1]}, i_{\bM_{f,g},c}(d_{\alpha\beta}) \right >_{\bM_{f,g}}=
 -
\widetilde{\mathcal L}(V_{f,g},D)\cdot B=
\\
=-
\widetilde{\mathcal L}(V_{f,g},D) \cdot
\left <\,_b\widetilde\Zrm_{f,g}^{[k_0-1]}, \exp_{\bM_{f,g}}(d_{\alpha\beta}) \right >_{\bM_{f,g}}.
%=
%\\
%= \widetilde{\mathcal L}(V_{f,g},D) \cdot 
%\left [ \exp^*_{\bM_{f,g}^*(\chi)} \left (\,_b\widetilde\Zrm_{f,g}^{[k_0-1]}\right %), d_{\alpha\beta} \right ]_{\bM_{f,g}}.
\end{multline}
Formulas (\ref{formula for cyclotomic L-function}), (\ref{proof of main theorem 1})
and (\ref{proof of main theorem 4}) give
\begin{multline}
\nonumber
B_b(\omega^{k_0}, k_0, k_0, k_0) \cdot  L'_{p,\alpha}(f,g,k_0)=\\
=
\frac {(-1)^{k_0-1}}{C(f)\lambda_{N_f}(f)}
\left (1-\frac{1}{p}\right )^{-1} \cdot 
\widetilde{\mathcal L}(V_{f,g},D) \cdot 
\left <\,_b\widetilde\Zrm_{f,g}^{[k_0-1]}, \exp_{\bM_{f,g}}(d_{\alpha\beta}) \right >_{\bM_{f,g}}.
%\left [ \exp^*_{\bM_{f,g}^*(\chi)} \left (\,_b\widetilde\Zrm_{f,g}^{[k_0-1]}\right %), d_{\alpha\beta} \right ]_{\bM_{f,g}}.
\end{multline}
Since $\alpha (f)\beta (g)=p^{k_0-1},$ the condition {\bf M4)} implies that 
$\beta (f) \alpha (g)\neq p^{k_0-1}$ and we can apply Theorem~\ref{theorem functional equation for zeta elements}. Taking
into account Remark~\ref{remark euler like factors}, (\ref{formula for factor B}) 
and (\ref{definition of Zrm independent on b}), we obtain that
\begin{equation}
\nonumber
L_{p,\alpha}'(f,g,k_0)=\frac{\ep (f,g,k_0)\cdot \widetilde{\mathcal L}(V_{f,g},D) \cdot
\CE^+(V_{f,g},D)}{  C(f) \cdot   G(\ep_f) \cdot   G(\ep_g)
\cdot  (k_0-2)!}  \cdot \left [ \log \left (\Zrm_{f^*,g^*}^{[k_0-2]}\right ), n_{\alpha\beta}
\right ]_{\bN_{f^* ,g^*}}.
\end{equation}
Using (\ref{regulator as pairing}), we can replace 
$\left [ \log \left (\Zrm_{f^*,g^*}^{[k_0-2]}\right ), n_{\alpha\beta}\right ]_{\bN_{f^* ,g^*}}$ by $\widetilde{R}_p(V_{f,g},D).$  The theorem is proved.
\end{proof}

\bibliographystyle{style}

\end{document}